\newcommand{\QQ}{\mathbb{Q}}
\newcommand{\ZZ}{\mathbb{Z}}
\newcommand{\N}{\mathbb{N}}
\newcommand{\fn}{\mathfrak{n}}
\newcommand{\ft}{\mathfrak{t}}
\newcommand{\DD}{\mathcal{D}}
\newcommand{\OO}{\mathcal{O}}
\newcommand{\sbullet}{{\hspace{.1em}\scriptstyle\bullet\hspace{.1em}}}
\newcommand{\dU}{\mathds{U}}
\newcommand{\dI}{\mathds{I}}
\newcommand{\dT}{\mathds{T}}
\DeclareMathOperator{\ord}{ord}
\DeclareMathOperator{\EXP}{\mathcal{E}}
\DeclareMathOperator{\Aut}{Aut}
\DeclareMathOperator{\supp}{\rm supp}
\DeclareMathOperator{\U}{\mathcal{U}}
\newcommand{\Ufil}{\U_{\text{\rm fil}}}
\newcommand{\Ugr}{\U_{\text{\rm gr}}}
\newcommand{\bupsigma}{{\boldsymbol\upsigma}}
\newcommand{\bupchi}{{\boldsymbol\upchi}}
\newcommand{\bupupsilon}{{\boldsymbol\upupsilon}}
\newcommand{\bupvartheta}{{\boldsymbol\upvartheta}}
\DeclareMathOperator{\Sim}{Sym}
\DeclareMathOperator{\Der}{Der}
\DeclareMathOperator{\fDer}{\mathcal{D}er}
\DeclareMathOperator{\Hom}{Hom}
\DeclareMathOperator{\diff}{\mathcal{D}}
\DeclareMathOperator{\End}{End}
\DeclareMathOperator{\gr}{\rm gr}
\DeclareMathOperator{\height}{\rm ht}
\DeclareMathOperator{\Lie}{\rm Lie}
\DeclareMathOperator{\fLie}{\mathcal{L\! ie}}
\newcommand{\pcirc}{{\scriptstyle \,\circ\,}}
\DeclareMathOperator{\init}{in}
\newcommand{\smallcirc}{{\scriptstyle \circ}}
\newcommand\Id{{\rm Id}}
\DeclareMathOperator{\Ad}{\rm Ad}
\DeclareMathOperator{\fAd}{\mathcal{A\! d}}
\newcommand{\bfu}{{\bf u}}
\newcommand{\bfv}{{\bf v}}
\newcommand{\bfs}{{\bf s}}
\newcommand{\bft}{{\bf t}}
\newcommand{\bvartheta}{{\boldsymbol\vartheta}}
\DeclareMathOperator{\HS}{HS}
\DeclareMathOperator{\Ider}{IDer}
\DeclareMathOperator{\Sub}{\mathcal{S}}
\DeclareMathOperator{\opvarphi}{\varphi}
\DeclareMathOperator{\coideal}{\mathcal{C\!I}}
\newcommand{\coide}[1]{\coideal\left(#1\right)}
\DeclareMathOperator{\Uppsib}{\overline{\Uppsi}}
\DeclareMathOperator{\Uppsibb}{\overline{\overline{\Uppsi}}}
\DeclareMathOperator{\Upgammab}{\overline{\Upgamma}}
\DeclareMathOperator{\Upgammabb}{\overline{\overline{\Upgamma}}}
\newtheorem{thm}{Theorem}[subsection]
\newtheorem{cor}[thm]{Corollary}
\newtheorem{prop}[thm]{Proposition}
\newtheorem{lemma}[thm]{Lemma}
\newtheorem{defi}[thm]{Definition}
\newtheorem{exam}[thm]{Example}
\newtheorem{nota}[thm]{Remark}
\newcommand{\numero}{\refstepcounter{thm}\noindent {\bf  \thethm\ }}
\newtheorem{notacion}[thm]{Notation}
\newtheorem{question}[thm]{Question}
\title{Rings of differential operators as enveloping algebras of Hasse--Schmidt derivations}
\author{Luis Narv\'{a}ez Macarro\thanks{Partially supported by MTM2016-75027-P, P12-FQM-2696
 and FEDER. }}
\date{}
\begin{document}

\maketitle

\mbox{} \hfill \parbox{6cm}{\scriptsize \it Il semble donc (et c'est le point de vue de H. Hasse, F.K. Schmidt et O. Teichm\"uller) que l'on ne puisse \'etudier les op\'erateurs $\Delta_k$ isolement, mais uniquement le syst\`eme  qu'ils forment avec les relations qui les relient.\\

\noindent Jean Dieudonn\'e \cite{dieu_ICM1954}}
\bigskip

\begin{abstract} 

Let $k$ be a commutative ring and $A$ a commutative $k$-algebra. In this paper we introduce the notion of  enveloping algebra of Hasse--Schmidt derivations of $A$ over $k$ and we prove that, under suitable smoothness hypotheses, the canonical map from the above enveloping algebra to the ring of differential operators $\diff_{A/k}$ is an isomorphism. This result generalizes the characteristic 0 case in which the ring $\diff_{A/k}$ appears as the enveloping algebra of the Lie-Rinehart algebra of the usual $k$-derivations of $A$ provided that $A$ is smooth over $k$.
\medskip

\noindent Keywords: Hasse--Schmidt derivation, integrable derivation, differential operator, substitution map, power divided algebra.

\noindent {\sc MSC: 14F10, 13N10, 13N15.}
\end{abstract}

\section*{Introduction}

In classical $\DD$-module theory, left $\DD_X$-modules on a smooth space $X$ (e.g. a smooth algebraic variety over a field of characteristic $0$, or a complex smooth analytic manifold, or a smooth rigid analytic space over a complete ultrametric field of characteristic $0$, etc.) are the same as modules over the structure sheaf $\OO_X$ endowed with an integrable connection, which is equivalent to an $\OO_X$-linear action of the module of derivations $\fDer_k(\OO_X)$  satisfying Leibniz rule and compatible with Lie brackets. A similar result holds for right $\DD_X$-modules. This fact plays a basic role in classical $\DD$-module theory, for instance in the definition of various operations or in the canonical right $\DD_X$-module structure on top differential forms on $X$. It can be conceptually stated as saying that the sheaf $\DD_X$ is the {\em enveloping algebra} of the {\em Lie algebroid} $\fDer_k(\OO_X)$ and it is strongly related with the canonical isomorphism of graded $\OO_X$-algebras:
\begin{equation} \label{eq:sim-gr} \Sim_{\OO_X} \fDer_k(\OO_X) \xrightarrow{\sim} \gr \diff_{X/k}.
\end{equation}
The main motivation of this paper is the existence of a canonical isomorphism:
\begin{equation} \label{eq:gamma-gr} \Gamma_A \Der_k(A) \xrightarrow{\sim} \gr \diff_{A/k}
\end{equation}
for any commutative ring $k$ (of arbitrary characteristic) and any {\em HS-smooth} $k$-algebra $A$ (see Definition \ref{defi:HS_smooth}), where $\Gamma_A$ denotes the power divided algebra functor (remember that $\Gamma_A = \Sim_A$ if $\QQ\subset A$). The proof of (\ref{eq:gamma-gr}) in \cite{nar_2009} depends on the fact that for a HS-smooth $k$-algebra $A$, any $k$-derivation $\delta:A\to A$ is integrable in the sense of Hasse--Schmidt (see Definition \ref{def:HS-integ}). This result suggests that, under these hypotheses, the ring of differential operators $\diff_{A/k}$ should be recovered in some canonical way from Hasse--Schmidt derivations. This paper is devoted to answering this question.
\medskip

The main difficulty is that Hasse--Schmidt derivations have a much less transparent 
algebraic structure than usual derivations. The module of usual derivations $\Der_k(A)$ carries an $A$-module structure and a $k$-Lie algebra structure, and both are mixed on a {\em Lie-Rinehart algebra} structure, enough to recover the ring of differential operators as its enveloping algebra provided that $\QQ\subset k$ and $A$ is smooth over $k$ (see \cite{rine-63}),
although Hasse--Schmidt derivations were only known to carry a (non-commutative) group structure. In our previous paper \cite{nar_subst_2018}, we introduced and studied the action of substitution maps (between power series rings) on Hasse--Schmidt derivations, to be thought as a substitute of the $A$-module structure on usual derivations. 
\medskip

In this paper we prove that both the group structure and the action of substitution maps allow us to define the {\em enveloping algebra} of Hasse--Schmidt derivations and to prove that, under smoothness hypotheses, this enveloping algebra is canonically isomorphic to the ring of differential operators without any assumption on the characteristic of $k$. A key step in the proof is the existence of a canonical map of graded algebras from the power divided algebra of the module of integrable derivations (in the sense of Hasse--Schmidt) to the graded ring of the enveloping algebra of Hasse--Schmidt derivations. 
\medskip

Let us now comment on the content of this paper.
\medskip

In section 1 we recall and adapt, for the ease of the reader, the material in \cite[\S 1, \S 2, \S 3]{nar_subst_2018}. We will concentrate ourselves in the case of power series rings and modules in a finite number of variables, which will be enough for our main results in section \ref{sect:main}. In the last sub-section we recall the notions of exponential type series and power divided algebras.
\medskip

In section 2 first we recall the notion of Hasse--Schmidt derivation and its basic properties. As we already did in \cite[\S 4]{nar_subst_2018}, we need to study, not only uni-variate Hasse--Schmidt derivations, but also multivariate ones: a $(p,\Delta)$-variate Hasse--Schmidt derivation of our $k$-algebra $A$ is a family  $D= \left(D_\alpha\right)_{\alpha\in\Delta}$ of $k$-linear endomorphisms of $A$ such that $D_0$ is the identity map and 
$$ D_\alpha (x y) = \sum_{\scriptscriptstyle \beta + \gamma = \alpha} D_\beta(x) D_\gamma(y),\quad \forall \alpha\in \Delta, \forall x,y\in A,
$$
where $\Delta \subset \N^p$ is a non-empty {\em co-ideal}, i.e. a subset of $\N^p$ such that everytime $\alpha\in \Delta$ and $\alpha'\leq \alpha$ we have $\alpha'\in \Delta$. An important idea is to think of Hasse--Schmidt derivations as series $D=\sum_{\scriptscriptstyle \alpha\in\Delta} D_\alpha \bfs^\alpha$ in the quotient ring $R[[\bfs]]_\Delta$ of the power series ring $R[[\bfs]] = R[[s_1,\dots,s_p]]$, $R=\End_k(A)$, by the two-sided monomial ideal generated by all $\bfs^\alpha$ with $\alpha \in \N^p \setminus \Delta$. In the second sub-section we recall \cite[\S 5]{nar_subst_2018} on the action of substitution maps on Hasse--Schmidt derivations. The starting point is simple: given a substitution map $\varphi: A[[s_1,\dots,s_p]]_\Delta \to A[[t_1,\dots,t_q]]_\nabla $ and a $(p,\Delta)$-variate Hasse--Schmidt derivation $D=\sum_{\scriptscriptstyle \alpha\in\Delta} D_\alpha \bfs^\alpha$ we may consider a new ($q,\nabla)$-variate Hasse--Schmidt derivation given by:
$$\varphi\sbullet D := \sum_{\scriptscriptstyle \alpha\in\Delta} \varphi(\bfs^\alpha) D_\alpha.
$$
In the last sub-section, we first recall the notion of integrable derivation: a $k$-derivation $\delta:A\to A$ is said to be $m$-integrable if there is a uni-variate Hasse--Schmidt derivation $D= \left(D_i\right)_{i=0}^m$ such that $D_1=\delta$, and second we recall the main results in \cite{nar_2009}.
\medskip

Section 3 contains the original results of this paper. First, we introduce the notion of {\em HS-module}, as a generalization of the classical notion of module with an integrable connection. 
Roughly speaking, a left HS-module is a module $M$ over our $k$-algebra $A$ on which Hasse--Schmidt derivations act ``globally'', in a compatible way with the group structure and the action of substitution maps, and satisfying a Leibniz rule. More precisely, for each $(p,\Delta)$-variate Hasse--Schmidt derivation $D=\sum_{\scriptscriptstyle \alpha\in\Delta} D_\alpha \bfs^\alpha$ of $A$, $M$ is endowed with a $k[[\bfs]]_\Delta$-linear automorphism $\Psi^p_\Delta(D): M[[\bfs]]_\Delta \to M[[\bfs]]_\Delta$ congruent to the identity modulo $\langle \bfs \rangle$, in such a way that:
\begin{itemize}
\item[-)]
 The $\Psi^p_\Delta(-)$ are group homomorphism.
\item[-)] For each substitution map $\varphi: A[[\bfs]]_\Delta \to A[[\bft]]_\nabla $ we have $\Psi^q_\nabla(\varphi \sbullet D) = \varphi \sbullet \Psi^p_\Delta(D)$.
\item[-)] (Leibniz rule) For each $a\in A$ we have $\Psi^p_\Delta(D) a = D(a) \Psi^p_\Delta(D)$.
\end{itemize}
Any $\diff_{A/k}$-module is obviously a HS-module, since Hasse--Schmidt derivations act through their components, which are differential operators. Namely, if $M$ is a left $\diff_{A/k}$-module, for each $(p,\Delta)$-variate Hasse--Schmidt derivation $D=\sum_{\scriptscriptstyle \alpha\in\Delta} D_\alpha \bfs^\alpha$ of $A$ we define $\Psi^p_\Delta(D)$ as:
$$ \Psi^p_\Delta(D) (m) = \sum_{\scriptscriptstyle \alpha\in\Delta} (D_\alpha m) \bfs^\alpha,\quad \forall m\in M.
$$
The basic question is whether a HS-module structure can be lifted to a $\diff_{A/k}$-module structure or not.
\smallskip

To illustrate the notion of HS-module, or more precisely, the notion of {\em pre-HS-module structure} (i.e. the compatibility with substitution maps only holds for substitution maps with constant coefficients), we give natural actions of Hasse--Schmidt derivations on $\Omega_{A/k}$ and on $\Der_k(A)$ generalizing, respectively, the classical Lie derivative and the adjoint representation of classical derivations.
\smallskip

In the second sub-section we generalize the well known $\otimes$ and $\Hom$ operations on modules with an integrable connection
to the setting of HS-modules. In the last two sub-sections we define the enveloping algebra of Hasse--Schmidt derivations of a commutative algebra, and we prove, by imitating \cite{nar_2009}, that there is a canonical map of graded algebras from the power divided algebra of the module of integrable derivations to the graded ring of the enveloping algebra of Hasse--Schmidt derivations. We finally prove that, under the HS-smoothness hypothesis, 
the former map is an isomorphism and we deduce that the canonical map from the enveloping algebra of Hasse--Schmidt derivations to the ring of differential operators is an isomorphism. As a corollary, HS-modules coincide with $\diff$-modules for HS-smooth algebras.
\smallskip

I would like to thank the referee for the careful reading of the paper.

\section{Notations and preliminaries}

\subsection{Notations}

Throughout the paper we will use the following notations:
\smallskip

\noindent -) $k$ is a commutative ring and $A$ a commutative
$k$-algebra.
\smallskip

\noindent -) $\diff_{A/k}$ is the ring of $k$-linear differential operators of $A$ 
(see \cite{ega_iv_4}).
\smallskip

\noindent -) $\bfs = \{s_1,\dots,s_p\}$, $\bft =\{t_1,\dots,t_q\}$, \dots are sets of variables.
\smallskip

\noindent -) $k$-algebra over $A$: see Definition \ref{def:k-algebra-over-A}.
\smallskip

\noindent -) $\fn_\beta := \{\alpha \in \N^p\ |\ \alpha\leq \beta\}$ ) for $\beta \in \N^p$.
\smallskip

\noindent -) $\ft_m := \{\alpha \in \N^p\ |\ |\alpha|\leq m\}$ with $m\geq 0$.
\smallskip

\noindent -) $\coide{\N^p}$ is the set of all non-empty co-ideals of $\N^p$: see Notation 
\ref{notacion:co-ideal}.
\smallskip

\noindent -) $\tau_{\Delta' \Delta}$ is a truncation map: see (\ref{eq:truncations}).
\smallskip

\noindent -) $\U^p(R;\Delta), \Ufil^p(R;\Delta), \Ugr^p(R;\Delta)$: see Notation \ref{notacion:Ump}.
\smallskip

\noindent -) $r\boxtimes r'$: see Definition \ref{defi:external-x}.
\smallskip

\noindent -) $r\mapsto \widetilde{r}$: see (\ref{eq:tilde-map});\quad $g \mapsto g^e$: see (\ref{eq:g^e}).
\smallskip

\noindent -) $\Hom_k^\pcirc(-,-)$, $\Aut_{k[[\bfs]]_\Delta}^\pcirc(-)$: see Notation \ref{notacion:pcirc}.
\smallskip

\noindent -) $\Sub_A(p,q;\Delta,\nabla)$ is the set of substitution maps: see Definition \ref{def:substitution_maps}.
\smallskip

\noindent -) ${\bf C}_e(\varphi,\alpha)$: see (\ref{eq:exp-substi}).
\smallskip

\noindent -) $\varphi_M$, $\sideset{_M}{}\opvarphi$: see \ref{sec:action-substi};\quad  $\varphi \sbullet r$, $r\sbullet \varphi$: see \ref{nume:def-sbullet}.
\smallskip

\noindent -) $\varphi_*$, $\overline{\varphi_*}$: see (\ref{eq:varphi_star}) and (\ref{eq:overline_varphi_star}).
\smallskip

\noindent -)  $\EXP_m(B)$ is the set of exponential type series: see Definition \ref{defi:exponential-series}.
\smallskip

\noindent -) $\Sim_A M$ is the symmetric algebra of the $A$-module $M$.
\smallskip

\noindent -) $\Gamma_A M$ is the power divided algebra of the $A$-module $M$: see Definition \ref{defi:power-divided-algebra}. 
\smallskip

\noindent -) $\HS_k^p(A;\Delta)$ is the set of $(p,\Delta)$-variate Hasse--Schmidt derivations: see Definition \ref{defi:HS}.
\smallskip

\noindent -) $a \sbullet D$: see Definition \ref{defi:sbullet-0}.
\smallskip

\noindent -) $\varphi^D$, for $\varphi$ a substitution map and $D$ a Hasse--Schmidt derivation: see Proposition \ref{prop:varphi-D-main}.
\smallskip

\noindent -) $\dU_{A/k} = \dT_{A/k}/\dI$ is the enveloping algebra of the Hasse--Schmidt derivations of $A$ over $k$: see Definition \ref{defi:enveloping_HS}.

\subsection{Rings and modules of power series}

Throughout this section, $k$ will be a commutative ring, $A$ a commutative $k$-algebra and $R$ a ring, not-necessarily commutative.
\medskip

Let $p\geq 0$ be an integer and let us call $\bfs = \{s_1,\dots,s_p\}$ a set of $p$ variables. The support of each $\alpha\in \N^p$ is defined as  $\supp \alpha := \{i\ |\ \alpha_i\neq 0\}$. 
The monoid $\N^p$ is endowed with a natural partial ordering. Namely, for $\alpha,\beta\in \N^p$, we define
$$ \alpha \leq \beta\quad \stackrel{\text{def.}}{\Longleftrightarrow}\quad \exists \gamma \in \N^p\ \text{such that}\ \beta = \alpha + \gamma\quad \Longleftrightarrow\quad \alpha_i \leq \beta_i\quad \forall i=1\dots,p.$$
We denote $|\alpha| := \alpha_1+\cdots+\alpha_p$. If $\alpha \leq \beta$ then $|\alpha|\leq |\beta|$. Moreover, if $\alpha \leq \beta$ and $|\alpha| = |\beta|$, then $\alpha=\beta$.
\medskip

Let $M$ be an abelian group and $M[[\bfs]]$ the abelian group of power series with coefficients in $M$.
The {\em support}  of a series $m=\sum_\alpha m_\alpha \bfs^\alpha \in M[[\bfs]]$ is $\supp(m) := \{  \alpha \in \N^p\ |\  m_\alpha \neq 0\} \subset \N^p$. It is clear that $m=0\Leftrightarrow \supp(m) = \emptyset$. 
The {\em order}  of a non-zero series $m=\sum_\alpha m_\alpha \bfs^\alpha \in M[[\bfs]]$ is
$$ \ord (m) := \min \{ |\alpha|\ |\ \alpha \in \supp(m) \} \in \N.
$$
If $m=0$  we define $\ord (0) := \infty$. If $M$ is an $A$-module, then $M[[\bfs]]$ is naturally an $A[[\bfs]]$-module and for $a\in A[[\bfs]]$ and $m,m'\in M[[\bfs]]$ we have 
$\supp(m+m') \subset \supp(m) \cup \supp(m')$, $\supp (am), \supp(ma) \subset \supp(m) + \supp(a)$,
$\ord(m+m') \geq \min \{\ord(m),\ord(m')\}$ and 
$\ord (am), \ord (ma) \geq \ord(a) + \ord(m)$. Moreover, if $\ord(m') > \ord(m)$, then $\ord(m+m') = \ord(m)$.
\medskip

The abelian group $M[[\bfs]]$ is the completion of the abelian group $M[\bfs]$ of polynomials with coefficients in $\bfs$ with respect to the $\langle \bfs \rangle$-adic topology, and its natural topology is also the $\langle \bfs \rangle$-adic topology.
\medskip

When $M=R$ is a ring, $R[[\bfs]]$ is a topological ring. 
If $M$ is an $A$-module, there is a natural $A[[\bfs]]$-linear bicontinuous isomorphism:
\begin{equation}  \label{eq:A[[s]]wotM-new}
   A[[\bfs]] \widehat{\otimes}_A M \stackrel{\sim}{\longrightarrow} M[[\bfs]],
\end{equation}
where $\widehat{\otimes}_A$ indicates the completed tensor product with respect to the natural topology on $A[[\bfs]]$.

\begin{defi} \label{def:k-algebra-over-A}
A $k$-algebra over $A$ \index{algebra over a commutative ring} is a (not-necessarily commutative) $k$-algebra $R$ endowed with a map of $k$-algebras $\iota:A \to R$. A map between two $k$-algebras $\iota:A \to R$ and $\iota':A \to R'$ over $A$ is a map $g:R\to R'$ of $k$-algebras such that $\iota' = g \pcirc \iota$. 
A filtered $k$-algebra over $A$  is a $k$-algebra $(R,\iota)$ over $A$, endowed with a ring filtration $(R_k)_{k\geq 0}$ such that $\iota(A) \subset R_0$.
\end{defi}

A $k$-algebra over $A$ is obviously an $(A;A)$-bimodule. If $R$ is a $k$-algebra over $A$, then the power series ring $R[[\bfs]]$ is a $k[[\bfs]]$-algebra over $A[[\bfs]]$.

\begin{defi} We say that a subset $\Delta \subset \N^p$ is an {\em ideal} (resp. a
{\em co-ideal}) of $\N^p$  
if everytime $\alpha\in\Delta$ and $\alpha\leq \alpha'$ (resp. $\alpha'\leq \alpha$), then $\alpha'\in\Delta$.
\end{defi}
It is clear that $\Delta$ is an ideal if and only if its complement $\Delta^c$ is a co-ideal, and that the union and the intersection of any family of ideals (resp. of co-ideals)  of $\N^p$ is again an ideal (resp. a co-ideal) of $\N^p$. 
Examples of ideals (resp. of co-ideals) of $\N^p$ are the $\beta + \N^p$
(resp. the $\fn_\beta := \{\alpha \in \N^p\ |\ \alpha\leq \beta\}$ ) with $\beta \in \N^p$. The $\ft_m$ defined as $\ft_m := \{\alpha \in \N^p\ |\ |\alpha|\leq m\}$ with $m\geq 0$ are also co-ideals.
Notice that a co-ideal $\Delta\subset \N^p$ is non-empty if and only if $(\ft_0 = \fn_0=)\{0\}  \subset \Delta$.

\begin{notacion} \label{notacion:co-ideal}
The set of all non-empty co-ideals of $\N^p$ will be denoted by $\coide{\N^p}$. 
\end{notacion}

For a co-ideal $\Delta \subset \N^P$ and an integer $m\geq 0$, we denote $\Delta^m := \Delta \cap \ft_m$. If $\Delta\subset \N^P$ is a finite non-empty co-ideal, we define its {\em height} as $\height (\Delta) := \min \{m\in\N\ |\ \Delta \subset \ft_m\} = \max \{|\alpha|\ |\ \alpha\in\Delta\}$.
\medskip

Let $M$ be an $(A;A)$-bimodule central over $k$. 
For each co-ideal $\Delta \subset \N^p$, we denote by $\Delta_M$ the closed sub-$(A[[\bfs];A[[\bfs]])$-bimodule of $M[[\bfs]]$ whose elements are the formal power series $\sum_{\alpha\in\N^p} m_\alpha \bfs^\alpha$ such that $m_\alpha=0$ whenever $\alpha\in \Delta$, i.e. 
\begin{eqnarray*}
&\displaystyle
\Delta_M =
  \left\{m\in M[[\bfs]],\ \supp(m) \subset \Delta^c\right\} = 
\left\{m\in M[[\bfs]],\ \supp(m) \subset \bigcap_{\beta\in \Delta} \fn_\beta^c\right\}=
&\\
&\displaystyle
\bigcap _{\beta\in \Delta} \left\{m\in M[[\bfs]],\ \supp(m) \subset  \fn_\beta^c\right\} =
 \bigcap_{\beta\in \Delta} \left(\fn_\beta\right)_M.
\end{eqnarray*}
For $m\in\N$ we have 
$\left(\ft_m\right)_M = \langle \bfs \rangle^{m+1} M[[\bfs]]$. 
Let us denote by $M[[\bfs]]_\Delta := M[[\bfs]]/\Delta_M$ endowed with the quotient topology (it coincides with the $\langle \bfs \rangle$-adic topology regarded as a $k[[\bfs]]$-module), for which it is a topological bimodule over $(A[[\bfs]]_\Delta;A[[\bfs]]_\Delta)$.
\medskip

When $\Delta = \fn_\alpha$, for some $\alpha \in\N^p$, we will simply denote $M[[\bfs]]_\alpha := M[[\bfs]]_{\fn_\alpha}$. Similarly, when $\Delta = \ft_m$, for some $m\geq 0$, we will simply denote $M[[\bfs]]_m := M[[\bfs]]_{\ft_m}$. 
\medskip

The elements in $M[[\bfs]]_\Delta$ are power series of the form 
$$\sum_{\scriptscriptstyle \alpha\in\Delta} m_\alpha \bfs^\alpha,\quad m_\alpha \in M.
$$
The additive isomorphism
$$ \sum_{\scriptscriptstyle \alpha\in\Delta} m_\alpha \bfs^\alpha \in M[[\bfs]]_\Delta  \mapsto \{m_\alpha\}_{\alpha \in \Delta} \in M^\Delta    $$
is a homeomorphism, where $M^\Delta$ is endowed with the product of discrete topologies on each copy of $M$. 
\medskip

For $\Delta \subset \Delta'$ co-ideals of $\N^p$, we have natural 
 $(A[[\bfs]]_{\Delta'};A[[\bfs]]_{\Delta'})$-linear projections $\tau_{\Delta' \Delta}:M[[\bfs]]_{\Delta'} \longrightarrow M[[\bfs]]_\Delta$, that we call {\em truncations}:
 \begin{equation}  \label{eq:truncations}
\tau_{\Delta' \Delta} : \sum_{\scriptscriptstyle \alpha\in\Delta'} m_\alpha \bfs^\alpha \in M[[\bfs]]_{\Delta'} \longmapsto \sum_{\scriptscriptstyle \alpha\in\Delta} m_\alpha \bfs^\alpha \in M[[\bfs]]_{\Delta}.
\end{equation}
When $\Delta=\ft_m, \Delta'=\ft_{m'}$, $m\leq m'$, we will simply denote $\tau_{m'm}:= \tau_{\ft_{m'}\ft_{m}}$. We have $(A;A)$-linear scissions:
$$\sum_{\scriptscriptstyle \alpha\in\Delta} m_\alpha \bfs^\alpha \in M[[\bfs]]_\Delta \longmapsto \sum_{\scriptscriptstyle \alpha\in\Delta} m_\alpha \bfs^\alpha \in M[[\bfs]]_{\Delta'}
$$ 
which are topological immersions. 
In particular we have natural $(A;A)$-linear topological embeddings $M[[\bfs]]_\Delta \hookrightarrow M[[\bfs]]$ and we define the {\em support} (resp. the {\em order}) of any element in $M[[\bfs]]_\Delta$ as its support (resp. its order) as element of $M[[\bfs]]$. 
We have a bicontinuous isomorphism of $(A[[\bfs]]_\Delta;A[[\bfs]]_\Delta)$-bimodules
$$ M[[\bfs]]_\Delta = \lim_{\stackrel{\longleftarrow}{m\in\N}} M[[\bfs]]_{\Delta^m},
$$
where transition maps in the inverse system are given by truncations. 
For a ring $R$, the $\Delta_R$ are closed two-sided ideals of $R[[\bfs]]$
 and we have a bicontinuous ring isomorphism
$$ R[[\bfs]]_\Delta = \lim_{\stackrel{\longleftarrow}{m\in\N}} R[[\bfs]]_{\Delta^m}.
$$
As in (\ref{eq:A[[s]]wotM-new}), for $A[[\bfs]]_\Delta \otimes_A M$ (resp. $M\otimes_A A[[\bfs]]_\Delta$) endowed with the natural topology, we have that the natural map $A[[\bfs]]_\Delta \otimes_A M \to M[[\bfs]]_\Delta$ (resp. $M\otimes_A A[[\bfs]]_\Delta \to M[[\bfs]]_\Delta$)
is continuous and gives rise to a $(A[[\bfs]]_\Delta;A)$-linear (resp. to a $(A;A[[\bfs]]_\Delta)$-linear) isomorphism
$$ A[[\bfs]]_\Delta \widehat{\otimes}_A M \xrightarrow{\sim} M[[\bfs]]_\Delta\quad\quad \text{(resp. $
M\widehat{\otimes}_A A[[\bfs]]_\Delta \xrightarrow{\sim} M[[\bfs]]_\Delta$)}.
$$
Each $(A;A)$-linear map $h: M \to M'$ between two bimodules induces a linear map (over 
($(A[[\bfs]]_\Delta;A[[\bfs]]_\Delta)$)
\begin{equation}   \label{eq:h-bar-mod}
\overline{h}: \sum_{\scriptscriptstyle \alpha\in\Delta} m_\alpha \bfs^\alpha \in M[[\bfs]_\Delta \longmapsto  
\sum_{\scriptscriptstyle \alpha\in\Delta} h(m_\alpha) \bfs^\alpha \in M[[\bfs]_\Delta.
\end{equation}
We have a commutative diagram
$$
\begin{tikzcd}
A[[\bfs]]_\Delta\widehat{\otimes}_A M \ar[r,"\simeq"] \ar[d,"\Id\widehat{\otimes} h "'] &
M[[\bfs]]_\Delta \ar[d,"\overline{h}"'] & M \widehat{\otimes}_A  A[[\bfs]]_\Delta \ar[l,"\simeq"'] 
\ar[d,"h\widehat{\otimes} \Id "] \\
A[[\bfs]]_\Delta\widehat{\otimes}_A M' \ar[r,"\simeq"] & M'[[\bfs]]_\Delta & \ar[l,"\simeq"'] M' \widehat{\otimes}_A  A[[\bfs]]_\Delta.
\end{tikzcd}
$$
Clearly, if $R$ is a $k$-algebra over $A$, then $R[[\bfs]]_\Delta$ is a $k[[\bfs]]_\Delta$-algebra over $A[[\bfs]]_\Delta$.

\begin{notacion} \label{notacion:Ump} Let $R$ be a ring, $p\geq 1$ and  $\Delta\subset \N^p$ a non-empty co-ideal. We denote by $\U^p(R;\Delta)$ the multiplicative sub-group of the units of $R[[\bfs]]_\Delta$ whose 0-degree coefficient is $1$. The multiplicative inverse of a unit $r\in R[[\bfs]]_\Delta$ will be denoted by $r^*$.
Clearly, $\U^p(R;\Delta)^{\text{\rm opp}} = \U^p(R^{\text{\rm opp}};\Delta)$.
For $\Delta\subset \Delta'$ co-ideals we have $\tau_{\Delta'\Delta}\left(\U^p(R;\Delta')\right) \subset \U^p(R;\Delta)$ and the truncation map
$\tau_{\Delta'\Delta}:\U^p(R;\Delta') \to \U^p(R;\Delta)$ is a group homomorphism. Clearly, we have:
\begin{equation} \label{eq:U-inv-limit-finite}
   \U^p(R;\Delta) = \lim_{\stackrel{\longleftarrow}{m\in\N}} \U^p(R;\Delta^m) = \lim_{\stackrel{\longleftarrow}{\substack{\scriptscriptstyle \Delta' \subset \Delta\\ \scriptscriptstyle  \sharp\Delta'<\infty}}} \U^p(R;\Delta').
\end{equation}
If $p=1$ and $\Delta = \ft_m = \{i\in\N\ |\ i\leq m\}$ we will simply denote $\U(R;m):= \U^1(R;\ft_m)$.
\medskip

\noindent
If $R = \cup_{d\geq 0} R_d$ is a filtered ring, we denote: 
$$\Ufil^p(R;\Delta):= \left\{ \sum_{\scriptscriptstyle \alpha\in\Delta} r_\alpha \bfs^\alpha\in\U^p(R;\Delta)\ \left|\  r_\alpha \in R_{|\alpha|}\ \forall \alpha \in\Delta\right.\right\}.
$$
It is clear that $\Ufil^p(R;\Delta)$ is a subgroup of $\U^p(R;\Delta)$.
\medskip

\noindent If $R= \bigoplus_{d\in\N} R_d$ is a graded ring, we denote: 
$$\Ugr^p(R;\Delta):= \left\{ \sum_{\scriptscriptstyle \alpha\in\Delta} r_\alpha \bfs^\alpha\in\U^p(R;\Delta)\ \left|\  r_\alpha \in R_{|\alpha|}\ \forall \alpha \in\Delta\right.\right\}.
$$
It is clear that $\Ugr^p(R;\Delta)$ is a subgroup of $\U^p(R;\Delta)$.
\medskip

\noindent If $R$ be a filtered ring, we will denote by
$ \bupsigma: \Ufil^p(R;\Delta) \longrightarrow \Ugr^p(\gr R;\Delta)
$
the {\em total symbol map} defined as:
$$ \bupsigma \left( \sum_{\scriptscriptstyle \alpha\in\Delta} r_\alpha \bfs^\alpha \right) := 
\sum_{\scriptscriptstyle \alpha\in\Delta} \sigma_{|\alpha|}(r_\alpha) \bfs^\alpha.
$$
It is clear that $\bupsigma$ is a group homomorphism compatible with truncations.
\medskip

For any ring homomorphism $f:R\to R'$, the induced ring homomorphism $\overline{f}: R[[\bfs]]_\Delta \to R'[[\bfs]]_\Delta$ sends $\U^p(R;\Delta)$ into $\U^p(R';\Delta)$ and so it induces natural group homomorphisms
$\U^p(R;\Delta) \to \U^p(R';\Delta)$. Similar results hold for the filtered or graded cases.
\end{notacion}

\begin{defi} \label{defi:external-x}
Let $R$ be a ring, $p,q\geq 0$, $\bfs=\{s_1,\dots,s_p\},\bft=\{t_1,\dots,t_q\}$ disjoint sets of variables and  $\nabla\subset \N^p, \Delta\subset \N^q$ non-empty co-ideals.
For each $r\in R[[\bfs]]_\nabla, r'\in R[[\bft]]_\Delta$, the {\em external product} $r\boxtimes r'\in R[[\bfs \sqcup \bft]]_{\nabla \times \Delta}$ (notice that $\nabla \times \Delta \subset \N^{p+q}$ is a non-empty co-ideal) is defined as
$$ r\boxtimes r' := \sum_{\scriptscriptstyle (\alpha,\beta)\in \nabla \times \Delta} r_\alpha r'_\beta \bfs^\alpha \bft^\beta.$$
\end{defi}

The above definition is consistent with the existence of natural isomorphism of $(R;R)$-bimodules $R[[\bfs]]_\nabla \widehat{\otimes}_R R[[\bft]]_\Delta \simeq R[[\bfs \sqcup \bft]]_{\nabla \times \Delta}\simeq   R[[\bft \sqcup \bfs]]_{\Delta \times \nabla} \simeq    R[[\bft]]_\Delta \widehat{\otimes}_R R[[\bfs]]_\nabla$. Let us also notice that $1\boxtimes 1 = 1$ and $r\boxtimes r' = (r \boxtimes 1) (1 \boxtimes r')$. Moreover, 
if
$r\in \U^p(R;\nabla)$, $r'\in \U^q(R;\Delta)$, then 
$r\boxtimes r' \in \U^{p+q}(R;\nabla\times \Delta)$
and  $(r\boxtimes r')^* = {r'}^* \boxtimes r^*$.
\medskip

Let $E,F$ be two $A$-modules and $\Delta\subset \N^p$ a non-empty co-ideal. The proof of the following proposition is straightforward.

\begin{prop} \label{prop:induced-cont-maps-M[[s]]_m} Under the above hypotheses, any $k[[\bfs]]_\Delta$-linear map $f:E[[\bfs]]_\Delta \to F[[\bfs]]_\Delta$ is continuous for the natural topologies, and for any co-ideal $\Delta'\subset \N^p$ with $\Delta'\subset \Delta$ we have
$ f\left(\Delta'_E/\Delta_E\right) \subset \Delta'_F/\Delta_F
$
and so there is a unique $k[[\bfs]]_{\Delta'}$-linear map $\overline{f}:E[[\bfs]]_{\Delta'} \to F[[\bfs]]_{\Delta'}$ such that the following diagram is commutative:
$$
\begin{tikzcd}
E[[\bfs]]_\Delta \ar[r,"f"] \ar[d,"\text{trunc.}"'] &   F[[\bfs]]_\Delta  \ar[d,"\text{trunc.}"]\\
E[[\bfs]]_{\Delta'} \ar[r,"\overline{f}"] & F[[\bfs]]_{\Delta'}.
\end{tikzcd}
$$
\end{prop}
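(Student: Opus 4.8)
The plan is to derive all three assertions from a single mechanism: a $k[[\bfs]]_\Delta$-linear map commutes with multiplication by every monomial $\bfs^\alpha$, and every element of the sub-bimodules in play can be written as a \emph{finite} sum of monomial multiples $\bfs^\alpha z$. The finiteness is what lets the monomial identities propagate through $f$, and it is the only non-formal ingredient. First I would establish continuity. Since the natural topology on $E[[\bfs]]_\Delta$ and $F[[\bfs]]_\Delta$ is the $\langle\bfs\rangle$-adic one and $f$ is additive, it suffices to check $f(\langle\bfs\rangle^m E[[\bfs]]_\Delta)\subseteq \langle\bfs\rangle^m F[[\bfs]]_\Delta$ for every $m\geq 0$ (taking $m'=m$). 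Given $w\in \langle\bfs\rangle^m E[[\bfs]]_\Delta$, i.e. a series supported in $\{\gamma\ :\ |\gamma|\geq m\}\cap\Delta$, I would group its terms: there are only finitely many monomials of degree exactly $m$, and every $\gamma$ with $|\gamma|\geq m$ dominates one of them, so $w=\sum_{|\alpha|=m}\bfs^\alpha z_\alpha$ with $z_\alpha\in E[[\bfs]]_\Delta$ (here $\Delta$ being a co-ideal guarantees $\gamma-\alpha\leq\gamma\in\Delta$, so the $z_\alpha$ stay supported in $\Delta$). Linearity then gives $f(w)=\sum_{|\alpha|=m}\bfs^\alpha f(z_\alpha)\in\langle\bfs\rangle^m F[[\bfs]]_\Delta$.

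Next I would treat the support condition. Since $\Delta'\subseteq\Delta$ gives $\Delta^c\subseteq\Delta'^c$, one has $\Delta_E\subseteq\Delta'_E$, so $\Delta'_E/\Delta_E$ is the sub-bimodule of $E[[\bfs]]_\Delta$ of series supported in $\Delta\setminus\Delta'$, which is exactly $\ker\tau_{\Delta\Delta'}$. The complement $\Delta'^c$ is an ideal of $\N^p$, so by Dickson's lemma it has finitely many minimal elements $\beta_1,\dots,\beta_r$. Any $x\in\Delta'_E/\Delta_E$ has each $\alpha\in\supp(x)\subseteq\Delta\setminus\Delta'\subseteq\Delta'^c$ dominating some $\beta_i$; assigning $\alpha$ to such an $i$ and noting $\beta_i\leq\alpha\in\Delta$ forces $\beta_i\in\Delta\setminus\Delta'$, I can group $x=\sum_{i=1}^r\bfs^{\beta_i}y_i$ as a \emph{finite} sum with $y_i\in E[[\bfs]]_\Delta$. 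Then $f(x)=\sum_i\bfs^{\beta_i}f(y_i)$, and since each $\beta_i\in\Delta'^c$ with $\Delta'^c$ an ideal, every term is supported in $(\beta_i+\N^p)\cap\Delta\subseteq\Delta\setminus\Delta'$; hence $f(x)\in\Delta'_F/\Delta_F$.

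For the induced map, the containment $\Delta_E\subseteq\Delta'_E$ lets the third isomorphism theorem identify $E[[\bfs]]_{\Delta'}$ with $E[[\bfs]]_\Delta/(\Delta'_E/\Delta_E)$ and the truncation $\tau_{\Delta\Delta'}$ with the quotient projection (likewise for $F$). Because $f$ carries $\Delta'_E/\Delta_E$ into $\Delta'_F/\Delta_F$ by the previous step, the universal property of the quotient yields a unique additive $\overline{f}:E[[\bfs]]_{\Delta'}\to F[[\bfs]]_{\Delta'}$ with $\tau_{\Delta\Delta'}\circ f=\overline{f}\circ\tau_{\Delta\Delta'}$, uniqueness being immediate from the surjectivity of the truncations. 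Its $k[[\bfs]]_{\Delta'}$-linearity follows from the $k[[\bfs]]_\Delta$-linearity of $f$ and the compatibility of the ring surjection $k[[\bfs]]_\Delta\to k[[\bfs]]_{\Delta'}$ with the module actions.

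The main obstacle is the step shared by the first two paragraphs: rewriting a power series of possibly infinite support as a \emph{finite} $k[[\bfs]]_\Delta$-linear combination of monomial multiples $\bfs^\alpha z$. For continuity this rests only on the finiteness of the set of degree-$m$ monomials, while for the support statement it rests on Dickson's lemma applied to the ideal $\Delta'^c$; once this finite reorganization is available, everything else is a formal consequence of linearity and of the quotient description of $E[[\bfs]]_{\Delta'}$.
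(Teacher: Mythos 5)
Your proof is correct. There is nothing in the paper to compare it against: the author explicitly declares this proposition ``straightforward'' and leaves it to the reader, so your argument simply supplies the missing details, and it does so completely. The one non-formal ingredient, as you correctly isolate, is rewriting a series of possibly infinite support as a \emph{finite} sum of monomial multiples $\bfs^{\alpha}z$ whose cofactors $z$ remain supported in $\Delta$ (this uses precisely that $\Delta$ is a co-ideal, and that multiplication by $\bfs^{\alpha}$ then kills nothing in the quotient); after that, $k[[\bfs]]_\Delta$-linearity of $f$ does all the work. For continuity the finitely many degree-$m$ monomials suffice; for the support statement your appeal to Dickson's lemma on the ideal $\Delta'^{c}$ is valid, though slightly heavier than necessary: once continuity is established, $\Delta'_E/\Delta_E$ is the closure of the span of the elements $\bfs^{\alpha}e$ with $\alpha\in\Delta\setminus\Delta'$ and $e\in E$, each of which $f$ sends into $\Delta'_F/\Delta_F$ by the upward-closedness of $\Delta'^{c}$, and $\Delta'_F/\Delta_F$ is closed in the product topology, so the containment follows from continuity plus density without any finiteness of minimal generators. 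Either route is fine. The passage to $\overline{f}$ via the quotient description of $E[[\bfs]]_{\Delta'}$, the uniqueness from surjectivity of the truncation, and the $k[[\bfs]]_{\Delta'}$-linearity via surjectivity of the ring truncation $k[[\bfs]]_\Delta\to k[[\bfs]]_{\Delta'}$ are all handled correctly.
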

\bigskip

\numero \label{nume:tilde}
For each $r= \sum_\beta r_\beta \bfs^\beta \in \Hom_k(E,F)[[\bfs]]_\Delta$ we define $\widetilde{r}: E[[\bfs]]_\Delta \to F[[\bfs]]_\Delta$ by
$$ \widetilde{r} \left( \sum_{\scriptscriptstyle \alpha\in \Delta} e_\alpha \bfs^\alpha \right) :=
\sum_{\scriptscriptstyle \alpha\in \Delta } \left( \sum_{\scriptscriptstyle \beta + \gamma=\alpha} r_\beta(e_\gamma) \right) \bfs^\alpha,
$$
which is obviously a $k[[\bfs]]_\Delta$-linear map. 
\medskip

Let us notice that 
$ \widetilde{r} = \sum_\beta \bfs^\beta  \widetilde{r_\beta}$. It is clear that the map
\begin{equation} \label{eq:tilde-map}
 r\in \Hom_k(E,F)[[\bfs]]_\Delta \longmapsto \widetilde{r} \in \Hom_{k[[\bfs]]_\Delta}(E[[\bfs]]_\Delta,F[[\bfs]]_\Delta)
\end{equation}
is $(A[[\bfs]]_\Delta;A[[\bfs]]_\Delta)$-linear.
\medskip

If $f:E[[\bfs]]_\Delta \to F[[\bfs]]_\Delta$ is a $k[[\bfs]]_\Delta$-linear map, let us denote by $f_\alpha:E \to F$, $\alpha\in\Delta $, the $k$-linear maps defined by
$$ f(e) = \sum_{\scriptscriptstyle \alpha\in \Delta} f_\alpha(e) \bfs^\alpha,\quad \forall e\in E.
$$
If $g:E\to F[[\bfs]]_\Delta$ is a $k$-linear map, we denote by $g^e:E[[\bfs]]_\Delta \to F[[\bfs]]_\Delta$ the unique $k[[\bfs]]_\Delta$-linear map extending $g$ to $E[[\bfs]]_\Delta = k[[\bfs]]_\Delta \widehat{\otimes}_k E$. It is given by
\begin{equation}  \label{eq:g^e}
 g^e \left( \sum_\alpha e_\alpha \bfs^\alpha \right) := \sum_\alpha g(e_\alpha) \bfs^\alpha.
\end{equation}
We have a $k[[\bfs]]_\Delta$-bilinear and $A[[\bfs]]_\Delta$-balanced map
$$ \langle -,-\rangle : (r,e) \in \Hom_k(E,F)[[\bfs]]_\Delta \times E[[\bfs]]_\Delta \longmapsto \langle r,e\rangle := \widetilde{r}(e) \in F[[\bfs]]_\Delta.
$$

\begin{lemma} \label{lema:tilde-map}
With the above hypotheses, the following properties hold:
\begin{enumerate}
\item[1)] The map (\ref{eq:tilde-map})
 is an isomorphism of $(A[[\bfs]]_\Delta;A[[\bfs]]_\Delta)$-bimodules. When $E=F$ it is an isomorphism of 
$k[[\bfs]]_\Delta$-algebras over $A[[\bfs]]_\Delta$.
\item[2)] The restriction map 
$$f \in  \Hom_{k[[\bfs]]_\Delta}(E[[\bfs]]_\Delta,F[[\bfs]]_\Delta) \mapsto f|_E \in \Hom_k(E,F[[\bfs]]_\Delta)$$ is an isomorphism of $(A[[\bfs]]_\Delta;A)$-bimodules.
\item[3)] For $r \in \Hom_k(A,F)[[\bfs]]_\Delta$, we have 
$$ r \in \Der_k(A,F)[[\bfs]]_\Delta \Longleftrightarrow \widetilde{r} \in \Der_{k[[\bfs]]_\Delta}(A[[\bfs]]_\Delta,F[[\bfs]]_\Delta),
$$
and so the map (\ref{eq:tilde-map}) for $E=A$ induces an isomorphism of $A[[\bfs]]_\Delta$-modules
$$ \Der_k(A,F)[[\bfs]]_\Delta \xrightarrow{\sim} \Der_{k[[\bfs]]_\Delta}(A[[\bfs]]_\Delta,F[[\bfs]]_\Delta).
$$
\end{enumerate}
\end{lemma}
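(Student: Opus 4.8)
The plan is to prove 2) first and to deduce 1) and 3) from it, since the restriction isomorphism is the technical heart and the other two statements are formal consequences.

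For 2) I would exhibit the inverse of the restriction map $\rho\colon f\mapsto f|_E$ explicitly: it is the extension operator $g\mapsto g^e$ of (\ref{eq:g^e}). Indeed, the defining formula (\ref{eq:g^e}) gives $(g^e)|_E = g$ for every $k$-linear $g\colon E \to F[[\bfs]]_\Delta$, while the uniqueness clause in the definition of $g^e$ gives $(f|_E)^e = f$ for every $k[[\bfs]]_\Delta$-linear $f$. The point that makes this uniqueness legitimate is that $E$ generates $E[[\bfs]]_\Delta = k[[\bfs]]_\Delta \widehat{\otimes}_k E$ topologically over $k[[\bfs]]_\Delta$ and that, by Proposition \ref{prop:induced-cont-maps-M[[s]]_m}, every $k[[\bfs]]_\Delta$-linear map is automatically continuous; hence two such maps agreeing on $E$ coincide on all of $E[[\bfs]]_\Delta$. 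This is precisely the step I expect to be the only real \emph{obstacle} — everything else is bookkeeping — and it is already guaranteed by the cited proposition together with the topological freeness of $E[[\bfs]]_\Delta$. It then remains to check the bimodule linearity: for $a\in A[[\bfs]]_\Delta$ the left action is post-multiplication and restricts termwise, so $(a\cdot f)|_E = a\cdot (f|_E)$; for $a\in A$ the right action is pre-composition with multiplication by $a$, and since $ae\in E$ whenever $e\in E$ this yields $(f\cdot a)|_E = (f|_E)\cdot a$. (Note the right structure is only over $A$, not over $A[[\bfs]]_\Delta$, because the source $E$ is merely an $A$-module.)

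For 1) I would observe that composing the tilde map (\ref{eq:tilde-map}) with $\rho$ yields the map $\theta\colon r\mapsto \widetilde r|_E$, which by the defining formula sends $e\in E$ to $\sum_{\alpha\in\Delta} r_\alpha(e)\,\bfs^\alpha$; thus $\theta$ is the evident identification $\Hom_k(E,F)[[\bfs]]_\Delta \xrightarrow{\sim}\Hom_k(E,F[[\bfs]]_\Delta)$, a bijection because $\Hom_k(E,-)$ commutes with the product $F[[\bfs]]_\Delta\cong F^\Delta$. Since $\rho$ is an isomorphism by 2) and $\theta$ is an isomorphism, the tilde map, which equals $\rho^{-1}\pcirc\theta$, is an isomorphism too; its $(A[[\bfs]]_\Delta;A[[\bfs]]_\Delta)$-linearity is already recorded before the statement. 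For the algebra assertion when $E=F$ I would use the identity $\widetilde r = \sum_\beta \bfs^\beta\,\overline{r_\beta}$ of \ref{nume:tilde}: each $\overline{r_\beta}$ is $k[[\bfs]]_\Delta$-linear, hence commutes with multiplication by monomials $\bfs^\gamma$, and the functoriality $\overline{r_\beta}\pcirc\overline{r'_\gamma} = \overline{r_\beta\pcirc r'_\gamma}$ gives $\widetilde r\pcirc\widetilde{r'} = \widetilde{rr'}$ after collecting by $\alpha = \beta+\gamma$; with $\widetilde{1} = \Id$ and compatibility with the structure map from $A[[\bfs]]_\Delta$, this makes (\ref{eq:tilde-map}) an isomorphism of $k[[\bfs]]_\Delta$-algebras over $A[[\bfs]]_\Delta$.

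Finally, for 3) I would reduce the Leibniz rule for $\widetilde r$ to a condition on the coefficients. Since $\widetilde r$ is always $k[[\bfs]]_\Delta$-linear, the multiplication on $A[[\bfs]]_\Delta$ is $k[[\bfs]]_\Delta$-bilinear and continuous, and $A$ generates $A[[\bfs]]_\Delta$ topologically over $k[[\bfs]]_\Delta$, the map $\widetilde r$ is a $k[[\bfs]]_\Delta$-derivation if and only if $\widetilde r(ab) = \widetilde r(a)\,b + a\,\widetilde r(b)$ for all $a,b\in A$. Evaluating gives $\sum_\alpha r_\alpha(ab)\,\bfs^\alpha$ against $\sum_\alpha\bigl(r_\alpha(a)\,b + a\,r_\alpha(b)\bigr)\bfs^\alpha$, so the Leibniz rule for $\widetilde r$ is equivalent to $r_\alpha\in\Der_k(A,F)$ for every $\alpha\in\Delta$, that is, to $r\in\Der_k(A,F)[[\bfs]]_\Delta$. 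The displayed isomorphism then follows by restricting the bimodule isomorphism of 1) (with $E=A$) to this $A[[\bfs]]_\Delta$-submodule, whose image is exactly $\Der_{k[[\bfs]]_\Delta}(A[[\bfs]]_\Delta,F[[\bfs]]_\Delta)$.
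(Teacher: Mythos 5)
Your proof is correct, but it is organized quite differently from the paper's. For parts 1) and 2) the paper gives no argument at all: it simply cites Lemma 3 of \cite{nar_subst_2018}, whereas you prove them from scratch --- part 2) via the mutually inverse pair ``restriction'' and ``extension $g\mapsto g^e$'' of (\ref{eq:g^e}), with the uniqueness of the extension justified by automatic continuity (Proposition \ref{prop:induced-cont-maps-M[[s]]_m}) plus density of the $k[[\bfs]]_\Delta$-span of $E$, and part 1) by factoring the tilde map (\ref{eq:tilde-map}) as $\rho^{-1}\pcirc\theta$ through the evident identification $\Hom_k(E,F)[[\bfs]]_\Delta\simeq \Hom_k(E,F)^\Delta\simeq\Hom_k(E,F[[\bfs]]_\Delta)$. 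For part 3) the backward implication is the same coefficient comparison on elements of $A$ in both proofs, but the forward implications differ: the paper verifies the Leibniz rule directly for arbitrary series $a,b\in A[[\bfs]]_\Delta$, expanding $\widetilde{r}(ab)$ into the triple sum $\sum_\alpha\bigl(\sum_{\beta+\gamma+\delta=\alpha} r_\beta(a_\gamma b_\delta)\bigr)\bfs^\alpha$ and regrouping after applying the Leibniz rule of each $r_\beta$, while you reduce to the case $a,b\in A$ by the same density-plus-continuity principle used in your part 2), exploiting that both sides of the Leibniz identity are continuous and $k[[\bfs]]_\Delta$-bilinear in $(a,b)$. Your route buys a self-contained proof and a single uniform principle (a $k[[\bfs]]_\Delta$-linear map is continuous and determined by its values on the degree-zero part) that makes the two implications of 3) symmetric and avoids the multi-index bookkeeping; the paper's route buys brevity (outsourcing 1)--2)) and a forward implication in 3) that is purely computational, with no appeal to topological arguments.
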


\begin{proof} Parts 1) and 2) are proven in \cite[Lemma 3]{nar_subst_2018}. For part 3), let us write $r= \sum_\beta r_\beta \bfs^\beta$.
\medskip

\noindent ($\Rightarrow$) For all $a = \sum_\alpha, b=\sum_\alpha \in A[[\bfs]]_\Delta$ we have:
\begin{eqnarray*}
&\displaystyle
\widetilde{r}(a b) = \cdots = \sum_{\scriptscriptstyle \alpha \in \Delta} \left( \sum_{\scriptscriptstyle \beta+ \gamma + \delta= \alpha} r_\beta (a_\gamma b_\delta) \right) \bfs^\alpha = 
&
\\
&\displaystyle
\sum_{\scriptscriptstyle \alpha \in \Delta} \left(  \sum_{\scriptscriptstyle \beta+\gamma + \delta= \alpha}  (b_\delta r_\beta (a_\gamma ) + a_\gamma r_\beta (b_\delta)) \right) \bfs^\alpha=
\cdots = b\, \widetilde{r}(a) + a\,  \widetilde{r}(b).
\end{eqnarray*}
\noindent ($\Leftarrow$) For all $a,b\in A$ we have:
$$ \sum_{\scriptscriptstyle \beta \in \Delta} r_\beta (a b) \bfs^\beta = \widetilde{r}(ab) = b\, \widetilde{r}(a) + a\,  \widetilde{r}(b) = \cdots= \sum_{\scriptscriptstyle \beta \in \Delta} (b\, r_\beta (a) + a\, r_\beta(b)) \bfs^\beta 
$$
and so $r_\beta \in \Der_k(A,F)$ for all $\beta\in\Delta$.
\end{proof}

Let us call $R = \End_k(E)$. As a consequence of the above lemma, the composition of the maps
\begin{equation}  \label{eq:comple_formal_1}
 R[[\bfs]]_\Delta \xrightarrow{r \mapsto \widetilde{r}} \End_{k[[\bfs]]_\Delta}(E[[\bfs]]_\Delta) \xrightarrow{f \mapsto f|_E} \Hom_k(E,E[[\bfs]]_\Delta)
\end{equation}
is an isomorphism of $(A[[\bfs]]_\Delta;A)$-bimodules, and so $\Hom_k(E,E[[\bfs]]_\Delta)$ inherits a natural structure of $k[[\bfs]]_\Delta$-algebra over $A[[\bfs]]_\Delta$. Namely, if $g,h:E \to E[[\bfs]]_\Delta$ are $k$-linear maps
with
$$ g(e)=\sum_{\scriptscriptstyle \alpha\in \Delta} g_\alpha(e)\bfs^\alpha,\ h(e)=\sum_{\scriptscriptstyle \alpha\in \Delta} h_\alpha(e)\bfs^\alpha,\quad \forall e\in E,\quad g_\alpha, h_\alpha \in \Hom_k(E,E),
$$
then the product $h g \in \Hom_k(E,E[[\bfs]]_\Delta)$ is given by
\begin{equation} \label{eq:product-HomEE[[s]]}
 (hg)(e) = \sum_{\scriptscriptstyle \alpha\in \Delta} \left( \sum_{\scriptscriptstyle \beta + \gamma = \alpha} (h_\beta \pcirc g_\gamma)(e) \right) \bfs^\alpha.
\end{equation}

\begin{defi} \label{defi:tensor-of-maps} Let $p,q\geq 0$, $\bfs=\{s_1,\dots,s_p\},\bft=\{t_1,\dots,t_q\}$ disjoint sets of variables and $\Delta\subset\N^p, \nabla\subset\N^q$ non-empty co-ideals. 
For each $f\in\End_{k[[\bfs]]_\Delta}(E[[\bfs]]_\Delta)$ and each $g\in\End_{k[[\bft]]_\nabla}(E[[\bft]]_\nabla)$, with
$$ f(e)=\sum_{\scriptscriptstyle \alpha \in \Delta} f_\alpha(e) \bfs^\alpha, \quad g(e)=\sum_{\scriptscriptstyle \beta \in \nabla} g_\beta(e) \bft^\beta\quad \forall e\in E,
$$
we define $f\boxtimes g\in \End_{k[[\bfs\sqcup \bft]]_{\Delta\times\nabla}}(E[[\bfs\sqcup \bft]]_{\Delta\times\nabla})$ as $f\boxtimes g := h^e$, with:
$$ h(x) := \sum_{\scriptscriptstyle (\alpha,\beta) \in \Delta\times\nabla} (f_\alpha \pcirc g_\beta)(x) \bfs^\alpha \bft^\beta\quad \forall x\in E.
$$
\end{defi}

The proof of the following lemma is clear and it is left to the reader.

\begin{lemma} \label{lemma:tilde-otimes}
With the above hypotheses, for each $r\in R[[\bfs]]_\Delta, r'\in R[[\bft]]_\nabla$, we have
$ \widetilde{r\boxtimes r'} = \widetilde{r}\boxtimes \widetilde{r'}$ (see Definition \ref{defi:external-x}).
\end{lemma}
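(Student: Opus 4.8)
The plan is to reduce the identity to a computation on constant series and then merely unravel the two definitions of $\boxtimes$. First I would observe that both $\widetilde{r\boxtimes r'}$ and $\widetilde{r}\boxtimes \widetilde{r'}$ are $k[[\bfs\sqcup\bft]]_{\Delta\times\nabla}$-linear endomorphisms of $E[[\bfs\sqcup\bft]]_{\Delta\times\nabla}$: the former by the construction of $\widetilde{(-)}$ in \ref{nume:tilde} applied to the variable set $\bfs\sqcup\bft$ and the co-ideal $\Delta\times\nabla$, and the latter because, by Definition \ref{defi:tensor-of-maps}, it has the form $h^e$, and every map $g^e$ as in (\ref{eq:g^e}) is $k[[\bfs\sqcup\bft]]_{\Delta\times\nabla}$-linear. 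By the isomorphism of Lemma \ref{lema:tilde-map}, part 2), such an endomorphism is completely determined by its restriction to $E\hookrightarrow E[[\bfs\sqcup\bft]]_{\Delta\times\nabla}$, so it suffices to check that both maps agree on an arbitrary $e\in E$.

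For the left-hand side I would use that, by Definition \ref{defi:external-x}, the coefficient of $\bfs^\alpha\bft^\beta$ in $r\boxtimes r'$ is the product $r_\alpha r'_\beta = r_\alpha\pcirc r'_\beta$ in $R=\End_k(E)$. Evaluating $\widetilde{r\boxtimes r'}$ on a constant series $e\in E$, only the degree-$0$ coefficient of $e$ survives in the convolution defining $\widetilde{(-)}$, whence
$$\widetilde{r\boxtimes r'}(e)=\sum_{\scriptscriptstyle (\alpha,\beta)\in\Delta\times\nabla}(r_\alpha\pcirc r'_\beta)(e)\,\bfs^\alpha\bft^\beta.$$
For the right-hand side I would first note that, in the notation of \ref{nume:tilde}, the components of $\widetilde{r}$ and $\widetilde{r'}$ are exactly $(\widetilde{r})_\alpha=r_\alpha$ and $(\widetilde{r'})_\beta=r'_\beta$, since on a constant $e$ one has $\widetilde{r}(e)=\sum_\alpha r_\alpha(e)\bfs^\alpha$ and likewise for $r'$. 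Feeding these components into Definition \ref{defi:tensor-of-maps} yields $\widetilde{r}\boxtimes\widetilde{r'}=h^e$ with $h(e)=\sum_{(\alpha,\beta)}(r_\alpha\pcirc r'_\beta)(e)\,\bfs^\alpha\bft^\beta$; since $h^e$ agrees with $h$ on constants, this is precisely the expression just computed. Hence the two endomorphisms coincide on $E$, and therefore everywhere.

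There is no genuine obstacle here; the only point to watch is the bookkeeping linking the two distinct meanings of $\boxtimes$ — the ring product $r_\alpha r'_\beta$ in $R$ on one side and the composition $f_\alpha\pcirc g_\beta$ of components on the other. That these match is exactly the statement that $\widetilde{(-)}$ is a homomorphism of algebras (Lemma \ref{lema:tilde-map}, part 1)), and it is what makes the identity hold on the nose rather than merely up to the identification.
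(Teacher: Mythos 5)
Your proof is correct and is precisely the direct verification the paper has in mind --- the paper declares this proof ``clear and left to the reader,'' and your argument (both sides are $k[[\bfs\sqcup\bft]]_{\Delta\times\nabla}$-linear, hence determined by their restrictions to $E$ by Lemma \ref{lema:tilde-map}, part 2), and these restrictions visibly coincide) is that verification. One small correction to your closing remark: the agreement between the ring product $r_\alpha r'_\beta$ and the composition $r_\alpha \pcirc r'_\beta$ is definitional, since multiplication in $R=\End_k(E)$ \emph{is} composition; it is not an instance of the algebra-isomorphism statement of Lemma \ref{lema:tilde-map}, part 1), which concerns products within a single $R[[\bfs]]_\Delta$.
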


\begin{notacion} \label{notacion:pcirc}
We denote :
$$
\Hom_k^\pcirc(E,E[[\bfs]]_\Delta) := \left\{ f \in \Hom_k(E,E[[\bfs]]_\Delta), f(e) \equiv e\!\!\!\!\mod \left(\fn_0\right)_E \forall e\in E \right\}, 
$$
$$
\Aut_{k[[\bfs]]_\Delta}^\pcirc(E[[\bfs]]_\Delta) 
:= \left\{    
f \in \Aut_{k[[\bfs]]_\Delta}(E[[\bfs]]_\Delta), f(e) \equiv e_0\, \text{\rm mod} \left(\fn_0\right)_E \forall e\in E[[\bfs]]_\Delta  \right\}.
$$
Let us notice that a $f \in \Hom_k(E,E[[\bfs]]_\Delta)$, given by $f(e) = \sum_{\alpha\in \Delta} f_\alpha(e) \bfs^\alpha$, belongs to $\Hom_k^\pcirc(E,E[[\bfs]]_\Delta)$ if and only if $f_0=\Id_E$.
\end{notacion}

The isomorphism in (\ref{eq:comple_formal_1}) gives rise to a group isomorphism
\begin{equation} \label{eq:U-iso-pcirc}
 r\in \U^p(\End_k(E);\Delta) \stackrel{\sim}{\longmapsto} \widetilde{r} \in \Aut_{k[[\bfs]]_\Delta}^\pcirc(E[[\bfs]]_\Delta) 
\end{equation}
and to a bijection 
\begin{equation} \label{eq:Aut-iso-pcirc} 
f\in \Aut_{k[[\bfs]]_\Delta}^\pcirc(E[[\bfs]]_\Delta) \stackrel{\sim}{\longmapsto} f|_E \in \Hom_k^\pcirc(E,E[[\bfs]]_\Delta).
\end{equation}
So, $\Hom_k^\pcirc(E,E[[\bfs]]_\Delta)$ is naturally a group with the product described in (\ref{eq:product-HomEE[[s]]}).

\subsection{Substitution maps}

In this section we give a summary of sections 2 and 3 of \cite{nar_subst_2018}. Let $k$ be a commutative ring, $A$ a commutative $k$-algebra, $\bfs=\{s_1,\dots,s_p\},\bft=\{t_1,\dots,t_q\}$ two sets of variables and
$\Delta\subset \N^p, \nabla\subset \N^q$ non-empty co-ideals.
\medskip

\begin{defi}  \label{def:substitution_maps} 
An $A$-algebra map $\varphi:A[[\bfs]]_\Delta \xrightarrow{} A[[\bft]]_\nabla$
will be called a {\em substitution map} whenever $\ord (\varphi(s_i)) \geq 1$ for all $i=1,\dots, p$.
A such map is continuous and uniquely determined by the family $c=\{\varphi(t_i), i=1,\dots, p\}$.
\medskip

If $\varphi:A[[\bfs]]_\Delta \xrightarrow{} A[[\bft]]_\nabla$ is a substitution map, its {\em order} is defined as 
$$\ord (\varphi) : = \min \{\ord (\varphi(s_i))\ |\ i=1,\dots, p\} \geq 1.$$
The set of substitution maps $A[[\bfs]]_\Delta \xrightarrow{} A[[\bft]]_\nabla$ will be denoted by $\Sub_A(p,q;\Delta,\nabla)$.
The {\em trivial} substitution map  $A[[\bfs]]_\Delta \xrightarrow{} A[[\bft]]_\nabla $ is the one sending any $s_i$ to $0$ ($\ord(0)=\infty$). It will be denoted by $\mathbf{0}$. 
\end{defi}

The composition of substitution maps is obviously a substitution map. Any substitution map $\varphi:A[[\bfs]]_\Delta \xrightarrow{} A[[\bft]]_\nabla$ determines and is determined by a family 
$$\left\{{\bf C}_e(\varphi,\alpha), e\in \nabla, \alpha\in\Delta, |\alpha|\leq |e|  \right\}\subset A,\quad \text{with\ }\ {\bf C}_0(\varphi,0)=1,
$$
such that:
\begin{equation} \label{eq:exp-substi}
\varphi\left( \sum_{\scriptscriptstyle\alpha\in\Delta} a_\alpha \bfs^\alpha \right) =
\sum_{\scriptscriptstyle e\in \nabla}
\left(  \sum_{\substack{\scriptscriptstyle \alpha\in\Delta\\ \scriptscriptstyle  |\alpha|\leq |e| }} {\bf C}_e(\varphi,\alpha) a_\alpha \right) \bft^e.
\end{equation}
In section 3, 2. of \cite{nar_subst_2018} the reader can find the explicit expression of the ${\bf C}_e(\varphi,\alpha)$ in terms of the $\varphi(s_i)$. 
The following lemma is clear.

\begin{lemma} \label{lemma:truncations-are-substitutions}
If $\Delta \subset \Delta'\subset \N^p$ are non-empty co-ideals, the truncation 
$\tau_{\Delta' \Delta} :  A[[\bfs]]_{\Delta'} \to A[[\bfs]]_{\Delta}$ is clearly a substitution map and 
${\bf C}_\beta\left(\tau_{\Delta' \Delta},\alpha\right) = \delta_{\alpha \beta}$ for all $\alpha\in\Delta$ and for all $\beta\in\Delta'$ with $|\alpha|\leq |\beta|$.
\end{lemma}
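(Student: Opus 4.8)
The plan is to check the two assertions separately; both are purely formal unwindings of the definitions, so the argument amounts to a short consistency check of the conventions set up above.

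First I would confirm that $\tau_{\Delta'\Delta}$ is a substitution map in the sense of Definition~\ref{def:substitution_maps}. Because $\Delta\subset\Delta'$ we have $\Delta^c\supset{\Delta'}^c$, hence an inclusion of ideals $\Delta'_A\subset\Delta_A$ in $A[[\bfs]]$ (recall $\Delta_A$ is the set of series supported in $\Delta^c$). Consequently $\tau_{\Delta'\Delta}$ is nothing but the canonical projection $A[[\bfs]]/\Delta'_A\twoheadrightarrow A[[\bfs]]/\Delta_A$, so it is an $A$-algebra homomorphism. To see that it is a substitution map it then suffices to verify the order condition: by the explicit description~(\ref{eq:truncations}) the image of $s_i=\bfs^{e_i}$ is either $s_i$ (when $e_i\in\Delta$) or $0$ (otherwise), and in both cases $\ord(\tau_{\Delta'\Delta}(s_i))\geq 1$. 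Thus $\tau_{\Delta'\Delta}\in\Sub_A(p,p;\Delta',\Delta)$, with source co-ideal $\Delta'$ and target co-ideal $\Delta$.

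Next I would read off the coefficients ${\bf C}_\beta(\tau_{\Delta'\Delta},\alpha)$, where $\beta$ ranges over the target co-ideal $\Delta$ and $\alpha$ over the source co-ideal $\Delta'$. These are characterized by~(\ref{eq:exp-substi}); feeding a single monomial $\bfs^\alpha$, $\alpha\in\Delta'$, into that identity gives
$$\tau_{\Delta'\Delta}(\bfs^\alpha)=\sum_{\substack{\beta\in\Delta\\ |\alpha|\leq|\beta|}}{\bf C}_\beta(\tau_{\Delta'\Delta},\alpha)\,\bfs^\beta .$$
On the other hand,~(\ref{eq:truncations}) computes the left-hand side directly: it equals $\bfs^\alpha$ when $\alpha\in\Delta$ and $0$ when $\alpha\in\Delta'\setminus\Delta$. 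Since the family $\{{\bf C}_\beta(\varphi,\alpha)\}$ is uniquely determined by $\varphi$, comparing the coefficient of each $\bfs^\beta$ in the two expressions yields ${\bf C}_\beta(\tau_{\Delta'\Delta},\alpha)=\delta_{\alpha\beta}$: if $\alpha\in\Delta$ only the term $\beta=\alpha$ survives (and $|\alpha|\leq|\alpha|$ holds), whereas if $\alpha\notin\Delta$ every coefficient vanishes, matching $\delta_{\alpha\beta}=0$ for all $\beta\in\Delta$.

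I do not anticipate any real obstacle, as the whole argument is formal. The only point that demands care is keeping straight which co-ideal plays the role of source and which of target in~(\ref{eq:exp-substi})---here the source is $\Delta'$ and the target is $\Delta$---and observing that the defining constraint $|\alpha|\leq|\beta|$ on the coefficients is automatically compatible with the answer, since the unique nonzero contribution occurs at $\alpha=\beta$.
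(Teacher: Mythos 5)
Your proof is correct, and it is precisely the verification the paper leaves implicit --- the lemma is introduced there with ``The following lemma is clear'' and no proof is given, the intended argument being exactly your two-step unwinding: the projection $A[[\bfs]]/\Delta'_A \twoheadrightarrow A[[\bfs]]/\Delta_A$ is an $A$-algebra map with $\ord\bigl(\tau_{\Delta'\Delta}(s_i)\bigr)\geq 1$, and evaluating (\ref{eq:exp-substi}) on monomials $\bfs^\alpha$ and comparing with (\ref{eq:truncations}) forces ${\bf C}_\beta(\tau_{\Delta'\Delta},\alpha)=\delta_{\alpha\beta}$. Your care about source versus target is also warranted: since by (\ref{eq:exp-substi}) the coefficients ${\bf C}_\beta(\tau_{\Delta'\Delta},\alpha)$ are only defined for $\beta$ in the target co-ideal $\Delta$ and $\alpha$ in the source co-ideal $\Delta'$, your quantification is the correct reading of the statement (the lemma's literal ``$\alpha\in\Delta$, $\beta\in\Delta'$'' has the memberships swapped, harmlessly, as the only nonzero case is $\alpha=\beta\in\Delta$).
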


\begin{defi} We say that a substitution map $\varphi: A[[\bfs]]_\Delta \xrightarrow{} A[[\bft]]_\nabla $ has {\em constant coefficients} 
if $\varphi(s_i) \in k[[\bft]]_\nabla$ for all $i=1,\dots,p$.
This is equivalent to saying that
 ${\bf C}_e(\varphi,\alpha)\in k$ for all $e\in\nabla$ and for all $\alpha\in \Delta$ with $|\alpha|\leq |e|$. Substitution maps with constant coefficients are induced by substitution maps $k[[\bfs]]_\Delta \xrightarrow{} k[[\bft]]_\nabla$.\\

We say that a substitution map $\varphi: A[[\bfs]]_\Delta \xrightarrow{} A[[\bft]]_\nabla $ is {\em combinatorial} if $\varphi(s_i) \in \bft$ for all $i=1,\dots,p$. A combinatorial substitution map has constant coefficients and is determined by (and determines) a map $\bfs \to \bft$. If $\iota : \bfs \to \bft$ is such a map, we will also denote by $\iota: A[[\bfs]]_\Delta \xrightarrow{} A[[\bft]]_\nabla $ the corresponding substitution map,  for any non-empty co-ideal $\nabla\subset 
\iota_*(\Delta):= \{\beta \in \N^q\ |\ \beta \pcirc \iota \in \Delta\}$ (here multi-indexes in $\N^q$ or $\N^p$ are considered as maps $\bft \to \N$ or $\bfs \to \N$ respectively).
\end{defi}

\begin{defi} \label{defi:tensor-prod-of-substi}
Let
$\bfu=\{u_1,\dots,u_m\},\bfv=\{v_1,\dots,v_n\}$ be another sets of variables.
The {\em tensor product} \index{tensor product (of substitution maps)} of two substitution maps $\varphi:A[[\bfs]]_\nabla \to A[[\bft]]_\Delta$, $\psi:A[[\bfu]]_{\nabla'} \to A[[\bfv]]_{\Delta'}$ is the unique substitution map
$$ \varphi \otimes \psi: A[[\bfs\sqcup \bfu]]_{\nabla \times \nabla'} \longrightarrow A[[\bft\sqcup \bfv]]_{\Delta \times \Delta'}
$$
making commutative the following diagram:
$$
\begin{tikzcd}
A[[\bfs]]_\nabla \ar[d,"\varphi"] \ar[r] & A[[\bfs\sqcup \bfu]]_{\nabla \times \nabla'} \ar[d,"\varphi \otimes \psi"] & A[[\bfu]]_{\nabla'} \ar[l] \ar[d,"\psi"]\\ 
A[[\bft]]_\Delta \ar[r] & A[[\bft\sqcup \bfv]]_{\Delta \times \Delta'}   & \ar[l] A[[\bfv]]_{\Delta'},
\end{tikzcd}
$$
where the horizontal arrows are the combinatorial substitution maps induced by the inclusions $\bfs, \bfu \hookrightarrow \bfs\sqcup \bfu$, $\bft, \bfv \hookrightarrow \bft\sqcup \bfv$\footnote{Let us notice that there are canonical continuous isomorphisms of $A$-algebras $A[[\bfs\sqcup \bfu]]_{\nabla \times \nabla'} \simeq A[[\bfs]]_\nabla \widehat{\otimes}_A A[[\bfu]]_{\nabla'}$,  
$A[[\bft\sqcup \bfv]]_{\Delta \times \Delta'} \simeq A[[\bft]]_\Delta \widehat{\otimes}_A A[[\bfv]]_{\Delta'}$.}.
\end{defi}

For all $(\alpha,\beta)\in \nabla \times \nabla' \subset  \N^p \times \N^m \equiv \N^{p+m}$ we have
$$ (\varphi \otimes \psi)(\bfs^\alpha \bfu^\beta) =  \varphi (\bfs^\alpha) \psi(\bfu^\beta) =
\cdots = 
\sum_{\substack{\scriptscriptstyle  e\in \Delta, f\in \Delta'\\ \scriptscriptstyle  |e|\geq |\alpha| \\ \scriptscriptstyle |f|\geq |\beta|  }} 
{\bf C}_e(\varphi,\alpha) {\bf C}_f(\psi,\beta) \bft^e \bfv^f
$$
and so, for all $(e,f)\in \Delta \times \Delta'$ and all $(\alpha,\beta) \in \nabla \times \nabla'$
 with $ |e|+|f|= |(e,f)| \geq |(\alpha,\beta)| = |\alpha|+|\beta|$ we have
$$
{\bf C}_{(e,f)}(\varphi\otimes \psi,(\alpha,\beta)) = \left\{ 
\begin{array}{ll}
{\bf C}_e(\varphi,\alpha) {\bf C}_f(\psi,\beta) & \text{if\ }\ |\alpha|\leq |e|\ \text{and\ }\ |\beta|\leq |f|, \\
0 & \text{otherwise}.
\end{array}  \right.
$$

\begin{prop} \label{prop:init}
Let $\varphi \in \Sub_A(p,q;\Delta,\nabla)$ be a substitution map and 
$\displaystyle \varphi(s_i) = \sum_{\scriptscriptstyle |\beta|>0 } c^i_\beta \bft^\beta \in   A[[\bft]]_\nabla$, $ i=1,\dots,p$. 
Let us denote
$\displaystyle  \init \varphi(s_i) :=  \sum_{\scriptscriptstyle |\beta|=1} c^i_\beta \bft^\beta \in   A[[\bft]]_\nabla$, $i=1,\dots,p
$
and $\psi: A[[\bfs]] \to  A[[\bft]]_\nabla$ the substitution map determined by $\psi (s_i) =  \init \varphi(s_i)$ for $i=1,\dots,p$.
Then, $\psi(\Delta_A) = \{0\}$ and there is a unique induced substitution map  $\init \varphi: A[[\bfs]]_\Delta \to  A[[\bft]]_\nabla$ satisfying $(\init \varphi)(s_i) = \init \varphi(s_i)$, $i=1,\dots,p$.
\end{prop}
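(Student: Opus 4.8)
The plan is to obtain $\init\varphi$ as the factorization of $\psi$ through the canonical surjection $q\colon A[[\bfs]]\to A[[\bfs]]_\Delta = A[[\bfs]]/\Delta_A$, so that the entire proposition reduces to the single assertion $\psi(\Delta_A)=\{0\}$. First I would record that $\psi$ is itself a genuine substitution map $A[[\bfs]]\to A[[\bft]]_\nabla$: each $\init\varphi(s_i)=\sum_{|\beta|=1}c^i_\beta\bft^\beta$ is either $0$ or homogeneous of degree one in $\bft$, so $\ord(\init\varphi(s_i))\geq 1$, whence by Definition \ref{def:substitution_maps} $\psi$ is a continuous $A$-algebra homomorphism.

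Since $\psi$ is continuous and $A$-linear and $\Delta_A$ is the closure of the $A$-span of the monomials $\bfs^\alpha$ with $\alpha\in\Delta^c$, it suffices to prove $\psi(\bfs^\alpha)=0$ in $A[[\bft]]_\nabla$ for every $\alpha\notin\Delta$. The crucial input is the well-definedness of $\varphi$ on the quotient: for $\alpha\notin\Delta$ the monomial $\bfs^\alpha=\prod_i s_i^{\alpha_i}$ vanishes in $A[[\bfs]]_\Delta$, so applying the ring homomorphism $\varphi$ gives
\[ \prod_{i=1}^p \varphi(s_i)^{\alpha_i} = \varphi(\bfs^\alpha) = 0 \quad\text{in } A[[\bft]]_\nabla. \]

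I would then extract the lowest-order part of this vanishing product. Writing $\varphi(s_i)=\init\varphi(s_i)+h_i$ with $\ord(h_i)\geq 2$ and expanding $\prod_i(\init\varphi(s_i)+h_i)^{\alpha_i}$, every summand other than $\prod_i\init\varphi(s_i)^{\alpha_i}$ contains at least one factor $h_i$ and therefore has order $\geq(|\alpha|-1)+2=|\alpha|+1$, while $\prod_i\init\varphi(s_i)^{\alpha_i}=\psi(\bfs^\alpha)$ is homogeneous of degree $|\alpha|$. Consequently $\psi(\bfs^\alpha)$ is exactly the homogeneous component of degree $|\alpha|$ of $\prod_i\varphi(s_i)^{\alpha_i}$. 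Because taking the degree-$d$ homogeneous component is a well-defined continuous $A$-linear operation on $A[[\bft]]_\nabla$ (the co-ideal $\nabla$ being the disjoint union of its degree-slices $\{e\in\nabla : |e|=d\}$), and the whole product vanishes, I conclude $\psi(\bfs^\alpha)=0$, as desired. This degree bookkeeping is the main obstacle: one must be certain that the degree-$|\alpha|$ projection is meaningful on the truncated ring $A[[\bft]]_\nabla$ and that no higher-order term of the $\varphi(s_i)$ can feed back into degree $|\alpha|$, both of which hinge on $\nabla$ splitting cleanly by total degree.

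Finally, $\psi(\Delta_A)=\{0\}$ yields a unique $A$-algebra map $\init\varphi\colon A[[\bfs]]_\Delta\to A[[\bft]]_\nabla$ with $\init\varphi\circ q=\psi$, uniqueness of the factorization following from surjectivity of $q$. By construction $(\init\varphi)(s_i)=\psi(s_i)=\init\varphi(s_i)$, which has order $\geq 1$, so $\init\varphi$ is a substitution map, and it is the unique such map with these images by Definition \ref{def:substitution_maps}.
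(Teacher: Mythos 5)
Your proof is correct and takes essentially the same route as the paper's: both reduce the proposition to showing $\psi(\bfs^\alpha)=0$ for $\alpha\notin\Delta$, by combining $\varphi(\bfs^\alpha)=0$ (well-definedness of $\varphi$ on the quotient) with the fact that $\psi(\bfs^\alpha)$ is exactly the degree-$|\alpha|$ homogeneous component of $\varphi(\bfs^\alpha)$. The only difference is one of self-containedness: the paper obtains that fact in coefficient form, ${\bf C}_e(\varphi,\alpha)={\bf C}_e(\psi,\alpha)$ for $|e|=|\alpha|$, by citing \cite[Lemma 6, (2)]{nar_subst_2018}, whereas you prove it inline via the decomposition $\varphi(s_i)=\init\varphi(s_i)+h_i$ with $\ord(h_i)\geq 2$ and order bookkeeping.
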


\begin{proof} First, let us prove that $\supp \psi(\bfs^\alpha) \subset \supp \varphi(\bfs^\alpha)$ for all $\alpha \in \N^p$. 
Since the $\init \varphi(s_i)$ are homogeneous of degree $1$, we deduce that $\psi(\bfs^\alpha)$ is homogeneous of degree $|\alpha|$ for all $\alpha\in \N^p$. So, if $e\in \supp \psi(\bfs^\alpha)$, then $|e|=|\alpha|$ and ${\bf C}_e(\psi,\alpha) \neq 0$, but from \cite[Lemma 6, (2)]{nar_subst_2018} we have ${\bf C}_e(\varphi,\alpha) = {\bf C}_e(\psi,\alpha) \neq 0$ and we deduce $e \in \supp \varphi(\bfs^\alpha)$.
\medskip

The substitution map $\overline{\varphi}: A[[\bfs]] \to  A[[\bft]]_\nabla$  obtained by composing $\varphi$ with the projection $A[[\bfs]] \to A[[\bfs]]_\Delta$ satisfies $ \overline{\varphi}(\Delta_A) =\{0\}$, i.e. for all $\alpha \notin \Delta$ we have $\overline{\varphi}(\bfs^\alpha) = 0$, and so $\psi(\bfs^\alpha) = 0$. We deduce that $\psi(\Delta_A)=\{0\}$ and so it induces a unique substitution map $\init \varphi: A[[\bfs]]_\Delta \to  A[[\bft]]_\nabla$ as required.
\end{proof}

Let us notice that, with the notations of Proposition \ref{prop:init}, we have  $\ord \varphi > 1$ if and only if $\init \varphi =\mathbf{0}$.
\bigskip

\numero  \label{sec:action-substi}
Let
$M$ be an $(A;A)$-bimodule.
Any substitution map $\varphi: A[[\bfs]]_\Delta \to A[[\bft]]_\nabla$ induces 
$(A;A)$-linear maps:  
$$\varphi_M := \varphi \widehat{\otimes} \Id_M : M[[\bfs]]_\Delta \equiv A[[\bfs]]_\Delta \widehat{\otimes}_A M \longrightarrow M[[\bft]]_\nabla \equiv A[[\bft]]_\nabla \widehat{\otimes}_A M
$$ 
and 
$$\sideset{_M}{}\opvarphi := \Id_M \widehat{\otimes} \varphi  : M[[\bfs]]_\Delta \equiv M \widehat{\otimes}_A A[[\bfs]]_\Delta \longrightarrow M[[\bft]]_\nabla \equiv M \widehat{\otimes}_A A[[\bft]]_\nabla.
$$
We have:
\begin{eqnarray*}
&\displaystyle \varphi_M \left(\sum_{ \scriptscriptstyle \alpha\in\Delta} m_\alpha \bfs^\alpha \right) = \sum_{\scriptscriptstyle \alpha\in\Delta} \varphi( \bfs^\alpha ) m_\alpha = \sum_{ \scriptscriptstyle e\in \nabla}   \left( \sum_{\substack{\scriptscriptstyle \alpha\in\Delta\\ \scriptscriptstyle  |\alpha|\leq |e| } }
{\bf C}_e(\varphi,\alpha) m_\alpha \right) \bft^e,
&\\
&\displaystyle 
\sideset{_M}{}\opvarphi \left(\sum_{ \scriptscriptstyle \alpha\in\Delta} m_\alpha \bfs^\alpha \right) = \sum_{\scriptscriptstyle \alpha\in\Delta}  m_\alpha \varphi( \bfs^\alpha ) = \sum_{ \scriptscriptstyle e\in\nabla }   \left( \sum_{ \substack{\scriptscriptstyle \alpha\in\Delta\\ \scriptscriptstyle  |\alpha|\leq |e| } } 
m_\alpha {\bf C}_e(\varphi,\alpha)  \right) \bft^e&
\end{eqnarray*}
for all $m\in M[[\bfs]]_\Delta$. 
If $M$ is a trivial bimodule, then $\varphi_M = \sideset{_M}{}\opvarphi$. 
If $\varphi': A[[\bft]]_\nabla \to A[[\bfu]]_\Omega$ is another substitution map and $\opvarphi'' = \opvarphi\pcirc \opvarphi'$, we have
$\opvarphi''_M = \varphi_M \pcirc \varphi'_M$, $\sideset{_M}{}\opvarphi'' = \sideset{_M}{}\opvarphi \pcirc \sideset{_M}{}\opvarphi'$.
\bigskip

For all $m\in M[[\bfs]]_\Delta$ and all $a \in A[[\bfs]]_\nabla$, we have
$$ \varphi_M (a m) = \varphi(a) \varphi_M(m),\ \sideset{_M}{}\opvarphi (m a) =  \sideset{_M}{}\opvarphi (m) \varphi(a),
$$
i.e. $\varphi_M$ is $(\varphi;A)$-linear and $\sideset{_M}{}\opvarphi$ is $(A;\varphi)$-linear. Moreover, $\varphi_M$ and $\sideset{_M}{}\opvarphi$ are compatible with the augmentations, i.e.
\begin{equation} \label{eq:compat-augment}
\varphi_M(m) \equiv m_0\!\!\!\!\mod \left(\fn_0\right)_M/\nabla_M,\ 
\sideset{_M}{}\opvarphi(m) \equiv m_0\!\!\!\!\mod \left(\fn_0\right)_M/\nabla_M,\ m\in M[[\bfs]]_\Delta.
\end{equation}
If $\varphi$ is the trivial substitution map (i.e. $\varphi(s_i)=0$ for all $s_i\in\bfs$), then $\varphi_M : M[[\bfs]]_\Delta \to M[[\bft]]_\nabla$ and $\sideset{_M}{}\opvarphi: M[[\bfs]]_\Delta \to M[[\bft]]_\nabla$ are also trivial, i.e.
$ \varphi_M (m) = \sideset{_M}{}\opvarphi
(m) = m_0$, for all $m\in M[[\bfs]]_\nabla$.
\bigskip

\numero \label{nume:def-sbullet}
The above constructions apply in particular to the case of any $k$-algebra $R$ over $A$, for which we have two induced continuous maps:
$\varphi_R= \varphi \widehat{\otimes} \Id_R : R[[\bfs]]_\Delta \to R[[\bft]]_\nabla$, which is $(A;R)$-linear, and  $\sideset{_R}{}\opvarphi= \Id_R \widehat{\otimes} \varphi  : R[[\bfs]]_\Delta \to R[[\bft]]_\nabla$, which is $(R;A)$-linear. 
For $r\in R[[\bfs]]_\Delta$ we will denote
$ \varphi \sbullet r := \varphi_R(r)$, $r \sbullet \varphi := \sideset{_R}{}\opvarphi(r)$. 
Explicitly, if $r=\sum_\alpha
r_\alpha \bfs^\alpha$ with $\alpha\in\Delta$, then:
\begin{equation} \label{eq:explicit-bullet}
\varphi \sbullet r
 = 
\sum_{\scriptscriptstyle e \in\nabla}
\left( \sum_{\substack{\scriptscriptstyle \alpha\in\Delta\\ \scriptscriptstyle  |\alpha|\leq |e|}} {\bf C}_e(\varphi,\alpha) r_\alpha \right) \bft^e,\quad
 r \sbullet \varphi 
 = 
\sum_{\scriptscriptstyle e \in\nabla}
\left( \sum_{\substack{\scriptscriptstyle \alpha\in\Delta\\ \scriptscriptstyle  |\alpha|\leq |e|}} r_\alpha {\bf C}_e(\varphi,\alpha)  \right) \bft^e.
\end{equation}
From (\ref{eq:compat-augment}), we deduce that:
$$\varphi \sbullet \U^p(R;\Delta) \subset \U^q(R;\nabla),\quad \U^p(R;\Delta)\sbullet \varphi \subset \U^q(R;\nabla),
$$ 
and if $R$ is a filtered $k$-algebra over $A$, then  $\varphi \sbullet \Ufil^p(R;\Delta) \subset \Ufil^q(R;\nabla)$ and $\Ufil^p(R;\Delta)\sbullet \varphi \subset \Ufil^q(R;\nabla)$.
We also have $\varphi \sbullet 1 = 1 \sbullet \varphi = 1$. 
\medskip

\noindent
If $\varphi$ is a substitution map with \underline{constant coefficients}, then $\varphi_R = \sideset{_R}{}\opvarphi$ is a ring homomorphism over $\varphi$. 
In particular, $\varphi \sbullet r = r \sbullet \varphi$ and $\varphi \sbullet (rr') = (\varphi \sbullet r) (\varphi \sbullet r')$.
\medskip

\noindent 
If $\varphi = \mathbf{0}: A[[\bfs]]_\Delta \to A[[\bft]]_\nabla$ is the trivial substitution map, then 
$\mathbf{0} \sbullet r = r\sbullet \mathbf{0} = r_0$ for all $r\in R[[\bfs]]_\Delta$. In particular,
$\mathbf{0} \sbullet r = r \sbullet \mathbf{0} = 1$ for all $r\in \U^p(R;\Delta)$.
\medskip

\noindent 
If $\bfu=\{u_1,\dots,u_r\}$ is another set of variables, $\Omega \subset \N^r$ is a non-empty co-ideal and
$\psi:R[[\bft]]_\nabla \to R[[\bfu]]_\Omega$ is another substitution map, one has:
$$ \psi \sbullet (\varphi \sbullet r) = (\psi \pcirc \varphi) \sbullet r,\quad
(r \sbullet \varphi) \sbullet \psi  =  r \sbullet (\psi \pcirc \varphi).
$$
Since $\left(R[[\bfs]]_\Delta\right)^{\text{opp}} = R^{\text{opp}}[[\bfs]]_\Delta$, for any substitution map $\varphi: A[[\bfs]]_\Delta \to A[[\bft]]_\nabla$  we have $\left( \varphi_R \right)^{\text{opp}} = \sideset{_{R^{\text{opp}}}}{}\opvarphi$ and 
$\left( \sideset{_R}{}\opvarphi \right)^{\text{opp}} = \varphi_{R^{\text{opp}}}$. 
\medskip

The proof of the following lemma is straightforward and it is left to the reader.

\begin{lemma} \label{lemma:phi-linearity-of-phi_R}
If $\varphi: A[[\bfs]]_\Delta \to A[[\bft]]_\nabla$ is a substitution map, then:
\begin{itemize}
\item[(i)] $\varphi_R$ is left $\varphi$-linear, i.e. $\varphi_R(ar) = \varphi(a) \varphi_R(r)$ for all $a\in A[[\bfs]]_\Delta$ and for all $r\in R[[\bfs]]_\Delta$.
\item[(ii)] $\sideset{_R}{}\opvarphi$ is right $\varphi$-linear, i.e. $\sideset{_R}{}\opvarphi(ra)= \sideset{_R}{}\opvarphi(r) \varphi(a)$  for all $a\in A[[\bfs]]_\Delta$ and for all $r\in R[[\bfs]]_\Delta$.
\end{itemize}
\end{lemma}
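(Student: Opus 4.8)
The statement is the specialization to $M=R$ of the left/right $\varphi$-linearity of $\varphi_M$ and $\sideset{_M}{}\opvarphi$ recorded in \ref{sec:action-substi}, so the plan is simply to check that the two structures match up. First I would observe that a $k$-algebra $R$ over $A$ is in particular an $(A;A)$-bimodule, with $A$ acting through $\iota:A\to R$ by left and right multiplication; the induced left (resp. right) $A[[\bfs]]_\Delta$-module structure on $R[[\bfs]]_\Delta$ coming from this bimodule is exactly multiplication by the image of $A[[\bfs]]_\Delta$ under the extended algebra map $\overline{\iota}:A[[\bfs]]_\Delta \to R[[\bfs]]_\Delta$. Hence the products $ar$ and $ra$ appearing in (i) and (ii), with $a\in A[[\bfs]]_\Delta$, agree with the bimodule actions, and the general identities $\varphi_M(am)=\varphi(a)\varphi_M(m)$ and $\sideset{_M}{}\opvarphi(ma)=\sideset{_M}{}\opvarphi(m)\varphi(a)$ from \ref{sec:action-substi} yield precisely (i) and (ii).

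Alternatively, for a self-contained verification I would argue directly from the definition $\varphi_R = \varphi\widehat{\otimes}\Id_R$ under the identification $R[[\bfs]]_\Delta \equiv A[[\bfs]]_\Delta \widehat{\otimes}_A R$. On a simple tensor $a'\widehat{\otimes} r_0$ with $a'\in A[[\bfs]]_\Delta$ and $r_0\in R$, and for $a\in A[[\bfs]]_\Delta$, one computes
$$\varphi_R\big(a\,(a'\widehat{\otimes} r_0)\big) = \varphi(aa')\widehat{\otimes} r_0 = \varphi(a)\big(\varphi(a')\widehat{\otimes} r_0\big) = \varphi(a)\,\varphi_R(a'\widehat{\otimes} r_0),$$
where the crucial middle equality uses that $\varphi$ is an $A$-algebra map, hence multiplicative. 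Since both sides are continuous in $r$ and finite sums of such simple tensors are dense in $R[[\bfs]]_\Delta$, the identity (i) extends to all $r\in R[[\bfs]]_\Delta$. Statement (ii) is obtained symmetrically from $\sideset{_R}{}\opvarphi = \Id_R\widehat{\otimes}\varphi$ under the identification $R[[\bfs]]_\Delta \equiv R\widehat{\otimes}_A A[[\bfs]]_\Delta$, now using multiplicativity of $\varphi$ on the right factor.

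The only point requiring any care — and the reason the lemma is genuinely \emph{straightforward} rather than automatic — is the compatibility between ring multiplication by elements of $A[[\bfs]]_\Delta$ inside $R[[\bfs]]_\Delta$ and the $A[[\bfs]]_\Delta$-module structure through which $\varphi_R$ and $\sideset{_R}{}\opvarphi$ were defined; once this is observed, multiplicativity of the $A$-algebra map $\varphi$ does all the work. One could equally run the verification at the level of coefficients via the explicit formula (\ref{eq:explicit-bullet}) together with the expansion (\ref{eq:exp-substi}), but the tensor-product argument is cleaner and avoids rearranging the double sums.
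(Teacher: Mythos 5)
Your proposal is correct. The paper gives no proof of this lemma (it is declared straightforward and left to the reader), and your argument --- identifying multiplication by $A[[\bfs]]_\Delta$ inside the ring $R[[\bfs]]_\Delta$ with the $(A;A)$-bimodule action so that the general $\varphi$-linearity of $\varphi_M$ and $\sideset{_M}{}\opvarphi$ recorded in \ref{sec:action-substi} applies with $M=R$, or equivalently checking the identity on simple tensors $a'\widehat{\otimes}r_0$ using multiplicativity of the $A$-algebra map $\varphi$ and then extending by continuity and density --- is exactly the intended routine verification.
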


For each substitution map $\varphi: A[[\bfs]]_\Delta \to A[[\bft]]_\nabla$ we define the $(A;A)$-linear map:
\begin{equation} \label{eq:varphi_star}
 \varphi_*: f\in \Hom_k(A,A[[\bfs]]_\Delta) \longmapsto \varphi_*(f)= \varphi \pcirc f \in \Hom_k(A,A[[\bft]]_\nabla)
\end{equation}
which induces another one $\overline{\varphi_*}: \End_{k[[\bfs]]_\Delta}(A[[\bfs]]_\Delta) \longrightarrow
\End_{k[[\bft]]_\nabla}(A[[\bft]]_\nabla)$ given by:
\begin{equation} \label{eq:overline_varphi_star}
\overline{\varphi_*}(f):=
\left(\varphi_*\left(f|_A\right)\right)^e =
\left(\varphi \pcirc f|_A\right)^e\quad \forall f\in \End_{k[[\bfs]]_\Delta}(A[[\bfs]]_\Delta).
\end{equation}
More generally, for any left $A$-modules $E,F$ we have $(A;A)$-linear maps:
\begin{eqnarray*}
&\displaystyle
 (\varphi_{F})_*: f\in \Hom_k(E,F[[\bfs]]_\Delta) \longmapsto (\varphi_{F})_*(f)= \varphi_F \pcirc f \in \Hom_k(E,F[[\bft]]_\nabla),
&\\
&\displaystyle
\overline{(\varphi_F)_*}:  \Hom_{k[[\bfs]]_\Delta}(E[[\bfs]]_\Delta,F[[\bfs]]_\Delta) \longrightarrow  \Hom_{k[[\bft]]_\nabla}(E[[\bft]]_\nabla,F[[\bft]]_\nabla),
&\\
&\displaystyle
 \overline{(\varphi_F)_*}(f):=\left(\varphi_F \pcirc f|_E\right)^e.
\end{eqnarray*}
Let us consider the $(A;A)$-bimodule $M=\Hom_k(E,F)$. For each $m\in M[[\bfs]]_\Delta$ and for each $e\in E$ we have
$ \widetilde{\varphi_M(m)}(e) = \varphi_F\left(\widetilde{m}(e) \right)$, i.e.
\begin{equation}   \label{eq:tilde-varphi-M-varphi-F}
\widetilde{\varphi_M(m)}|_E = \varphi_F \pcirc \left(\widetilde{m}|_E\right),
\end{equation}
or more graphically, the following diagram is commutative (see (\ref{eq:comple_formal_1})):
\begin{equation} \label{eq:commut-CD-varphi-bullet}
\begin{tikzcd}
M[[\bfs]]_\Delta 
\ar[d,"\varphi_M"'] \ar[r,"\sim", "m\mapsto \widetilde{m}"'] & 
\Hom_{k[[\bfs]]_\Delta}(E[[\bfs]]_\Delta,F[[\bfs]]_\Delta) \ar[r,"\sim", "\text{restr.}"']
\ar[d,"\overline{(\varphi_F)_*}"] & \Hom_k(E,F[[\bfs]]_\Delta) \ar[d,"(\varphi_F)_*"'] \\
M[[\bft]]_\nabla \ar[r,"\sim", "m\mapsto \widetilde{m}"'] &
\Hom_{k[[\bft]]_\nabla}(E[[\bft]]_\nabla,F[[\bft]]_\nabla) \ar[r,"\sim", "\text{restr.}"'] &
\Hom_k(E,F[[\bft]]_\nabla).
\end{tikzcd}
\end{equation}
In order to simplify notations, we will also write:
$$ \varphi \sbullet f := \overline{(\varphi_F)_*}(f)\quad \forall f \in \Hom_{k[[\bfs]]_\Delta}(E[[\bfs]]_\Delta,F[[\bfs]]_\Delta),
$$
and so we have
$ \widetilde{\varphi \sbullet m} = \varphi \sbullet \widetilde{m}$ for all $m\in M[[\bfs]]_\Delta$. 
Let us notice that $(\varphi \sbullet f)(e) = (\varphi_F \pcirc f)(e)$ for all $e\in E$, i.e.  
\begin{equation} \label{eq:atencion-bullet}
\framebox{$(\varphi \sbullet f)|_E = (\varphi_F \pcirc f)|_E = \varphi_F \pcirc \left(f|_E\right)$, but in general $\varphi \sbullet f \neq  \varphi_F \pcirc f$.}
\end{equation}
If $\varphi = \mathbf{0}$ is the trivial substitution map, then for each $f=\sum_\alpha f_\alpha \bfs^\alpha \in \Hom_k(E,E[[\bfs]]_\Delta)$ (resp. $f=\sum_\alpha f_\alpha \bfs^\alpha \in\End_k(E)[[\bfs]]_\Delta \equiv \End_{k[[\bfs]]_\Delta}(E[[\bfs]]_\Delta)$), we have $\mathbf{0} \sbullet f = f\sbullet \mathbf{0} = f_0 \in \End_k(E) \subset \Hom_k(E,E[[\bfs]]_\Delta)$ (resp. $\mathbf{0} \sbullet f = f\sbullet \mathbf{0} = f_0^e = \overline{f_0} \in \End_{k[[\bfs]]_\Delta}(E[[\bfs]]_\Delta)$).
\bigskip

If $\varphi: A[[\bfs]]_\Delta \to A[[\bft]]_\nabla$ is a substitution map, we have:
\begin{eqnarray*}
&\displaystyle
\varphi \sbullet (af) = \varphi(a) \left( \varphi \sbullet f\right),\   (fa) \sbullet \varphi  =  \left(f \sbullet \varphi \right) \varphi(a)
\end{eqnarray*}
for all $a\in A[[\bfs]]_\Delta$ and for all  $f\in \Hom_k(E,E[[\bfs]]_\Delta)$ (or $f\in \End_{k[[\bfs]]_\Delta}(E[[\bfs]]_\Delta)$).
\medskip

Moreover:
\begin{eqnarray*}
& \displaystyle (\varphi_E)_*(\Hom_k^\pcirc(E,M[[\bfs]]_\Delta)) \subset \Hom_k^\pcirc(E,E[[\bft]]_\nabla),
&
\\
& \displaystyle 
 \varphi \sbullet  \left( \Aut_{k[[\bfs]]_\Delta}^\pcirc(E[[\bfs]]_\Delta) \right) \subset
\Aut_{k[[\bft]]_\nabla}^\pcirc(E[[\bft]]_\nabla)&
\end{eqnarray*}
and so we have a commutative diagram:
\begin{equation}   \label{eq:diag-funda}
\begin{tikzcd}
\U^p(R;\Delta) \ar[r,"\sim", "r\mapsto \widetilde{r}"'] \ar[d,"\varphi \sbullet (-)"']  &   
 \Aut_{k[[\bfs]]_\Delta}^\pcirc(E[[\bfs]]_\Delta) \ar[d,"\varphi \sbullet (-)"] \ar[r,"\sim", "\text{restr.}"']  &
\Hom_k^\pcirc(E,E[[\bfs]]_\Delta)  \ar[d,"(\varphi_{E})_*"] \\
\U^q(R;\nabla) \ar[r,"\sim", "r\mapsto \widetilde{r}"'] & \Aut_{k[[\bft]]_\nabla}^\pcirc(E[[\bft]]_\nabla)
\ar[r,"\sim", "\text{restr.}"'] & \Hom_k(E,F[[\bft]]_\nabla).
\end{tikzcd}
\end{equation}
\bigskip

\numero \label{nume:properties-external-x} 
Let us denote $\iota: A[[\bfs]]_\Delta \to A[[\bfs \sqcup \bft]]_{\Delta \times \nabla}$, $\kappa: A[[\bft]]_\nabla \to A[[\bfs \sqcup \bft]]_{\Delta \times \nabla}$ the combinatorial substitution maps given by the inclusions $\bfs \hookrightarrow \bfs \sqcup \bft$, $\bft \hookrightarrow \bfs \sqcup \bft$.

Let us notice that for $r\in R[[\bfs]]_\Delta$ and $r'\in R[[\bft]]_\nabla$, we have (see Definition \ref{defi:external-x})
$ r\boxtimes r' = (\iota\sbullet r) (\kappa\sbullet r') \in R[[\bfs \sqcup \bft]]_{\Delta \times \nabla}$. 
If $\Delta'\subset\Delta\subset\N^p$, $\nabla'\subset\nabla\subset \N^q$ are non-empty co-ideals, we have 
$$\tau_{\Delta \times \nabla,\Delta' \times \nabla'}(r\boxtimes r') = \tau_{\Delta,\Delta'}(r) \boxtimes \tau_{\nabla,\nabla'}(r').
$$
If we denote by $\Sigma: R[[\bfs \sqcup \bfs]]_{\nabla\times\nabla}  \rightarrow R[[\bfs]]_\nabla$ the combinatorial substitution map given by the co-diagonal map $\bfs \sqcup \bfs \to \bfs$, it is clear that for each $r,r'\in R[[\bfs]]_\nabla$ we have
\begin{equation} \label{eq:prod-in-terms-x}
r r' = \Sigma\sbullet (r \boxtimes r').
\end{equation}
If $\varphi:A[[\bfs]]_\Delta \to A[[\bfu]]_\Omega$ and $\psi:A[[\bft]]_\nabla \to A[[\bfv]]_{\Omega'}$ are substitution maps, we have new substitution maps 
$\varphi \otimes \Id:  A[[\bfs \sqcup \bft]]_{\Delta\times\nabla} \to A[[\bfu \sqcup \bft]]_{\Omega\times\nabla}$ and $\Id \otimes \psi: A[[\bfs \sqcup \bft]]_{\Delta\times\nabla} \to A[[\bfs \sqcup \bfv]]_{\Delta\times\Omega'}$ (see Definition \ref{defi:tensor-prod-of-substi}) taking part in the following commutative diagrams of $(A;A)$-bimodules:
$$
\begin{tikzcd}
R[[\bfs]]_\Delta \otimes_R R[[\bft]]_\nabla \ar[r,"\varphi_R \otimes\Id"] \ar[d,"\text{can.}"'] &
 R[[\bfu]]_\Omega \otimes_R R[[\bft]]_\nabla \ar[d,"\text{can.}"]\\
R[[\bfs \sqcup \bft]]_{\Delta\times\nabla}  \ar[r,"\left(\varphi\otimes\Id\right)_R"] &
R[[\bfu \sqcup \bft]]_{\Omega\times\nabla}
\end{tikzcd}
$$
and
$$
\begin{tikzcd}
R[[\bfs]]_\Delta \otimes_R R[[\bft]]_\nabla \ar[r,"\Id\otimes\psi"] \ar[d,"\text{can.}"'] &
 R[[\bfs]]_\Delta \otimes_R R[[\bfv]]_{\Omega'} \ar[d,"\text{can.}"]\\
R[[\bfs \sqcup \bft]]_{\Delta\times\nabla}  \ar[r,"\left(\Id\otimes\varphi\right)_R"] &
R[[\bfs \sqcup \bfv]]_{\Delta\times\Omega'}.
\end{tikzcd}
$$
We deduce that $(\varphi\sbullet r) \boxtimes r'=
(\varphi\otimes \Id)\sbullet (r\boxtimes r')$ and $r\boxtimes (r'\sbullet \psi)= (r\boxtimes r')\sbullet (\Id \otimes \psi)$.

\begin{prop} \label{prop:sigma-varphi-bullet}
Let $R$ be a filtered $k$-algebra over $A$ and $\varphi \in \Sub_A(p,q;\Delta,\nabla)$ a substitution map. The following diagram is commutative:
$$
\begin{tikzcd}
\Ufil^p(R;\Delta)
\ar[d,"\varphi\sbullet (-)"'] \ar[r,"\bupsigma"] & 
\Ugr^p(\gr R;\Delta)  \ar[d, "(\init \varphi)\sbullet (-)"]\\
\Ufil^q(R;\nabla) \ar[r,"\bupsigma"] & \Ugr^q(\gr R;\nabla),
\end{tikzcd}
$$
where $\init \varphi$ has been defined in Proposition \ref{prop:init}. 
\end{prop}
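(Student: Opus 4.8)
The plan is to verify commutativity directly on an arbitrary element $r=\sum_{\alpha\in\Delta} r_\alpha\bfs^\alpha\in\Ufil^p(R;\Delta)$ (so that $r_\alpha\in R_{|\alpha|}$ for every $\alpha$), by expanding both composites via the explicit formula (\ref{eq:explicit-bullet}) for the $\sbullet$-action together with the definition of the total symbol map $\bupsigma$, and then comparing coefficients of $\bft^e$ for each $e\in\nabla$. The conceptual content is that passing to principal symbols collapses the action of $\varphi$ to that of its linear part $\init\varphi$, since the symbol only records top-order contributions.

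First I would compute the lower route $r\mapsto\bupsigma(\varphi\sbullet r)$. By (\ref{eq:explicit-bullet}) the $e$-coefficient of $\varphi\sbullet r$ is $\sum_{\alpha\in\Delta,\,|\alpha|\le|e|}{\bf C}_e(\varphi,\alpha)r_\alpha$; each term lies in $R_{|\alpha|}\subset R_{|e|}$ because ${\bf C}_e(\varphi,\alpha)\in A\subset R_0$, which re-proves that $\varphi\sbullet r\in\Ufil^q(R;\nabla)$ as recorded in \ref{nume:def-sbullet}. Now the filtration estimate is decisive: for $|\alpha|<|e|$ the summand ${\bf C}_e(\varphi,\alpha)r_\alpha$ lies in $R_{|\alpha|}\subseteq R_{|e|-1}$ and is killed by $\sigma_{|e|}$, so that only the terms with $|\alpha|=|e|$ contribute, yielding $\sigma_{|e|}\bigl(\sum_{|\alpha|\le|e|}{\bf C}_e(\varphi,\alpha)r_\alpha\bigr)=\sum_{|\alpha|=|e|}{\bf C}_e(\varphi,\alpha)\,\sigma_{|e|}(r_\alpha)$, where I use the identity $\sigma_{|e|}(c\,x)=c\,\sigma_{|e|}(x)$ for $c\in R_0$ and $x\in R_{|e|}$, immediate from the multiplication in $\gr R$.

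Next I would compute the upper route $r\mapsto(\init\varphi)\sbullet\bupsigma(r)$. Here $\bupsigma(r)=\sum_\alpha\sigma_{|\alpha|}(r_\alpha)\bfs^\alpha$, and applying (\ref{eq:explicit-bullet}) to $\init\varphi$ gives the $e$-coefficient $\sum_{\alpha,\,|\alpha|\le|e|}{\bf C}_e(\init\varphi,\alpha)\,\sigma_{|\alpha|}(r_\alpha)$. By Proposition \ref{prop:init} the series $(\init\varphi)(\bfs^\alpha)$ is homogeneous of degree $|\alpha|$, so ${\bf C}_e(\init\varphi,\alpha)=0$ unless $|e|=|\alpha|$, while ${\bf C}_e(\init\varphi,\alpha)={\bf C}_e(\varphi,\alpha)$ when $|e|=|\alpha|$. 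The same homogeneity shows that $(\init\varphi)\sbullet(-)$ indeed lands in $\Ugr^q(\gr R;\nabla)$. Thus the $e$-coefficient reduces to $\sum_{|\alpha|=|e|}{\bf C}_e(\varphi,\alpha)\,\sigma_{|\alpha|}(r_\alpha)$, which coincides term by term with the lower-route expression (recall that $|\alpha|=|e|$ forces $\sigma_{|\alpha|}(r_\alpha)=\sigma_{|e|}(r_\alpha)$). Comparing coefficients of $\bft^e$ for all $e\in\nabla$ gives $\bupsigma(\varphi\sbullet r)=(\init\varphi)\sbullet\bupsigma(r)$, so the diagram commutes.

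I do not anticipate a genuine obstacle beyond this bookkeeping; the one substantive point worth isolating is that the filtration estimate annihilates precisely the $|\alpha|<|e|$ terms on the left, while the homogeneity of $\init\varphi$ annihilates exactly those same terms on the right, so that both routes see only the leading $|\alpha|=|e|$ part. This is the precise sense in which the higher-order part of $\varphi$, discarded in passing to $\init\varphi$, becomes invisible after taking symbols.
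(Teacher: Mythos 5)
Your proposal is correct and follows essentially the same route as the paper: expand both composites coefficient-by-coefficient via (\ref{eq:explicit-bullet}), use the filtration to see that $\sigma_{|e|}$ kills the terms with $|\alpha|<|e|$, and invoke the agreement ${\bf C}_e(\init\varphi,\alpha)={\bf C}_e(\varphi,\alpha)$ for $|\alpha|=|e|$ (together with homogeneity of $\init\varphi$) to identify the surviving terms. The only difference is presentational — the paper runs a single chain of equalities from $\bupsigma(\varphi\sbullet r)$ to $(\init\varphi)\sbullet\bupsigma(r)$, whereas you compute the two routes separately and compare — and you are in fact slightly more explicit about why the upper route also collapses to the $|\alpha|=|e|$ terms.
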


\begin{proof} For any element $r=\sum_{\scriptscriptstyle \alpha} r_\alpha \bfs^\alpha\in\Ufil^p(R;\Delta)$ we have:
\begin{eqnarray*}
& \displaystyle
\bupsigma \left( \varphi\sbullet r\right) = \bupsigma \left( \sum_{\scriptscriptstyle e\in \nabla} 
\left( \sum_{\substack{\scriptscriptstyle \alpha\in \Delta\\ \scriptscriptstyle |e|\geq |\alpha|}} {\bf C}_e(\varphi,\alpha) r_\alpha \right) \bft^e\right) = 
\sum_{\scriptscriptstyle e\in \nabla} 
\sigma_{|e|} \left( \sum_{\substack{\scriptscriptstyle \alpha\in \Delta\\ \scriptscriptstyle |e|\geq |\alpha|}} {\bf C}_e(\varphi,\alpha) r_\alpha \right) \bft^e =
&
\\
& \displaystyle
\sum_{\scriptscriptstyle e\in \nabla} 
\sigma_{|e|} \left( \sum_{\substack{\scriptscriptstyle \alpha\in \Delta\\ \scriptscriptstyle |e|= |\alpha|}} {\bf C}_e(\varphi,\alpha) r_\alpha \right) \bft^e =
\sum_{\scriptscriptstyle e\in \nabla} 
\sigma_{|e|} \left( \sum_{\substack{\scriptscriptstyle \alpha\in \Delta\\ \scriptscriptstyle |e|= |\alpha|}} {\bf C}_e(\init \varphi,\alpha) r_\alpha \right) \bft^e =
&
\\
& \displaystyle
\sum_{\scriptscriptstyle e\in \nabla} 
 \left( \sum_{\substack{\scriptscriptstyle \alpha\in \Delta\\ \scriptscriptstyle |e|= |\alpha|}} {\bf C}_e(\init \varphi,\alpha) \sigma_{|\alpha|} \left(r_\alpha \right) \right) \bft^e =
  (\init \varphi) \sbullet \bupsigma(r).
\end{eqnarray*}
\end{proof}

\subsection{Exponential type series and divided power algebras}

General references for the notions and results in this section are
\cite{roby_63,roby_65}, \cite{bert_ogus} and \cite{laksov-notes}. 
In this section, $A$ will be a fixed commutative ring.
\medskip

For a given integer $m\geq 1$  or
$m=\infty$, we consider the following substitution maps:
\begin{eqnarray*}
& \displaystyle  \varphi:A[[t]]_m \longrightarrow A[[t,t']]_m,\quad \varphi(t) = t+t', &
\\
& \displaystyle 
\iota: A[[t]]_m \longrightarrow A[[t,t']]_m,\quad  \iota(t) = t, &
\\
& \displaystyle 
\iota': A[[t]]_m \longrightarrow A[[t,t']]_m,\quad  \iota'(t) = t'. &
\end{eqnarray*}
For each commutative $A$-algebra $B$, the above substitution maps induce homomorphisms of $A$-algebras (actually, they are the ``same'' substitution maps over $B$):
\begin{eqnarray*}
& \displaystyle  \varphi\sbullet (-): r(t) \in B[[t]]_m \longmapsto r(t+t') \in B[[t,t']]_m, &
\\
& \displaystyle 
\iota\sbullet (-): r(t)\in B[[t]]_m \longmapsto r(t)\in B[[t,t']]_m, &
\\
& \displaystyle 
\iota'\sbullet (-):  r(t)\in B[[t]]_m \longmapsto r(t')\in B[[t,t']]_m.  &
\end{eqnarray*}

\begin{defi} \label{defi:exponential-series} 
An element
$r=r(t)=\sum_{i=0}^m r_i t^i$ in $B[[t]]_m$ is said to
be of {\em exponential type} \index{exponential type series} if $r_0=1$ and $r(t+t')=r(t)r(t')$, i.e. $\varphi\sbullet r = \left(\iota\sbullet r\right) \left(\iota'\sbullet r\right)$, or
equivalently, if
$$ \binom{i+j}{i} r_{i+j} = r_i r_j,\quad \text{whenever}\ i+j <
m+1.$$ The set of elements in $B[[t]]_m$ of exponential type will be
denoted by $\EXP_m(B)$. The set $\EXP_{\infty}(B)$ will be simply
denoted by $\EXP(B)$.
\end{defi}

The set $\EXP_m(B)$ is a subgroup $\U(B;m)$ and
the external operation
\begin{equation} \label{eq:mod_action_on_EXP}
 \left(a,\sum_{i=0}^m r_i t^i\right) \in B \times \EXP_m(B) \mapsto
\sum_{i=0}^m r_i (at)^i=\sum_{i=0}^m r_i a^i t^i\in \EXP_m(B)
\end{equation}
defines a natural $B$-module structure on $\EXP_m(B)$. It is clear
that $\EXP_1(B)$ is canonically isomorphic to $B$ (as $B$-module). 
\medskip

Let $C$ be another commutative $A$-algebra. For each $m\geq 1$, any $A$-algebra
map $h:B\to C$ induces obvious A-linear maps $\EXP_m(h):\EXP_m(B)
\to \EXP_m(C)$. In this way we obtain functors $\EXP_m$ from the
category of commutative $A$-algebras to the category of $A$-modules.  
For $1\leq
m \leq q\leq \infty$, the projections $B[[t]]_q \to B[[t]]_m$ induce natural
truncation maps  $\EXP_q \to \EXP_m$ and we have (see (\ref{eq:U-inv-limit-finite})):
\begin{equation*} 
   \EXP(B) = \lim_{\stackrel{\longleftarrow}{m\in\N}} \EXP_m(B).
\end{equation*}
When $\QQ \subset B$, any $r=\sum_{i=0}^m r_i t^i \in \EXP_m(B)$ is determined by $r_1$, since $r_i= \frac{r^i}{i!}$ for all $i=0\dots, m$, and so all truncation maps $\EXP_q(B) \to \EXP_m(B)$, $1\leq
m \leq q\leq \infty$, are isomorphisms and $B \simeq \EXP_1(B) \simeq \EXP_m(B) \simeq \EXP(B).$
\medskip

The
following result is proven in \cite[Chap. III]{roby_63} in the case $m=\infty$.
The proof for any integer $m\geq 1$ is completely similar.

\begin{prop} \label{prop:PU-PD} For each $A$-module $M$ and each $m\geq 1$ there is an universal
pair $(\Gamma_{A,m} M,\gamma_{A,m})$, where $\Gamma_{A,m} M$ is a commutative $A$-algebra and $\gamma_{A,m}:M \to \EXP_m (\Gamma_{A,m} M)$ is an $A$-linear map, satisfying the
following universal property: for any commutative $A$-algebra $B$ and any $A$-linear map $H:M\to\EXP_m(B)$ there is a unique homomorphism of $A$-algebras
$h:\Gamma_{A,m} M\to B$ such that $H=\EXP_m(h)\pcirc \gamma_{A,m}$, or equivalently, the map
$$ h \in \Hom_{A-\text{\rm alg}}(\Gamma_{A,m} M,B) \mapsto \EXP_m(h)\pcirc
\gamma_{A,m} \in \Hom_A(M,\EXP_m(B))$$ is bijective.
\end{prop}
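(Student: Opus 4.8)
The plan is to exhibit $\Gamma_{A,m}M$ by an explicit presentation and then verify the universal property directly, adapting Roby's construction for $m=\infty$ to the truncated setting. First I would unwind what an $A$-linear map $H:M\to\EXP_m(B)$ actually is: writing $H(x)=\sum_{i=0}^m H_i(x)\,t^i$, the condition $H(x)\in\EXP_m(B)$ says $H_0(x)=1$ and $\binom{i+j}{i}H_{i+j}(x)=H_i(x)H_j(x)$ whenever $i+j\le m$, while $A$-linearity of $H$ (recalling that the group law on $\EXP_m(B)$ is multiplication of series and the scalar action is the one of (\ref{eq:mod_action_on_EXP})) says $H_n(x+y)=\sum_{i+j=n}H_i(x)H_j(y)$ and $H_i(ax)=a^i H_i(x)$.

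These four families of identities are exactly the relations to impose. So I would define $\Gamma_{A,m}M$ as the free commutative $A$-algebra on the set of symbols $\gamma_i(x)$, one for each $x\in M$ and each $0\le i\le m$, modulo the ideal generated by the elements
(i) $\gamma_0(x)-1$;\
(ii) $\gamma_i(x)\gamma_j(x)-\binom{i+j}{i}\gamma_{i+j}(x)$ for $i+j\le m$;\
(iii) $\gamma_n(x+y)-\sum_{i+j=n}\gamma_i(x)\gamma_j(y)$ for $n\le m$;\
(iv) $\gamma_i(ax)-a^i\gamma_i(x)$ for $a\in A$.\
Setting $\gamma_{A,m}(x):=\sum_{i=0}^m\gamma_i(x)\,t^i$, relations (i) and (ii) guarantee that $\gamma_{A,m}(x)\in\EXP_m(\Gamma_{A,m}M)$, while (iii) and (iv) guarantee that $\gamma_{A,m}:M\to\EXP_m(\Gamma_{A,m}M)$ is $A$-linear (note that (iv) with $a=0$ already forces $\gamma_i(0)=0$ for $i\ge 1$, i.e. $\gamma_{A,m}(0)=1$). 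The only place the finiteness of $m$ intervenes is the restriction $i+j\le m$ in (ii): products $\gamma_i(x)\gamma_j(x)$ with $i+j>m$ are left unconstrained, matching the fact that $\EXP_m$ records only coefficients up to degree $m$, and since all generators appearing in (ii)--(iv) have index $\le m$, no inconsistency can arise.

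For the universal property, given any $A$-linear $H:M\to\EXP_m(B)$ I would define a map on generators by $\gamma_i(x)\mapsto H_i(x)$. Since $\EXP_m(h)$ acts coefficientwise, the computation of the previous paragraph shows precisely that this assignment annihilates all four families of relators, hence descends to a well-defined homomorphism of $A$-algebras $h:\Gamma_{A,m}M\to B$, and by construction $\EXP_m(h)\pcirc\gamma_{A,m}=H$. Uniqueness is immediate because the $\gamma_i(x)$ generate $\Gamma_{A,m}M$ as an $A$-algebra and any $h$ satisfying $\EXP_m(h)\pcirc\gamma_{A,m}=H$ must send $\gamma_i(x)$ to $H_i(x)$. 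This yields the asserted bijection between $\Hom_{A\text{-alg}}(\Gamma_{A,m}M,B)$ and $\Hom_A(M,\EXP_m(B))$, naturally in $B$.

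I expect the proof to be essentially formal once the dictionary between the defining relations and the two conditions \emph{``exponential type''} and \emph{``$A$-linearity''} is set up; the only real verifications are that $\gamma_{A,m}$ lands in $\EXP_m$ and is $A$-linear, and that $h$ respects the relations, both of which are routine bookkeeping. The main point deserving attention is not computational but structural: one must check that truncating the exponential relation to $i+j\le m$ is the correct modification of Roby's $m=\infty$ construction and does not collapse the object. As noted above, this holds because the remaining relations only ever involve generators of index at most $m$, so the passage from $\infty$ to finite $m$ is harmless.
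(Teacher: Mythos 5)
Your proof is correct, and it is exactly the argument the paper has in mind: the paper gives no proof of its own, citing Roby~\cite[Chap.~III]{roby_63} for $m=\infty$ and asserting that the finite-$m$ case is ``completely similar,'' and Roby's construction is precisely the presentation by generators $\gamma_i(x)$ subject to your four families of relations. Your truncated version (indices $\leq m$, exponential relation only for $i+j\leq m$) and the tautological verification of the universal property from the presentation is the intended adaptation, so you have simply filled in the details the paper delegates to the reference.
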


The pair $(\Gamma_{A,m} M,\gamma_{A,m})$ is unique up to a unique isomorphism. For $m=1$, we have a canonical isomorphism $\Sim_A M \xrightarrow{\sim} \Gamma_{A,1} M$.

\begin{defi}  \label{defi:power-divided-algebra} 
The $A$-algebra $\Gamma_{A,m} M$ is called the {\em algebra
of $m$-divided powers} \index{divided powers, algebra of} of $M$ and it is canonically $\N$-graded with
$\Gamma_{A,m}^0 M=A$, $\Gamma_{A,m}^1 M=M$. In the case $m=\infty$,
$(\Gamma_{A,\infty} M,\gamma_{A,\infty})$ is simply denoted by $(\Gamma_A
M,\gamma_A)$ and it is called the {\em algebra of divided powers} of
$M$.
\end{defi}

In this way $\Gamma_{A,m}$ becomes a functor from the category of $A$-modules to the category of ($\N$-graded) commutative $A$-algebras, which is left adjoint
to $\EXP_m$. For $1\leq m \leq q\leq \infty$ the truncations $\EXP_q \to \EXP_m$ induce  natural transformations $\Gamma_{A,m} \to \Gamma_{A,q}$ and $\displaystyle
\Gamma_A = \lim_{\stackrel{\longrightarrow}{m\in\N}} \Gamma_{A,m}$.
\medskip

When $\QQ \subset A$, we have $\Sim_A  \xrightarrow{\sim} \Gamma_{A,1} \xrightarrow{\sim} \Gamma_{A,m} \xrightarrow{\sim} \Gamma_A$ for all $m\geq 1$. For instance, for $A=\ZZ$ and $M=\ZZ x$ a free abelian group of rank $1$, the algebra $\Gamma_{\ZZ,m} M$ is the $\ZZ$-subalgebra $\ZZ\left[x^i/i!, 1\leq i \leq m\right] \subset \QQ[x]$  and
$$ \gamma_{A,m}: nx \in \ZZ x \longmapsto \sum_{i=0}^m n^i \frac{x^i}{i!} t^i \in \EXP_m \left(\ZZ\left[x^i/i!, 1\leq i \leq m\right]\right).
$$

\section{Hasse--Schmidt derivations}

\subsection{Definitions and first results}
\label{section:HS}

In this section we recall some notions and results of the theory of Hasse--Schmidt derivations \cite{has37} as developed in \cite{nar_subst_2018}. See also \cite{hoff_kow_2016}.
\medskip

From now on $k$ will be a commutative ring, $A$ a commutative $k$-algebra, $\bfs =\{s_1,\dots,s_p\}$ a set of variables and $\Delta \subset \N^p$ a non-empty co-ideal.

\begin{defi} \label{defi:HS}
A {\em $(p,\Delta)$-variate Hasse--Schmidt derivation}, or 
a {\em $(p,\Delta)$-va\-riate HS-de\-ri\-va\-tion} for short, of $A$ over $k$  
 is a family $D=(D_\alpha)_{\alpha\in \Delta}$ 
 of $k$-linear maps $D_\alpha:A
\longrightarrow A$, with $D_0=\Id_A$ and satisfying the following Leibniz type identities: $$ 
D_\alpha(xy)=\sum_{\beta+\gamma=\alpha}D_\beta(x)D_\gamma(y) $$ for all $x,y \in A$ and for all
$\alpha\in \Delta$. We denote by
$\HS^p_k(A;\Delta)$ the set of all $(p,\Delta)$-variate HS-derivations of $A$ over
$k$ and $\HS^p_k(A)$ for $\Delta = \N^p$. When $\Delta=\ft_m$ we will simply denote $\HS^p_k(A;m) := \HS^p_k(A;\ft_m)$. For $p=1$, a $1$-variate HS-derivation will be simply called a {\em Hasse--Schmidt derivation}   (a HS-derivation for short), or a {\em higher derivation}\footnote{This terminology is used for instance in \cite{mat_86}.}, and we will simply write $\HS_k(A;m):= \HS^1_k(A;\Delta)$ for $\Delta=\ft_m=\{q\in\N\ |\ q\leq m\}$\footnote{These HS-derivations are called of length $m$ in \cite{nar_2012}.} and $\HS_k(A) := \HS^1_k(A)$.
\end{defi}

Any $(p,\Delta)$-variate HS-derivation $D$ of $A$ over $k$  can be understood as a power series 
$$ \sum_{\scriptscriptstyle \alpha\in\Delta} D_\alpha \bfs^\alpha \in R[[\bfs]]_\Delta,\quad R=\End_k(A),$$
and so we consider $\HS^p_k(A;\Delta) \subset R[[\bfs]]_\Delta$. Actually 
$\HS^p_k(A;\Delta)$ is a (multiplicative) sub-group of $\U^p(R;\Delta)$. 
The group operation in $\HS^p_k(A;\Delta)$ is explicitly given by:
$$ (D,E) \in \HS^p_k(A;\Delta) \times \HS^p_k(A;\Delta) \longmapsto D \pcirc E \in \HS^p_k(A;\Delta)$$
with
$$ (D \pcirc E)_\alpha = \sum_{\scriptscriptstyle \beta+\gamma=\alpha} D_\beta \pcirc E_\gamma,
$$
and the identity element of $\HS^p_k(A;\Delta)$ is $\mathbb{I}$ with $\mathbb{I}_0 = \Id$ and 
$\mathbb{I}_\alpha = 0$ for all $\alpha \neq 0$. The inverse of a $D\in \HS^p_k(A;\Delta)$ will be denoted by $D^*$. 
\medskip

For $\Delta' \subset \Delta \subset \N^p$ non-empty co-ideals, we have truncations
$$\tau_{\Delta \Delta'}: \HS^p_k(A;\Delta) \longrightarrow \HS^p_k(A;\Delta'),
$$ which obviously are group homomorphisms. 
For $m\geq n$ we will denote $\tau_{mn}: \HS^p_k(A;m) \to \HS^p_k(A;n)$ the truncation map.
Since any $D\in\HS_k^p(A;\Delta)$ is determined by its finite truncations, we have a natural group isomorphism
\begin{equation} \label{eq:HS-inv-limit-finite}
 \HS_k^p(A) =  \lim_{\stackrel{\longleftarrow}{\substack{\scriptscriptstyle \Delta' \subset \Delta\\ \scriptscriptstyle  \sharp\Delta'<\infty}}} \HS_k^p(A;\Delta').
\end{equation}
The proof of the following proposition is clear and is left to the reader. 

\begin{prop} Let $\bft=\{t_1,\dots,t_q\}$ be another set of variables, $\nabla \subset \N^q$ a non-empty co-ideal,
and $D\in \HS^p_k(A;\Delta)$, $E \in \HS^q_k(A;\nabla)$ HS-derivations. Then its external product $D\boxtimes E$ (see Definition \ref{defi:external-x}) is a $(p+q,\nabla\times\Delta)$-variate HS-derivation. 
\end{prop}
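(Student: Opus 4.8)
The plan is to verify directly that the family $D\boxtimes E$ satisfies the two defining conditions of a HS-derivation in Definition \ref{defi:HS}: that its component of index $(0,0)$ is the identity, and that every component obeys the Leibniz identity. Writing $D=\sum_{\alpha\in\Delta} D_\alpha\bfs^\alpha$ and $E=\sum_{\beta\in\nabla} E_\beta\bft^\beta$, Definition \ref{defi:external-x} gives $(D\boxtimes E)_{(\alpha,\beta)}=D_\alpha E_\beta=D_\alpha\pcirc E_\beta$ (the product in $R=\End_k(A)$ being composition) for all $(\alpha,\beta)\in\Delta\times\nabla\subset\N^{p+q}$. The normalization is then immediate, since $(D\boxtimes E)_{(0,0)}=D_0\pcirc E_0=\Id\pcirc\Id=\Id$.

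For the Leibniz identity I would fix $(\alpha,\beta)\in\Delta\times\nabla$ and $x,y\in A$, and compute $(D_\alpha\pcirc E_\beta)(xy)$ in two stages. First I apply the Leibniz rule for $E$ to get $E_\beta(xy)=\sum_{\beta'+\beta''=\beta} E_{\beta'}(x)E_{\beta''}(y)$; since $\nabla$ is a co-ideal and $\beta',\beta''\leq\beta\in\nabla$, every index occurring lies in $\nabla$. Then, using that $D_\alpha$ is $k$-linear, I push $D_\alpha$ through this finite sum and apply the Leibniz rule for $D$ to each term, $D_\alpha\!\left(E_{\beta'}(x)\,E_{\beta''}(y)\right)=\sum_{\alpha'+\alpha''=\alpha} D_{\alpha'}(E_{\beta'}(x))\,D_{\alpha''}(E_{\beta''}(y))$, again with all indices in $\Delta$ by the co-ideal property. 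Collecting the two nested sums yields exactly $\sum_{(\alpha',\beta')+(\alpha'',\beta'')=(\alpha,\beta)} (D_{\alpha'}\pcirc E_{\beta'})(x)\,(D_{\alpha''}\pcirc E_{\beta''})(y)$, which is the Leibniz identity for $D\boxtimes E$ along the decomposition $(\alpha,\beta)=(\alpha',\beta')+(\alpha'',\beta'')$ in $\N^{p+q}$.

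A more conceptual alternative would invoke the identity $D\boxtimes E=(\iota\sbullet D)(\kappa\sbullet E)$ from \ref{nume:properties-external-x}, where $\iota,\kappa$ are the combinatorial (hence constant-coefficient) substitution maps attached to the inclusions $\bfs,\bft\hookrightarrow\bfs\sqcup\bft$, and then use that $\HS^{p+q}_k(A;\Delta\times\nabla)$ is a subgroup of $\U^{p+q}(\End_k(A);\Delta\times\nabla)$; but this route presupposes that the substitution action carries HS-derivations to HS-derivations, which is heavier machinery than the statement requires, so I would keep the direct computation. I expect no genuine obstacle here: the claim is a purely formal convolution identity. The only point needing care is bookkeeping, namely checking that every multi-index produced by the two Leibniz expansions remains inside $\Delta$ (resp. $\nabla$) — guaranteed by the co-ideal property — and that the convolution over $\N^{p+q}$ factors as the product of the convolutions over $\N^p$ and $\N^q$, with the interchange of $D_\alpha$ and a finite sum justified by $k$-linearity of each component.
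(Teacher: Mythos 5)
Your proof is correct, and it is essentially the paper's own argument: the paper declares this proposition ``clear and left to the reader,'' and the intended verification is exactly your direct computation, namely $(D\boxtimes E)_{(0,0)}=D_0\pcirc E_0=\Id$ together with the two nested Leibniz expansions (first for $E$, then for $D$ after pushing the $k$-linear map $D_\alpha$ through the finite sum), which recombine into the convolution identity over $\N^{p+q}$. Your side remark is also accurate: the route via $D\boxtimes E=(\iota\sbullet D)(\kappa\sbullet E)$ and Proposition \ref{prop:equiv-action-subs-HS} works but would invoke the substitution-map machinery, which at this point in the paper has not yet been recalled, so the elementary computation is the right choice.
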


\begin{defi}  \label{defi:sbullet-0}
For each $a\in A^p$ and for each $D\in \HS_k^p(A;\Delta)$, we define $a \sbullet D$ as
$$ (a\sbullet D)_\alpha := a^\alpha D_\alpha,\quad \forall \alpha\in \Delta.
$$
It is clear that $a \sbullet D\in \HS_k^p(A;\Delta)$, $a'\sbullet (a\sbullet D) = (a'a)\sbullet D$, $1 \sbullet D=D$ and $0 \sbullet D=\mathbb{I}$.
\end{defi}

If $\Delta' \subset \Delta \subset \N^p$ are non-empty co-ideals,
we have $\tau_{\Delta \Delta'}(a\sbullet D)=a\sbullet \tau_{\Delta \Delta'}(D)$. 
In particular, the image of $\tau_{m1}: \HS_k(A;m) \to \HS_k(A;1) \equiv \Der_k(A)$
is an $A$-submodule.

\begin{notacion} \label{notacion:pcirc-HS}
Let us denote:
\begin{eqnarray*}
&\displaystyle  \Hom_{k-\text{\rm alg}}^\pcirc(A,A[[\bfs]]_\Delta) :=
\left\{ f \in \Hom_{k-\text{\rm alg}}(A,A[[\bfs]]_\Delta),\ f(a) \equiv a\!\!\!\!\mod \left(\fn_0\right)_A\  \forall a\in A \right\}, &\\
&\displaystyle  \Aut_{k[[\bfs]]_\Delta-\text{\rm alg}}^\pcirc(A[[\bfs]]_\Delta) :=
 \left\{ f \in \Aut_{k[[\bfs]]_\Delta-\text{\rm alg}}(A[[\bfs]]_\Delta)\ |\ f(a) \equiv a_0\!\!\!\!\mod \left(\fn_0\right)_A\ \forall a\in A[[\bfs]]_\Delta \right\}.
\end{eqnarray*}
\end{notacion}
It is clear that $\Hom_{k-\text{\rm alg}}^\pcirc(A,A[[\bfs]]_\Delta) \subset \Hom_k^\pcirc(A,A[[\bfs]]_\Delta)$ and 
$$\Aut_{k[[\bfs]]_\Delta-\text{\rm alg}}^\pcirc(A[[\bfs]]_\Delta) \subset \Aut_{k[[\bfs]]_\Delta}^\pcirc(A[[\bfs]]_\Delta)$$ (see Notation \ref{notacion:pcirc}) are subgroups and
we have  group isomorphisms (see (\ref{eq:Aut-iso-pcirc}) and  (\ref{eq:U-iso-pcirc})):

\begin{equation} \label{eq:HS-funda}
\begin{tikzcd}
\HS^p_k(A;\Delta) \ar[r,"D \mapsto \widetilde{D}","\simeq"'] & \Aut_{k[[\bfs]]_\Delta-\text{\rm alg}}^\pcirc(A[[\bfs]]_\Delta)  \ar[r,"\text{restr.}","\simeq"'] & \Hom_{k-\text{\rm alg}}^\pcirc(A,A[[\bfs]]_\Delta).
\end{tikzcd}
\end{equation}
The composition of the above isomorphisms is given by:
\begin{equation} \label{eq:HS-iso-Hom(A,A[[s]])}
 D\in \HS^p_k(A;\Delta) \stackrel{\sim}{\longmapsto} \Phi_D := \left[a\in A \mapsto \sum_{\scriptscriptstyle \alpha\in\Delta} D_\alpha(a) \bfs^\alpha\right] \in \Hom_{k-\text{\rm alg}}^\pcirc(A,A[[\bfs]]_\Delta).
\end{equation}
Notice that the identity $D_0 = \Id$ corresponds to the fact that $\Phi_D(a) \equiv a$ modulo $\left(\fn_0\right)_A$ for all $a\in A$, Leibniz identities in Definition \ref{defi:HS} correspond to the fact that $\Phi_D$ is a ring homomorphism, and $k$-linearity of the $D_\alpha$ correspond to $k$-linearity of $\Phi_D$.
\medskip

For each HS-derivation $D\in \HS^p_k(A;\Delta)$ we have $\widetilde{D} = \left(\Phi_D\right)^e$, i.e.:
$$ \widetilde{D}\left( \sum_{\scriptscriptstyle \alpha\in\Delta} a_\alpha \bfs^\alpha \right) = 
\sum_{\scriptscriptstyle \alpha\in\Delta} \Phi_D(a_\alpha) \bfs^\alpha\quad 
$$
for all $\sum_\alpha a_\alpha \bfs^\alpha \in A[[\bfs]]_\Delta$, and for any $E\in \HS^p_k(A;\Delta)$ we have $\Phi_{D \smallcirc E} = \widetilde{D} \pcirc \Phi_E$. 
If $\Delta'\subset \Delta$ is another non-empty co-ideal and
 we denote by  $\pi_{\Delta \Delta'}: A[[\bfs]]_\Delta \to A[[\bfs]]_{\Delta'}$ the projection (or truncation), one has $\Phi_{\tau_{\Delta \Delta'} (D)} = \pi_{\Delta \Delta'}\pcirc \Phi_D$. 

\begin{defi} \label{def:ell-new} For each HS-derivation
$E\in\HS^p_k(A;\Delta)$, we denote\footnote{This definition changes slightly with respect to Definition (1.2.7) in \cite{nar_2012}.}
$$ \ell(E) := \min \{r\geq 1 \ |\ \exists \alpha\in \Delta, |\alpha|= r, E_\alpha \neq 0 \} \geq 1$$
if $E\neq \mathbb{I}$ and $ \ell(E) = \infty$ if $E= \mathbb{I}$. In other words, $\ell (E) = \ord (E-\mathbb{I})$. 
\end{defi}

We obviously have $\ell(E\pcirc E') \geq \min \{\ell(E),\ell(E')\}$ and $\ell(E^*) = \ell(E)$. Moreover, if $\ell(E') > \ell(E)$, then $\ell(E\pcirc E')=\ell(E)$. The next two results are proven in Propositions 7 and 8 of \cite{nar_subst_2018}.

\begin{prop}  \label{prop:ell-new}
For each $D\in \HS^p_k(A;\Delta)$ we have that $D_\alpha$ is a $k$-linear differential operator of order $\leq \lfloor \frac{|\alpha|}{\ell(D)}\rfloor$
for all $\alpha\in \Delta$. 
\end{prop}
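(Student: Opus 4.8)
The plan is to prove the statement by induction on $|\alpha|$, using the standard recursive characterization of differential operators (see \cite{ega_iv_4}): writing $a$ also for the multiplication operator $x\mapsto ax$, a $k$-linear endomorphism $P$ of $A$ is a differential operator of order $\leq n$ if and only if the commutator $[P,a] = P\pcirc a - a\pcirc P$ is a differential operator of order $\leq n-1$ for every $a\in A$, with the convention that the only operator of order $\leq -1$ is $0$ and that the operators of order $\leq 0$ are exactly the multiplications by elements of $A$. We may clearly assume $D\neq \mathbb{I}$, so that $\ell := \ell(D)$ is a finite integer $\geq 1$ and, by the very definition of $\ell(D)$, we have $D_\beta = 0$ for every $\beta$ with $0 < |\beta| < \ell$.

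The key computation is the following commutator identity. For $a\in A$ and $\alpha\in\Delta$, the Leibniz rule in Definition \ref{defi:HS} together with $D_0 = \Id$ gives, for all $x\in A$,
\begin{equation*}
[D_\alpha,a](x) = D_\alpha(ax) - a\, D_\alpha(x) = \sum_{\substack{\scriptscriptstyle\beta+\gamma=\alpha\\ \scriptscriptstyle\beta\neq 0}} D_\beta(a)\, D_\gamma(x),
\end{equation*}
where the term $\beta=0$ cancels against $a\, D_\alpha(x)$. Hence, as an operator, $[D_\alpha,a] = \sum_{\beta+\gamma=\alpha,\ \beta\neq 0} D_\beta(a)\pcirc D_\gamma$, a finite sum (there are finitely many $\beta\leq\alpha$) in which each coefficient $D_\beta(a)$ acts by multiplication, i.e. as an operator of order $0$.

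Now I would induct on $|\alpha|$. For $\alpha=0$ we have $D_0=\Id$, of order $0 = \lfloor 0/\ell\rfloor$, and for $0<|\alpha|<\ell$ we have $D_\alpha=0$, so the bound $\lfloor|\alpha|/\ell\rfloor = 0$ holds trivially; these are the base cases. For $|\alpha|\geq\ell$, consider a nonzero summand $D_\beta(a)\pcirc D_\gamma$ above: then $D_\beta\neq 0$ and $\beta\neq 0$ force $|\beta|\geq \ell$, whence $|\gamma|\leq |\alpha|-\ell < |\alpha|$. By the induction hypothesis $D_\gamma$ has order $\leq \lfloor|\gamma|/\ell\rfloor \leq \lfloor(|\alpha|-\ell)/\ell\rfloor = \lfloor|\alpha|/\ell\rfloor - 1$; composing with the order-$0$ operator $D_\beta(a)$ preserves this bound, and a finite sum of operators of order $\leq \lfloor|\alpha|/\ell\rfloor-1$ again has order $\leq \lfloor|\alpha|/\ell\rfloor - 1$. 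Thus $[D_\alpha,a]$ has order $\leq \lfloor|\alpha|/\ell\rfloor - 1$ for every $a\in A$, and the recursive characterization yields that $D_\alpha$ has order $\leq \lfloor|\alpha|/\ell\rfloor$, completing the induction.

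The argument is essentially routine: the only genuine idea is the cancellation of the $\beta=0$ term in the commutator, which is precisely what lowers the order and couples the order bound to $\ell(D)$. The points requiring care are the monotone floor arithmetic $\lfloor(|\alpha|-\ell)/\ell\rfloor = \lfloor|\alpha|/\ell\rfloor-1$ and the correct handling of the base cases, both elementary; no step here is expected to be a serious obstacle.
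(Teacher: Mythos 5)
Your proof is correct. The paper itself gives no argument for this proposition --- it defers to Proposition 7 of \cite{nar_subst_2018} --- and your induction on $|\alpha|$ via the commutator identity $[D_\alpha,a]=\sum_{\beta+\gamma=\alpha,\ \beta\neq 0}D_\beta(a)\pcirc D_\gamma$ (with the $\beta=0$ term cancelling, and $D_\beta=0$ forcing $|\beta|\geq \ell(D)$ in any nonzero summand) is essentially the argument of that reference, so there is nothing to add.
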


\noindent
As a consequence of the above proposition we have $\HS^p_k(A;\Delta) \subset \Ufil^p(\DD_{A/k};\Delta)$.

\begin{lemma} \label{lemma:ell-corchete}
For any $D,E\in \HS^{\bfs}_k(A;\Delta)$ we have $ \ell([D,E]) \geq \ell(D) + \ell(E)$.
\end{lemma}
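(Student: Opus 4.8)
The plan is to work inside the associative ring $R[[\bfs]]_\Delta$ with $R=\End_k(A)$, where $\HS^{\bfs}_k(A;\Delta)$ sits as a subgroup of $\U^p(R;\Delta)$ under the product $\pcirc$, and to exploit the order filtration together with the identity $\ell(F)=\ord(F-\mathbb{I})$ valid for every HS-derivation $F$ (Definition \ref{def:ell-new}). Throughout I read $[D,E]$ as the group commutator $D\pcirc E\pcirc D^*\pcirc E^*$.

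First I would record the two estimates for $\ord$ that I need in the truncated ring: for $x,y\in R[[\bfs]]_\Delta$ one has $\ord(x+y)\geq\min\{\ord x,\ord y\}$ and $\ord(x\pcirc y)\geq\ord x+\ord y$. The additive one is immediate from the definition of $\ord$ via the embedding $R[[\bfs]]_\Delta\hookrightarrow R[[\bfs]]$. For the multiplicative one I would observe that the product in $R[[\bfs]]_\Delta$ is obtained from the product in $R[[\bfs]]$ by the projection that kills the monomials outside $\Delta$; since deleting coefficients can only raise the order, the inequality descends from the corresponding (standard) one in $R[[\bfs]]$.

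Next, set $d=\ell(D)$, $e=\ell(E)$ and write $D=\mathbb{I}+\delta$, $E=\mathbb{I}+\epsilon$ with $\ord\delta=d$, $\ord\epsilon=e$ and vanishing constant terms. The key algebraic step is the identity $[D,E]=(D\pcirc E)\pcirc(E\pcirc D)^*$, which I would check directly by multiplying on the right by $E\pcirc D$. Writing $\mathbb{I}=(E\pcirc D)\pcirc(E\pcirc D)^*$ and subtracting gives
$$ [D,E]-\mathbb{I}=\left(D\pcirc E-E\pcirc D\right)\pcirc(E\pcirc D)^*. $$
Expanding $D\pcirc E$ and $E\pcirc D$ in terms of $\delta,\epsilon$, the contributions $\mathbb{I}$, $\delta$, $\epsilon$ cancel and only the ring commutator $\delta\pcirc\epsilon-\epsilon\pcirc\delta$ survives, so $\ord(D\pcirc E-E\pcirc D)\geq d+e$ by the product estimate. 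Finally, since $E\pcirc D\in\U^p(R;\Delta)$ has constant term $\mathbb{I}$, so does $(E\pcirc D)^*$, whence $\ord (E\pcirc D)^*=0$; applying the product estimate once more to the displayed identity yields $\ell([D,E])=\ord([D,E]-\mathbb{I})\geq d+e=\ell(D)+\ell(E)$.

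The computation is entirely formal once the order estimates are available, so I do not anticipate a serious obstacle; the only point deserving care is the product inequality $\ord(x\pcirc y)\geq\ord x+\ord y$ in $R[[\bfs]]_\Delta$, that is, verifying that passing to the quotient modulo $\Delta_R$ does not lower orders. Everything else is the familiar commutator estimate in a filtered associative algebra.
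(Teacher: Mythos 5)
Your proof is correct and is essentially the paper's own argument: your identity $[D,E]-\mathbb{I}=(D\pcirc E-E\pcirc D)\pcirc(E\pcirc D)^*$ coincides with the paper's identity $[D,E]-\mathbb{I}=\left[(D-\mathbb{I}),(E-\mathbb{I})\right]D^*E^*$, since $D\pcirc E-E\pcirc D=\left[(D-\mathbb{I}),(E-\mathbb{I})\right]$ and $(E\pcirc D)^*=D^*\pcirc E^*$. You merely derive the identity and spell out the order estimates that the paper leaves implicit.
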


\begin{proof} It is a consequence of the identity 
$[D,E]- \mathbb{I} = \left[ (D- \mathbb{I}), (E- \mathbb{I})\right] D^* E^*.
$
\end{proof}

Proposition \ref{prop:ell-new} can be improved in the following way.

\begin{defi} \label{def:ell-new-alpha} For each HS-derivation
$E\in\HS^p_k(A;\Delta)$ and each $\alpha\in\Delta$, we denote 
$ \ell_\alpha(E) := \ell\left(\tau_{\Delta,\fn_\alpha}(E)\right)$, 
i.e.
$$ \ell_\alpha(E):= \min \{r\geq 1 \ |\ \exists \beta\leq\alpha, |\beta|= r, E_\beta \neq 0 \} \geq 1$$
if $\tau_{\Delta,\fn_\alpha}(E)\neq \mathbb{I}$ and $ \ell_\alpha(E) = \infty$ if $\tau_{\Delta,\fn_\alpha}(E)= \mathbb{I}$.
\end{defi}

It is clear that $\ell(E) \leq \ell_\alpha(E)$ for all $\alpha\in\Delta$. Replacing $D$ with $\tau_{\Delta,\fn_\alpha}(D)$ makes obvious the following proposition.

\begin{prop}  \label{prop:ell-new-alpha}
For each $D\in \HS^p_k(A;\Delta)$ we have that $D_\alpha$ is a $k$-linear differential operator or order $\leq \lfloor \frac{|\alpha|}{\ell_\alpha(D)}\rfloor$
for all $\alpha\in \Delta$. 
\end{prop}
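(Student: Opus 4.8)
The plan is to reduce this sharper estimate to Proposition~\ref{prop:ell-new} by passing to the truncation of $D$ to the co-ideal $\fn_\alpha$, exactly as suggested by the remark preceding the statement. First I would check that the truncation is legitimate: since $\Delta$ is a co-ideal and $\alpha\in\Delta$, every $\beta\leq\alpha$ already lies in $\Delta$, so $\fn_\alpha\subset\Delta$ and hence the truncation $D':=\tau_{\Delta,\fn_\alpha}(D)$ is a well-defined element of $\HS^p_k(A;\fn_\alpha)$.

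The key observation is that truncations act componentwise, as recorded in (\ref{eq:truncations}): for every $\beta\in\fn_\alpha$ one has $D'_\beta=D_\beta$, and in particular $D'_\alpha=D_\alpha$ because $\alpha\in\fn_\alpha$. Moreover, Definition~\ref{def:ell-new-alpha} is tailored precisely so that $\ell(D')=\ell\bigl(\tau_{\Delta,\fn_\alpha}(D)\bigr)=\ell_\alpha(D)$. Thus both the operator $D_\alpha$ whose order we wish to bound and the exponent $\ell_\alpha(D)$ appearing in the bound are intrinsic to $D'$.

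It then remains only to invoke the already-established coarse estimate. Applying Proposition~\ref{prop:ell-new} to $D'\in\HS^p_k(A;\fn_\alpha)$ shows that each $D'_\beta$ is a $k$-linear differential operator of order $\leq\lfloor |\beta|/\ell(D')\rfloor$; specializing to $\beta=\alpha$ gives that $D_\alpha=D'_\alpha$ has order $\leq\lfloor |\alpha|/\ell(D')\rfloor=\lfloor |\alpha|/\ell_\alpha(D)\rfloor$, which is the claim.

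I expect no genuine obstacle here: the whole content is the remark that $D_\alpha$ depends only on $\tau_{\Delta,\fn_\alpha}(D)$, so that the estimate of Proposition~\ref{prop:ell-new} applied to this truncation is automatically the sharper one, the improvement being exactly the inequality $\ell_\alpha(D)\geq\ell(D)$ already noted before the statement. The only points requiring a line of verification are the inclusion $\fn_\alpha\subset\Delta$ and the componentwise compatibility $\tau_{\Delta,\fn_\alpha}(D)_\alpha=D_\alpha$, both of which are immediate.
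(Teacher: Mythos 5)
Your proof is correct and is exactly the paper's argument: the paper's own proof is the one-line remark that replacing $D$ with $\tau_{\Delta,\fn_\alpha}(D)$ reduces the claim to Proposition~\ref{prop:ell-new}, which is precisely the reduction you spell out (including the observations that $\fn_\alpha\subset\Delta$, that $\tau_{\Delta,\fn_\alpha}(D)_\alpha=D_\alpha$, and that $\ell\bigl(\tau_{\Delta,\fn_\alpha}(D)\bigr)=\ell_\alpha(D)$). No gaps; you have merely made explicit what the paper leaves implicit.
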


\subsection{The action of substitution maps on HS-derivations}

In this section, $k$ will be a commutative ring, $A$ a commutative $k$-algebra, $R=\End_k(A)$, $\bfs =\{s_1,\dots,s_p\}$, $\bft =\{t_1,\dots,t_p\}$  sets of variables and $\Delta \subset \N^p$, $\nabla \subset \N^q$ non-empty co-ideals.
\medskip

Let us recall Proposition 10 in \cite{nar_subst_2018}.

\begin{prop} \label{prop:equiv-action-subs-HS}
For any substitution map $\varphi:A[[\bfs]]_\Delta \to A[[\bft]]_\nabla$, we have:
\begin{enumerate}
\item[1)] $\varphi_*\left(\Hom_{k-\text{\rm alg}}^\pcirc(A,A[[\bfs]]_\Delta)\right) \subset \Hom_{k-\text{\rm alg}}^\pcirc(A,A[[\bft]]_\nabla)$,
\item[2)] $\varphi \sbullet \HS^p_k(A;\Delta) \subset \HS^q_k(A;\nabla)$,
\item[3)] $ \varphi\sbullet \Aut_{k[[\bfs]]_\Delta-\text{\rm alg}}^\pcirc(A[[\bfs]]_\Delta)  \subset
\Aut_{k[[\bft]]_\nabla-\text{\rm alg}}^\pcirc(A[[\bft]]_\nabla)$.
\end{enumerate}
\end{prop}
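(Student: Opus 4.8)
The plan is to exploit the two group isomorphisms of (\ref{eq:HS-funda}) together with the commutative diagram (\ref{eq:diag-funda}), which together make the three assertions equivalent. Accordingly, I would prove the most transparent statement, namely 1), by a direct check, and then transport it first to 3) via the restriction isomorphism and finally to 2) via the isomorphism $D\mapsto\widetilde{D}$. The whole argument rests on a single elementary fact: a substitution map $\varphi$ is an $A$-algebra homomorphism that preserves constant terms, so precomposing with it preserves both the $k$-algebra property and the augmentation condition.

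First I would establish 1). Let $f\in\Hom_{k-\text{\rm alg}}^\pcirc(A,A[[\bfs]]_\Delta)$, so that $f$ is a homomorphism of $k$-algebras with $f(a)\equiv a\bmod\left(\fn_0\right)_A$ for every $a\in A$. Then $\varphi_*(f)=\varphi\pcirc f$ (see (\ref{eq:varphi_star})) is again a homomorphism of $k$-algebras, being the composite of $f$ with the $A$-algebra (hence $k$-algebra) homomorphism $\varphi$. For the congruence, the constant term of $f(a)$ is $a$, so the augmentation-compatibility (\ref{eq:compat-augment}) of $\varphi$ gives $\varphi(f(a))\equiv\left(f(a)\right)_0=a\bmod\left(\fn_0\right)_A$. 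Hence $\varphi_*(f)\in\Hom_{k-\text{\rm alg}}^\pcirc(A,A[[\bft]]_\nabla)$, which is exactly 1). The only thing being used is that $\varphi$ respects multiplication and the augmentation, both immediate from $\varphi$ being an $A$-algebra substitution map.

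Next I would deduce 3). By (\ref{eq:HS-funda}) the restriction map $g\mapsto g|_A$ carries $\Aut_{k[[\bft]]_\nabla-\text{\rm alg}}^\pcirc(A[[\bft]]_\nabla)$ isomorphically onto $\Hom_{k-\text{\rm alg}}^\pcirc(A,A[[\bft]]_\nabla)$, so the preimage of the latter under restriction is precisely the former. Given $g\in\Aut_{k[[\bfs]]_\Delta-\text{\rm alg}}^\pcirc(A[[\bfs]]_\Delta)$, its restriction $g|_A$ lies in $\Hom_{k-\text{\rm alg}}^\pcirc(A,A[[\bfs]]_\Delta)$, and the right-hand square of (\ref{eq:diag-funda}) (with $E=F=A$, where the right vertical map is $(\varphi_A)_*=\varphi_*$) yields $(\varphi\sbullet g)|_A=\varphi_*\!\left(g|_A\right)$, which belongs to $\Hom_{k-\text{\rm alg}}^\pcirc(A,A[[\bft]]_\nabla)$ by 1). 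Since $\varphi\sbullet g$ restricts into this algebra subgroup, the remark on preimages forces $\varphi\sbullet g\in\Aut_{k[[\bft]]_\nabla-\text{\rm alg}}^\pcirc(A[[\bft]]_\nabla)$, which is 3).

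Finally, 2) follows from 3) by applying the isomorphism $D\mapsto\widetilde{D}$ of (\ref{eq:HS-funda}) together with the identity $\widetilde{\varphi\sbullet D}=\varphi\sbullet\widetilde{D}$ recorded in the discussion preceding (\ref{eq:atencion-bullet}): for $D\in\HS_k^p(A;\Delta)$ one has $\widetilde{D}\in\Aut_{k[[\bfs]]_\Delta-\text{\rm alg}}^\pcirc(A[[\bfs]]_\Delta)$, hence $\widetilde{\varphi\sbullet D}=\varphi\sbullet\widetilde{D}\in\Aut_{k[[\bft]]_\nabla-\text{\rm alg}}^\pcirc(A[[\bft]]_\nabla)$ by 3), and therefore $\varphi\sbullet D\in\HS_k^q(A;\nabla)$. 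I do not expect a genuine obstacle: the entire mathematical content sits in 1), namely that precomposition with an $A$-algebra homomorphism preserves the $k$-algebra and augmentation conditions, and the rest is formal transport along the already-established isomorphisms. The one point demanding mild care is verifying that the vertical maps of (\ref{eq:diag-funda}) carry the ``algebra'' subgroups into one another compatibly, but this is exactly what 1) supplies on the $\Hom$ side and what the restriction isomorphism of (\ref{eq:HS-funda}) then propagates to the $\Aut$ and $\HS$ sides.
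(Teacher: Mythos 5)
Your proposal is correct, but there is nothing in this paper to compare it against: the paper does not prove Proposition \ref{prop:equiv-action-subs-HS} at all, it simply recalls it from Proposition 10 of \cite{nar_subst_2018} (``Let us recall Proposition 10 in \ldots''). Your argument is a legitimate self-contained reconstruction from the material the paper does recall. The real content sits in part 1), exactly as you say: $\varphi$ is an $A$-algebra (hence $k$-algebra) map, so $\varphi\pcirc f$ is a $k$-algebra map, and (\ref{eq:compat-augment}) preserves the augmentation condition. Parts 3) and 2) are then formal transport: for 3) you use that $\varphi\sbullet(-)$ sends $\Aut_{k[[\bfs]]_\Delta}^\pcirc(A[[\bfs]]_\Delta)$ into $\Aut_{k[[\bft]]_\nabla}^\pcirc(A[[\bft]]_\nabla)$, the commutation $(\varphi\sbullet g)|_A=\varphi_*\left(g|_A\right)$ from (\ref{eq:atencion-bullet})/(\ref{eq:diag-funda}) (noting $\varphi_A=\varphi$, so $(\varphi_A)_*=\varphi_*$), and the preimage argument, which is valid precisely because the restriction map is injective on the full group $\Aut_{k[[\bft]]_\nabla}^\pcirc(A[[\bft]]_\nabla)$ by (\ref{eq:Aut-iso-pcirc}) while its restriction to the algebra subgroup surjects onto $\Hom_{k-\text{\rm alg}}^\pcirc(A,A[[\bft]]_\nabla)$ by (\ref{eq:HS-funda}); for 2) you use $\widetilde{\varphi\sbullet D}=\varphi\sbullet\widetilde{D}$ together with the fact that $\HS^q_k(A;\nabla)\subset\U^q(\End_k(A);\nabla)$ corresponds exactly to $\Aut_{k[[\bft]]_\nabla-\text{\rm alg}}^\pcirc(A[[\bft]]_\nabla)$ under (\ref{eq:HS-funda}). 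All of these ingredients are explicitly available in the paper, so the proof is complete; it also matches the way the paper itself organizes these data in diagram (\ref{eq:diag-funda-HS}), and is presumably the same mechanism as the proof in the cited reference.
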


We have then a commutative diagram:
\begin{equation} \label{eq:diag-funda-HS}
\begin{tikzcd}
\Hom_{k-\text{\rm alg}}^\pcirc(A,A[[\bfs]]_\Delta) \ar[d,"\varphi_*"'] & \HS^p_k(A;\Delta) \ar[l,"\sim"', "\Phi_D \mapsfrom D"] \ar[r,"\sim"] \ar[d,"\varphi\sbullet (-)"] & \Aut_{k[[\bfs]]_\Delta-\text{\rm alg}}^\pcirc(A[[\bfs]]_\Delta) \ar[d,"\varphi\sbullet (-)"] \\
\Hom_{k-\text{\rm alg}}^\pcirc(A,A[[\bft]]_\nabla)  & \HS^q_k(A;\nabla) \ar[l,"\sim"', "\Phi_D \mapsfrom D"]
\ar[r,"\sim"] &
\Aut_{k[[\bft]]_\nabla-\text{\rm alg}}^\pcirc(A[[\bft]]_\nabla).
\end{tikzcd}
\end{equation}
In particular, for any HS-derivation $D\in \HS^p_k(A;\Delta)$ we have $\varphi \sbullet D\in \HS^q_k(A;\nabla)$ (see \ref{nume:def-sbullet}). Moreover
$ \Phi_{\varphi \sbullet D} = \varphi \pcirc \Phi_D$.
\medskip

It is clear that for any co-ideals $\Delta' \subset \Delta$ and $\nabla' \subset \nabla$ with $\varphi \left( \Delta'_A/\Delta_A \right) \subset \nabla'_A/\nabla_A$
we have
\begin{equation} \label{eq:trunca_bullet}
\tau_{\nabla \nabla'}(\varphi \sbullet D) = \varphi' \sbullet \tau_{\Delta \Delta'}(D),
\end{equation}
where $\varphi': A[[\bfs]]_{\Delta'} \to A[[\bft]]_{\nabla'}$ is the substitution map induced by $\varphi$.
\medskip

Let us notice that any  $a\in A^p$ gives rise to a substitution map $\varphi: A[[\bfs]]_\Delta  \to A[[\bfs]]_\Delta$ given by $\varphi(s_i)= a_i s_i$ for all $i=1,\dots,p$, and one has $a \sbullet D = \varphi \sbullet D$.
\bigskip

\numero
\label{nume:def-sbullet-HS} Let $\bfu=\{u_1,\dots,u_r\}$ be another set of variables, $\Omega\subset \N^r$ a non-empty co-ideal, $\varphi\in\Sub_A(p,q;\Delta,\nabla)$, $\psi\in\Sub_A(q,r;\nabla,\Omega)$  substitution maps and $D,D'\in\HS_k^p(A;\Delta)$ HS-derivations. From \ref{nume:def-sbullet} we deduce the following properties:\medskip

\noindent -) If we denote $E:= \varphi \sbullet D \in\HS_k^q(A;\nabla)$,  we have
\begin{equation} \label{eq:expression_phi_D} 
E_0=\Id,\quad  E_e = \sum_{\substack{\scriptstyle \alpha\in\Delta\\ \scriptstyle |\alpha|\leq |e| }} {\bf C}_e(\varphi,\alpha) D_\alpha,\quad \forall e\in \nabla.
\end{equation}
-) If $\varphi = \mathbf{0}$ is the trivial substitution map or if $D=\mathbb{I}$, then 
$\varphi \sbullet D = \mathbb{I}$.
\medskip

\noindent
-) If $\varphi$ has \underline{constant coefficients}, then $(\varphi \sbullet D)^* = \varphi \sbullet D^*$
and
$\varphi \sbullet (D\pcirc D') = (\varphi \sbullet D) \pcirc (\varphi \sbullet D')$. The general case is treated in Proposition \ref{prop:varphi-D-main}.
\medskip

\noindent 
-) $ \psi \sbullet (\varphi \sbullet D) = (\psi \pcirc \varphi) \sbullet D$.
\medskip

\noindent 
-) $\ell(\varphi\sbullet D)\geq \ord(\varphi) \ell(D)$.
\medskip

The following result is proven in Propositions 11 and 12 of \cite{nar_subst_2018}.

\begin{prop} \label{prop:varphi-D-main}
Let $\varphi:A[[\bfs]]_\Delta  \to A[[\bft]]_\nabla$ be a substitution map. Then, 
the following assertions hold:
\begin{enumerate}
\item[(i)] For each $D\in\HS_k^p(A;\Delta)$ there is a unique substitution map
$\varphi^D: A[[\bfs]]_\Delta  \to A[[\bft]]_\nabla$ such that 
$\left(\widetilde{\varphi \sbullet D}\right) \pcirc \varphi^D =  \varphi \pcirc \widetilde{D}$. 
Moreover, $\left(\varphi\sbullet D\right)^* = \varphi^D \sbullet D^*$, $\varphi^{\mathbb{I}} = \varphi$ and:
$$ {\bf C}_e(\varphi,f+\nu) = \sum_{\substack{\scriptscriptstyle\beta+\gamma=e\\ 
\scriptscriptstyle |f+g|\leq |\beta|,|\nu|\leq |\gamma| }} {\bf C}_\beta(\varphi,f+g) D_g({\bf C}_\gamma(\varphi^D,\nu)) 
$$
for all $e\in \Delta$ and for all $f,\nu\in\nabla$ with $|f+\nu|\leq |e|$.
\item[(ii)] For each $D,E\in\HS_k^p(A;\Delta)$, we have $\varphi \sbullet (D \pcirc E) = (\varphi \sbullet D) \pcirc (\varphi^D \sbullet E)$ and 
 $\left(\varphi^D\right)^E = \varphi^{D \pcirc E}$. 
 In particular, $\left(\varphi^D\right)^{D^*} = \varphi$. 
\item[(iii)] If $\psi$ is another composable substitution map, then $(\varphi \pcirc \psi)^D = \varphi^{\psi\sbullet D} \pcirc \psi^D$.
\item[(iv)] If $\varphi$ has constant coefficients then $\varphi^D = \varphi$.
\end{enumerate}
\end{prop}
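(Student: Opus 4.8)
The plan is to construct $\varphi^D$ by hand from the displayed identity and then to read off every remaining assertion by formal manipulation of the commutative square
$$\varphi\pcirc\widetilde D=\left(\widetilde{\varphi\sbullet D}\right)\pcirc\varphi^D,$$
the only substantive step being the verification that the constructed map is a substitution map. First I would prove (i). By Proposition \ref{prop:equiv-action-subs-HS} we have $\varphi\sbullet D\in\HS^q_k(A;\nabla)$, so $\widetilde{\varphi\sbullet D}\in\Aut^\pcirc_{k[[\bft]]_\nabla-\text{\rm alg}}(A[[\bft]]_\nabla)$ is invertible; hence the required identity forces $\varphi^D:=\left(\widetilde{\varphi\sbullet D}\right)^{-1}\pcirc\varphi\pcirc\widetilde D$, which already yields uniqueness. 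This composite is a continuous ring homomorphism, and I must check it is an $A$-algebra map with $\ord(\varphi^D(s_i))\geq1$. For $a\in A$ we have $\widetilde D(a)=\Phi_D(a)$, and the relation $\Phi_{\varphi\sbullet D}=\varphi\pcirc\Phi_D$ accompanying diagram (\ref{eq:diag-funda-HS}) gives $\varphi(\Phi_D(a))=\Phi_{\varphi\sbullet D}(a)=\widetilde{\varphi\sbullet D}(a)$, so $\varphi^D(a)=\left(\widetilde{\varphi\sbullet D}\right)^{-1}\left(\widetilde{\varphi\sbullet D}(a)\right)=a$; thus $\varphi^D$ fixes $A$. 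Since $\widetilde D$ is $k[[\bfs]]_\Delta$-linear with $\widetilde D(1)=1$ we get $\widetilde D(s_i)=s_i$, so $\varphi^D(s_i)=\left(\widetilde{\varphi\sbullet D}\right)^{-1}(\varphi(s_i))$; as $\varphi$ raises order and a $k[[\bft]]_\nabla$-linear element of $\Aut^\pcirc$ preserves order, $\ord(\varphi^D(s_i))\geq\ord(\varphi(s_i))\geq1$. Hence $\varphi^D$ is a substitution map. The equality $\varphi^{\mathbb I}=\varphi$ is immediate from $\widetilde{\mathbb I}=\Id$ and $\varphi\sbullet\mathbb I=\mathbb I$.

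Next I would deduce the two identities of (ii). Applying the isomorphism $D\mapsto\Phi_D$ of (\ref{eq:HS-iso-Hom(A,A[[s]])}) and using $\Phi_{F\pcirc G}=\widetilde F\pcirc\Phi_G$ together with $\Phi_{\psi\sbullet F}=\psi\pcirc\Phi_F$, I compute $\Phi_{\varphi\sbullet(D\pcirc E)}=\varphi\pcirc\widetilde D\pcirc\Phi_E$ and $\Phi_{(\varphi\sbullet D)\pcirc(\varphi^D\sbullet E)}=\widetilde{\varphi\sbullet D}\pcirc\varphi^D\pcirc\Phi_E$, which agree by the square of (i); injectivity of $\Phi$ gives $\varphi\sbullet(D\pcirc E)=(\varphi\sbullet D)\pcirc(\varphi^D\sbullet E)$. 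Taking $E=D^*$ and using $\varphi\sbullet\mathbb I=\mathbb I$ then yields $(\varphi\sbullet D)^*=\varphi^D\sbullet D^*$, completing the ``moreover'' part of (i). For $(\varphi^D)^E=\varphi^{D\pcirc E}$ I argue by uniqueness: starting from the defining equation $\widetilde{\varphi\sbullet(D\pcirc E)}\pcirc\varphi^{D\pcirc E}=\varphi\pcirc\widetilde D\pcirc\widetilde E$, I substitute $\widetilde{\varphi\sbullet(D\pcirc E)}=\widetilde{\varphi\sbullet D}\pcirc\widetilde{\varphi^D\sbullet E}$ (the first identity of (ii) together with $\widetilde{F\pcirc G}=\widetilde F\pcirc\widetilde G$) and $\varphi\pcirc\widetilde D=\widetilde{\varphi\sbullet D}\pcirc\varphi^D$, then cancel the invertible $\widetilde{\varphi\sbullet D}$ to obtain $\widetilde{\varphi^D\sbullet E}\pcirc\varphi^{D\pcirc E}=\varphi^D\pcirc\widetilde E$, which is exactly the defining equation of $(\varphi^D)^E$. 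The case $E=D^*$ gives $(\varphi^D)^{D^*}=\varphi^{\mathbb I}=\varphi$.

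Assertion (iii) is again a uniqueness argument: using $(\varphi\pcirc\psi)\sbullet D=\varphi\sbullet(\psi\sbullet D)$ from \ref{nume:def-sbullet}, and applying the square of (i) first to the pair $(\varphi,\psi\sbullet D)$ and then to $(\psi,D)$, one checks that $\varphi^{\psi\sbullet D}\pcirc\psi^D$ satisfies $\widetilde{(\varphi\pcirc\psi)\sbullet D}\pcirc\bigl(\varphi^{\psi\sbullet D}\pcirc\psi^D\bigr)=(\varphi\pcirc\psi)\pcirc\widetilde D$, whence it equals $(\varphi\pcirc\psi)^D$. For (iv), when $\varphi$ has constant coefficients each $\varphi(s_i)$ lies in $k[[\bft]]_\nabla$ and is therefore fixed by the $k[[\bft]]_\nabla$-linear map $\widetilde{\varphi\sbullet D}$, so $\widetilde{\varphi\sbullet D}(\varphi(s_i))=\varphi(s_i)=\varphi(\widetilde D(s_i))$; on $A$ both sides of the square restrict to $\Phi_{\varphi\sbullet D}=\varphi\pcirc\Phi_D$. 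As $\widetilde{\varphi\sbullet D}\pcirc\varphi$ and $\varphi\pcirc\widetilde D$ are continuous ring homomorphisms agreeing on the topological generators $A\cup\{s_1,\dots,s_p\}$, they coincide, so $\varphi$ itself solves the defining equation and $\varphi^D=\varphi$ by uniqueness.

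The explicit coefficient formula is the computational counterpart of the square: expanding $\varphi\pcirc\widetilde D=\widetilde{\varphi\sbullet D}\pcirc\varphi^D$ on a monomial $a\bfs^\nu$ with $\widetilde D(a\bfs^\nu)=\sum_g D_g(a)\bfs^{g+\nu}$, using (\ref{eq:exp-substi}) on both sides and the HS-Leibniz rule to expand $D_\alpha\bigl(a\,{\bf C}_\gamma(\varphi^D,\nu)\bigr)$, and comparing the $\bft^e$-coefficients together with the contribution of each $D_g(a)$, produces the stated identity for ${\bf C}_e(\varphi,f+\nu)$ (the separation of the terms $D_g(a)$ being justified by passing to a universal/formal situation). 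The main obstacle is precisely the verification in (i) that the formula-defined $\varphi^D$ is a genuine substitution map, i.e.\ that it fixes $A$: since $\widetilde D$ is only $k[[\bfs]]_\Delta$-linear and not $A$-linear, this is not automatic, and it is exactly here that the compatibility $\Phi_{\varphi\sbullet D}=\varphi\pcirc\Phi_D$ of diagram (\ref{eq:diag-funda-HS}) is indispensable. Once existence and uniqueness are in place, parts (ii)--(iv) are formal diagram-chases resting on the invertibility of $\widetilde{\varphi\sbullet D}$ and on uniqueness.
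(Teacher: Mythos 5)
The paper itself offers no proof of this proposition: it is quoted from Propositions 11 and 12 of \cite{nar_subst_2018}. Your argument therefore has to stand on its own, and most of it does. Setting $\varphi^D:=\left(\widetilde{\varphi\sbullet D}\right)^{-1}\pcirc\varphi\pcirc\widetilde{D}$ is legitimate ($\widetilde{\varphi\sbullet D}$ is an algebra automorphism by Proposition \ref{prop:equiv-action-subs-HS} and (\ref{eq:HS-funda})), uniqueness is then automatic, and your two checks that this composite is a substitution map are correct: it fixes $A$ because $\varphi\pcirc\Phi_D=\Phi_{\varphi\sbullet D}=\widetilde{\varphi\sbullet D}|_A$, and $\ord\bigl(\varphi^D(s_i)\bigr)\geq 1$ because $\widetilde{D}(s_i)=s_i$, $\ord\varphi(s_i)\geq 1$, and elements of $\Aut^\pcirc_{k[[\bft]]_\nabla}(A[[\bft]]_\nabla)$ preserve order. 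The deductions of $\varphi\sbullet(D\pcirc E)=(\varphi\sbullet D)\pcirc(\varphi^D\sbullet E)$ via injectivity of $D\mapsto\Phi_D$, of $(\varphi\sbullet D)^*=\varphi^D\sbullet D^*$, of $(\varphi^D)^E=\varphi^{D\pcirc E}$ by cancelling $\widetilde{\varphi\sbullet D}$, and of (iii) and (iv) by uniqueness are all clean and complete.

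The one genuine gap is the coefficient formula in (i). Expanding the square on $a\bfs^\nu$ gives, for each $e$, an identity of the form $\sum_f {\bf C}_e(\varphi,f+\nu)D_f(a)=\sum_f(\cdots)D_f(a)$ valid for every $a\in A$, and you then ``compare the contribution of each $D_g(a)$''. That step is not valid as stated: the elements $D_f(a)\in A$ are not independent (for $D=\mathbb{I}$ all of them with $f\neq 0$ vanish), so an identity holding for all $a$ does not split term by term. The parenthetical appeal to ``a universal/formal situation'' is exactly where the work lies and is not carried out: one must pass to $B=A[y_g: g\in\Delta]$, extend $D$ to $\bar{D}\in\HS^p_k(B;\Delta)$ with $\bar{D}_g(y_0)=y_g$, and -- crucially -- prove that $(\varphi_B)^{\bar{D}}$ restricts to $\varphi^D$ on $A[[\bfs]]_\Delta$ (this holds because $\varphi_B\sbullet\bar{D}$ extends $\varphi\sbullet D$, but it is a statement about base-change compatibility of the construction $\varphi\mapsto\varphi^D$ that must be proved, not assumed). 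A cleaner patch avoids the issue entirely: apply the square only to $\bfs^\nu$, getting $\varphi(\bfs^\nu)=\widetilde{\varphi\sbullet D}\bigl(\varphi^D(\bfs^\nu)\bigr)$, multiply both sides by $\varphi(\bfs^f)$, and compute the coefficient of $\bft^e$ in $\varphi(\bfs^{f+\nu})=\varphi(\bfs^f)\cdot\widetilde{\varphi\sbullet D}\bigl(\varphi^D(\bfs^\nu)\bigr)$ using (\ref{eq:expression_phi_D}), the multiplicativity identity ${\bf C}_\beta(\varphi,f+g)=\sum_{\beta'+\beta''=\beta}{\bf C}_{\beta'}(\varphi,f)\,{\bf C}_{\beta''}(\varphi,g)$, and the vanishing ${\bf C}_\beta(\varphi,\alpha)=0$ for $|\beta|<|\alpha|$; this yields the stated formula directly, with no independence argument needed.
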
 

\begin{defi} \label{def:D-element}
 Let $S$ be a $k$-algebra over $A$, $D\in\HS^p_k(A;\Delta)$ and $r\in\U^p(S;\Delta)$. We say that
$r$ is a {\em $D$-element} if 
$ r a = \widetilde{D}(a) r$ for all $a\in A[[\bfs]]_\Delta$. 
\end{defi}

Given $D\in  \U^p(\End_k(A);\Delta)$, it is clear that:
$$  D\in \HS^p_k(A;\Delta) \Longleftrightarrow D\ \text{is a $D$-element}.
$$
For $D=\mathbb{I}$ the identity HS-derivation, a $r\in\U^p(S;\Delta)$ is an {\em $\mathbb{I}$-element} if and only if $r$ commutes with all $a\in A[[\bfs]]_\Delta$. If $E\in \HS^p_k(A;\Delta)$ is another HS-derivation, $r\in\U^p(S;\Delta)$ is a $D$-element and $s\in\U^p(S;\Delta)$ is an $E$-element, then $rs$ is a $(D\pcirc E)$-element.
\medskip

The proof of the following lemma is easy and it is left to the reader.

\begin{lemma} \label{lemma:charact-D-elements}
With the above notations, 
for each $r=\sum_\alpha r_\alpha \bfs^\alpha\in\U^p(S;\Delta)$ the following properties are equivalent:
\begin{enumerate}
\item[-)] $r$ is a $D$-element.
\item[-)] $ b r  =  r \widetilde{D^*}(b)$ for all $b\in A[[\bfs]]_\Delta$. 
\item[-)] $r^*$ is a $D^*$-element.
\item[-)] If $r=\sum_\alpha r_\alpha \bfs^\alpha$, we have $r_\alpha a = \sum_{\beta+\gamma=\alpha} D_\beta(a) r_\gamma$ for all $a\in A$ and for all $\alpha \in\Delta$.
\item[-)] $ r a = \widetilde{D}(a) r$ for all $a\in A$.
\end{enumerate}
\end{lemma}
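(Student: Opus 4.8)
The plan is to organize the five conditions around the defining one, $ra=\widetilde{D}(a)r$ for all $a\in A[[\bfs]]_\Delta$, and to prove the cycle of equivalences in four short steps. First I would show that this global identity is equivalent to its restriction to $A$ (the last listed property). The direction from $A[[\bfs]]_\Delta$ to $A$ is trivial; for the converse, write $a=\sum_\gamma a_\gamma\bfs^\gamma$ with $a_\gamma\in A$ and use that the variables $\bfs$ are central in $S[[\bfs]]_\Delta$ together with the $k[[\bfs]]_\Delta$-linearity and continuity of $\widetilde{D}$. Concretely, $ra=\sum_\gamma (ra_\gamma)\bfs^\gamma$ and $\widetilde{D}(a)r=\sum_\gamma\bfs^\gamma\widetilde{D}(a_\gamma)\,r$, so the identity on $A$ propagates coefficientwise to all of $A[[\bfs]]_\Delta$.

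Next I would pass from the restricted identity to the coefficient characterization $r_\alpha a=\sum_{\beta+\gamma=\alpha}D_\beta(a)r_\gamma$. For $a\in A$ one has $\widetilde{D}(a)=\Phi_D(a)=\sum_\beta D_\beta(a)\bfs^\beta$ by \eqref{eq:HS-iso-Hom(A,A[[s]])}, so expanding $ra=\sum_\alpha(r_\alpha a)\bfs^\alpha$ and $\widetilde{D}(a)r=\sum_\alpha\bigl(\sum_{\beta+\gamma=\alpha}D_\beta(a)r_\gamma\bigr)\bfs^\alpha$ and comparing the coefficients of each $\bfs^\alpha$ gives the equivalence at once.

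The only substantive ingredient is that $\widetilde{D}$ is a $k[[\bfs]]_\Delta$-algebra automorphism with inverse $\widetilde{D^*}$: since $D\mapsto\widetilde{D}$ is the group isomorphism of \eqref{eq:HS-funda}, we have $\widetilde{D}\pcirc\widetilde{D^*}=\widetilde{D\smallcirc D^*}=\widetilde{\mathbb{I}}=\Id$, and similarly on the other side. Using this I would produce the two remaining forms. To obtain $br=r\,\widetilde{D^*}(b)$ from the defining identity, substitute $a=\widetilde{D^*}(b)\in A[[\bfs]]_\Delta$, which gives $r\,\widetilde{D^*}(b)=\widetilde{D}\bigl(\widetilde{D^*}(b)\bigr)r=br$; the reverse implication substitutes $b=\widetilde{D}(a)$. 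Finally, to reach the statement that $r^*$ is a $D^*$-element, I would left- and right-multiply $br=r\,\widetilde{D^*}(b)$ by $r^*$ and use $r^*r=rr^*=1$ to get $r^*b=\widetilde{D^*}(b)\,r^*$, reversing the multiplications for the converse.

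I expect no genuine obstacle here: the argument is formal once $\widetilde{D^*}=(\widetilde{D})^{-1}$ is in hand, and that is immediate from the functoriality $\widetilde{D\smallcirc E}=\widetilde{D}\pcirc\widetilde{E}$ recorded in \eqref{eq:HS-funda}. The only point requiring attention is the careful bookkeeping of left versus right multiplication, since $S$ is noncommutative and $A$ acts through $\iota$ on one fixed side; this is routine but must be tracked consistently throughout the four steps.
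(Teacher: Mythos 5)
Your proof is correct. The paper gives no proof of this lemma (it is explicitly ``left to the reader''), and your argument --- reduction of the defining identity to constant series $a\in A$, coefficientwise comparison using $\widetilde{D}(a)=\sum_\beta D_\beta(a)\bfs^\beta$, and the formal substitutions exploiting $\widetilde{D^*}=(\widetilde{D})^{-1}$ together with left/right multiplication by $r^*$ --- is exactly the routine verification the author intends.
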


The following proposition generalizes Proposition \ref{prop:varphi-D-main}.

\begin{prop}   \label{prop:bullet-D-elements} 
Let $S$ be a $k$-algebra over $A$, $D\in\HS^p_k(A;\Delta)$, $\varphi:A[[\bfs]]_\Delta \to A[\bft]]_\nabla$ a substitution map and $r\in\U^p(S;\Delta)$ a $D$-element. 
Then the following properties hold:
\begin{enumerate}
\item[(a)] $ \varphi\sbullet r$ is a $(\varphi\sbullet D)$-element. 
\item[(b)] $\varphi\sbullet (rr') = (\varphi\sbullet r) (\varphi^D \sbullet r')$ for all $r'\in S[[\bfs]]_\Delta$. In particular, 
$(\varphi\sbullet r)^* = \varphi^D \sbullet r^*$. 
\end{enumerate}
Moreover, if $E$ is an $A$-module and $S=\End_k(E)$, then the following identity holds:
\begin{enumerate}
\item[(c)] $\langle \varphi \sbullet r, \varphi^D_E(e) \rangle 
= \varphi_E \left( \langle r,e\rangle\right)$ for all $e\in E[[\bfs]]_\Delta$, i.e.   
$\left(\varphi \sbullet \widetilde{r}\right) \pcirc \varphi^D_E 
= \varphi_E \pcirc \widetilde{r}$.
\end{enumerate}
\end{prop}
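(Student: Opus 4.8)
The plan is to prove the three items in the order (a), (b), (c), since (a) is the mechanism that unlocks the other two, and none of them requires passing to a faithful representation of $S$. For (a) I would use the coefficient form of the $D$-element property (Lemma \ref{lemma:charact-D-elements}): it suffices to check, for every $a\in A$ and every $e\in\nabla$, the identity $(\varphi\sbullet r)_e\, a = \sum_{e'+e''=e}(\varphi\sbullet D)_{e'}(a)\,(\varphi\sbullet r)_{e''}$. Expanding both sides by the explicit formulas (\ref{eq:explicit-bullet}) and (\ref{eq:expression_phi_D}), substituting $r_\alpha a = \sum_{\beta+\gamma=\alpha}D_\beta(a)r_\gamma$ on the left, and collecting the coefficient of each $D_\beta(a)\,r_\gamma$ (the scalars ${\bf C}_\bullet(\varphi,\bullet)$ lie in the \emph{commutative} image of $A$ in $S$, so they move freely among themselves), the whole thing reduces to
\[
{\bf C}_e(\varphi,\beta+\gamma)=\sum_{e'+e''=e}{\bf C}_{e'}(\varphi,\beta)\,{\bf C}_{e''}(\varphi,\gamma),
\]
which is exactly the coefficient comparison in $\varphi(\bfs^{\beta+\gamma})=\varphi(\bfs^\beta)\varphi(\bfs^\gamma)$, i.e. the multiplicativity of the ring homomorphism $\varphi$. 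Thus (a) needs nothing about the finer structure of $\varphi^D$.

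For (b) the key consequence of (a), combined with the defining identity $\widetilde{\varphi\sbullet D}\pcirc\varphi^D=\varphi\pcirc\widetilde D$ of Proposition \ref{prop:varphi-D-main}(i), is the commutation rule $(\varphi\sbullet r)\,\varphi^D(a)=\varphi(\widetilde D(a))\,(\varphi\sbullet r)$ for all $a\in A[[\bfs]]_\Delta$, obtained by feeding $b=\varphi^D(a)$ into the $(\varphi\sbullet D)$-element relation. I would then prove (b) by building $r'$ up from scalars: it holds trivially for a constant $r'=s\in S$, where both sides equal $(\varphi\sbullet r)\,s$ (since $\varphi^D\sbullet s=s$); and if it holds for $r'$ it holds for $a r'$ with $a\in A[[\bfs]]_\Delta$, because $r(ar')=\widetilde D(a)\,rr'$ (as $r$ is a $D$-element), the left $\varphi$-linearity of $\varphi\sbullet(-)$, and the commutation rule together rewrite $\varphi\sbullet\big(r(ar')\big)$ as $(\varphi\sbullet r)\,\varphi^D(a)\,(\varphi^D\sbullet r')=(\varphi\sbullet r)\,(\varphi^D\sbullet(ar'))$. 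Since both sides of (b) are additive and continuous in $r'$ and every element of $S[[\bfs]]_\Delta$ is a convergent sum $\sum_\alpha \bfs^\alpha s_\alpha$, the set of valid $r'$ is a closed additive subgroup containing a dense subset, hence all of $S[[\bfs]]_\Delta$; the formula $(\varphi\sbullet r)^*=\varphi^D\sbullet r^*$ is then the case $r'=r^*$.

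For (c), where $S=\End_k(E)$, I would read both composites as continuous $k$-linear maps $E[[\bfs]]_\Delta\to E[[\bft]]_\nabla$ and identify them by a uniqueness argument. Transporting the $D$-element relations through the ring isomorphism $r\mapsto\widetilde r$ of Lemma \ref{lema:tilde-map}(1), the hypothesis that $r$ is a $D$-element becomes $\widetilde r(a\xi)=\widetilde D(a)\,\widetilde r(\xi)$, while (a) gives the analogous $\widetilde{\varphi\sbullet r}(b\eta)=\widetilde{\varphi\sbullet D}(b)\,\widetilde{\varphi\sbullet r}(\eta)$. Using the left $\varphi$- (resp. $\varphi^D$-) semilinearity of $\varphi_E$ and $\varphi^D_E$ together with $\widetilde{\varphi\sbullet D}\pcirc\varphi^D=\varphi\pcirc\widetilde D$, one checks that \emph{both} $\varphi_E\pcirc\widetilde r$ and $\widetilde{\varphi\sbullet r}\pcirc\varphi^D_E$ satisfy $F(a\xi)=\varphi(\widetilde D(a))\,F(\xi)$; since $\widetilde D(\bfs^\alpha)=\bfs^\alpha$, such an $F$ is determined by its restriction to $E$. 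On a constant $e_0\in E$ both composites return $\varphi_E(\widetilde r(e_0))$, so they agree, which is precisely the claimed $\big(\varphi\sbullet\widetilde r\big)\pcirc\varphi^D_E=\varphi_E\pcirc\widetilde r$.

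The recurring obstacle — and the reason (a) must be established first — is that $A$ need not be central in $S$, so one cannot simply push the $A$-valued coefficients of a substitution past the coefficients of $r$. In (b) this is circumvented by the semilinear build-up, which only ever commutes genuine scalars through the $D$-element relation; in (c) it is exactly the twisted-linearity furnished by (a) that pins the operator down from its restriction to $E$. I expect the most delicate points to be making the uniqueness step in (c) rigorous (continuity plus the identity $\widetilde D(\bfs^\alpha)=\bfs^\alpha$) and, in (b), verifying that the closed additive set of admissible $r'$ genuinely contains the dense subset of finite sums $\sum_\alpha\bfs^\alpha s_\alpha$.
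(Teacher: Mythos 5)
Your proposal is correct. Parts (b) and (c) follow essentially the paper's own route: reduce by additivity and continuity to ``monomial'' inputs ($r'=\bfs^\alpha s_\alpha$, resp.\ $e=\bfs^\alpha e_\alpha$, which your closure-under-left-multiplication induction in (b) and your ``twisted linearity plus agreement on constants'' packaging of (c) amount to), then combine part (a) with the defining identity $\left(\widetilde{\varphi\sbullet D}\right)\pcirc\varphi^D=\varphi\pcirc\widetilde{D}$ of Proposition \ref{prop:varphi-D-main}(i) and the restriction compatibility (\ref{eq:tilde-varphi-M-varphi-F}). The genuine difference is in (a): the paper argues at the operator level in two lines, applying $\varphi_R$ to $rb=\widetilde{D}(b)\,r$ and invoking right $R$-linearity, left $\varphi$-linearity (Lemma \ref{lemma:phi-linearity-of-phi_R}) and (\ref{eq:tilde-varphi-M-varphi-F}), whereas you verify the coefficient criterion of Lemma \ref{lemma:charact-D-elements} directly and reduce it to the structure-constant identity ${\bf C}_e(\varphi,\beta+\gamma)=\sum_{e'+e''=e}{\bf C}_{e'}(\varphi,\beta)\,{\bf C}_{e''}(\varphi,\gamma)$. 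Both rest on the same fact, namely that $\varphi$ is an $A$-algebra map; the paper's version hides the index bookkeeping inside the semilinearity lemmas, while yours makes it explicit. Two small points you should record to make the coefficient route airtight: adopt the convention ${\bf C}_e(\varphi,\alpha)=0$ when $|\alpha|>|e|$, and observe that pairs with $\beta+\gamma\notin\Delta$ contribute zero to both sides because $\bfs^{\beta+\gamma}=0$ in $A[[\bfs]]_\Delta$, so multiplicativity there reads $\varphi(\bfs^\beta)\varphi(\bfs^\gamma)=0$. Your observation that only images of elements of $A$ (the ${\bf C}$'s and the $D_\beta(a)$'s) ever need to be commuted past one another is exactly the right point, and is what legitimizes the computation even though $A$ need not be central in $S$.
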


\begin{proof} \noindent (a) By Lemma \ref{lemma:charact-D-elements} we need to prove that $ \varphi_R(r) b = \left(\widetilde{\varphi\sbullet D}\right)\!\!(b)\, \varphi_R(r)$ for all $b\in A$, but we know that 
$r b = \widetilde{D}(b) r$ and so, from Lemma \ref{lemma:phi-linearity-of-phi_R} and (\ref{eq:tilde-varphi-M-varphi-F}), we deduce that
\begin{eqnarray*}
&\displaystyle (\varphi\sbullet r)b=
 \varphi_R(r) b = \varphi_R(rb) = \varphi_R\left( \widetilde{D}(b) r \right) =
&
\\
&\displaystyle 
\varphi \left( \widetilde{D}(b) \right)  \varphi_R(r) = \left(\widetilde{\varphi\sbullet D}\right)\!\!(b)\, \varphi_R(r) = \left(\widetilde{\varphi\sbullet D}\right)\!\!(b) (\varphi\sbullet r).
\end{eqnarray*}
(b) Since all the involved maps are $k$-linear and continuous, it is enough to prove the identity in the case where $r'= r'_\alpha \bfs^\alpha$ with $r'_\alpha \in R$ and $\alpha\in\Delta$.
But, on one hand we have
$$\varphi\sbullet (rr') =\varphi_R(r r'_\alpha \bfs^\alpha) = \varphi_R(\bfs^\alpha r r'_\alpha) = \varphi(\bfs^\alpha) \varphi_R( r r'_\alpha) = \varphi(\bfs^\alpha) \varphi_R( r ) r'_\alpha = \varphi(\bfs^\alpha) (\varphi\sbullet r) r'_\alpha, 
$$
and on the other hand, by using (a), we have
\begin{eqnarray*}
&\displaystyle
(\varphi\sbullet r) (\varphi^D \sbullet r') =
(\varphi\sbullet r) \varphi^D_R(r'_\alpha \bfs^\alpha) = (\varphi\sbullet r) \varphi^D(\bfs^\alpha) r'_\alpha =
\left(\widetilde{\varphi\sbullet D}\right) (\varphi^D(\bfs^\alpha)) (\varphi\sbullet r) r'_\alpha =
&\\
&\displaystyle
 \left(\left(\widetilde{\varphi\sbullet D}\right) \pcirc \varphi^D\right) (\bfs^\alpha) (\varphi\sbullet r) r'_\alpha =
\left(\varphi \pcirc \widetilde{D}\right) (\bfs^\alpha) (\varphi\sbullet r) r'_\alpha = \varphi(\bfs^\alpha) (\varphi\sbullet r) r'_\alpha
\end{eqnarray*}
and we are done.
For the last part,
$ 1 = \varphi_R(1) = \varphi_R(r r^*) = \varphi_R(r) \varphi^D_R(r^*)$.
\medskip

\noindent (c) As in part (b), it is enough to prove the identity for $e=e_\alpha \bfs^\alpha$, with $\alpha \in\Delta$ and $e_\alpha\in E$. By using the fact that 
$$ \varrho \in \End_k(E)[[\bfs]]_\Delta \longmapsto \widetilde{\varrho} \in \End_{k[[\bfs]]_\Delta}\left(E[[\bfs]]_\Delta \right)
$$
is an $\left(A[[\bfs]]_\Delta ;A[[\bfs]]_\Delta \right)$-linear isomorphism compatible with the $\varphi\sbullet (-)$ operation (see Lemma  \ref{lema:tilde-map} and (\ref{eq:commut-CD-varphi-bullet})),
 we deduce from part (a) that 
$\left(\widetilde{\varphi\sbullet r}\right) b =
\left(\widetilde{\varphi \sbullet D}\right)\!\! (b) \left(\widetilde{\varphi\sbullet r}\right)
$
for all $b\in A[\bft]]_\nabla$, and from
Proposition \ref{prop:varphi-D-main}, (i) and (\ref{eq:atencion-bullet}) we obtain:
\begin{eqnarray*}
&\displaystyle 
\langle \varphi \sbullet r, \varphi^D_E(e) \rangle  = \left(\widetilde{\varphi\sbullet r}\right) \left( \varphi^D_E \left( e_\alpha \bfs^\alpha \right) \right)=
\left(\widetilde{\varphi\sbullet r}\right) \left( \varphi^D(\bfs^\alpha) e_\alpha \right) =
\left(\widetilde{\varphi \sbullet D}\right) \left( \varphi^D(\bfs^\alpha) \right) \left(\widetilde{\varphi\sbullet r}\right)(e_\alpha)=
&
\\
&\displaystyle 
\varphi (\widetilde{D}(\bfs^\alpha) ) 
\varphi_E ( \widetilde{r}(e_\alpha))=
\varphi (\bfs^\alpha) \varphi_E ( \widetilde{r}(e_\alpha))= \varphi_E( \bfs^\alpha \widetilde{r} ( e_\alpha))=
\varphi_E(  \widetilde{r} (\bfs^\alpha e_\alpha)) = \varphi_E \left( \langle r,e\rangle\right).
\end{eqnarray*}
\end{proof}

\subsection{Integrable derivations and HS-smooth algebras}
\label{sec:int-der-HS-smooth}

In this section we recall some notions and results of \cite{nar_2009,nar_2012}.
Let $k$ be a commutative ring and $A$ a commutative $k$-algebra.
The following definition slightly changes with respect to Definition (2.1.1) in \cite{nar_2012}.

\begin{defi} (Cf. \cite{brown_1978,mat-intder-I})  \label{def:HS-integ}
Let $m\geq 1$ be an integer or $m=\infty$, and $\delta:A\to A$ a $k$-derivation.
 We say that $\delta$ is {\em $m$-integrable} (over $k$)  if there is a
HS-derivation $D\in \HS_k(A;m)$ such that $D_1=\delta$. Any such $D$ will
be called an  {\em $m$-integral} of $\delta$. The set of $m$-integrable
$k$-derivations of $A$ is denoted by $\Ider_k(A;m)$. We simply say that $\delta$ is
{\em integrable} if it is $\infty$-integrable and denote  $\Ider_k(A):=\Ider_k(A;\infty)$.
\smallskip

We say that $\delta$ is
{\em f-integrable} {\em (finite integrable)} if it is $m$-integrable for any \underline{integer} $m\geq 1$. The set of f-integrable
$k$-derivations of $A$ is denoted by $\Ider^f_k(A)$. 
\end{defi}

It is clear (see Definition \ref{defi:sbullet-0}) that the $\Ider_k(A;m)$ and $\Ider^f_k(A)$ are $A$-submodules of $\Der_k(A)$ and that we have exact sequences of groups:
\begin{equation} \label{eq:suex-ider}
1 \to \ker \tau_{m1} \xrightarrow{} \HS_k(A;m) \to \Ider_k(A;m) \to
0,\quad m\geq 1,
\end{equation}
and
$$ \Der_k(A)=\Ider_k(A;1)\supset\Ider_k(A;2)\supset
\Ider_k(A;3)\supset \cdots,$$
\begin{equation} \label{eq:intersection-ider} \Ider_k(A;\infty) \subset \Ider^f_k(A)= \bigcap_{\substack{\scriptscriptstyle m\in\N\\ \scriptscriptstyle  m\geq 1}} \Ider_k(A;m).
\end{equation}

\begin{exam} \label{ex:1} Let $m\geq 1$ be an integer. If $m!$ is invertible in $A$,
then any $k$-derivation $\delta$ of $A$ is $m$-integrable: we can
take $D \in \HS_k(A;m)$ defined by $D_i=\frac{\delta^i}{i!}$ for
$i=0,\dots,m$. If $\QQ\subset k$, one proves
in a similar way that any $k$-derivation of $A$ is $\infty$-integrable.
\end{exam}

Let us recall the following result (\cite[Theorem 27.1]{mat_86}):

\begin{prop} Let us assume that $A$ is a $0$-smooth $k$-algebra.
Then any $k$-derivation of $A$ is integrable.
\end{prop}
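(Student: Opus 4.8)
The plan is to exploit the dictionary between HS-derivations and $k$-algebra homomorphisms into truncated power series rings recorded in (\ref{eq:HS-funda})--(\ref{eq:HS-iso-Hom(A,A[[s]])}): a HS-derivation $D\in\HS_k(A;m)$ with $D_1=\delta$ is the same datum as a homomorphism of $k$-algebras $\Phi\in\Hom_{k-\text{\rm alg}}^\pcirc(A,A[[t]]_m)$ whose coefficient of $t$ is $\delta$, where $A[[t]]_m=A[[t]]/\langle t\rangle^{m+1}$. Since $\HS_k(A)=\varprojlim_m\HS_k(A;m)$ by (\ref{eq:HS-inv-limit-finite}), it suffices to produce a compatible family of such $\Phi_m$ for all integers $m\geq 1$; their inverse limit is a $\Phi\in\Hom_{k-\text{\rm alg}}^\pcirc(A,A[[t]])$ corresponding to an $\infty$-integral $D\in\HS_k(A)$ of $\delta$. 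I would build the $\Phi_m$ by induction on $m$, using $0$-smoothness of $A$ over $k$ at each step.

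For the base case I set $\Phi_1(a):=a+\delta(a)\,t\in A[[t]]_1$. The Leibniz rule for $\delta$ is precisely the assertion that $\Phi_1$ is a homomorphism of $k$-algebras, and $\Phi_1\equiv\Id\pmod t$, so $\Phi_1$ is a length-$1$ integral of $\delta$ (this merely encodes $\delta\in\Der_k(A)$). For the inductive step, assume $\Phi_m\in\Hom_{k-\text{\rm alg}}^\pcirc(A,A[[t]]_m)$ with $t$-coefficient $\delta$ has been constructed. The truncation $A[[t]]_{m+1}\to A[[t]]_m$ is a surjection of $k$-algebras whose kernel $\langle t\rangle^{m+1}/\langle t\rangle^{m+2}$ has square zero. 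By $0$-smoothness of $A$ over $k$, the infinitesimal lifting property applies to this square-zero extension and produces a homomorphism of $k$-algebras $\Phi_{m+1}\colon A\to A[[t]]_{m+1}$ lifting $\Phi_m$.

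It remains to verify that this lift automatically has the required shape. Since $\Phi_{m+1}$ reduces to $\Phi_m$ modulo $\langle t\rangle^{m+1}$ and $m\geq 1$, the coefficients of $t^0$ and $t^1$ in $\Phi_{m+1}(a)$ agree with those of $\Phi_m(a)$, namely $a$ and $\delta(a)$. In particular $\Phi_{m+1}\equiv\Id\pmod t$, so $\Phi_{m+1}\in\Hom_{k-\text{\rm alg}}^\pcirc(A,A[[t]]_{m+1})$ is again an integral of $\delta$, and by construction its truncation is $\Phi_m$. Thus the $\Phi_m$ assemble into an inverse system; passing to the inverse limit via (\ref{eq:HS-inv-limit-finite}) yields the desired $D\in\HS_k(A)$ with $D_1=\delta$.

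The only substantial input is the infinitesimal lifting property packaged in $0$-smoothness; everything else is bookkeeping. The point deserving care --- and the closest thing to an obstacle --- is the observation that the lift furnished by $0$-smoothness needs no correction: truncation does not disturb the low-order coefficients, so the normalization $\Phi\equiv\Id\pmod t$ and the prescribed value $\delta$ of the $t$-coefficient are preserved for free, and the successive lifts form a genuine inverse system whose limit lands in $\HS_k(A)$ rather than merely in each $\HS_k(A;m)$.
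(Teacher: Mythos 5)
Your proof is correct, and it is essentially the classical argument: the dictionary between HS-derivations and maps in $\Hom_{k-\text{\rm alg}}^\pcirc(A,A[[t]]_m)$, followed by inductive lifting of $\Phi_m$ along the square-zero truncations $A[[t]]_{m+1}\to A[[t]]_{m}$ via $0$-smoothness, and passage to the inverse limit. The paper does not prove this proposition itself but recalls it from \cite[Theorem 27.1]{mat_86}, whose proof is precisely this lifting argument, so your proposal matches the intended one.
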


\begin{prop} \label{prop:N-infini} The following properties are equivalent:
\begin{enumerate}
\item[(a)] $\Der_k(A) = \Ider_k(A;\infty)$.
\item[(b)] $\Der_k(A) = \Ider_k(A;m)$ for all integers $m\geq
1$ ($\Leftrightarrow \Der_k(A) = \Ider^f_k(A)$).
\end{enumerate}
\end{prop}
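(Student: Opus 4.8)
The implication (a) $\Rightarrow$ (b) is immediate: by (\ref{eq:intersection-ider}) we have, for every integer $m\geq 1$, the chain
$\Ider_k(A;\infty) \subseteq \Ider^f_k(A) = \bigcap_{m}\Ider_k(A;m) \subseteq \Ider_k(A;m) \subseteq \Der_k(A)$,
so if the two extreme terms coincide then every intermediate term equals $\Der_k(A)$, which is precisely (b). The whole content is thus (b) $\Rightarrow$ (a). Recalling from (\ref{eq:HS-inv-limit-finite}) that $\HS_k(A) = \varprojlim_m \HS_k(A;m)$ with transition maps the truncations $\tau_{m+1,m}$, and from (\ref{eq:suex-ider}) that $\tau_{m1}\colon\HS_k(A;m)\to\Der_k(A)$ has image $\Ider_k(A;m)$, proving (a) amounts to showing that under (b) an arbitrary $\delta\in\Der_k(A)$ lies in the image of $\HS_k(A)\to\Der_k(A)$. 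The plan is to deduce this from the key assertion: \emph{under (b), every truncation $\tau_{m+1,m}\colon\HS_k(A;m+1)\to\HS_k(A;m)$ is surjective.} Granting it, one lifts $\delta\in\HS_k(A;1)$ step by step to a compatible sequence $(D^{(m)})\in\varprojlim_m\HS_k(A;m)=\HS_k(A)$ with $D^{(1)}=\delta$, whence $\delta\in\Ider_k(A;\infty)$.

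To prove the key assertion, fix $D\in\HS_k(A;m)$ and set $\delta=D_1$. By (b) the derivation $\delta$ is $(m+1)$-integrable, so there is $E\in\HS_k(A;m+1)$ with $E_1=\delta$; writing $E'=\tau_{m+1,m}(E)$, both $D$ and $E'$ are $m$-integrals of $\delta$, so $G:=(E')^*\pcirc D$ has $G_1=0$, i.e. $\ell(G)\geq 2$. Since $\tau_{m+1,m}(E\pcirc L)=E'\pcirc G=D$ for any $L$ lifting $G$, it suffices to lift along $\tau_{m+1,m}$ the elements $G\in\HS_k(A;m)$ with $\ell(G)\geq 2$. I would do this by reverse induction on $\ell:=\ell(G)\in\{2,\dots,m\}\cup\{\infty\}$. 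The base cases $\ell=\infty$ (where $G=\mathbb{I}$, lifted by $\mathbb{I}$) and $\ell=m$ (where $G=(\Id,0,\dots,0,G_m)$ with $G_m\in\Der_k(A)$, lifted by appending a $0$ in degree $m+1$, all cross terms in the degree-$(m+1)$ Leibniz identity vanishing) are immediate.

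For the inductive step the key device is a substitution map. When $2\leq\ell\leq m$, the leading component $G_\ell$ is a $k$-derivation, since the Leibniz identity in degree $\ell$ has only trivial cross terms because $G_1=\cdots=G_{\ell-1}=0$. By (b) this $G_\ell$ is finitely integrable; choose an $N$-integral $H\in\HS_k(A;N)$ of $G_\ell$ with $N\geq (m+1)/\ell$, and let $\varphi$ be the substitution map with $\varphi(s)=t^\ell$. By Proposition \ref{prop:equiv-action-subs-HS} the series $\varphi\sbullet H$ is again a HS-derivation, and since ${\bf C}_e(\varphi,\alpha)$ equals $1$ if $e=\ell\alpha$ and $0$ otherwise, its components are $(\varphi\sbullet H)_e=H_{e/\ell}$ when $\ell\mid e$ and $0$ otherwise (cf. (\ref{eq:expression_phi_D})); truncating to lengths $m+1$ and $m$ yields $\widetilde K\in\HS_k(A;m+1)$ and $K=\tau_{m+1,m}(\widetilde K)\in\HS_k(A;m)$ with $\ell(K)=\ell$ and $K_\ell=G_\ell$. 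Then $K^*\pcirc G$ has vanishing components in all degrees $\leq\ell$ (the degree-$\ell$ term being $-K_\ell+G_\ell=0$), so $\ell(K^*\pcirc G)>\ell$; by the induction hypothesis it lifts to some $L\in\HS_k(A;m+1)$, and $\widetilde K\pcirc L$ is the desired lift of $K\pcirc(K^*\pcirc G)=G$.

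The main obstacle is exactly the one this construction circumvents: the finite integrals supplied by (b) at the various levels need not be compatible under truncation, so one cannot simply take an inverse limit of chosen integrals, and the naive attempt to extend a given $D\in\HS_k(A;m)$ by a single component meets a nonlinear Leibniz obstruction in degree $m+1$. The substitution $s\mapsto t^\ell$ dissolves this by converting the hypothesis ``$G_\ell$ is integrable'' (a statement about degree $1$) into the existence of a genuine HS-derivation whose leading term sits in degree $\ell$, so that the leading obstruction of $G$ can be peeled off one degree at a time until nothing is left.
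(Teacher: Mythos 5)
Your proof is correct, and its top-level structure coincides with the paper's: both handle (a) $\Rightarrow$ (b) via the inclusions (\ref{eq:intersection-ider}), and both reduce (b) $\Rightarrow$ (a) to lifting $\delta$ step by step through the truncations $\tau_{m+1,m}$ and then passing to the inverse limit in $\HS_k(A)$. The difference is where the one-step lifting comes from. The paper disposes of it in one line by citing \cite[Corollary 4]{nar_subst_2018}, which supplies an extension $D^{(m+1)}$ of each $D^{(m)}$; you instead prove the lifting statement from scratch: splitting off an $(m+1)$-integral $E$ of $\delta$ reduces the problem to lifting HS-derivations $G$ with $\ell(G)\geq 2$, and your reverse induction on $\ell(G)$ --- integrate the leading derivation $G_\ell$ using (b), stretch that integral through the substitution map $s\mapsto t^\ell$ into an element of $\HS_k(A;m+1)$ whose degree-$\ell$ component is $G_\ell$ and whose truncation $K$ satisfies $\ell(K^*\pcirc G)>\ell(G)$, then iterate --- is precisely the kind of peeling argument that underlies the cited corollary. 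The details check out: your choice $N\geq (m+1)/\ell$ makes $\varphi$ well defined on $A[[s]]_N$ (since $\ell(N+1)\geq m+3$), the formula $(\varphi\sbullet H)_e = H_{e/\ell}$ for $\ell\mid e$ and $0$ otherwise follows from (\ref{eq:expression_phi_D}), membership in $\HS_k(A;m+1)$ follows from Proposition \ref{prop:equiv-action-subs-HS}, and the computation $(K^*\pcirc G)_\ell = -G_\ell+G_\ell=0$ is right; the induction is well-founded because $\ell(\cdot)>m$ forces $G=\mathbb{I}$. So your proposal buys self-containedness --- the Proposition no longer rests on an external reference --- at the cost of essentially reproving that reference's result inline, which is exactly why the paper's own proof is so short.
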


\begin{proof} The implication (a) $\Rightarrow$ (b) is clear. 
\medskip

\noindent (b) $\Rightarrow$ (a)\ 
Let $\delta$ be a $k$-derivation of $A$. By hypothesis, there is a $2$-integral $D^{(2)}=(\Id,D_1,D_2)\in\HS_k(A;2)$ of $\delta$, and by applying \cite[Corollary 4]{nar_subst_2018} repeatedly we find 
a sequence $D^{(m)}\in \HS_k(A;m)$, $m\geq
2$, such that $\tau_{m,m-1} D^{(m)} = D^{(m-1)}$ for each $m\geq 2$.
We can take $\displaystyle D= \lim_{\stackrel{\longleftarrow}{m}}
D^{(m)}\in \HS_k(A)$, that obviously is an $\infty$-integral of $\delta$.
\end{proof}

\begin{nota} \label{nota:intersection-ider} In general, we know that
$$ \Ider_k(A;\infty) \subset \Ider^f_k(A) =\bigcap_{\scriptscriptstyle m\in\N_+} \Ider_k(A;m) \subset \Der_k(A).$$
Proposition \ref{prop:N-infini} tells us that the above inclusion is an equality whenever all the $k$-derivations of $A$ are $m$-integrable for each $m\in\N_+$. Otherwise, we do not know whether it is strict or not, or in other words, whether a derivation which is $m$-integrable for each integer $m\geq 1$ is $\infty$-integrable or not.
\end{nota}

\begin{defi} Let $m$ be a non-negative integer or $m=\infty$.
For any HS-derivation $D\in\HS_k(A;m)$ we define its
{\em total symbol} by (see Notation \ref{notacion:Ump}):
$$\Sigma_m(D) := \bupsigma(D)= \sum_{i=0}^m \sigma_i(D_i) t^i \in \Ugr(\gr \diff_{A/k};m).
$$
\end{defi}

The total symbol
map $\Sigma_m:\HS_k(A;m) \longrightarrow \Ugr(\gr \diff_{A/k};m)$ is a group homomorphism.
The following proposition is proven in \cite[Proposition 2.5, Corollary 2.7]{nar_2009}.

\begin{prop} With the hypotheses above, the following properties hold:
\begin{enumerate}
\item[(1)] The image of $\Sigma_m$ is contained in  $\EXP_m\left(\gr \diff_{A/k}\right) $.
\item[(2)] For any $D\in \HS_k(A;m)$ and any $a\in A$ we have $\Sigma_m(a \bullet D) =
a\Sigma_m(D)$.
\item[(3)] The map $\Sigma_m$ 
induces an $A$-linear map $\chi_m:\Ider_k(A;m) \to \EXP_m(\gr
\diff_{A/k})$.
\end{enumerate}
\end{prop}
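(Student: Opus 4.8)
The plan is to prove the three assertions in the order (2), (1), (3), since (1) carries all the content and (3) uses both (1) and (2).

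For (2) I would argue directly on symbols. By Definition \ref{defi:sbullet-0} one has $(a\bullet D)_i=a^iD_i$; writing $a^iD_i$ as the composite of the order-zero multiplication operator by $a^i$ with the operator $D_i$ of order $\leq i$ and using multiplicativity of the principal symbol, I get $\sigma_i(a^iD_i)=a^i\sigma_i(D_i)$. Summing over $i$ and comparing with the module structure (\ref{eq:mod_action_on_EXP}) on $\EXP_m$ yields $\Sigma_m(a\bullet D)=a\,\Sigma_m(D)$.

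The heart is (1). I would exploit the substitution maps $\varphi,\iota,\iota':A[[t]]_m\to A[[t,t']]_m$ already introduced ($\varphi(t)=t+t'$, $\iota(t)=t$, $\iota'(t)=t'$), so that, with $g:=\Sigma_m(D)$ and $g_0=\sigma_0(\Id)=1$, the exponential-type condition to be established is exactly $\varphi\sbullet g=(\iota\sbullet g)(\iota'\sbullet g)$. The idea is to compare the two bivariate HS-derivations $E:=\varphi\sbullet D$ and $F:=(\iota\sbullet D)\pcirc(\iota'\sbullet D)$ in $\HS^2_k(A;m)$, which lie there by Proposition \ref{prop:equiv-action-subs-HS} and the group structure. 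A short computation shows $E_{(i,j)}=\binom{i+j}{i}D_{i+j}$ and $F_{(i,j)}=D_i\pcirc D_j$, so $E\neq F$ in general; but their truncations to total degree $1$ coincide (both equal $\Id+D_1\,t+D_1\,t'$), whence $\ell(F^*\pcirc E)\geq 2$. By Proposition \ref{prop:ell-new} every component of $F^*\pcirc E$ of multidegree $(i,j)\neq 0$ then has order $\leq\lfloor(i+j)/2\rfloor<i+j$, so $\bupsigma(F^*\pcirc E)=1$; since $\bupsigma$ is a group homomorphism this gives $\bupsigma(E)=\bupsigma(F)$.

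To finish (1) I would translate this identity with the substitution calculus. Since $\varphi,\iota,\iota'$ are homogeneous of degree $1$ one has $\init\varphi=\varphi$, $\init\iota=\iota$, $\init\iota'=\iota'$, so Proposition \ref{prop:sigma-varphi-bullet} gives $\bupsigma(E)=\varphi\sbullet g$ and, using also multiplicativity of $\bupsigma$, $\bupsigma(F)=(\iota\sbullet g)(\iota'\sbullet g)$; hence $\varphi\sbullet g=(\iota\sbullet g)(\iota'\sbullet g)$, i.e. $g\in\EXP_m(\gr\diff_{A/k})$. I expect the main obstacle to be precisely the order estimate packaged in Proposition \ref{prop:ell-new} (that $\ell(F^*\pcirc E)\geq 2$ forces the top-degree symbols to vanish); the rest is formal. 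Finally, for (3) I would use the exact sequence (\ref{eq:suex-ider}): if $D\in\ker\tau_{m1}$ then $D_1=0$, so $\ell(D)\geq 2$ and each $D_i$ has order $\leq\lfloor i/2\rfloor<i$ by Proposition \ref{prop:ell-new}, whence $\sigma_i(D_i)=0$ for $i\geq 1$ and $\Sigma_m(D)=1$. As $\EXP_m(\gr\diff_{A/k})$ is abelian and $\Sigma_m$ is a homomorphism, $\Sigma_m$ descends to $\chi_m:\Ider_k(A;m)\to\EXP_m(\gr\diff_{A/k})$, which is additive (the group law of $\EXP_m$ is its module addition) and $A$-homogeneous by (2), hence $A$-linear.
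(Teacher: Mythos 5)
Your proposal is correct and takes essentially the same route as the paper: this proposition is deferred to \cite[Proposition 2.5, Corollary 2.7]{nar_2009}, but the argument used there --- and reproduced in this paper for the parallel statement on $\gr \dU_{A/k}$ (Proposition \ref{prop:bsigma-upsilon}) --- is precisely your scheme of comparing $E=\varphi\sbullet D$ with $F=(\iota\sbullet D)\pcirc(\iota'\sbullet D)$, using $\ell(F^*\pcirc E)\geq 2$ together with the order bound of Proposition \ref{prop:ell-new} to kill the top-degree symbols of the discrepancy, and then transferring the resulting identity through Proposition \ref{prop:sigma-varphi-bullet}. Your treatment of (2) via multiplicativity of principal symbols and of (3) via the exact sequence (\ref{eq:suex-ider}) likewise matches the cited proof.
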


It is clear that, for $1\leq m \leq q\leq \infty$, the following
diagram is commutative:
\begin{equation*}
\begin{tikzcd}
\Ider_k(A;q) \ar[r,"\chi_q"] \ar[d,hook,"\text{inc.}"'] & \EXP_q(\gr
\diff_{A/k}) \ar[d,"\text{trunc.}"]\\
\Ider_k(A;m) \ar[r,"\chi_m"] & \EXP_m(\gr
\diff_{A/k}).
\end{tikzcd}
\end{equation*}
By taking the inverse limit of the $\chi_m$ for $1\leq m < \infty$ we obtain an $A$-linear map $\chi^f: \Ider^f_k(A) \to \EXP(\gr
\diff_{A/k})$. Explicitly, if $\delta\in \Ider^f_k(A)$, then: 
$$\chi^f(\delta) = \sum_{m=0}^\infty \sigma_m\left(D_m^m\right) t^m
$$
where $D^m = \left(D^m_j\right)_{0\leq j\leq m} \equiv \sum_{j=0}^m D^m_j t^j \in \HS_k(A;m)$ is any $m$-integral of $\delta$ for each integer $m\geq 1$ ($D^0 = \mathbb{I}$).
\medskip

From the universal property of power divided algebras (see
Proposition \ref{prop:PU-PD}), we obtain a canonical homomorphism of graded
$A$-algebras:
\begin{equation} \label{eq:mor-canon}
\vartheta^f_{A/k}: \Gamma \Ider^f_k(A) \to \gr \diff_{A/k}.
\end{equation}
It is clear that for each integer $m\geq 1$, the following diagram is commutative:
$$
\begin{tikzcd}
\Gamma\Ider_k(A;\infty) \ar[r,"\text{nat.}"] \ar[dr,"\vartheta_{A/k,\infty}"'] & \Gamma\Ider^f_k(A) \ar[r,"\text{nat.}"] 
\ar[d,"\vartheta^f_{A/k}"] & \Gamma\Ider_k(A;m) \ar[dl,"\vartheta_{A/k,m}"]\\
 & \gr \diff_{A/k}, &
\end{tikzcd}
$$
where the $\vartheta_{A/k,m}$ and $\vartheta_{A/k,\infty}$ have been defined in  \cite[(2.6)]{nar_2009}.
The following two theorems are proven in \cite{nar_2009}, Theorem (2.8) and Theorem (2.14), for $\Ider_k(A;\infty)$,  $\vartheta_{A/k,\infty}$ instead of $\Ider^f_k(A)$, $\vartheta^f_{A/k}$, but the proofs remain essentially the same.

\begin{thm} \label{teo:commut} With the above notations, there are canonical maps $\theta_{A/k}$ and $\phi$ such that the following diagram of graded $A$-algebras is commutative:
\begin{equation*}
\begin{tikzcd}
\gr \diff_{A/k} \ar[r,"\theta_{A/k}"] & \left(\Sim_A \Omega_{A/k}\right)^*_{gr}\\
\Gamma \Ider^f_k(A) \ar[u,"\vartheta^f_{A/k}"] \ar[r,"\text{\rm nat.}"]
& \Gamma \Der_k(A). \ar[u,"\phi"']
\end{tikzcd}
\end{equation*}
\end{thm}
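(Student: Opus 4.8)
The plan is to construct the two horizontal-free arrows by hand and then verify commutativity on a generating set. First I would take $\theta_{A/k}$ to be the principal symbol map: for a $k$-linear differential operator $P$ of order $\leq n$, the assignment
$$ da_1\cdots da_n \longmapsto [\cdots[[P,a_1],a_2],\dots,a_n](1),\qquad a_1,\dots,a_n\in A, $$
depends only on the class of $P$ modulo operators of order $\leq n-1$, and modulo that it is $A$-multilinear and symmetric in the $a_i$ and vanishes when some $a_i\in k$; hence it defines an $A$-linear map $\gr_n \diff_{A/k}\to \Hom_A(\Sim_A^n \Omega_{A/k},A)$. Since iterated commutators are multiplicative on top symbols, the resulting map $\theta_{A/k}\colon \gr \diff_{A/k}\to (\Sim_A \Omega_{A/k})^*_{gr}$ is a homomorphism of graded $A$-algebras, the target carrying the convolution product dual to the canonical (cocommutative) coalgebra structure on $\Sim_A \Omega_{A/k}$.

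Next I would build $\phi$ from the canonical evaluation pairing $\Der_k(A)\times \Omega_{A/k}\to A$, $(\delta,da)\mapsto \delta(a)$, which identifies $\Der_k(A)$ with $\Hom_A(\Omega_{A/k},A)$. This pairing extends to a pairing between divided and symmetric powers, giving for each $n$ an $A$-linear map $\Gamma_A^n \Der_k(A)\to \Hom_A(\Sim_A^n \Omega_{A/k},A)$ determined by $\gamma_n(\delta)\mapsto \big(\omega_1\cdots\omega_n\mapsto \prod_i \langle \delta,\omega_i\rangle\big)$. This is precisely the canonical map $\Gamma_A(M^*)\to (\Sim_A M)^*_{gr}$ for $M=\Omega_{A/k}$, and it is a homomorphism of graded $A$-algebras; I would take it as $\phi$.

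For commutativity I would observe that both composites $\theta_{A/k}\pcirc \vartheta^f_{A/k}$ and $\phi\pcirc(\text{nat.})$ are homomorphisms of graded $A$-algebras out of $\Gamma \Ider^f_k(A)$, so it suffices to check they agree on the $A$-algebra generators $\gamma_n(\delta)$ with $\delta\in\Ider^f_k(A)$ and $n\geq 1$. By the universal property of $\Gamma$ used to define $\vartheta^f_{A/k}$ from $\chi^f$, one has $\vartheta^f_{A/k}(\gamma_n(\delta))=\sigma_n(D^n_n)$, where $D^n\in\HS_k(A;n)$ is any $n$-integral of $\delta$. Evaluating $\theta_{A/k}(\sigma_n(D^n_n))$ on $da_1\cdots da_n$ then reduces the claim to the symbol identity
$$ [\cdots[[D^n_n,a_1],a_2],\dots,a_n](1)=\prod_{i=1}^n \delta(a_i),\qquad \delta=D^n_1, $$
whose right-hand side is exactly $\phi(\gamma_n(\delta))(da_1\cdots da_n)$.

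The main obstacle is establishing this symbol identity. I expect to prove it by induction on $n$ using the Hasse--Schmidt Leibniz rule $D_\alpha(xy)=\sum_{\beta+\gamma=\alpha}D_\beta(x)D_\gamma(y)$: each commutator $[-,a_i]$ strictly lowers the differential order of $D^n_n$, and after forming all $n$ commutators every term involving some $D_j$ with $j\geq 2$ cancels, leaving only the product of the first-order contributions $D_1(a_i)=\delta(a_i)$ (the cases $n=1,2$ already display this cancellation). Equivalently, this records the fact that $\bupsigma(D^n)\in\EXP_n(\gr\diff_{A/k})$, so that $\sigma_n(D^n_n)$ behaves as the $n$-th divided power of $\sigma_1(\delta)$; matching this with the divided-power structure of the target $(\Sim_A\Omega_{A/k})^*_{gr}$ yields the commutativity, exactly as in the proof of Theorem (2.8) of \cite{nar_2009}.
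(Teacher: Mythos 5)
Your proposal is correct and follows essentially the same route as the paper's own proof, which is given by reference to Theorem (2.8) of \cite{nar_2009}: there $\theta_{A/k}$ is the canonical symbol pairing of $\gr \diff_{A/k}$ against $\Sim_A \Omega_{A/k}$, $\phi$ is the canonical map $\Gamma_A\left(\Omega_{A/k}^*\right) \to \left(\Sim_A \Omega_{A/k}\right)^*_{gr}$, and commutativity is checked on the algebra generators $\gamma_n(\delta)$ via $\vartheta^f_{A/k}(\gamma_n(\delta))=\sigma_n(D^n_n)$ and the evaluation identity $\theta_{A/k}(\sigma_n(D^n_n))(da_1\cdots da_n)=\prod_i \delta(a_i)$, exactly as you do. Your inductive commutator computation (all terms with some $D_j$, $j\geq 2$, are forced out because the $n$ bracket indices are each $\geq 1$ and sum to at most $n$) is precisely the Hasse--Schmidt Leibniz argument underlying that identity.
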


\begin{thm} \label{teo:impor} Assume that $\Der_k(A)$ is a projective $A$-module of finite rank.
The following properties are equivalent:
\begin{enumerate}
\item[(a)] The homomorphism of graded $A$-algebras
$\theta_{A/k}: \gr \diff_{A/k} \xrightarrow{} \left(\Sim_A \Omega_{A/k}\right)^*_{gr} $ 
 is an isomorphism.
\item[(b)] The homomorphism of graded $A$-algebras
$\vartheta^f_{A/k}: \Gamma \Ider^f_k(A) \xrightarrow{} \gr \diff_{A/k}$
is an isomorphism.
\item[(c)] $\Ider^f_k(A)=\Der_k(A)$.
\end{enumerate}
\end{thm}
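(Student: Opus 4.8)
The plan is to run everything through the commutative square of Theorem~\ref{teo:commut}, which says $\theta_{A/k}\pcirc \vartheta^f_{A/k} = \phi\pcirc j$, where $j:\Gamma\Ider^f_k(A)\to\Gamma\Der_k(A)$ is the natural map induced by the inclusion $\Ider^f_k(A)\hookrightarrow\Der_k(A)$. First I would record two standing facts. In each degree the principal symbol gives an injection $\gr^n\diff_{A/k}\hookrightarrow(\Sim^n_A\Omega_{A/k})^*$ (an operator of order $\le n$ with vanishing symbol has order $\le n-1$), so $\theta_{A/k}$ is \emph{always} injective. Next, because $\Der_k(A)$ is projective of finite rank, $\Omega_{A/k}$ is its $A$-dual, again projective of finite rank, and the canonical comparison $\Gamma^n_A\Der_k(A)\xrightarrow{\sim}(\Sim^n_A\Omega_{A/k})^*$ (divided powers are the graded dual of the symmetric algebra of the dual module) shows that $\phi$ is an \emph{isomorphism}. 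I would also note that in degree $1$ the map $j$ is exactly the inclusion $\Ider^f_k(A)\hookrightarrow\Der_k(A)$, so that $j$ is an isomorphism if and only if \textup{(c)} holds.

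With these facts the implication \textup{(c)}$\Rightarrow$\textup{(a)},\textup{(b)} is immediate: if $\Ider^f_k(A)=\Der_k(A)$ then $j$, hence $\phi\pcirc j=\theta_{A/k}\pcirc\vartheta^f_{A/k}$, is an isomorphism; since $\theta_{A/k}$ is injective and $\theta_{A/k}\pcirc\vartheta^f_{A/k}$ is onto, $\theta_{A/k}$ must be an isomorphism, which is \textup{(a)}, and then $\vartheta^f_{A/k}=\theta_{A/k}^{-1}\pcirc\phi\pcirc j$ is an isomorphism, which is \textup{(b)}. For \textup{(b)}$\Rightarrow$\textup{(c)} I would simply restrict the graded isomorphism $\vartheta^f_{A/k}$ to degree $1$, where it becomes the inclusion $\Ider^f_k(A)\hookrightarrow\Der_k(A)$; surjectivity there gives $\Ider^f_k(A)=\Der_k(A)$. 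At this point \textup{(b)}$\Leftrightarrow$\textup{(c)} and \textup{(c)}$\Rightarrow$\textup{(a)} are settled, so the only thing left to close the cycle is \textup{(a)}$\Rightarrow$\textup{(c)}; note that, $\phi$ and (by \textup{(a)}) $\theta_{A/k}$ being isomorphisms, this is literally the same as \textup{(a)}$\Rightarrow$\textup{(b)}.

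The hard part will be \textup{(a)}$\Rightarrow$\textup{(c)}, and this is where all the real content sits, since $\vartheta^f_{A/k}$ enjoys no formal surjectivity: its image is only the subalgebra of $\gr\diff_{A/k}$ generated by symbols of components of HS-derivations, so the statement genuinely encodes an integrability assertion. My plan here is to fix $\delta\in\Der_k(A)$ and an integer $m$ and to build an $m$-integral of $\delta$ by successively lifting a length-$n$ HS-derivation with first component $\delta$ to length $n+1$, for $1\le n<m$. The obstruction to choosing a $k$-linear $D_{n+1}$ making $(\Id,\delta,\dots,D_n,D_{n+1})$ satisfy the degree-$(n+1)$ Leibniz identities is a symmetric cocycle whose class is measured exactly by the cokernel of the symbol map in degree $n+1$; assumption \textup{(a)} forces that cokernel to vanish, so each lift exists, and since this works for every integer $m$ the derivation $\delta$ is $m$-integrable for all $m$, i.e. $\delta\in\Ider^f_k(A)$, which is \textup{(c)}.

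Concretely, I would phrase the lifting through the exponential-type formalism recalled earlier: the graded isomorphism $\phi^{-1}\pcirc\theta_{A/k}:\gr\diff_{A/k}\xrightarrow{\sim}\Gamma_A\Der_k(A)$ carries the universal series $\gamma_A(\delta)\in\EXP(\Gamma_A\Der_k(A))$ to an exponential-type series in $\EXP_m(\gr\diff_{A/k})$ with prescribed degree-$1$ part $\delta$, and I would realise it by an actual $m$-integral using the total symbol map $\Sigma_m$ together with the step-by-step extension mechanism already used in Proposition~\ref{prop:N-infini}; the universal property of divided powers (Proposition~\ref{prop:PU-PD}) is what guarantees that the lifted symbols fit together coherently. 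This argument is exactly the one of \cite[Theorem~2.14]{nar_2009}, which carries over essentially unchanged upon replacing $\Ider_k(A;\infty)$ and $\vartheta_{A/k,\infty}$ by $\Ider^f_k(A)$ and $\vartheta^f_{A/k}$.
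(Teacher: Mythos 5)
Your formal skeleton --- injectivity of $\theta_{A/k}$ in every degree, the square of Theorem \ref{teo:commut}, the degree-one argument for (b)$\Rightarrow$(c), and the isolation of (a)$\Rightarrow$(c) as the only non-formal implication --- is exactly the skeleton of the proof the paper invokes (the paper proves this theorem by citing \cite{nar_2009}, Theorems (2.8) and (2.14), and remarking that the argument transfers from $\Ider_k(A;\infty)$ to $\Ider^f_k(A)$). But your second standing fact is justified by a false statement: from $\Der_k(A)=\Hom_A(\Omega_{A/k},A)$ you cannot conclude that $\Omega_{A/k}=\Hom_A(\Der_k(A),A)$, nor that $\Omega_{A/k}$ is projective of finite rank; duality only gives a canonical map $c:\Omega_{A/k}\to \Der_k(A)^*$, which in general is neither injective nor surjective. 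Under the theorem's hypothesis $\Omega_{A/k}$ need not even be finitely generated: for $A=k[[x]]$ over a field $k$, the module $\Der_k(A)=A\,d/dx$ is free of rank one and every derivation is f-integrable (so the example lies squarely within the scope of the theorem), yet $\Omega_{A/k}$ is not a finitely generated $A$-module. Since your implication (c)$\Rightarrow$(a),(b) rests entirely on $\phi$ being an isomorphism, this is a genuine gap. The conclusion is nevertheless true, but the correct argument uses reflexivity of $\Der_k(A)$ rather than projectivity of $\Omega_{A/k}$: because $\Omega_{A/k}^*=\Der_k(A)$ is projective of finite rank, restriction along $c$ is bijective on linear functionals, hence (slot by slot, by uniqueness of the extensions) bijective on symmetric multilinear forms; this identifies $\left(\Sim_A\Omega_{A/k}\right)^*_{gr}$ with $\left(\Sim_A\Der_k(A)^*\right)^*_{gr}$, and the latter with $\Gamma_A\Der_k(A)$ by the standard duality for projective modules of finite rank. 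That is the statement actually established and used in \cite{nar_2009}.

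For (a)$\Rightarrow$(c), the sentence carrying all the weight --- that the obstruction to extending $(\Id,\delta,D_2,\dots,D_n)$ is ``measured exactly by the cokernel of the symbol map in degree $n+1$'' --- is asserted, not proved, and it is not the right statement as it stands. The obstruction is the class of the symmetric $2$-cocycle $\Lambda(a,b)=\sum_{i+j=n+1,\,i,j>0}D_i(a)D_j(b)$ modulo coboundaries $aR(b)-R(ab)+bR(a)$, i.e.\ a class of $\operatorname{Exalcomm}$/Andr\'e--Quillen type, not a class in the cokernel of $\theta_{A/k}$. What surjectivity of $\theta_{A/k}$ actually provides is a candidate $D_{n+1}$ whose symbol realizes the prescribed divided power $\phi(\gamma_{n+1}(\delta))$, after which the Leibniz defect drops in order; note that this step uses the specific divided-power series of $\delta$ and not mere exponentiality (in characteristic $p$ an exponential series $\sum s_it^i$ need not satisfy $s_{n+1}(da\,\omega_1\cdots\omega_n)=\delta(a)\,s_n(\omega_1\cdots\omega_n)$: for instance $1+\gamma_2(\partial)t^2$ is of exponential type in characteristic $2$). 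Even then one must still absorb the remaining lower-order defect, and doing so is precisely the content of the proof of Theorem (2.14) in \cite{nar_2009}. So, as written, your hard implication is carried entirely by the closing citation --- which is legitimate, and is what the paper itself does --- while the obstruction-theoretic sketch offered in its place has a hole exactly where the content of the theorem lies.
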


\begin{nota} After Theorem (2.14) in \cite{nar_2009} or Proposition \ref{prop:N-infini}, the equivalent properties in Theorem \ref{teo:impor} are also equivalent to:
\begin{enumerate}
\item[(b')] The homomorphism of graded $A$-algebras
$$\vartheta_{A/k,\infty}: \Gamma \Ider_k(A;\infty) \xrightarrow{} \gr \diff_{A/k}
$$
is an isomorphism.
\item[(c')] $\Ider_k(A;\infty)=\Der_k(A)$.
\end{enumerate}
\end{nota}

\begin{defi} \label{defi:HS_smooth}
We say that a $k$-algebra $A$ is {\em HS-smooth}  if $\Der_k(A)$ is a projective $A$-module of finite rank and the equivalent properties (a), (b), (c) of Theorem \ref{teo:impor} hold.
\end{defi}

Let us recall the following result (\cite[Corollary (2.16)]{nar_2009}).

\begin{cor} Assume that $\Omega_{A/k}$ is a projective $A$-module of finite rank and
that $A$ is differentially smooth over $k$ (in the sense of \cite[16.10]{ega_iv_4}).
Then, $A$ is a HS-smooth $k$-algebra.
\end{cor}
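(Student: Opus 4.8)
The plan is to verify directly the two requirements packaged into Definition \ref{defi:HS_smooth}: that $\Der_k(A)$ is a projective $A$-module of finite rank, and that one (hence all) of the equivalent properties (a), (b), (c) of Theorem \ref{teo:impor} holds. The first requirement I would extract from the projectivity hypothesis on $\Omega_{A/k}$, and the second I would extract from differential smoothness; once both are in hand, Definition \ref{defi:HS_smooth} applies verbatim.

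First I would handle the projectivity of derivations. Since $\Der_k(A) = \Hom_A(\Omega_{A/k}, A)$ is the $A$-dual of $\Omega_{A/k}$, and the dual of a finitely generated projective module is again finitely generated projective of the same rank, the hypothesis that $\Omega_{A/k}$ is projective of finite rank immediately gives that $\Der_k(A)$ is projective of finite rank. This settles the standing hypothesis of Theorem \ref{teo:impor} and the first half of Definition \ref{defi:HS_smooth}, so it remains only to produce one of (a), (b), (c).

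The heart of the argument is to translate ``differentially smooth in the sense of \cite[16.10]{ega_iv_4}'' into property (a), namely that the canonical map of graded $A$-algebras $\theta_{A/k}: \gr \diff_{A/k} \to \left(\Sim_A \Omega_{A/k}\right)^*_{gr}$ from Theorem \ref{teo:commut} is an isomorphism. Here I would recall that differential smoothness asserts, besides local freeness of finite rank of $\Omega_{A/k}$, that the sheaves of principal parts $\mathcal{P}^n_{A/k}$ have the expected structure; equivalently, that the associated graded of the principal parts with respect to the filtration by the augmentation ideal is canonically $\Sim_A \Omega_{A/k}$. Dualizing over $A$ -- which is legitimate precisely because each graded piece $\Sim^n_A \Omega_{A/k}$ is finitely generated projective -- identifies the classical principal symbol map with $\theta_{A/k}$ and shows it to be an isomorphism. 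This is exactly property (a). I expect the main obstacle to lie in this translation step: one must confirm that the map $\theta_{A/k}$ constructed abstractly in Theorem \ref{teo:commut} coincides with the classical symbol map on principal parts, and carry out the dualization degree by degree, checking compatibility of the gradings and of the $A$-algebra structures. Once (a) is verified, Theorem \ref{teo:impor} yields (b) and (c) as well, and together with the projectivity of $\Der_k(A)$ established above, Definition \ref{defi:HS_smooth} gives that $A$ is HS-smooth over $k$, completing the proof.
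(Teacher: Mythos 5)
Your proposal is correct and follows essentially the same route as the paper's intended argument: the paper gives no proof of this corollary, simply recalling it from \cite[Corollary (2.16)]{nar_2009}, and the proof there runs exactly as yours does --- $\Der_k(A)=\Hom_A(\Omega_{A/k},A)$ is projective of finite rank by duality, and differential smoothness, i.e.\ the canonical isomorphism $\Sim_A \Omega_{A/k} \xrightarrow{\sim} \gr P_{A/k}$ on principal parts, dualizes degree by degree (legitimately, since the projectivity of the graded pieces forces the modules $P^n_{A/k}$ to be projective and the defining exact sequences to split) to show that $\theta_{A/k}$ is an isomorphism, which is property (a) of Theorem \ref{teo:impor}. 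With that, Definition \ref{defi:HS_smooth} applies verbatim, as you say.
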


In particular, after \cite[Proposition 17.12.4]{ega_iv_4}, if $A$ is a smooth finitely presented $k$-algebra, then 
$A$ is a HS-smooth $k$-algebra.

\section{Main results}  \label{sect:main}

\subsection{Hasse--Schmidt modules}

\begin{defi} \label{def:HS-structure}
Let $R$ be a $k$-algebra over $A$. A {\em pre-HS-structure} \index{pre-HS-structure} on $R$ over $A/k$ is a system of maps
$$ \Uppsi= \left\{\Uppsi^p_\Delta: \HS^p_k(A;\Delta) \longrightarrow \U^p(R;\Delta),\ p\in \N, \Delta\in\coide{\N^p}\right\}
$$
such that\footnote{Actually, from (\ref{eq:U-inv-limit-finite}) and (\ref{eq:HS-inv-limit-finite}) we could restrict ourselves to non-empty \underline{finite} co-ideals.}:
\begin{enumerate}
\item[(i)] The $\Uppsi^p_\Delta$ are group homomorphisms.
\item[(ii)] (Leibniz rule) For any $D\in \HS^p_k(A;\Delta)$, $\Uppsi^p_\Delta(D)$ is a $D$-element, i.e. $\Uppsi^p_\Delta(D)\, a = \widetilde{D}(a) \Uppsi^p_\Delta(D) $ for all $a\in A$ (see Lemma \ref{lemma:charact-D-elements}).
\item[(iii)] For any substitution map $\varphi\in\Sub_k(p,q;\Delta,\nabla)$ and for any $D\in \HS^p_k(A;\Delta)$ we have $\Uppsi^q_\nabla(\varphi\sbullet D) = \varphi\sbullet \Uppsi^p_\Delta(D)$.
\end{enumerate}
We say that a pre-HS-structure $\Uppsi$ on $R$ over $A/k$ is a {\em HS-structure} \index{HS-structure} if property (iii) above holds for any substitution map $\varphi\in\Sub_A(p,q;\Delta,\nabla)$.
\medskip

\noindent If $R'$ is another $k$-algebra over $A$ and $f:R\to R'$ is a map of $k$-algebras over $A$, then any (pre-)HS-structure $\Uppsi$ on $R$ over $A/k$ gives rise to a (pre-)HS-structure $f\pcirc \Uppsi$ on $R'$ over $A/k$ defined as
$$\left( f\pcirc \Uppsi \right)^p_\Delta := \overline{f} \pcirc \Uppsi^p_\Delta,\quad p\in \N, \Delta\in\coide{\N^p}.
$$

\noindent If $R$ is filtered, we will say that a (pre-)HS-structure $\Uppsi$ on $R$ over $A/k$ is {\em filtered} \index{filtered (pre-)HS-structure} if 
$$\Uppsi^p_\Delta(\HS^p_k(A;\Delta) ) \subset \Ufil^p(R;\Delta)$$ for all $p\in\N$ and all $\Delta\in\coide{\N^p}$.
\end{defi}

Let us notice that if $\Uppsi$ is a pre-HS-structure on $R$ over $A/k$, then the system of maps $ \Upgamma= 
\{\Upgamma^p_\Delta: \HS^p_k(A;\Delta) \longrightarrow \U^p(R^{\text{\rm opp}};\Delta),\ p\in \N, 
\Delta\in\coide{\N^p}\}
$ defined as $\Upgamma^p_\Delta(D) = \Uppsi^p_\Delta(D^*)$ for $D\in \HS^p_k(A;\Delta)$ is a pre-structure on $R^{\text{\rm opp}}$ over $A/k$. However, if $\Uppsi$ is a HS-structure on $R$ over $A/k$, the system $\Upgamma$ defined above is not in general HS-structure on $R^{\text{\rm opp}}$. More precisely, we have the following proposition.

\begin{prop} \label{prop:carac-dual-HS-structures}
Let $\Uppsi$ be a pre-HS-structure on $R$ over $A/k$ and let us consider the system of maps $ \Upgamma= \{\Upgamma^p_\Delta: \HS^p_k(A;\Delta) \longrightarrow \U^p(R^{\text{\rm opp}};\Delta),\ p\in \N, \Delta\in\coide{\N^p}\}
$ defined as $\Upgamma^p_\Delta(D) = \Uppsi^p_\Delta(D^*)$ for $D\in \HS^p_k(A;\Delta)$. The following properties are equivalent:
\begin{enumerate}
\item[(1)] $\Upgamma$ is a HS-structure on $R^{\text{\rm opp}}$ over $A/k$.
\item[(2)] For each $p,q\in \N$, for each $\Delta\in\coide{\N^p}, \nabla\in\coide{\N^q}$, for each substitution map $\varphi\in\Sub_A(p,q;\Delta,\nabla)$ and for each $D\in \HS^p_k(A;\Delta)$ we have $\Uppsi^q_\nabla (\varphi \sbullet D) = \Uppsi^p_\Delta (D) \sbullet \varphi^D$ (see Proposition \ref{prop:varphi-D-main}).
\end{enumerate}
\end{prop}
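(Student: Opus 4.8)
The plan is to reduce the assertion that $\Upgamma$ is a HS-structure on $R^{\text{\rm opp}}$ to an explicit identity in $R$, and then to recognize that identity as a mere reparametrization of condition (2) through the involution $D\mapsto D^*$ and the operation $\varphi\mapsto \varphi^D$ of Proposition \ref{prop:varphi-D-main}.

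First I would use that $\Upgamma$ is already known to be a pre-HS-structure on $R^{\text{\rm opp}}$ (noted just before the statement): thus property (1) holds if and only if property (iii) of Definition \ref{def:HS-structure} holds for $\Upgamma$ for \emph{every} $\varphi\in\Sub_A(p,q;\Delta,\nabla)$, that is
$$ \Upgamma^q_\nabla(\varphi\sbullet D) = \varphi\sbullet \Upgamma^p_\Delta(D), $$
where the right-hand $\sbullet$ is computed in $R^{\text{\rm opp}}[[\bfs]]_\Delta = \left(R[[\bfs]]_\Delta\right)^{\text{\rm opp}}$. The key observation is that, by the identity $\left(\sideset{_R}{}\opvarphi\right)^{\text{\rm opp}} = \varphi_{R^{\text{\rm opp}}}$ recalled in \ref{nume:def-sbullet}, the operation $\varphi\sbullet(-)$ in $R^{\text{\rm opp}}$ is the same underlying set map as $(-)\sbullet\varphi$ in $R$; hence $\varphi\sbullet \Upgamma^p_\Delta(D) = \Uppsi^p_\Delta(D^*)\sbullet\varphi$. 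On the left, unwinding $\Upgamma^q_\nabla(\varphi\sbullet D) = \Uppsi^q_\nabla\!\left((\varphi\sbullet D)^*\right)$ and applying $(\varphi\sbullet D)^* = \varphi^D\sbullet D^*$ from Proposition \ref{prop:varphi-D-main}(i), I get that property (1) is equivalent to the identity
$$ \Uppsi^q_\nabla(\varphi^D\sbullet D^*) = \Uppsi^p_\Delta(D^*)\sbullet\varphi \qquad (\star) $$
holding for all $\varphi$ and all $D$.

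It then remains to show $(\star)\Leftrightarrow(2)$, which I expect to be purely formal. For $(\star)\Rightarrow(2)$, given arbitrary $\psi$ and $E$, I would substitute $D:=E^*$ and $\varphi:=\psi^E$ into $(\star)$; then $D^*=E$ and, by Proposition \ref{prop:varphi-D-main}(ii), $\varphi^D = (\psi^E)^{E^*} = \psi^{E\pcirc E^*} = \psi^{\mathbb{I}} = \psi$, so $(\star)$ reads $\Uppsi^q_\nabla(\psi\sbullet E) = \Uppsi^p_\Delta(E)\sbullet\psi^E$, which is exactly (2). Conversely, for $(2)\Rightarrow(\star)$, given $\varphi$ and $D$ I would set $\psi:=\varphi^D$ and $E:=D^*$; then $\psi^E = (\varphi^D)^{D^*} = \varphi$ (again by Proposition \ref{prop:varphi-D-main}(ii)), and (2) yields precisely $(\star)$. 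Since $D\mapsto D^*$ is an involution of $\HS^p_k(A;\Delta)$ and, for fixed $D$, the map $\varphi\mapsto\varphi^D$ is a bijection of $\Sub_A(p,q;\Delta,\nabla)$ with inverse $\psi\mapsto\psi^{D^*}$, these substitutions exhaust all pairs, so the two families of identities coincide.

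The only delicate point is the bookkeeping in the first step: getting the opposite-algebra action right (so that $\varphi\sbullet(-)$ in $R^{\text{\rm opp}}$ becomes $(-)\sbullet\varphi$ in $R$) and correctly invoking $(\varphi\sbullet D)^* = \varphi^D\sbullet D^*$ to move the star through. Once $(\star)$ has been isolated, the equivalence with (2) is a mechanical application of the cocycle-type relations $(\varphi^D)^E = \varphi^{D\pcirc E}$ and $\varphi^{\mathbb{I}}=\varphi$.
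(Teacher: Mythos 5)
Your proof is correct and follows essentially the same route as the paper's: both reduce property (1) to an identity in $R$ via the identification of $\varphi\sbullet(-)$ in $R^{\text{\rm opp}}$ with $(-)\sbullet\varphi$ in $R$, rewrite using $(\varphi\sbullet D)^* = \varphi^D\sbullet D^*$, and then pass between the two identities by the substitutions $E=D^*$, $\psi=\varphi^D$ together with the relations $(\varphi^D)^{D^*}=\varphi$ and $\varphi^{\mathbb{I}}=\varphi$ from Proposition \ref{prop:varphi-D-main}. Your explicit isolation of the intermediate identity $(\star)$ and of the inverse substitution merely spells out what the paper compresses into ``for property (iii) we proceed as in (1) $\Rightarrow$ (2).''
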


\begin{proof}  (1) $\Rightarrow$ (2): We know that for each $E\in \HS^p_k(A;\Delta)$ and each $\psi \in \Sub_A(p,q;\Delta,\nabla)$ we have $\Upgamma^q_\nabla (\psi \sbullet E) = \psi \stackrel{\text{\tiny\rm opp}}{\sbullet} \Upgamma^p_\Delta(E)$, i.e. $\Uppsi^q_\nabla \left((\psi \sbullet E)^*\right) = \Uppsi^p_\Delta(E^*) \sbullet \psi$, and we conclude by taking $E=D^*$ and $\psi = \varphi^D$ (see Proposition \ref{prop:varphi-D-main}):
$$ \Uppsi^q_\nabla (\varphi \sbullet D) = \Uppsi^q_\nabla\left( \psi^E \sbullet E^*\right) = \Uppsi^q_\nabla \left((\psi \sbullet E)^*\right) = \Uppsi^p_\Delta(E^*) \sbullet \psi = \Uppsi^p_\Delta(D) \sbullet \varphi^D.
$$
(2) $\Rightarrow$ (1): Properties (i) and (ii) are clear. For property (iii) we proceed as in (1) $\Rightarrow$ (2).
\end{proof}

\begin{exam} The inclusions 
$$\HS^p_k(A;\Delta) \hookrightarrow \U^p(\DD_{A/k};\Delta) \subset \U^p(\End_k(A);\Delta)
$$ give rise to the ``tautological'' HS-structures on $\DD_{A/k}$ and on $\End_k(A)$ over $A/k$.
\end{exam}

\begin{defi} (1) A {\em left (pre-)HS-module} \index{HS-module (pre-, left or right)} (resp. a  {\em right (pre-)HS-module}) over $A/k$ is an $A$-module $E$ endowed with a (pre-)HS-structure on $\End_k(E)$ (resp. on the opposed ring $\End_k(E)^{\text{\rm opp}}$) over $A/k$.\\
(2) A HS-map from a left (resp. a right) (pre-)HS-module $(E,\Phi)$ to a left (resp. to a right) (pre-)HS-module $(F,\Uppsi)$ is an $A$-linear map $f:E\to F$ such that $\overline{f} \pcirc \Phi^p_\Delta(D) = \Uppsi^p_\Delta(D) \pcirc \overline{f}$
 for all $p\in\N$,  for all $\Delta\in\coide{\N^p}$, for all $\alpha\in\Delta$ and for all $D\in \HS^p_k(A;\Delta)$.
\end{defi}

\begin{nota} \label{nota:Hsmod-equiv}
Let $E$ be an $A$-module and $R=\End_k(E)$. By using the canonical isomorphisms (\ref{eq:U-iso-pcirc}), we have the following:
\\

\noindent (1) For each left (pre-)HS-module $(E,\Uppsi)$, the (pre-)HS-structure $\Uppsi$ may be considered as a system of maps $ \Uppsi= \{\Uppsi^p_\Delta: \HS^p_k(A;\Delta) \longrightarrow \Aut^{\circ}_{k[[\bfs]]_\Delta}(E[[\bfs]]_\Delta),\ p\in \N, \Delta\in\coide{\N^p}\}
$, with $\bfs = \{s_1,\dots,s_p\}$, 
such that:
\begin{enumerate}
\item[(i)] The $\Uppsi^p_\Delta$ are group homomorphisms.
\item[(ii)] For any $D\in \HS^p_k(A;\Delta)$ and any $a\in A[[\bfs]]_\Delta$,  $\Uppsi^p_\Delta(D)\, a = \widetilde{D}(a) \Uppsi^p_\Delta(D)$.
\item[(iii)] For any substitution map $\varphi\in\Sub_A(p,q;\Delta,\nabla)$ (resp. for any substitution map $\varphi\in\Sub_k(p,q;\Delta,\nabla)$) and for any $D\in \HS^p_k(A;\Delta)$ we have $\Uppsi^q_\nabla(\varphi\sbullet D) = \varphi\sbullet \Uppsi^p_\Delta(D)$.
\end{enumerate}
Moreover, property (ii) above is equivalent to:
\begin{enumerate}
\item[(ii')] For any $D\in \HS^p_k(A;\Delta)$ and any $a\in A[[\bfs]]_\Delta$,  $a \, \Uppsi^p_\Delta(D)  =  \Uppsi^p_\Delta(D) \, \widetilde{D^*}(a)$.
\end{enumerate}

\noindent (2) For each right (pre-)HS-module $(E,\Uppsi)$, the (pre-)HS-structure $\Uppsi$ may be considered as a system of maps $ \Uppsi= \{\Uppsi^p_\Delta: \HS^p_k(A;\Delta) \longrightarrow \Aut^{\circ}_{k[[\bfs]]_\Delta}(E[[\bfs]]_\Delta),\ p\in \N, \Delta\in\coide{\N^p}\}
$
such that:
\begin{enumerate}
\item[(i)] The $\Uppsi^p_\Delta$ are group anti-homomorphisms.
\item[(ii)] For any $D\in \HS^p_k(A;\Delta)$ and any $a\in A[[\bfs]]_\Delta$,  $a \,\Uppsi^p_\Delta(D) =  \Uppsi^p_\Delta(D)\, \widetilde{D}(a)$.
\item[(iii)] For any substitution map $\varphi\in\Sub_A(p,q;\Delta,\nabla)$ (resp. for any substitution map $\varphi\in\Sub_k(p,q;\Delta,\nabla)$) and for any $D\in \HS^p_k(A;\Delta)$ we have $\Uppsi^q_\nabla(\varphi\sbullet D) = \Uppsi^p_\Delta(D) \sbullet \varphi$.
\end{enumerate}
Moreover, property (ii) above is equivalent to:
\begin{enumerate}
\item[(ii')] For any $D\in \HS^p_k(A;\Delta)$ and any $a\in A[[\bfs]]_\Delta$,  $\Uppsi^p_\Delta(D)\, a  = \widetilde{D^*}(a) \Uppsi^p_\Delta(D)$.
\end{enumerate}
\end{nota}

\begin{exam} The underlying $A$-module of any left (resp. right) $\DD_{A/k}$-module $E$ carries an obvious left (resp. right) HS-module structure, namely $ \Uppsi= \{\Uppsi^p_\Delta: \HS^p_k(A;\Delta) \longrightarrow \Aut^{\circ}_{k[[\bfs]]_\Delta}(E[[\bfs]]_\Delta),\ p\in \N, \Delta\in\coide{\N^p}\}
$
given by:
$$ \Uppsi^p_\Delta (D)(e) := \sum_{\scriptscriptstyle \alpha\in\Delta} \left(\sum_{\scriptscriptstyle \beta+\gamma= \alpha} D_\beta \cdot e_\gamma\right)\bfs^\alpha\quad 
\left(\text{resp.\ }\ \Uppsi^p_\Delta (D)(e) := \sum_{\scriptscriptstyle \alpha\in\Delta}\left(\sum_{\scriptscriptstyle \beta+\gamma= \alpha} e_\gamma \cdot D_\beta  \right)\bfs^\alpha\right)
$$
for all $D\in \HS^p_k(A;\Delta)$ and for all $e=\sum e_\gamma \bfs^\gamma \in E[[\bfs]]_\Delta$.
\medskip

When we consider the left $\DD_{A/k}$-module $E=A$, then its left HS-module structure is simply given by the injective group homomorphisms
$$
D\in \HS^p_k(A;\Delta) \longmapsto \widetilde{D} \in \Aut^{\circ}_{k[[\bfs]]_\Delta}(A[[\bfs]]_\Delta).
$$
\end{exam}

\begin{prop} \label{prop:pre-HS-Omega}
Under the above hypotheses, the $A$-module $\Omega_{A/k}$ has a unique left pre-HS-module structure over $A/k$ for which the differential $d:A \longrightarrow \Omega_{A/k}$ is a HS-map.
\end{prop}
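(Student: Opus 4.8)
The plan is to let the uniqueness requirement dictate the construction. If $\Uppsi$ is any left pre-HS-module structure on $\Omega_{A/k}$ for which $d$ is a HS-map, then the intertwining relation $\overline{d}\pcirc \widetilde{D} = \Uppsi^p_\Delta(D)\pcirc \overline{d}$ (where $\overline{d}\colon A[[\bfs]]_\Delta\to \Omega_{A/k}[[\bfs]]_\Delta$ sends $\sum a_\alpha\bfs^\alpha$ to $\sum (da_\alpha)\bfs^\alpha$), evaluated on $a\in A$, forces
\[
\Uppsi^p_\Delta(D)(da) = \overline{d}\bigl(\widetilde{D}(a)\bigr) = \overline{d}\bigl(\Phi_D(a)\bigr) = \sum_{\scriptscriptstyle\alpha\in\Delta} d\bigl(D_\alpha(a)\bigr)\bfs^\alpha .
\]
By the last characterization in Lemma \ref{lemma:charact-D-elements}, the Leibniz rule (ii) is exactly the assertion that $\Uppsi^p_\Delta(D)$ is $\widetilde{D}$-semilinear; since $\Uppsi^p_\Delta(D)$ is $k[[\bfs]]_\Delta$-linear and the $da$ generate $\Omega_{A/k}$ over $A$, it is completely determined by the prescription above. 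This settles uniqueness and simultaneously isolates the only possible candidate.

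For existence I would first verify that this prescription defines a legitimate map out of $\Omega_{A/k}$. Regard $\Omega_{A/k}[[\bfs]]_\Delta$ as an $A$-module through $\Phi_D$, that is, $a\cdot\omega := \Phi_D(a)\,\omega$. Because $\Phi_D\colon A\to A[[\bfs]]_\Delta$ is a homomorphism of $k$-algebras (see (\ref{eq:HS-iso-Hom(A,A[[s]])})) and $\overline{d}$ is a $k[[\bfs]]_\Delta$-linear derivation, the composite $\overline{d}\pcirc\Phi_D\colon A\to \Omega_{A/k}[[\bfs]]_\Delta$ is a $k$-derivation into this twisted module. The universal property of $\Omega_{A/k}$ then yields a unique $A$-linear, hence $\Phi_D$-semilinear, map $\widehat\partial_D\colon \Omega_{A/k}\to \Omega_{A/k}[[\bfs]]_\Delta$ with $\widehat\partial_D(da)=\overline{d}(\Phi_D(a))$. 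Since $D_0=\Id$ one has $\widehat\partial_D\in\Hom_k^\pcirc(\Omega_{A/k},\Omega_{A/k}[[\bfs]]_\Delta)$, so extending it $k[[\bfs]]_\Delta$-linearly by (\ref{eq:g^e}) and (\ref{eq:Aut-iso-pcirc}) produces $\Uppsi^p_\Delta(D)\in\Aut^\pcirc_{k[[\bfs]]_\Delta}(\Omega_{A/k}[[\bfs]]_\Delta)$, with property (ii) built in.

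It then remains to verify axioms (i) and (iii). For (i), using $\Phi_{D\pcirc E}=\widetilde{D}\pcirc\Phi_E$ and the $k[[\bfs]]_\Delta$-linearity of $\Uppsi^p_\Delta(D)$, a direct evaluation on $da$ shows that both $\Uppsi^p_\Delta(D\pcirc E)(da)$ and $(\Uppsi^p_\Delta(D)\pcirc\Uppsi^p_\Delta(E))(da)$ equal $\sum_{\scriptscriptstyle\beta,\gamma} d\bigl(D_\gamma E_\beta(a)\bigr)\bfs^{\gamma+\beta}$; as both composites are $\widetilde{D\pcirc E}$-semilinear and agree on the generators $da$, they coincide. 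For (iii) I would take $\varphi\in\Sub_k(p,q;\Delta,\nabla)$ and use $\Phi_{\varphi\sbullet D}=\varphi\pcirc\Phi_D$ together with $(\varphi\sbullet\Uppsi^p_\Delta(D))|_{\Omega_{A/k}}=\varphi_{\Omega_{A/k}}\pcirc(\Uppsi^p_\Delta(D)|_{\Omega_{A/k}})$ (see (\ref{eq:atencion-bullet})); comparing the two sides on $da$ reduces the claim to the identity $\overline{d}\pcirc\varphi=\varphi_{\Omega_{A/k}}\pcirc\overline{d}$, which holds because the coefficients $\mathbf{C}_e(\varphi,\alpha)$ lie in $k$ and are therefore annihilated by $\overline{d}$.

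The main obstacle is precisely this last identity, and it is also where the prefix ``pre'' becomes essential. The commutation $\overline{d}\pcirc\varphi=\varphi_{\Omega_{A/k}}\pcirc\overline{d}$ rests on $\varphi$ having coefficients in $k$; for a general $\varphi\in\Sub_A$ the nonconstant coefficients $\mathbf{C}_e(\varphi,\alpha)\in A$ are differentiated by $\overline{d}$ through the Leibniz rule, producing correction terms that obstruct (iii) in its strong form. Consequently the construction yields a genuine pre-HS-module structure on $\Omega_{A/k}$, unique by the first paragraph, but not in general a full HS-module structure, exactly as the statement asserts.
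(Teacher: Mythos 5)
Your proposal is correct and follows essentially the same route as the paper: you construct $\Uppsi^p_\Delta(D)$ by viewing $\overline{d}\pcirc\Phi_D$ as a $k$-derivation into $\Omega_{A/k}[[\bfs]]_\Delta$ twisted through $\Phi_D$, invoke the universal property of $\Omega_{A/k}$, extend $k[[\bfs]]_\Delta$-linearly via (\ref{eq:g^e}) and (\ref{eq:Aut-iso-pcirc}), and verify (i)--(iii) exactly as the paper does, including the key observation that the commutation $\overline{d}\pcirc\varphi=\varphi_{\Omega_{A/k}}\pcirc\overline{d}$ (hence axiom (iii)) only holds for substitution maps with constant coefficients. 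Your opening paragraph even makes explicit the uniqueness argument (forced values on the generators $da$ plus $\widetilde{D}$-semilinearity) that the paper leaves implicit in the uniqueness clause of the universal property.
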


\begin{proof} For each $p\in \N$, each $\Delta\in\coide{\N^p}$ and each $D\in\HS^\bfs_k(A;\Delta)$, let us consider $\Omega_{A/k}[[\bfs]]_\Delta$ as an $A$-module through the $k$-algebra map $\Phi_D: A \to A[[\bfs]]_\Delta$ (see (\ref{eq:HS-iso-Hom(A,A[[s]])})). It is clear that the map
$$ \overline{d} \pcirc \Phi_D: x\	\in A \longmapsto  \sum_{\alpha} d(D_\alpha(x)) \bfs^\alpha \in \Omega_{A/k}[[\bfs]]_\Delta
$$
is a $k$-derivation. So, there is a unique $A$-linear map $\fLie^p_\Delta(D): \Omega_{A/k} \longrightarrow \Omega_{A/k}[[\bfs]]_\Delta$ such that the following diagram is commutative:
$$
\begin{tikzcd}
A \ar[r,"d"] \ar[d,"\Phi_D"'] & \Omega_{A/k} \ar[d, "\fLie^p_\Delta (D)"]\\
A[[\bfs]]_\Delta \ar[r,"\overline{d}"] & \Omega_{A/k}[[\bfs]]_\Delta.
\end{tikzcd}
$$
If write $\fLie^p_\Delta (D) = \sum_{\alpha} \fLie^p_\Delta (D)_\alpha \bfs^\alpha$, each $\fLie^p_\Delta(D)_\alpha$ is $k$-linear, $\fLie^p_\Delta(D)_\alpha \pcirc d = d \pcirc D_\alpha$ for all $\alpha\in \Delta$
and the $A$-linearity of $\fLie^p_\Delta(D)$ means that
\begin{equation}  \label{eq:D-element-fLie}
\fLie^p_\Delta(D)_\alpha (a \omega) = \sum_{\scriptscriptstyle \alpha'+\alpha''=\alpha} D_{\alpha'}(a) \fLie^p_\Delta(D)_{\alpha''} (\omega)\  \forall a\in A, \forall \omega \in \Omega_{A/k}, \forall \alpha\in \Delta.
\end{equation}
In particular, $\fLie^p_\Delta(D)_0 = \Id$. In order to simplify, the canonical $k[[\bfs]]_\Delta$-linear extension of $\fLie^p_\Delta(D)$ to $\Omega_{A/k}[[\bfs]]_\Delta$ (see (\ref{eq:g^e})) will be also denoted by $\fLie^p_\Delta(D)$. We have then a commutative diagram:
$$
\begin{tikzcd}
A[[\bfs]]_\Delta \ar[r,"\overline{d}"] \ar[d,"\widetilde{D}"'] & \Omega_{A/k}[[\bfs]]_\Delta \ar[d, "\fLie^p_\Delta(D)"]\\
A[[\bfs]]_\Delta \ar[r,"\overline{d}"] & \Omega_{A/k}[[\bfs]]_\Delta.
\end{tikzcd}
$$
Let us see that the system:
$$\fLie := \{\fLie^p_\Delta:\HS^\bfs_k(A;\Delta) \to \Aut^{\circ}_{k[[\bfs]]_\Delta}(\Omega_{A/k}[[\bfs]]_\Delta),\ p\in \N, \Delta\in\coide{\N^p}
\}
$$ 
is a left pre-HS-module structure on $\Omega_{A/k}$ over $A/k$:
\medskip

\noindent (i) The uniqueness property defining $\fLie^p_\Delta(D)$ implies that the $\fLie^p_\Delta$ are group homomorphisms.
\medskip

\noindent (ii) Property (\ref{eq:D-element-fLie}) can be translated into $\fLie^p_\Delta(D) a = \widetilde{D}(a) \fLie^p_\Delta(D)$.
\medskip

\noindent (iii) Let $\varphi\in\Sub_k(p,q;\Delta,\nabla)$ be a substitution map with constant coefficients and $D\in \HS^p_k(A;\Delta)$. To prove the equality $\fLie^q_\nabla(\varphi\sbullet D) = \varphi\sbullet \fLie^p_\Delta(D)$, it is enough to prove that the restrictions to $\Omega_{A/k}$ of both terms coincide (see Lemma \ref{lema:tilde-map}), and this is a consequence of the identity 
$$ \left( \varphi\sbullet \fLie^p_\Delta(D) \right)|_{\Omega_{A/k}} = \varphi_\Omega \pcirc \fLie^p_\Delta(D),
$$
where $\varphi_\Omega =\varphi \widehat{\otimes} \Id_{\Omega_{A/k}}:\Omega_{A/k}[[\bfs]]_\Delta \to \Omega_{A/k}[[\bft]]_\nabla$ is the $\varphi$-linear map induced by $\varphi$ (see \ref{sec:action-substi} and (\ref{eq:diag-funda})), the identity $\Phi_{\varphi\sbullet D} = \varphi \pcirc \Phi_D$ (see (\ref{eq:diag-funda-HS})),
and the commutativity of the following diagram:
$$
\begin{tikzcd}
A \ar[r,"d"] \ar[d,"\Phi_D"'] & \Omega_{A/k} \ar[d, "\fLie^p_\Delta(D)"]\\
A[[\bfs]]_\Delta \ar[r,"\overline{d}"] \ar[d,"\varphi"'] & \Omega_{A/k}[[\bfs]]_\Delta \ar[d, "\varphi_\Omega"]\\
A[[\bft]]_\nabla \ar[r,"\overline{d}"] & \Omega_{A/k}[[\bft]]_\nabla.
\end{tikzcd}
$$
Let us notice that the commutativity of the bottom square depends on $\varphi$ being with constant coefficients.
\end{proof}

\begin{nota} With the notations of the above proposition, for each $\alpha \in\Delta$ with $|\alpha|=1$, the map 
$\fLie^p_\Delta(D)_\alpha:\Omega_{A/k}\to \Omega_{A/k}$ coincides with the classical Lie derivative
$\Lie_{D_\alpha}: \Omega_{A/k}\to \Omega_{A/k}$ with respect to the derivation $D_\alpha$.
\end{nota}

\begin{prop} The following properties hold:
\begin{enumerate}
\item[1)]  For each $p\in \N$, each $\Delta \in\coide{\N^p}$, each $D\in\HS^p_k(A;\Delta)$ and each $\delta\in\Der_k(A)[[\bfs]]_\Delta$  we have
$ D\, \delta\, D^* \in \Der_k(A)[[\bfs]]_\Delta$.
\item[2)] The system $\fAd :=\{\fAd^p_\Delta:\HS_k^p(A;\Delta) \to 
 \Aut^{\circ}_{k[[\bfs]]_\Delta}(\Der_k(A)[[\bfs]]_\Delta),\ p\in \N, \Delta \in\coide{\N^p}\}$, defined as
$$\fAd^p_\Delta(D)(\delta):=D\, \delta\, D^*\quad \forall D\in\HS^p_k(A;\Delta),\ \forall \delta\in  \Der_k(A)[[\bfs]]_\Delta,
$$
is a left pre-HS-module structure on $\Der_k(A)$ over $A/k$.
\end{enumerate}
\end{prop}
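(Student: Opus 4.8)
The plan is to push everything through the $k[[\bfs]]_\Delta$-algebra isomorphism $r\mapsto\widetilde r$ of Lemma \ref{lema:tilde-map}, which converts products in $R[[\bfs]]_\Delta$ (with $R=\End_k(A)$) into compositions of $k[[\bfs]]_\Delta$-linear endomorphisms of $A[[\bfs]]_\Delta$. For part 1), recall from (\ref{eq:HS-funda}) that $D\mapsto\widetilde D$ is a group isomorphism onto $\Aut^{\circ}_{k[[\bfs]]_\Delta\text{-alg}}(A[[\bfs]]_\Delta)$, so $\widetilde D$ is a $k[[\bfs]]_\Delta$-algebra automorphism with inverse $\widetilde{D^*}$, while by Lemma \ref{lema:tilde-map} part 3) the hypothesis $\delta\in\Der_k(A)[[\bfs]]_\Delta$ says precisely that $\widetilde\delta$ is a $k[[\bfs]]_\Delta$-derivation of $A[[\bfs]]_\Delta$. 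I would then write $\widetilde{D\,\delta\,D^*}=\widetilde D\pcirc\widetilde\delta\pcirc\widetilde{D^*}$ and invoke the elementary fact that the conjugate of a derivation by an algebra automorphism is again a derivation; a second application of Lemma \ref{lema:tilde-map} part 3), now in the reverse direction, gives $D\,\delta\,D^*\in\Der_k(A)[[\bfs]]_\Delta$.

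For part 2), I would first verify that $\fAd^p_\Delta(D)$ is a well-defined element of $\Aut^{\circ}_{k[[\bfs]]_\Delta}(\Der_k(A)[[\bfs]]_\Delta)$: it maps into $\Der_k(A)[[\bfs]]_\Delta$ by part 1); it is $k[[\bfs]]_\Delta$-linear because $k[[\bfs]]_\Delta$ is central in $R[[\bfs]]_\Delta$; it is bijective with inverse $\delta\mapsto D^*\delta D=\fAd^p_\Delta(D^*)(\delta)$; and it lies in $\Aut^{\circ}$ since $D_0=D^*_0=\Id$ forces the $\bfs^0$-coefficient of $D\delta D^*$ to equal $\delta_0$. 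Property (i), that the $\fAd^p_\Delta$ are group homomorphisms, is then immediate from $(D\pcirc E)^*=E^*D^*$, which yields $\fAd^p_\Delta(D\pcirc E)(\delta)=(DE)\delta(E^*D^*)=\fAd^p_\Delta(D)\bigl(\fAd^p_\Delta(E)(\delta)\bigr)$.

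The Leibniz rule (ii) I would deduce directly from the fact that $D$ is its own $D$-element (Definition \ref{def:D-element}, Lemma \ref{lemma:charact-D-elements}), i.e. $Da=\widetilde D(a)\,D$ for all $a\in A[[\bfs]]_\Delta$. Since the $A[[\bfs]]_\Delta$-module action on $\Der_k(A)[[\bfs]]_\Delta$ is left multiplication inside $R[[\bfs]]_\Delta$, one gets $\fAd^p_\Delta(D)(a\delta)=D\,a\,\delta\,D^*=(DaD^*)(D\delta D^*)=\widetilde D(a)\,\fAd^p_\Delta(D)(\delta)$, exactly the form required in Remark \ref{nota:Hsmod-equiv}(1)(ii). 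For compatibility (iii) with a substitution map $\varphi\in\Sub_k(p,q;\Delta,\nabla)$ with constant coefficients, I would use that $\varphi\sbullet(-)=\varphi_R$ is then a ring homomorphism (see \ref{nume:def-sbullet}) and that $\varphi_{\Der_k(A)}$ is the restriction of $\varphi_R$ (because the coefficients $\mathbf C_e(\varphi,\alpha)$ lie in $k$). By (\ref{eq:atencion-bullet}) together with Lemma \ref{lema:tilde-map} part 2), two elements of $\Aut^{\circ}_{k[[\bft]]_\nabla}(\Der_k(A)[[\bft]]_\nabla)$ agree once their restrictions to $\Der_k(A)$ agree, so it suffices to compute, for constant $\delta\in\Der_k(A)$, that $\varphi_{\Der_k(A)}(D\delta D^*)=\varphi_R(D)\,\varphi_R(\delta)\,\varphi_R(D^*)=(\varphi\sbullet D)\,\delta\,(\varphi\sbullet D)^*$, using $\varphi_R(\delta)=\delta$ and $\varphi\sbullet D^*=(\varphi\sbullet D)^*$ (see \ref{nume:def-sbullet-HS}); this is $\fAd^q_\nabla(\varphi\sbullet D)$ restricted to $\Der_k(A)$.

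The only genuinely delicate point is step (iii), and it is exactly here that one obtains a pre-HS-structure rather than a full HS-structure. The identity $\varphi_R(D\delta D^*)=\varphi_R(D)\varphi_R(\delta)\varphi_R(D^*)$ relies on $\varphi_R$ being multiplicative, which holds only when $\varphi$ has constant coefficients; for a general $\varphi\in\Sub_A(p,q;\Delta,\nabla)$ the map $\varphi_R$ is not a ring homomorphism (one would be forced to insert $\varphi^D$ as in Proposition \ref{prop:varphi-D-main}), so the conjugation argument collapses. This mirrors precisely the obstruction already encountered for the Lie-derivative structure $\fLie$ on $\Omega_{A/k}$ in Proposition \ref{prop:pre-HS-Omega}.
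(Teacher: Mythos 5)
Your proof is correct, and its overall architecture (well-definedness of $\fAd^p_\Delta(D)$, then properties (i), (ii), (iii) of Remark \ref{nota:Hsmod-equiv}(1), with (iii) only for constant-coefficient substitution maps) matches the paper's; the differences lie in how the two key verifications are carried out. For part 1), the paper's primary argument is a direct computation of the commutator $[\widetilde{D\,\delta\,D^*},a]$ for $a\in A[[\bfs]]_\Delta$, concluding via Lemma \ref{lema:tilde-map}, 3); the conceptual argument you give --- conjugation of a $k[[\bfs]]_\Delta$-derivation of $A[[\bfs]]_\Delta$ by a $k[[\bfs]]_\Delta$-algebra automorphism is again a derivation, transported through the isomorphisms of Lemma \ref{lema:tilde-map} and (\ref{eq:HS-funda}) --- is exactly what the paper records as an after-the-fact remark, so you simply promote that remark to the proof. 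For property (iii), the paper fixes $\delta\in\Der_k(A)$ and $\varepsilon\in\nabla$ and expands $\fAd^q_\nabla(\varphi\sbullet D)_\varepsilon(\delta)=\sum_{e+f=\varepsilon}E_e\,\delta\,E^*_f$ (with $E=\varphi\sbullet D$) coefficient by coefficient, recombining through the identity $\sum {\bf C}_e(\varphi,\alpha)\,{\bf C}_f(\varphi,\gamma)={\bf C}_\varepsilon(\varphi,\alpha+\gamma)$, quoted from \cite[Proposition 3]{nar_subst_2018}; you instead invoke the packaged consequence of that same fact, namely that $\varphi_R$ is a ring homomorphism when $\varphi$ has constant coefficients (\ref{nume:def-sbullet}), together with $\varphi_R(\delta)=\delta$ for constant $\delta$ and $(\varphi\sbullet D)^*=\varphi\sbullet D^*$ (\ref{nume:def-sbullet-HS}), after reducing to restrictions to $\Der_k(A)$ via Lemma \ref{lema:tilde-map}, 2) and (\ref{eq:atencion-bullet}) --- a reduction the paper performs implicitly by comparing the coefficient maps evaluated on constants. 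The two arguments rest on the same underlying multiplicativity, but yours is shorter, avoids re-deriving the ${\bf C}$-identity, and isolates exactly where constant coefficients are indispensable (the multiplicativity of $\varphi_R$), which your closing paragraph correctly identifies as the obstruction to upgrading $\fAd$ to a full HS-structure; the paper's version has the compensating virtue of being self-contained at the coefficient level.
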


\begin{proof} 1)  
For each $a\in A[[\bfs]]_\Delta$ we have
\begin{eqnarray*}
& \displaystyle
[\widetilde{D\, \delta\, D^*}, a] = \widetilde{D}\, \widetilde{\delta}\, \widetilde{D^*}\, a - a\, \widetilde{D}\, \widetilde{\delta}\, \widetilde{D^*} = 
&\\
& \displaystyle
\widetilde{D}\, \widetilde{\delta}\, \widetilde{D^*}(a)\, \widetilde{D^*} - a\, \widetilde{D}\, \widetilde{\delta}\, \widetilde{D^*} =
\widetilde{D}\,  \widetilde{D^*}(a)\, \widetilde{\delta}\, \widetilde{D^*} + \widetilde{D}\, \widetilde{\delta}(\widetilde{D^*}(a))\,  \widetilde{D^*} - a\, \widetilde{D}\, \widetilde{\delta}\, \widetilde{D^*} =
&\\
& \displaystyle
\widetilde{D}(\widetilde{D^*}(a))\, \widetilde{D}\, \widetilde{\delta}\, \widetilde{D^*} + \widetilde{D}(\widetilde{\delta}(\widetilde{D^*}(a)))\,\widetilde{D} \,\widetilde{D^*} - a\, \widetilde{D}\, \widetilde{\delta} \widetilde{D^*} =
&\\
& \displaystyle
a\, \widetilde{D}\, \widetilde{\delta}\, \widetilde{D^*} + \widetilde{D \delta D^*}(a) - a\, \widetilde{D}\, \widetilde{\delta}\, \widetilde{D^*}  = \widetilde{D \delta D^*}(a)
\end{eqnarray*}
and so by Lemma \ref{lema:tilde-map}, c), we deduce that $D\, \delta\, D^* \in \Der_k(A)[[\bfs]]_\Delta$. Actually, this result can be simply understood as the fact that the conjugation of any $k[[\bfs]]_\Delta$-derivation of $A[[\bfs]]_\Delta$ by any automorphism of the $k[[\bfs]]_\Delta$-algebra $A[[\bfs]]_\Delta$ is again a $k[[\bfs]]_\Delta$-derivation.
\medskip

\noindent 2) For each $\delta\in\Der_k(A)$ we have $\fAd^p_\Delta(D)(\delta) = \sum_\alpha \fAd^p_\Delta(D)_\alpha(\delta)\, \bfs^\alpha$ with 
$$\fAd^p_\Delta(D)_\alpha(\delta) =\sum_{\scriptscriptstyle \alpha'+\alpha''=\alpha} D_{\alpha'}\, \delta\, D^*_{\alpha''},
$$
and so $\fAd^p_\Delta(D)_0 =\Id$ and $\fAd^p_\Delta(D) \in \Aut^{\circ}_{k[[\bfs]]_\Delta}(\Der_k(A)[[\bfs]]_\Delta)$.
\medskip

\noindent (i) Since the $\fAd^p_\Delta$ are defined as a conjugation, they are group homomorphisms.
\medskip

\noindent (ii) For any $D\in\HS^p_k(A;\Delta)$, for any 
$a\in A[[\bfs]]_\Delta$  and for any $\delta\in\Der_k(A)[[\bfs]]_\Delta$  we have
$$\left(\fAd^p_\Delta(D)\, a\right)(\delta) = D\, a\, \delta\, D^* = \widetilde{D}(a)\, D\, \delta\, D^*= \widetilde{D}(a) \fAd^p_\Delta(D)(\delta).
$$

\noindent (iii) Let $\varphi\in\Sub_k(p,q;\Delta,\nabla)$ be a substitution map with constant coefficients and $D\in \HS^p_k(A;\Delta)$ a HS-derivation. Let us denote $E:= \varphi \bullet D$. We know from  \ref{nume:def-sbullet-HS} that:
$$
 E_e = \sum_{\substack{\scriptscriptstyle \alpha\in\Delta\\ \scriptscriptstyle  |\alpha|\leq |e| }} {\bf C}_e(\varphi,\alpha) D_\alpha,\quad \forall e\in \N^q, e \neq 0\quad (E_0=\Id)
$$
and $E^* = \varphi \bullet D^*$. So, for each $\varepsilon\in \nabla$ and for each $\delta\in\Der_k(A)$ we have:
\begin{eqnarray*}
&\displaystyle
\fAd^p_\Delta(\varphi \bullet D)_\varepsilon(\delta) = \sum_{\scriptscriptstyle e + f = \varepsilon} E_e \, \delta \, E^*_f = \sum_{\substack{\scriptscriptstyle e + f = \varepsilon\\ \scriptscriptstyle \alpha\in \Delta, \gamma\in \Delta \\ \scriptscriptstyle |\alpha|\leq |e|, |\gamma|\leq |f| }} {\bf C}_e(\varphi,\alpha) {\bf C}_f(\varphi,\gamma) D_\alpha\, \delta\, D^*_\gamma =&\\
&\displaystyle \sum_{\substack{\scriptscriptstyle a\in\Delta\\ \scriptscriptstyle |a|\leq |\varepsilon|}}\sum_{\substack{\scriptscriptstyle \alpha, \gamma \in\Delta\\ \scriptscriptstyle \alpha+\gamma=a}} \sum_{\substack{\scriptscriptstyle e + f = \varepsilon\\  \scriptscriptstyle |\alpha|\leq |e|, |\gamma|\leq |f| }} {\bf C}_e(\varphi,\alpha) {\bf C}_f(\varphi,\gamma) D_\alpha\, \delta\, D^*_\gamma 
\stackrel{(\star)}{=}
&\\
&\displaystyle
 \sum_{\substack{\scriptscriptstyle a\in\Delta\\ \scriptscriptstyle |a|\leq |\varepsilon|}}\sum_{\substack{\scriptscriptstyle \alpha, \gamma \in\Delta\\ \scriptscriptstyle \alpha+\gamma=a}}  {\bf C}_\varepsilon(\varphi,a) D_\alpha\, \delta\, D^*_\gamma = \sum_{\substack{\scriptscriptstyle a\in\Delta\\ \scriptscriptstyle |a|\leq |\varepsilon|}} {\bf C}_\varepsilon(\varphi,a) \left(
 \sum_{\substack{\scriptscriptstyle \alpha, \gamma \in\Delta\\ \scriptscriptstyle \alpha+\gamma=a}}  D_\alpha\, \delta\, D^*_\gamma\right) =
&\\
&\displaystyle 
  \sum_{\substack{\scriptscriptstyle a\in\Delta\\ \scriptscriptstyle |a|\leq |\varepsilon|}} {\bf C}_\varepsilon(\varphi,a) \fAd^p_\Delta(D)_a(\delta) = \left( \varphi\sbullet \fAd^p_\Delta(D)\right)_\varepsilon (\delta),
\end{eqnarray*}
where the equality $(\star)$ comes from the fact that $\varphi$ is an $A$-algebra map (see \cite[Proposition 3]{nar_subst_2018}).
\end{proof}

\begin{nota} With the notations of the above proposition, for each $\alpha \in\Delta$ with $|\alpha|=1$, the map 
$\fAd^p_\Delta(D)_\alpha:\Der_k(A)\to \Der_k(A)$ coincides with the classical adjoint representation
$$\Ad_{D_\alpha}: \delta\in \Der_k(A)\longmapsto [D_\alpha,\delta]\in \Der_k(A)$$ associated with the derivation $D_\alpha$.
\end{nota}

It is clear that left (resp. right) (pre-)HS-modules with HS-maps form an abelian category admitting a conservative additive exact functor (the forgetful functor) to the category of $A$-modules.
\medskip

\subsection{Operations on Hasse--Schmidt modules}

In this section, starting with two left (pre-)HS-modules  $(E,\Uppsib)$, $(F,\Uppsibb)$ over $A/k$ and two right (pre-)HS-modules $(P,\Upgammab)$, $(Q,\Upgammabb)$  over $A/k$, we will see how to construct natural left \mbox{(pre-)}HS-modules structures on $E\otimes_A F$, $\Hom_A(E,F)$, $\Hom_A(P,Q)$ and right (pre-)HS-modules structures on $P\otimes_A E$, $\Hom_A(E,P)$. Let us notice that similar constructions have been studied in \cite[\S 2.2]{matzat_vdput_2003} in the particular case of iterative uni-variate Hasse--Schmidt derivations over a field.

\begin{prop} \label{prop:tensor-left-HS-mod}
Under the above hypotheses, the following properties hold:
\begin{enumerate}
\item[(1)]
For any $p\in \N$, for any $\Delta\in\coide{\N^p}$ and for any $D\in \HS^p_k(A;\Delta)$ there is a unique $\Uppsi^p_\Delta(D) \in \Aut^{\circ}_{k[[\bfs]]_\Delta}((E\otimes_A F)[[\bfs]]_\Delta)$ such
that the following diagram is commutative:
$$
\begin{tikzcd}
E[[\bfs]]_\Delta \otimes_{k[[\bfs]]_\Delta} F[[\bfs]]_\Delta \ar[r, "\mu"]
\ar[d,"\Uppsib^p_\Delta(D)\otimes \Uppsibb^p_\Delta(D)"'] &
(E\otimes_A F)[[\bfs]]_\Delta \ar[d, "\Uppsi^p_\Delta(D)"]\\
E[[\bfs]]_\Delta \otimes_{k[[\bfs]]_\Delta} F[[\bfs]]_\Delta \ar[r, "\mu"] &
(E\otimes_A F)[[\bfs]]_\Delta,
\end{tikzcd}
$$
where $\mu$ is the natural $(A[[\bfs]]_\Delta;A[[\bfs]]_\Delta)$-linear map
$$ \mu\left( \left(\sum_\alpha e_\alpha \bfs^\alpha\right) \otimes \left(\sum_\alpha f_\alpha \bfs^\alpha\right)\right) = \sum_\alpha \left( \sum_{\alpha'+\alpha''=\alpha} e_{\alpha'} \otimes f_{\alpha''} \right) \bfs^\alpha.
$$
\item[(2)] The system $\Uppsi = \{\Uppsi^p_\Delta, p\in \N, \Delta\in\coide{\N^p}\}$ defines a left (pre-)HS-module structure over $A/k$ on $E\otimes_A F$.
\end{enumerate}
\end{prop}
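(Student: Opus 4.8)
Here is how I would prove Proposition \ref{prop:tensor-left-HS-mod}. The plan is to build $\Uppsi^p_\Delta(D)$ by descending the automorphism $G:=\Uppsib^p_\Delta(D)\otimes\Uppsibb^p_\Delta(D)$ through the surjection $\mu$, and then to read off the three structural axioms from the uniqueness in (1). First I would observe that $\mu$ exhibits $(E\otimes_A F)[[\bfs]]_\Delta$ as the quotient of $E[[\bfs]]_\Delta\otimes_{k[[\bfs]]_\Delta}F[[\bfs]]_\Delta$ obtained by imposing the $A[[\bfs]]_\Delta$-balancing; concretely $\mu$ is a continuous $(A[[\bfs]]_\Delta;A[[\bfs]]_\Delta)$-linear epimorphism whose kernel is the closure of the span of the elements $ea\otimes f-e\otimes af$ with $a\in A[[\bfs]]_\Delta$. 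To descend $G$ it then suffices to check that $\mu\pcirc G$ annihilates these generators, and this is exactly where the Leibniz axiom (ii) of the two factors enters. Since $\Uppsib^p_\Delta(D)$ and $\Uppsibb^p_\Delta(D)$ are $D$-elements (property (ii) of Remark \ref{nota:Hsmod-equiv}), one has $\Uppsib^p_\Delta(D)(ae)=\widetilde{D}(a)\,\Uppsib^p_\Delta(D)(e)$ and likewise on $F$; using the $A[[\bfs]]_\Delta$-bilinearity of $\mu$ and the fact that $A[[\bfs]]_\Delta$ is commutative (so the left and right actions on $(E\otimes_A F)[[\bfs]]_\Delta$ coincide), both $\mu(G(ea\otimes f))$ and $\mu(G(e\otimes af))$ equal $\widetilde{D}(a)\,\mu\bigl(\Uppsib^p_\Delta(D)(e)\otimes\Uppsibb^p_\Delta(D)(f)\bigr)$. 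Hence $G$ preserves $\ker\mu$ and induces a unique continuous $k[[\bfs]]_\Delta$-linear $\Uppsi^p_\Delta(D)$ with $\Uppsi^p_\Delta(D)\pcirc\mu=\mu\pcirc G$; since $G\equiv\Id$ and $\mu$ is the identity in degree $0$, we get $\Uppsi^p_\Delta(D)\in\Aut^{\circ}$, its inverse being the map produced by the same recipe for $D^*$. This settles (1), the surjectivity of $\mu$ giving the uniqueness.

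For axiom (i) I would invoke this uniqueness together with the elementary identity $(f_1\pcirc g_1)\otimes(f_2\pcirc g_2)=(f_1\otimes f_2)\pcirc(g_1\otimes g_2)$: applying it to $\Uppsib^p_\Delta(D\pcirc E)=\Uppsib^p_\Delta(D)\pcirc\Uppsib^p_\Delta(E)$ and its $F$-analogue shows that $\Uppsi^p_\Delta(D)\pcirc\Uppsi^p_\Delta(E)$ satisfies the diagram characterizing $\Uppsi^p_\Delta(D\pcirc E)$, whence equality. Axiom (ii) is then immediate from the construction: writing $z=\mu(w)$ and pushing $a\in A[[\bfs]]_\Delta$ through $\mu$ and $G$ exactly as in the descent computation gives $\Uppsi^p_\Delta(D)(az)=\widetilde{D}(a)\,\Uppsi^p_\Delta(D)(z)$, so $\Uppsi^p_\Delta(D)$ is again a $D$-element.

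The main obstacle is axiom (iii), the compatibility with a substitution map $\varphi\in\Sub_A(p,q;\Delta,\nabla)$. Here I would not try to push $\varphi\sbullet(-)$ naively through $\mu$ (recall the warning (\ref{eq:atencion-bullet}) that $\varphi\sbullet f\neq\varphi_{E\otimes F}\pcirc f$ in general), but instead reduce to the restrictions to $E\otimes_A F$: by Lemma \ref{lema:tilde-map}(2) two elements of $\Aut^{\circ}_{k[[\bft]]_\nabla}((E\otimes_A F)[[\bft]]_\nabla)$ coincide as soon as their restrictions to $E\otimes_A F$ agree, and by (\ref{eq:atencion-bullet}) the restriction of $\varphi\sbullet\Uppsi^p_\Delta(D)$ is $\varphi_{E\otimes F}\pcirc(\Uppsi^p_\Delta(D)|_{E\otimes F})$. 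Expanding $\Uppsi^p_\Delta(D)|_{E\otimes F}$ through $\mu$ in terms of the components of $\Uppsib^p_\Delta(D)$ and $\Uppsibb^p_\Delta(D)$, applying $\varphi_{E\otimes F}$, and on the other side expanding $\Uppsi^q_\nabla(\varphi\sbullet D)|_{E\otimes F}$ using $\Uppsib^q_\nabla(\varphi\sbullet D)=\varphi\sbullet\Uppsib^p_\Delta(D)$ and its $F$-analogue, the equality of the two reduces to the single combinatorial identity
$$ {\bf C}_g(\varphi,\alpha'+\alpha'')=\sum_{g'+g''=g}{\bf C}_{g'}(\varphi,\alpha')\,{\bf C}_{g''}(\varphi,\alpha''), $$
which is nothing but the multiplicativity $\varphi(\bfs^{\alpha'}\bfs^{\alpha''})=\varphi(\bfs^{\alpha'})\varphi(\bfs^{\alpha''})$ of the $A$-algebra map $\varphi$ (cf. \cite[Proposition 3]{nar_subst_2018}). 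Finally I would stress that this identity holds for every $\varphi$, so the pre-/HS- dichotomy is inherited solely through the factors: the step $\Uppsib^q_\nabla(\varphi\sbullet D)=\varphi\sbullet\Uppsib^p_\Delta(D)$ is available for constant-coefficient $\varphi$ when $(E,\Uppsib)$ and $(F,\Uppsibb)$ are pre-HS-modules and for all $\varphi$ when they are HS-modules, yielding respectively a pre-HS- or an HS-structure on $E\otimes_A F$.
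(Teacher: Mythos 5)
Your proposal is correct and follows essentially the same route as the paper: descend $\Uppsib^p_\Delta(D)\otimes\Uppsibb^p_\Delta(D)$ through $\mu$ using the $D$-element property of both factors for well-definedness, get (i) and (ii) from uniqueness and the same balancing computation, and prove (iii) by restricting to $E\otimes_A F$ via Lemma \ref{lema:tilde-map} and (\ref{eq:atencion-bullet}). The only cosmetic difference is in (iii): where the paper invokes the commutativity of the square $\varphi_{E\otimes_A F}\pcirc\mu=\mu\pcirc(\varphi_E\otimes\varphi_F)$, you verify exactly that commutativity in components, via the identity ${\bf C}_g(\varphi,\alpha'+\alpha'')=\sum_{g'+g''=g}{\bf C}_{g'}(\varphi,\alpha'){\bf C}_{g''}(\varphi,\alpha'')$ expressing multiplicativity of $\varphi$.
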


\begin{proof} (1) Since we have canonical isomorphisms $E[[\bfs]]_\Delta \otimes_{A[[\bfs]]_\Delta} F[[\bfs]]_\Delta \simeq (E\otimes_A F)[[\bfs]]_\Delta$, the result comes from the following equality:
\begin{eqnarray*}
& \displaystyle 
\mu \left( \left( \Uppsib^p_\Delta(D)\otimes \Uppsibb^p_\Delta(D) \right) ( (ae) \otimes f)   \right) =
\mu \left( \Uppsib^p_\Delta(D)(ae)\otimes \Uppsibb^p_\Delta(D)(f) \right)  = 
&\\
& \displaystyle 
\mu \left( \left(\widetilde{D}(a)\Uppsib^p_\Delta(D)(e)\right)\otimes \Uppsibb^p_\Delta(D)(f) \right) =
\mu \left( \Uppsib^p_\Delta(D)(e)\otimes \left(\widetilde{D}(a)\Uppsibb^p_\Delta(D)(f)\right) \right) =
&\\
& \displaystyle 
\mu \left( \Uppsib^p_\Delta(D)(e)\otimes \Uppsibb^p_\Delta(D)(af)\right) =
\mu \left( \left( \Uppsib^p_\Delta(D)\otimes \Uppsibb^p_\Delta(D) \right) ( e \otimes (af))   \right)
\end{eqnarray*}
for all $e\in E[[\bfs]]_\Delta $, for all $f\in F[[\bfs]]_\Delta $ and for all $a\in A[[\bfs]]_\Delta$.
\medskip

\noindent (2) We have to check properties (i), (ii) and (iii) of Remark \ref{nota:Hsmod-equiv} (1). Property (i) is clear from the uniqueness of $\Uppsi^p_\Delta(D)$ in part (1). Property (ii) follows from 
\begin{eqnarray*}
& \displaystyle \left(\Uppsi^p_\Delta(D)\, a\right)(\mu(e\otimes f)) = \Uppsi^p_\Delta(D)(\mu((ae)\otimes f)) =
&
\\
& \displaystyle 
\mu \left( \Uppsib^p_\Delta(D)(ae)\otimes \Uppsibb^p_\Delta(D)(f) \right)  = 
\mu \left( \left(\widetilde{D}(a)\Uppsib^p_\Delta(D)(e)\right)\otimes \Uppsibb^p_\Delta(D)(f) \right) =
&
\\
& \displaystyle 
\widetilde{D}(a)\, \mu \left( \Uppsib^p_\Delta(D)(e)\otimes \Uppsibb^p_\Delta(D)(f) \right) =
\widetilde{D}(a)\, \Uppsi^p_\Delta(D)(\mu(e\otimes f))
\end{eqnarray*}
for all $e\in E[[\bfs]]_\Delta$, for all $f\in F[[\bfs]]_\Delta$ and for all $a\in A[[\bfs]]_\Delta$.
Property (iii) follows from (\ref{eq:commut-CD-varphi-bullet}) and the commutativity of the following diagram:
$$
\begin{tikzcd}
E[[\bfs]]_\Delta \otimes_{k[[\bfs]]_\Delta} F[[\bfs]]_\Delta \ar[r, "\mu"]
\ar[d,"\varphi_E \otimes \varphi_F"'] &
(E\otimes_A F)[[\bfs]]_\Delta \ar[d, "\varphi_{E\otimes_A F}"]\\
E[[\bft]]_\nabla \otimes_{k[[\bft]]_\nabla} F[[\bft]]_\nabla \ar[r, "\mu"] &
(E\otimes_A F)[[\bft]]_\nabla
\end{tikzcd}
$$
for each substitution map $\varphi \in \Sub_A(p,q;\Delta,\nabla)$ (resp. $\varphi\in\Sub_k(p,q;\Delta,\nabla)$).
\end{proof}

For any maps $f:E[[\bfs]]_\Delta \to E[[\bfs]]_\Delta$, $g:F[[\bfs]]_\Delta \to F[[\bfs]]_\Delta$ and $h:E[[\bfs]]_\Delta \to F[[\bfs]]_\Delta$, let us denote:
$$ f^\star(h) := h\pcirc f,\quad g_\star(h) := g\pcirc h.
$$

\begin{prop} \label{prop:hom-left-HS-mod} 
Under the above hypotheses, the following properties hold:
\begin{enumerate}
\item[(1)]
For any $p\in \N$, for any  $\Delta\in\coide{\N^p}$ and for any $D\in \HS^p_k(A;\Delta)$ there is a unique $\Uppsi^p_\Delta(D) \in \Aut^{\circ}_{k[[\bfs]]_\Delta}\left(\Hom_A(E,F)[[\bfs]]_\Delta\right)$ such
that the following diagram is commutative:
$$
\begin{tikzcd}
\Hom_A(E,F)[[\bfs]]_\Delta
\ar[r, "\nu"]  \ar[d, "\Uppsi^p_\Delta(D)"'] & \Hom_{k[[\bfs]]_\Delta}(E[[\bfs]]_\Delta,F[[\bfs]]_\Delta)
\ar[d, "\Uppsibb^p_\Delta(D)_\star \pcirc \Uppsib^p_\Delta(D^*)^\star"]
 \\
\Hom_A(E,F)[[\bfs]]_\Delta \ar[r, "\nu"] & 
\Hom_{k[[\bfs]]_\Delta}(E[[\bfs]]_\Delta,F[[\bfs]]_\Delta),
\end{tikzcd}
$$
where $\nu$ is the natural $(A[[\bfs]]_\Delta;A[[\bfs]]_\Delta)$-linear map defined as $\nu(h) = \widetilde{h}$ (see (\ref{eq:tilde-map})).
\item[(2)] The system $\Uppsi = \{\Uppsi^p_\Delta, p\in \N, \Delta\in\coide{\N^p}\}$ defines a left (pre-)HS-module structure over $A/k$ on $\Hom_A(E,F)$.
\end{enumerate}
\end{prop}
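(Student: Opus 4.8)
The plan is to transport everything through the isomorphism $\nu$. By the same reasoning as in Lemma~\ref{lema:tilde-map}, parts 1) and 3), the map $\nu$ restricts to an isomorphism of $\Hom_A(E,F)[[\bfs]]_\Delta$ onto the subgroup $\Hom_{A[[\bfs]]_\Delta}(E[[\bfs]]_\Delta,F[[\bfs]]_\Delta)$ of $A[[\bfs]]_\Delta$-linear maps inside $\Hom_{k[[\bfs]]_\Delta}(E[[\bfs]]_\Delta,F[[\bfs]]_\Delta)$: indeed $\widetilde{h}$ is $A[[\bfs]]_\Delta$-linear if and only if every coefficient $h_\alpha$ is $A$-linear. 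Under this identification the right vertical map of the diagram in (1) is the conjugation $g\mapsto \Uppsibb^p_\Delta(D)\pcirc g\pcirc \Uppsib^p_\Delta(D^*)$, so for part (1) it suffices to check that this conjugation preserves $A[[\bfs]]_\Delta$-linearity. Writing $Q=\Uppsibb^p_\Delta(D)$ and $P=\Uppsib^p_\Delta(D^*)$, for $a\in A[[\bfs]]_\Delta$ and $A[[\bfs]]_\Delta$-linear $g$ one computes $(QgP)\pcirc a=Qg(\widetilde{D^*}(a)P)=Q\widetilde{D^*}(a)gP=a\pcirc(QgP)$, using that $P$ is a $D^*$-element (so $Pa=\widetilde{D^*}(a)P$), the $A[[\bfs]]_\Delta$-linearity of $g$, and that $Q$ is a $D$-element in the form $aQ=Q\widetilde{D^*}(a)$ (Lemma~\ref{lemma:charact-D-elements}). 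Injectivity of $\nu$ gives uniqueness of $\Uppsi^p_\Delta(D)$; and since $P,Q\in\Aut^\circ$, the conjugation is invertible (by conjugation with $\Uppsib^p_\Delta(D)$ and $\Uppsibb^p_\Delta(D^*)$) and congruent to the identity modulo $\langle\bfs\rangle$, whence $\Uppsi^p_\Delta(D)\in\Aut^\circ_{k[[\bfs]]_\Delta}(\Hom_A(E,F)[[\bfs]]_\Delta)$.

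For part (2) I verify properties (i), (ii) and (iii) of Remark~\ref{nota:Hsmod-equiv}~(1). Property (i) is immediate from the conjugation description together with $\Uppsibb^p_\Delta(D\pcirc E)=\Uppsibb^p_\Delta(D)\Uppsibb^p_\Delta(E)$ and $\Uppsib^p_\Delta((D\pcirc E)^*)=\Uppsib^p_\Delta(E^*)\Uppsib^p_\Delta(D^*)$ (using $(D\pcirc E)^*=E^*\pcirc D^*$). Property (ii) is the same computation as in part (1): in the conjugation picture $\Uppsi^p_\Delta(D)\pcirc a$ sends $g\mapsto QagP=\widetilde{D}(a)QgP$, which is exactly $\widetilde{D}(a)\pcirc\Uppsi^p_\Delta(D)$, now invoking the $D$-element relation $Qa=\widetilde{D}(a)Q$.

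The main obstacle is property (iii). Fix $\varphi\in\Sub_A(p,q;\Delta,\nabla)$ (resp.\ $\varphi\in\Sub_k(p,q;\Delta,\nabla)$) and $D\in\HS^p_k(A;\Delta)$, and set $M=\Hom_A(E,F)$. By Lemma~\ref{lema:tilde-map}~2) it is enough to compare the restrictions to $M$ of $\Uppsi^q_\nabla(\varphi\sbullet D)$ and $\varphi\sbullet\Uppsi^p_\Delta(D)$; by (\ref{eq:atencion-bullet}) the latter restriction is $\varphi_M\pcirc(\Uppsi^p_\Delta(D)|_M)$. Translating through $\nu$ (via diagram (\ref{eq:commut-CD-varphi-bullet})), and using the substitution compatibility of $\Uppsib$ and $\Uppsibb$ together with the identity $(\varphi\sbullet D)^*=\varphi^D\sbullet D^*$ of Proposition~\ref{prop:varphi-D-main}~(i), the desired equality reduces to the operator identity
$$ \varphi\sbullet\bigl(\Uppsibb^p_\Delta(D)\pcirc\overline{h}\pcirc\Uppsib^p_\Delta(D^*)\bigr)=\bigl(\varphi\sbullet\Uppsibb^p_\Delta(D)\bigr)\pcirc\overline{h}\pcirc\bigl(\varphi^D\sbullet\Uppsib^p_\Delta(D^*)\bigr) $$
of maps $E[[\bft]]_\nabla\to F[[\bft]]_\nabla$, one for each $h\in\Hom_A(E,F)$, where $\overline{h}$ is the coefficientwise (hence $A$-linear) extension of $h$ and the $\varphi\sbullet(-)$ on the left is the operator action $\overline{(\varphi_F)_*}$ of (\ref{eq:atencion-bullet}).

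To settle this identity I would restrict both sides to $E$, again by Lemma~\ref{lema:tilde-map}~2) and (\ref{eq:atencion-bullet}). Abbreviating by $x$ the image of a vector of $E$ under the innermost factor $\Uppsib^p_\Delta(D^*)$, the left-hand side evaluates to $\varphi_F\bigl(\Uppsibb^p_\Delta(D)(\overline{h}(x))\bigr)$; on the right-hand side the $A$-linearity of $h$ gives $\overline{h}\pcirc\varphi^D_E=\varphi^D_F\pcirc\overline{h}$, and Proposition~\ref{prop:bullet-D-elements}~(c), applied to the $D$-element $\Uppsibb^p_\Delta(D)$ of $\End_k(F)[[\bfs]]_\Delta$ and the substitution map $\varphi$, yields $\bigl(\varphi\sbullet\Uppsibb^p_\Delta(D)\bigr)\pcirc\varphi^D_F=\varphi_F\pcirc\Uppsibb^p_\Delta(D)$; the two sides therefore coincide. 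This is exactly where the factor $\varphi^D$ produced by $(\varphi\sbullet D)^*=\varphi^D\sbullet D^*$ is needed, as it is precisely absorbed by Proposition~\ref{prop:bullet-D-elements}~(c). In the pre-HS case $\varphi$ has constant coefficients, so $\varphi^D=\varphi$ by Proposition~\ref{prop:varphi-D-main}~(iv) and the same computation applies verbatim. I expect the bookkeeping in this last step to be the only genuinely delicate point; everything else is formal once the identification of $\Hom_A(E,F)[[\bfs]]_\Delta$ with $A[[\bfs]]_\Delta$-linear maps is in place.
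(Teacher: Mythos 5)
Your proof is correct and follows essentially the same route as the paper: the identification of $\Hom_A(E,F)[[\bfs]]_\Delta$ with $A[[\bfs]]_\Delta$-linear maps via $\nu$, the conjugation computation for (1) and for properties (i)--(ii), and for property (iii) the same chain of ingredients --- Proposition \ref{prop:varphi-D-main}~(i), (\ref{eq:atencion-bullet}), the $A$-linearity of $h$ to commute $\overline{h}$ past $(\varphi^D)_E$, Proposition \ref{prop:bullet-D-elements}~(c), and diagram (\ref{eq:commut-CD-varphi-bullet}) --- only packaged as a single displayed operator identity checked on $E$. Your explicit remark that $\varphi^D=\varphi$ in the constant-coefficient (pre-HS) case is a small but welcome point of extra care that the paper leaves implicit.
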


\begin{proof} (1) Since we have canonical isomorphisms 
$$h \in \Hom_A(E,F)[[\bfs]]_\Delta \stackrel{\sim}{\longmapsto} \widetilde{h} \in \Hom_{A[[\bfs]]_\Delta}(E[[\bfs]]_\Delta,F[[\bfs]]_\Delta),
$$
the result comes from the fact that $\left(\Uppsibb^p_\Delta(D)_\star \pcirc \Uppsib^p_\Delta(D^*)^\star\right)(h')$ is $A[[\bfs]]_\Delta$-linear for each $h'\in \Hom_{A[[\bfs]]_\Delta}(E[[\bfs]]_\Delta,F[[\bfs]]_\Delta)$, namely:
\begin{eqnarray*}
& \displaystyle \left(\Uppsibb^p_\Delta(D)_\star \pcirc \Uppsib^p_\Delta(D^*)^\star\right)(h')(am) = \left( \Uppsibb^p_\Delta(D) \pcirc h' \pcirc \Uppsib^p_\Delta(D^*)\right)(am)=
&
\\
& \displaystyle  \Uppsibb^p_\Delta(D) \left( h' \left( \widetilde{D^*}(a)\, \Uppsib^p_\Delta(D^*)(m)\right)   \right)   =
\Uppsibb^p_\Delta(D) \left(\widetilde{D^*}(a)\, h' \left(  \Uppsib^p_\Delta(D^*)(m)\right)   \right) =
&
\\
& \displaystyle 
\widetilde{D}(\widetilde{D^*}(a)) \Uppsibb^p_\Delta(D) \left(h' \left(  \Uppsib^p_\Delta(D^*)(m)\right)   \right) =
a \left(\Uppsibb^p_\Delta(D)_\star \pcirc \Uppsib^p_\Delta(D^*)^\star\right)(h')(m)
\end{eqnarray*}
for all $m\in E[[\bfs]]_\Delta $ and for all $a\in A[[\bfs]]_\Delta$.
\medskip

\noindent (2) As in Proposition \ref{prop:tensor-left-HS-mod}, we have to check properties (i), (ii) and (iii) of Remark \ref{nota:Hsmod-equiv} (1). Property (i) comes from the fact that the map
\begin{eqnarray*}
& \displaystyle 
D\in \HS^p_k(A;\Delta) \longmapsto 
&
\\
& \displaystyle 
\Uppsibb^p_\Delta(D)_\star \pcirc \Uppsib^p_\Delta(D^*)^\star \in
\Aut_{k[[\bfs]]_\Delta}\left( \Hom_{k[[\bfs]]_\Delta}(E[[\bfs]]_\Delta,F[[\bfs]]_\Delta) \right)
\end{eqnarray*}
is a group homomorphism:
\begin{eqnarray*}
& \displaystyle 
 \Uppsibb^p_\Delta(D\pcirc E)_\star \pcirc \Uppsib^p_\Delta((D\pcirc E)^*)^\star = \cdots = 
&
\\
& \displaystyle 
\Uppsibb^p_\Delta(D)_\star \pcirc \Uppsibb^p_\Delta(E)_\star \pcirc \Uppsib^p_\Delta(D^*)^\star \pcirc \Uppsib^p_\Delta(E^*)^\star =
&
\\
& \displaystyle  \Uppsibb^p_\Delta(D)_\star \pcirc \Uppsib^p_\Delta(D^*)^\star \pcirc \Uppsibb^p_\Delta(E)_\star  \pcirc \Uppsib^p_\Delta(E^*)^\star.
\end{eqnarray*}
Property (ii) follows from the following equality:
\begin{eqnarray*}
& \displaystyle 
 \left( \Uppsibb^p_\Delta(D)_\star \pcirc \Uppsib^p_\Delta(D^*)^\star \right) (ah') = \Uppsibb^p_\Delta(D) \pcirc (ah') \pcirc \Uppsib^p_\Delta(D^*) = 
 &
\\
& \displaystyle 
 \left( \Uppsibb^p_\Delta(D) \, a\right) \pcirc h' \pcirc \Uppsib^p_\Delta(D^*) =
\left( \widetilde{D}(a) \Uppsibb^p_\Delta(D)\right) \pcirc h' \pcirc \Uppsib^p_\Delta(D^*) = 
&
\\
& \displaystyle 
\widetilde{D}(a) \left( \Uppsibb^p_\Delta(D)_\star \pcirc \Uppsib^p_\Delta(D^*)^\star \right) (h')
&
\end{eqnarray*}
for all $h'\in \Hom_{A[[\bfs]]_\Delta}(E[[\bfs]]_\Delta,F[[\bfs]]_\Delta)$ and for all $a\in A[[\bfs]]_\Delta$.
\medskip

To finish, let us prove property (iii). Let us write $M=\Hom_A(E,F)$.
It is enough to prove that $\Uppsi^q_\nabla(\varphi\sbullet D)|_M = \left(\varphi\sbullet  \Uppsi^p_\Delta(D)\right)|_M$ for all $p,q\in \N$, for all $\Delta\subset\N^p, \nabla\in\coide{\N^q}$, for all substitution map   $\varphi\in\Sub_A(p,q;\Delta,\nabla)$ (resp. $\varphi\in\Sub_k(p,q;\Delta,\nabla)$) and for all HS-derivation $D\in\HS_k^p(A;\Delta)$. For each $h\in M$ we have $\nu(h)=\widetilde{h} = \overline{h}$ with $\overline{h}\left(\sum_\beta e_\alpha \bft^\beta\right) = \sum_\beta h(e_\beta) \bft^\beta$ for each $\sum_\beta e_\beta \bft^\beta \in E[[\bft]]_\nabla$. So:
\begin{eqnarray*}
& \displaystyle  \left( \nu \pcirc \Uppsi^q_\nabla(\varphi\sbullet D)\right)(h)|_E = 
\left[ \Uppsibb^q_\nabla(\varphi\sbullet D) \pcirc \nu(h) \pcirc \Uppsib^q_\nabla((\varphi\sbullet D)^*)\right]|_E  \stackrel{\text{\tiny (1)}}{=}
&
\\
& \displaystyle 
\left( \varphi\sbullet\Uppsibb^p_\Delta(D) \right)\pcirc \widetilde{h} \pcirc \left[\Uppsib^q_\nabla(\varphi^D\sbullet D^*)|_E\right]= 
\left( \varphi\sbullet\Uppsibb^p_\Delta(D) \right)\pcirc \widetilde{h} \pcirc \left[\left(\varphi^D\sbullet \Uppsib^p_\Delta(D^*)\right)|_E\right] \stackrel{\text{\tiny (2)}}{=}
&
\\
& \displaystyle 
\left( \varphi\sbullet\Uppsibb^p_\Delta(D) \right)\pcirc \overline{h} \pcirc 
\left[\left(\varphi^D\right)_E\pcirc \left(\Uppsib^p_\Delta(D^*)|_E\right)\right]=
&
\\
& \displaystyle 
\left( \varphi\sbullet\Uppsibb^p_\Delta(D) \right)\pcirc 
\left(\varphi^D\right)_F\pcirc \overline{h}\pcirc \left(\Uppsib^p_\Delta(D^*)|_E\right) \stackrel{\text{\tiny (3)}}{=}
\varphi_F\pcirc \Uppsibb^p_\Delta(D) \pcirc \nu(h) \pcirc \left(\Uppsib^p_\Delta(D^*)|_E \right)=
&
\\
& \displaystyle 
\varphi_F\pcirc 
\left[  
\left(\nu \pcirc \Uppsi^p_\Delta(D)\right)(h)|_E
\right] =
\varphi_F\pcirc 
\left[  
\nu \left(\Uppsi^p_\Delta(D)(h)\right)|_E
\right] \stackrel{\text{\tiny (4)}}{=}
\nu\left( \varphi_M \left(\Uppsi^p_\Delta(D)(h)\right)\right)|_E =
&
\\
& \displaystyle 
\nu \left( \left(\varphi_M  \pcirc \Uppsi^p_\Delta(D)\right)(h)
\right)|_E = 
\nu \left( \left(\varphi \sbullet \Uppsi^p_\Delta(D)\right)(h)
\right)|_E = 
\left( \nu \pcirc \left(\varphi \sbullet \Uppsi^p_\Delta(D)\right) \right)(h)|_E,
\end{eqnarray*}
where equality (1) comes from Proposition \ref{prop:varphi-D-main}, equality (2) comes from (\ref{eq:atencion-bullet}), equality (3) comes from Proposition \ref{prop:bullet-D-elements}, (c), and equality (4) comes from (\ref{eq:tilde-varphi-M-varphi-F}). 
We first deduce that $\left( \nu \pcirc \Uppsi^q_\nabla(\varphi\sbullet D)\right)(h) = \left( \nu \pcirc \left(\varphi \sbullet \Uppsi^p_\Delta(D)\right) \right)(h)$ for all $h\in M$, i.e. 
$$  \nu \pcirc \left(\Uppsi^q_\nabla(\varphi\sbullet D)|_M \right)= \nu \pcirc \left( \left(\varphi \sbullet \Uppsi^p_\Delta(D)\right)|_M \right),
$$
second, from the injectivity of $\nu$,  that $ \Uppsi^q_\nabla(\varphi\sbullet D)|_M =   \left(\varphi \sbullet \Uppsi^p_\Delta(D)\right)|_M$, and we conclude that $ \Uppsi^q_\nabla(\varphi\sbullet D) =   \varphi \sbullet \Uppsi^p_\Delta(D)$.
\end{proof}

The proofs of the following three propositions are completely similar to the proofs of Propositions \ref{prop:hom-left-HS-mod} and \ref{prop:tensor-left-HS-mod}.

\begin{prop} \label{prop:tensor-right-left-HS-mod}
Under the above hypotheses, the following properties hold:
\begin{enumerate}
\item[(1)]
For any $p\in \N$, for any $\Delta\in\coide{\N^p}$ and for any $D\in \HS^p_k(A;\Delta)$ there is a unique $\Upgamma^p_\Delta(D) \in \Aut^{\circ}_{k[[\bfs]]_\Delta}((P\otimes_A E)[[\bfs]]_\Delta)$ such
that the following diagram is commutative:
$$
\begin{tikzcd}
P[[\bfs]]_\Delta \otimes_{k[[\bfs]]_\Delta} E[[\bfs]]_\Delta \ar[r, "\mu"]
\ar[d,"\Upgammab^p_\Delta(D)\otimes \Uppsib^p_\Delta(D^*)"'] &
(P\otimes_A E)[[\bfs]]_\Delta \ar[d, "\Upgamma^p_\Delta(D)"]\\
P[[\bfs]]_\Delta \otimes_{k[[\bfs]]_\Delta} E[[\bfs]]_\Delta \ar[r, "\mu"] &
(P\otimes_A E)[[\bfs]]_\Delta,
\end{tikzcd}
$$
where $\mu$ is the natural $(A[[\bfs]]_\Delta;A[[\bfs]]_\Delta)$-linear map
$$ \mu\left( \left(\sum_\alpha p_\alpha \bfs^\alpha\right) \otimes \left(\sum_\alpha e_\alpha \bfs^\alpha\right)\right) = \sum_\alpha \left( \sum_{\alpha'+\alpha''=\alpha} p_{\alpha'} \otimes e_{\alpha''} \right) \bfs^\alpha.
$$
\item[(2)] The system $\Upgamma = \{\Upgamma^p_\Delta, p\in \N, \Delta\in\coide{\N^p}\}$ defines a right (pre-)HS-module structure over $A/k$ on $P\otimes_A E$.
\end{enumerate}
\end{prop}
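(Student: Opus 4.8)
The plan is to imitate the proofs of Propositions \ref{prop:tensor-left-HS-mod} and \ref{prop:hom-left-HS-mod} verbatim, the only novelty being the twist $D\mapsto D^*$ on the left factor $E$, which is forced by the fact that $P$ carries a \emph{right} structure. The guiding observation is that the two automorphisms in the left column of the diagram are governed by the \emph{same} scalar transformation: by Remark \ref{nota:Hsmod-equiv}(2)(ii') the right-module automorphism satisfies $\Upgammab^p_\Delta(D)\,a = \widetilde{D^*}(a)\,\Upgammab^p_\Delta(D)$, while Remark \ref{nota:Hsmod-equiv}(1)(ii) applied to the HS-derivation $D^*$ gives $\Uppsib^p_\Delta(D^*)\,a = \widetilde{D^*}(a)\,\Uppsib^p_\Delta(D^*)$ for all $a\in A[[\bfs]]_\Delta$.

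For part (1) I would use this to show that $\Upgammab^p_\Delta(D)\otimes \Uppsib^p_\Delta(D^*)$ respects $\ker\mu$. Indeed, using the commutativity of $A$ and the $A[[\bfs]]_\Delta$-balancing of $\mu$, one computes $\mu\bigl(\Upgammab^p_\Delta(D)(pa)\otimes \Uppsib^p_\Delta(D^*)(e)\bigr) = \mu\bigl(\widetilde{D^*}(a)\Upgammab^p_\Delta(D)(p)\otimes \Uppsib^p_\Delta(D^*)(e)\bigr) = \mu\bigl(\Upgammab^p_\Delta(D)(p)\otimes \Uppsib^p_\Delta(D^*)(ae)\bigr)$, which is the verbatim analogue of the displayed equalities in the proof of Proposition \ref{prop:tensor-left-HS-mod}(1) with every $\widetilde{D}(a)$ replaced by $\widetilde{D^*}(a)$. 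This yields the unique descended $\Upgamma^p_\Delta(D)$; since both factors lie in $\Aut^\circ$, so does $\Upgamma^p_\Delta(D)$.

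For part (2) I would verify properties (i), (ii), (iii) of Remark \ref{nota:Hsmod-equiv}(2). Property (i), that $\Upgamma^p_\Delta$ is a group \emph{anti}-homomorphism, is exactly where the twist pays off: since $(D\pcirc E)^* = E^*\pcirc D^*$, the anti-homomorphism property of $\Upgammab$ and the homomorphism property of $\Uppsib$ combine so that $\Upgammab^p_\Delta(D\pcirc E)\otimes \Uppsib^p_\Delta((D\pcirc E)^*)$ equals $\bigl(\Upgammab^p_\Delta(E)\otimes \Uppsib^p_\Delta(E^*)\bigr)\pcirc\bigl(\Upgammab^p_\Delta(D)\otimes \Uppsib^p_\Delta(D^*)\bigr)$; descending through $\mu$ gives $\Upgamma^p_\Delta(D\pcirc E) = \Upgamma^p_\Delta(E)\pcirc \Upgamma^p_\Delta(D)$. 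Property (ii) is again the computation of Proposition \ref{prop:tensor-left-HS-mod}(2) with $\widetilde{D^*}$ in place of $\widetilde{D}$ on the $E$-factor.

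The main obstacle is property (iii), the compatibility with substitution maps, because here the two factors are acted on by \emph{different} substitution maps. Writing $(\varphi\sbullet D)^* = \varphi^D\sbullet D^*$ (Proposition \ref{prop:varphi-D-main}(i)), the right-module property (iii) for $P$ gives $\Upgammab^q_\nabla(\varphi\sbullet D) = \Upgammab^p_\Delta(D)\sbullet\varphi$, whereas the left-module property (iii) for $E$ applied to $\varphi^D\sbullet D^*$ gives $\Uppsib^q_\nabla((\varphi\sbullet D)^*) = \varphi^D\sbullet \Uppsib^p_\Delta(D^*)$. Thus $\Upgamma^q_\nabla(\varphi\sbullet D)$ descends from $(\Upgammab^p_\Delta(D)\sbullet\varphi)\otimes(\varphi^D\sbullet \Uppsib^p_\Delta(D^*))$, in which the $P$-factor is transported by $\varphi$ and the $E$-factor by $\varphi^D$. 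I would reconcile this with $\Upgamma^p_\Delta(D)\sbullet\varphi$ by checking the identity after restriction to $M = P\otimes_A E$ (which suffices, by the tilde isomorphism and the injectivity of $\nu$, exactly as in the proof of Proposition \ref{prop:hom-left-HS-mod}) and chasing the naturality square relating $\varphi_P\otimes\varphi_E$ to $\varphi_{P\otimes_A E}$ through $\mu$; the passage from $\varphi^D$ back to $\varphi$ on the $E$-factor is furnished by Proposition \ref{prop:bullet-D-elements}(c) together with (\ref{eq:atencion-bullet}), precisely the device used for property (iii) in Proposition \ref{prop:hom-left-HS-mod}. In the pre-HS case one restricts to $\varphi$ with constant coefficients, where $\varphi^D=\varphi$ by Proposition \ref{prop:varphi-D-main}(iv), and this last reconciliation becomes trivial.
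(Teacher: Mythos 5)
Your proposal is correct and follows exactly the route the paper intends: the paper gives no separate proof of this proposition, saying only that it is ``completely similar'' to the proofs of Propositions \ref{prop:tensor-left-HS-mod} and \ref{prop:hom-left-HS-mod}, and your adaptation supplies the right details. In particular, the three points where the adaptation is non-trivial are handled correctly: the common scalar twist $\widetilde{D^*}(a)$ on both factors (Remark \ref{nota:Hsmod-equiv}(2)(ii') for $P$ and (1)(ii) applied to $D^*$ for $E$) gives well-definedness through the $A[[\bfs]]_\Delta$-balancing of $\mu$; the identity $(D\pcirc E)^*=E^*\pcirc D^*$ turns the homomorphism/anti-homomorphism pair into the required anti-homomorphism; and for property (iii) the mismatch between $\varphi$ acting on the $P$-factor and $\varphi^D$ acting on the $E$-factor is resolved, after restriction to $P\otimes_A E$, by Proposition \ref{prop:varphi-D-main}(i), (\ref{eq:atencion-bullet}) and the $D$-element mechanism packaged in Proposition \ref{prop:bullet-D-elements}(c), with the pre-HS case reducing to $\varphi^D=\varphi$ by Proposition \ref{prop:varphi-D-main}(iv), exactly as in the paper's proof of Proposition \ref{prop:hom-left-HS-mod}.
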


\begin{prop} \label{prop:hom-right-right-HS-mod}
Under the above hypotheses, the following properties hold:
\begin{enumerate}
\item[(1)]
For any $p\in \N$, for any $\Delta\in\coide{\N^p}$ and for any $D\in \HS^p_k(A;\Delta)$ there is a unique $\Uppsi^p_\Delta(D) \in \Aut^{\circ}_{k[[\bfs]]_\Delta}\left(\Hom_A(P,Q)[[\bfs]]_\Delta\right)$ such
that the following diagram is commutative:
$$
\begin{tikzcd}
\Hom_A(P,Q)[[\bfs]]_\Delta
\ar[r, "\nu"]  \ar[d, "\Uppsi^p_\Delta(D)"'] & \Hom_{k[[\bfs]]_\Delta}(P[[\bfs]]_\Delta,Q[[\bfs]]_\Delta)
\ar[d, "\Upgammabb^p_\Delta(D^*)_\star \pcirc \Upgammab^p_\Delta(D)^\star"]
 \\
\Hom_A(P,Q)[[\bfs]]_\Delta \ar[r, "\nu"] & 
\Hom_{k[[\bfs]]_\Delta}(P[[\bfs]]_\Delta,Q[[\bfs]]_\Delta),
\end{tikzcd}
$$
where $\nu$ is the natural $(A[[\bfs]]_\Delta;A[[\bfs]]_\Delta)$-linear map defined as $\nu(h) = \widetilde{h}$ (see (\ref{eq:tilde-map})).
\item[(2)] The system $\Uppsi = \{\Uppsi^p_\Delta, p\in \N, \Delta\in\coide{\N^p}\}$ defines a left (pre-)HS-module structure over $A/k$ on $\Hom_A(P,Q)$.
\end{enumerate}
\end{prop}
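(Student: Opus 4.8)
The plan is to imitate the proof of Proposition~\ref{prop:hom-left-HS-mod}, systematically replacing the left-module Leibniz identities of Remark~\ref{nota:Hsmod-equiv}(1) by their right-module counterparts from Remark~\ref{nota:Hsmod-equiv}(2). Throughout I identify $\Hom_A(P,Q)[[\bfs]]_\Delta$ with $\Hom_{A[[\bfs]]_\Delta}(P[[\bfs]]_\Delta,Q[[\bfs]]_\Delta)$ via $\nu:h\mapsto\widetilde{h}$ (Lemma~\ref{lema:tilde-map}), so that the candidate $\Uppsi^p_\Delta(D)$ is forced to be the conjugation $h'\mapsto \Upgammabb^p_\Delta(D^*)\pcirc h'\pcirc \Upgammab^p_\Delta(D)$. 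For part~(1) the only thing to check is that this conjugation preserves $A[[\bfs]]_\Delta$-linearity: given $h'$ linear and $a\in A[[\bfs]]_\Delta$, I push $a$ through $\Upgammab^p_\Delta(D)$ by property (ii') of Remark~\ref{nota:Hsmod-equiv}(2) (turning $a$ into $\widetilde{D^*}(a)$), across $h'$ by $A[[\bfs]]_\Delta$-linearity, and then through $\Upgammabb^p_\Delta(D^*)$ by property (ii) (turning $\widetilde{D^*}(a)$ back into $a$); this is the exact mirror of the computation in Proposition~\ref{prop:hom-left-HS-mod}(1).

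For part~(2) I would verify properties (i), (ii), (iii) of Remark~\ref{nota:Hsmod-equiv}(1). Property~(i) --- that $D\mapsto \Upgammabb^p_\Delta(D^*)_\star\pcirc \Upgammab^p_\Delta(D)^\star$ is a group \emph{homomorphism} --- follows from $(D\pcirc E)^*=E^*\pcirc D^*$ together with the fact that both $\Upgammab$ and $\Upgammabb$ are group \emph{anti}-homomorphisms: one gets $\Upgammabb^p_\Delta((D\pcirc E)^*)=\Upgammabb^p_\Delta(D^*)\,\Upgammabb^p_\Delta(E^*)$ and $\Upgammab^p_\Delta(D\pcirc E)=\Upgammab^p_\Delta(E)\,\Upgammab^p_\Delta(D)$, and these reassemble precisely into the composite of the two conjugations in the order corresponding to $D\pcirc E$. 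Property~(ii), the Leibniz rule $\Uppsi^p_\Delta(D)\,a=\widetilde{D}(a)\,\Uppsi^p_\Delta(D)$, is again a one-line computation: pulling the scalar out of $(ah')$ and then out of $\Upgammabb^p_\Delta(D^*)$ via property (ii') of Remark~\ref{nota:Hsmod-equiv}(2) applied to $D^*$ produces the factor $\widetilde{(D^*)^*}(a)=\widetilde{D}(a)$.

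The main obstacle is property~(iii), the compatibility with substitution maps, which I would prove by transcribing the chain of equalities in Proposition~\ref{prop:hom-left-HS-mod}(2)(iii) into the right-module setting. The essential inputs are: the right compatibility $\Upgammab^q_\nabla(\varphi\sbullet D)=\Upgammab^p_\Delta(D)\sbullet\varphi$ for the $P$-factor, and, for the $Q$-factor, $\Upgammabb^q_\nabla\big((\varphi\sbullet D)^*\big)=\Upgammabb^q_\nabla(\varphi^D\sbullet D^*)=\Upgammabb^p_\Delta(D^*)\sbullet\varphi^D$, where $(\varphi\sbullet D)^*=\varphi^D\sbullet D^*$ comes from Proposition~\ref{prop:varphi-D-main}. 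After substituting these into $\nu\big(\Uppsi^q_\nabla(\varphi\sbullet D)(h)\big)$ and restricting to $P$, I would move the substitution maps across $\widetilde{h}$ and out of the composite using (\ref{eq:atencion-bullet}), the identity of Proposition~\ref{prop:bullet-D-elements}(c), and (\ref{eq:tilde-varphi-M-varphi-F}); the delicate point --- exactly as in the left case --- is that the twist $\varphi^D$ appearing on the $P$-side is the precise partner of $\varphi$ on the $Q$-side needed for the intermediate terms to telescope into a single $\varphi_M$, after which the injectivity of $\nu$ yields $\Uppsi^q_\nabla(\varphi\sbullet D)=\varphi\sbullet\Uppsi^p_\Delta(D)$. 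The pre-HS case is identical, with $\varphi$ restricted to $\Sub_k(p,q;\Delta,\nabla)$, in which case $\varphi^D=\varphi$ by Proposition~\ref{prop:varphi-D-main}(iv) and the bookkeeping collapses.
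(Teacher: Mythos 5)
Your treatment of part (1) and of properties (i) and (ii) of part (2) is correct and is exactly the intended transcription: the conjugation $h'\mapsto\Upgammabb^p_\Delta(D^*)\pcirc h'\pcirc\Upgammab^p_\Delta(D)$ preserves $A[[\bfs]]_\Delta$-linearity by the computation you indicate, it is a group homomorphism because $\Upgammab$, $\Upgammabb$ are anti-homomorphisms and $(D\pcirc E)^*=E^*\pcirc D^*$, and the Leibniz rule follows from (ii$'$) applied to $D^*$. Since the paper itself only declares the proof ``completely similar'' to Propositions \ref{prop:hom-left-HS-mod} and \ref{prop:tensor-left-HS-mod}, up to this point your proposal matches the paper's strategy faithfully.

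The gap is in property (iii). Once you insert the right-module compatibilities, the two conjugating factors are $\Upgammab^p_\Delta(D)\sbullet\varphi$ and $\Upgammabb^p_\Delta(D^*)\sbullet\varphi^D$: they carry the \emph{right} action of substitution maps, and the twist $\varphi^D$ sits on the target ($Q$) factor while the plain $\varphi$ sits on the source ($P$) factor --- the opposite distribution from the left case (your ``delicate point'' sentence even interchanges the two sides relative to your own earlier, correct, identities). The three tools you then invoke --- (\ref{eq:atencion-bullet}), Proposition \ref{prop:bullet-D-elements}(c) and (\ref{eq:tilde-varphi-M-varphi-F}) --- all concern the \emph{left} action $\varphi\sbullet(-)$, and none of them applies to $g\sbullet\varphi$. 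This is not a notational matter: for $r=\sum_\gamma r_\gamma\bfs^\gamma\in\End_k(P)[[\bfs]]_\Delta$ one has $\widetilde{r\sbullet\varphi}(x)=\sum_e\sum_\gamma r_\gamma\bigl({\bf C}_e(\varphi,\gamma)\,x\bigr)\bft^e$ for $x\in P$, so the coefficients of $\varphi$ land \emph{inside} the merely $k$-linear operators $r_\gamma$, and there is no identity of the form $(\widetilde{r}\sbullet\varphi)|_P=\psi_P\pcirc(\widetilde{r}|_P)$ with $\psi=\varphi$ or $\psi=\varphi^D$; likewise $\Upgammab^p_\Delta(D)\sbullet\varphi\neq\varphi\sbullet\Upgammab^p_\Delta(D)$ and $\Upgammabb^p_\Delta(D^*)\sbullet\varphi^D\neq\varphi\sbullet\Upgammabb^p_\Delta(D^*)$ (already in order $1$ they differ by terms $\pm D_1({\bf C})$). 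Consequently the chain does \emph{not} telescope ``exactly as in the left case''. A correct execution needs extra content: for instance, apply Proposition \ref{prop:bullet-D-elements}(c) twice starting from the other side $\varphi_Q\pcirc\bigl[\nu(\Uppsi^p_\Delta(D)(h))|_P\bigr]$ --- once to the $D$-element $\Upgammabb^p_\Delta(D^*)$ with $\varphi$, once to the $D^*$-element $\Upgammab^p_\Delta(D)$ with $\varphi^D$, using $(\varphi^D)^{D^*}=\varphi^{D\pcirc D^*}=\varphi$ from Proposition \ref{prop:varphi-D-main}(ii) --- and then prove that the resulting left-action conjugate agrees with the right-action conjugate you started from; the residual discrepancies $(\varphi\sbullet u)^*(u\sbullet\varphi^D)$ and $(s\sbullet\varphi)(\varphi^D\sbullet s)^*$ are $\mathbb{I}$-elements whose effect must be shown to cancel across the $A[[\bft]]_\nabla$-linear map $\overline{h}$. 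Stating and proving these right-action analogues (or the cancellation) is precisely the step your proposal treats as bookkeeping, and as written it would fail at the first application of (\ref{eq:atencion-bullet}) to $(\Upgammab^p_\Delta(D)\sbullet\varphi)|_P$.
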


\begin{prop} \label{prop:hom-left-right-HS-mod}
Under the above hypotheses, the following properties hold:
\begin{enumerate}
\item[(1)]
For any $p\in \N$, for any $\Delta\in\coide{\N^p}$ and for any $D\in \HS^p_k(A;\Delta)$ there is a unique $\Upgamma^p_\Delta(D) \in \Aut^{\circ}_{k[[\bfs]]_\Delta}\left(\Hom_A(E,P)[[\bfs]]_\Delta\right)$ such
that the following diagram is commutative:
$$
\begin{tikzcd}
\Hom_A(E,P)[[\bfs]]_\Delta
\ar[r, "\nu"]  \ar[d, "\Upgamma^p_\Delta(D)"'] & \Hom_{k[[\bfs]]_\Delta}(E[[\bfs]]_\Delta,P[[\bfs]]_\Delta)
\ar[d, "\Upgammab^p_\Delta(D)_* \pcirc \Uppsib^p_\Delta(D)^\star = \Uppsib^p_\Delta(D)^\star \pcirc \Upgammab^p_\Delta(D)_*"]
 \\
\Hom_A(E,P)[[\bfs]]_\Delta \ar[r, "\nu"] & 
\Hom_{k[[\bfs]]_\Delta}(E[[\bfs]]_\Delta,P[[\bfs]]_\Delta),
\end{tikzcd}
$$
where $\nu$ is the natural $(A[[\bfs]]_\Delta;A[[\bfs]]_\Delta)$-linear map defined as $\nu(h) = \widetilde{h}$ (see (\ref{eq:tilde-map})).
\item[(2)] The system $\Upgamma = \{\Upgamma^p_\Delta, p\in \N, \Delta\in\coide{\N^p}\}$ defines a right (pre-)HS-module structure over $A/k$ on $\Hom_A(E,P)$.
\end{enumerate}
\end{prop}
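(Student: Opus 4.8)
The plan is to imitate the proof of Proposition \ref{prop:hom-left-HS-mod}, adapting the bookkeeping to the fact that $P$ is a \emph{right} (pre-)HS-module while $E$ is a \emph{left} one. Concretely, I would define $\Upgamma^p_\Delta(D)$ to be the unique automorphism of $\Hom_A(E,P)[[\bfs]]_\Delta$ that, under the isomorphism $\nu$ of (\ref{eq:tilde-map}) onto $\Hom_{k[[\bfs]]_\Delta}(E[[\bfs]]_\Delta,P[[\bfs]]_\Delta)$, corresponds to the conjugation
$$ h' \longmapsto \Upgammab^p_\Delta(D) \pcirc h' \pcirc \Uppsib^p_\Delta(D). $$
For part (1) I must check that this conjugation preserves $A[[\bfs]]_\Delta$-linearity. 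This is a direct computation: given an $A[[\bfs]]_\Delta$-linear $h'$ and $a\in A[[\bfs]]_\Delta$, one pushes $a$ through $\Uppsib^p_\Delta(D)$ using the left Leibniz rule $\Uppsib^p_\Delta(D)\,a = \widetilde{D}(a)\Uppsib^p_\Delta(D)$ (Remark \ref{nota:Hsmod-equiv} (1)(ii)), carries the resulting factor $\widetilde{D}(a)$ across $h'$ by its linearity, and finally converts it back to $a$ through $\Upgammab^p_\Delta(D)$ using the right Leibniz rule $a\,\Upgammab^p_\Delta(D) = \Upgammab^p_\Delta(D)\widetilde{D}(a)$ (Remark \ref{nota:Hsmod-equiv} (2)(ii)). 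That both factors here are $\Upgammab^p_\Delta(D)$ and $\Uppsib^p_\Delta(D)$ (rather than involving $D^*$, as in the all-left case) is exactly what the two opposite Leibniz conventions require.

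For part (2), properties (i) and (ii) are short. For (i), since $\Uppsib^p_\Delta$ is a group homomorphism and $\Upgammab^p_\Delta$ a group anti-homomorphism, and since the two factors sit on opposite sides of $h'$, composing the conjugations for $D$ and for $E$ reverses their order, so $\Upgamma^p_\Delta$ is a group anti-homomorphism, as required of a right HS-module. For (ii) it is cleanest to verify the equivalent form (ii$'$) of Remark \ref{nota:Hsmod-equiv} (2): moving a scalar $a$ through $\Upgammab^p_\Delta(D)$ (or, symmetrically, through $\Uppsib^p_\Delta(D)$) via the twisted rule $\Upgammab^p_\Delta(D)\,a = \widetilde{D^*}(a)\Upgammab^p_\Delta(D)$ produces precisely the factor $\widetilde{D^*}(a)$ on the outside, which is the statement $\Upgamma^p_\Delta(D)\,a = \widetilde{D^*}(a)\,\Upgamma^p_\Delta(D)$.

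The main obstacle is property (iii), the compatibility with substitution maps; this is exactly the long step in Proposition \ref{prop:hom-left-HS-mod} and is where the subtlety of non-constant coefficients lives, since $\varphi \sbullet (-)$ is not multiplicative in general. As there, it suffices (by injectivity of $\nu$ and since everything is determined by its restriction to $M=\Hom_A(E,P)$) to compare the restrictions to $E$ of $\nu\!\left(\Upgamma^q_\nabla(\varphi\sbullet D)(h)\right)$ and of $\nu\!\left((\Upgamma^p_\Delta(D)\sbullet\varphi)(h)\right)$ for $h\in M$. I would substitute the left-module relation $\Uppsib^q_\nabla(\varphi\sbullet D)=\varphi\sbullet\Uppsib^p_\Delta(D)$ and the right-module relation $\Upgammab^q_\nabla(\varphi\sbullet D)=\Upgammab^p_\Delta(D)\sbullet\varphi$, then rewrite the sandwiched composition using (\ref{eq:atencion-bullet}) together with Proposition \ref{prop:bullet-D-elements} (c)—applied on the $E$ side to the $D$-element $\Uppsib^p_\Delta(D)$ and, in its right-action counterpart, to the $D^*$-element $\Upgammab^p_\Delta(D)$—inserting the twisting substitution $\varphi^D$ of Proposition \ref{prop:varphi-D-main} at exactly the place where $\varphi_E$ and $\varphi_P$ must cross $h'$. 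Collapsing the chain via (\ref{eq:tilde-varphi-M-varphi-F}) yields $\nu\!\left((\Upgamma^p_\Delta(D)\sbullet\varphi)(h)\right)|_E$, and the pre-HS case is the specialization to constant-coefficient $\varphi$, where $\varphi^D=\varphi$ and all substitution actions are ring maps. The only genuine difficulty is organizing this chain so that $\varphi^D$ threads consistently through both the left factor $\Uppsib^p_\Delta(D)$ and the right factor $\Upgammab^p_\Delta(D)$; once the sides are correctly matched, each individual equality is an instance of a cited identity.
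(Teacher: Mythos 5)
Your definition of $\Upgamma^p_\Delta(D)$ as the conjugation $h'\mapsto \Upgammab^p_\Delta(D)\pcirc h'\pcirc \Uppsib^p_\Delta(D)$, and your treatment of part (1) and of properties (i) and (ii$'$) of part (2), are correct and are exactly the adaptation the paper intends (its own proof is only the remark that it is ``completely similar'' to Propositions \ref{prop:tensor-left-HS-mod} and \ref{prop:hom-left-HS-mod}); in particular your observation that the two opposite Leibniz rules are what eliminate $D^*$ from the diagram is the right one.

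The gap is in your sketch of property (iii), which is modelled too closely on the all-left case. In Proposition \ref{prop:hom-left-HS-mod} the twist $\varphi^D$ is forced in only because the conjugation there contains the \emph{inverse} $\Uppsib^p_\Delta(D^*)$ and $(\varphi\sbullet D)^*=\varphi^D\sbullet D^*$. Here no inverse occurs anywhere: the pre-composition factor of $\Upgamma^q_\nabla(\varphi\sbullet D)$ is $\Uppsib^q_\nabla(\varphi\sbullet D)=\varphi\sbullet\Uppsib^p_\Delta(D)$, so by (\ref{eq:atencion-bullet}) what crosses $h$ is $\varphi$ itself (as $\varphi_E$, becoming $\varphi_P$), and Proposition \ref{prop:bullet-D-elements}(c) plays no role on the $E$ side. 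Worse, the ``right-action counterpart'' of \ref{prop:bullet-D-elements}(c) that you want to apply to the $D^*$-element $\Upgammab^p_\Delta(D)$ is neither stated in the paper nor a formal transposition: for a $D^*$-element $\Gamma$ the right action $\Gamma\sbullet\varphi$ is \emph{not} the left action $\varphi^D\sbullet\Gamma$ (take $p=q=1$, $\Delta=\nabla=\ft_2$, $\Gamma=D^*$, $\varphi(s)=ct$ with $D_1(c)\neq 0$: the degree-one coefficients are $-D_1\pcirc(c\,\cdot\,)-D_1(c)$ and $-D_1\pcirc(c\,\cdot\,)$ respectively); the two differ by left multiplication by a nontrivial unit of $A[[\bft]]_\nabla$. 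So the chain as you describe it, in which ``each individual equality is an instance of a cited identity,'' cannot be assembled.

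What saves the proposition—and makes this case \emph{easier}, not harder, than the left-left one—is that $\varphi^D$ should never appear. By Lemma \ref{lema:tilde-map} it suffices to compare both sides of (iii) after evaluating at $h\in M=\Hom_A(E,P)$, applying $\nu$ and restricting to $E$. Fix $x\in E$ and put $z_{\alpha'}:=h\bigl(\Uppsib^p_\Delta(D)_{\alpha'}(x)\bigr)\in P$. Unwinding the structure relations on $E$ and $P$, (\ref{eq:atencion-bullet}) and the $A$-linearity of $h$ (which moves the scalars ${\bf C}_{e'}(\varphi,\alpha')$ across $h$), one gets
$$
\nu\bigl(\Upgamma^q_\nabla(\varphi\sbullet D)(h)\bigr)|_E(x)=\sum_{e}\Bigl[\sum_{e''+e'=e}\ \sum_{\alpha'',\alpha'}\Upgammab^p_\Delta(D)_{\alpha''}\bigl({\bf C}_{e''}(\varphi,\alpha''){\bf C}_{e'}(\varphi,\alpha')\,z_{\alpha'}\bigr)\Bigr]\bft^{e},
$$
while directly from the definition of the right action $\sbullet\,\varphi$ on $\End_k(M)[[\bfs]]_\Delta$ and the defining diagram of $\Upgamma^p_\Delta(D)$ one gets
$$
\nu\bigl((\Upgamma^p_\Delta(D)\sbullet\varphi)(h)\bigr)|_E(x)=\sum_{e}\Bigl[\sum_{\alpha'',\alpha'}\Upgammab^p_\Delta(D)_{\alpha''}\bigl({\bf C}_{e}(\varphi,\alpha''+\alpha')\,z_{\alpha'}\bigr)\Bigr]\bft^{e}.
$$
These coincide because $\sum_{e''+e'=e}{\bf C}_{e''}(\varphi,\alpha''){\bf C}_{e'}(\varphi,\alpha')={\bf C}_{e}(\varphi,\alpha''+\alpha')$, i.e.\ because $\varphi$ is an $A$-algebra map—the same point labelled $(\star)$ in the proof of the $\fAd$ pre-HS-structure on $\Der_k(A)$. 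The pre-HS case is the specialization where the ${\bf C}$'s lie in $k$, so that $k$-linearity of $h$ suffices.
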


The following proposition easily follows from Proposition \ref{prop:tensor-left-HS-mod} and its proof is left to the reader.

\begin{prop} \label{prop:HS-rep-sym-altern}
Under the above hypotheses, the left (pre-)HS-module structure over $A/k$ on $E^{\otimes d} = E\otimes_A E\otimes_A \cdots \otimes_A E$ defined in Proposition \ref{prop:tensor-left-HS-mod} induces:
\begin{enumerate}
\item[1)] A unique (pre-)HS-module structure over $A/k$ on $\Sim_A^d E$ such that the natural map $E^{\otimes d} \to \Sim_A^d E$ is a HS-map.
\item[2)] A unique (pre-)HS-module structure over $A/k$ on $\bigwedge_A^d E$ such that the natural map $E^{\otimes d} \to \bigwedge_A^d E$ is a HS-map.
\end{enumerate}
\end{prop}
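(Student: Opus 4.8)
The plan is to present both $\Sim_A^d E$ and $\bigwedge_A^d E$ as quotients $E^{\otimes d}/\mathcal{I}$ by explicit $A$-submodules, and to reduce everything to showing that the automorphisms $\Uppsi^p_\Delta(D)$ giving the tensor-power structure of Proposition \ref{prop:tensor-left-HS-mod} carry $\mathcal{I}[[\bfs]]_\Delta$ into itself. The reduction is formal: the functor $M\mapsto M[[\bfs]]_\Delta$ is exact, being additively isomorphic to $M\mapsto M^\Delta$, so $(E^{\otimes d}/\mathcal{I})[[\bfs]]_\Delta = (E^{\otimes d})[[\bfs]]_\Delta/\mathcal{I}[[\bfs]]_\Delta$ and the $A$-linear quotient map $q\colon E^{\otimes d}\to E^{\otimes d}/\mathcal{I}$ induces a surjection $\overline{q}$ compatible with truncations and, since $q$ is $A$-linear, with every $\varphi\sbullet(-)$. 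Once stability of $\mathcal{I}[[\bfs]]_\Delta$ is known, each $\Uppsi^p_\Delta(D)$ descends to a unique $\Theta^p_\Delta(D)\in\Aut^{\circ}_{k[[\bfs]]_\Delta}((E^{\otimes d}/\mathcal{I})[[\bfs]]_\Delta)$ with $\Theta^p_\Delta(D)\pcirc\overline{q}=\overline{q}\pcirc\Uppsi^p_\Delta(D)$; the three axioms of Remark \ref{nota:Hsmod-equiv}(1) for $\Theta$, and the uniqueness of the descended structure, then follow at once from surjectivity of $\overline{q}$ and the axioms for $\Uppsi$ (exactly as a quotient of a module with integrable connection inherits the connection). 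Because $\overline{q}$ commutes with $\varphi\sbullet(-)$ for all substitution maps, the pre-HS and HS cases are handled simultaneously, according to whether axiom (iii) holds for $\Sub_k$ or for $\Sub_A$. All kernel computations may be done on constant pure tensors, since these topologically generate $(E^{\otimes d})[[\bfs]]_\Delta$ over $k[[\bfs]]_\Delta$ and every map involved is $k[[\bfs]]_\Delta$-linear and continuous.

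Writing $\Uppsib^p_\Delta(D)(e)=\sum_{\alpha}\Uppsib^p_\Delta(D)_\alpha(e)\,\bfs^\alpha$ for the structure on $E$, the characterising diagram of Proposition \ref{prop:tensor-left-HS-mod}(1), iterated $d$ times with all factors equal to $(E,\Uppsib)$, gives on a constant pure tensor
\begin{equation*}
\Uppsi^p_\Delta(D)(e_1\otimes\cdots\otimes e_d)=\sum_{\beta}\Bigl(\sum_{\alpha_1+\cdots+\alpha_d=\beta}\Uppsib^p_\Delta(D)_{\alpha_1}(e_1)\otimes\cdots\otimes\Uppsib^p_\Delta(D)_{\alpha_d}(e_d)\Bigr)\bfs^\beta.
\end{equation*}
Let $P_\sigma$, for $\sigma\in S_d$, be the $A$-linear factor-permuting automorphism of $E^{\otimes d}$. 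Since all $d$ factors carry the \emph{same} structure $\Uppsib$, the displayed formula shows immediately that $\overline{P_\sigma}$ commutes with every $\Uppsi^p_\Delta(D)$, i.e. each $P_\sigma$ is an endomorphism of the HS-module $E^{\otimes d}$. For the symmetric power the kernel of $E^{\otimes d}\to\Sim_A^d E$ is $\mathcal{I}_{\Sim}=\sum_{\sigma}\operatorname{im}(\Id-P_\sigma)$; as $\Uppsi^p_\Delta(D)$ commutes with each $\overline{P_\sigma}$ it preserves each $\operatorname{im}(\overline{\Id-P_\sigma})$, hence $\mathcal{I}_{\Sim}[[\bfs]]_\Delta$. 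This settles part 1).

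For the exterior power commutation with $S_d$ is \emph{not} by itself enough: $P_\tau$-invariance does not force an element into $\mathcal{I}_{\wedge}$ in characteristic $2$, and the explicit formula must be used. The kernel $\mathcal{I}_{\wedge}$ of $E^{\otimes d}\to\bigwedge_A^d E$ is generated by pure tensors with two equal entries, so take $e_i=e_j=:e$ with $i\neq j$ and push the coefficient of $\bfs^\beta$ above into $\bigwedge_A^d E$. Among the multi-indices $(\alpha_1,\dots,\alpha_d)$ of total degree $\beta$, the terms with $\alpha_i=\alpha_j$ give a wedge with a repeated factor $\Uppsib^p_\Delta(D)_{\alpha_i}(e)\wedge\Uppsib^p_\Delta(D)_{\alpha_i}(e)=0$, while the terms with $\alpha_i\neq\alpha_j$ cancel in pairs under the swap $\alpha_i\leftrightarrow\alpha_j$, which flips a sign in $\bigwedge_A^d E$ but leaves the equal entries $e_i=e_j$ untouched. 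Hence each coefficient vanishes in $\bigwedge_A^d E$, so $\Uppsi^p_\Delta(D)$ sends the generators of $\mathcal{I}_{\wedge}$ into $\mathcal{I}_{\wedge}[[\bfs]]_\Delta$ and, by $k[[\bfs]]_\Delta$-linearity and continuity, preserves $\mathcal{I}_{\wedge}[[\bfs]]_\Delta$; this settles part 2). The one genuine obstacle, and the sole place where positive characteristic intervenes, is precisely this last cancellation: it is the HS-module avatar of the divided-power phenomenon behind the isomorphism (\ref{eq:gamma-gr}), and it replaces the naive $S_d$-averaging argument that would only be available when $d!$ is invertible.
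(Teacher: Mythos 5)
Your proposal is correct in substance, and it follows the route the paper intends: the paper offers no written proof (it declares that the statement ``easily follows'' from Proposition \ref{prop:tensor-left-HS-mod} and leaves it to the reader), and the natural reading of that remark is exactly your argument --- realize $\Sim_A^d E$ and $\bigwedge_A^d E$ as quotients of $E^{\otimes d}$, show each $\Uppsi^p_\Delta(D)$ preserves the kernel, and descend, the axioms and uniqueness following from surjectivity of $\overline{q}$ and its compatibility (by $A$-linearity of $q$) with truncations and with $\varphi\sbullet(-)$. Your treatment of $\Sim_A^d E$ via commutation with the $\overline{P_\sigma}$ and the decomposition $\mathcal{I}_{\Sim}[[\bfs]]_\Delta=\sum_\sigma \operatorname{im}\bigl(\overline{\Id-P_\sigma}\bigr)$ is complete, and your identification of the pairing $\alpha_i\leftrightarrow\alpha_j$ cancellation (with diagonal terms killed by the alternating relation) as the one genuinely characteristic-sensitive point is exactly right.

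There is, however, one step whose stated justification fails: for the exterior power you check stability only on the pure tensors with a repeated entry and then invoke ``$k[[\bfs]]_\Delta$-linearity and continuity''. That inference requires those generators to $k$-span $\mathcal{I}_\wedge$, and for $d=2$ they do not: take $k$ of characteristic $2$ (or $k=\ZZ$), $A=k[a]$ and $E$ free with basis $u,v$; then $a(u\otimes u)=(au)\otimes u$ lies in $\mathcal{I}_\wedge$, but the $u\otimes u$-coordinate of any $k$-combination of elements $e\otimes e$ is a $k$-combination of squares in $A$, which never equals $a$. Since $\Uppsi^p_\Delta(D)$ is not $A[[\bfs]]_\Delta$-linear, you cannot absorb $A$-coefficients by linearity; the repair is one line, using axiom (ii): $\Uppsi^p_\Delta(D)(ag)=\widetilde{D}(a)\,\Uppsi^p_\Delta(D)(g)$, and $\mathcal{I}_\wedge[[\bfs]]_\Delta$ is an $A[[\bfs]]_\Delta$-submodule, so stability on the pure-tensor generators does propagate to all of $\mathcal{I}_\wedge$ and then, by continuity and closedness of $\mathcal{I}_\wedge[[\bfs]]_\Delta$, to the whole kernel. (For $d\geq 3$ the issue is invisible, since an $A$-coefficient can be absorbed into a tensor slot away from the repeated pair, so the generators do $k$-span.) With that correction the proof is complete.
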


\subsection{The enveloping algebra of Hasse--Schmidt derivations}

Let $\dT_{A/k}$ be the free $k$-algebra
$$
\dT_{A/k} := k \langle S_a, T_{p,\Delta,D,\alpha}; a\in A,
p \in \N, \Delta\in\coide{\N^p},
 \alpha \in \Delta,
D\in \HS^p_k(A;\Delta) \rangle
$$ 
and let us consider the two-sided ideal $\dI \subset \dT_{A/k}$ with generators:
\begin{itemize}
\item[(0)] $S_{c1} -c$, $S_{a+a'}-S_a-S_{a'}$, $S_{aa'} - S_a S_{a'}$,
\item[(i)] $ T_{p,\{0\},\mathbb{I},0} - 1$,
\item[(ii)] $ T_{p,\Delta,\mathbb{I},\alpha}$ for $|\alpha|> 0$\footnote{Actually, generators (ii) can be avoided since they are deduced from generators (i) and (iii).},
\item[(iii)] $\displaystyle T_{p,\Delta,D\, \circ\, E,\alpha} - \sum_{\scriptscriptstyle \beta + \gamma = \alpha}
T_{p,\Delta,D,\beta}\, T_{p,\Delta,E,\gamma}$,
\item[(iv)] $\displaystyle T_{p,\Delta,D,\alpha}\, S_a - \sum_{\scriptscriptstyle \beta+\gamma=\alpha} S_{D_\beta(a)} T_{p,\Delta,D,\gamma}$,
\item[(v)] $\displaystyle T_{q,\nabla,\varphi\bullet D,\beta} - \sum_{\substack{\scriptscriptstyle \alpha \in\Delta  \\ \scriptscriptstyle |\alpha|\leq |\beta|  }} S_{{\bf C}_\beta(\varphi,\alpha)} T_{p,\Delta,D,\alpha}$,

\end{itemize}
for 
$c\in k$, $a,a'\in A$, $p,q \in \N$, $\Delta\subset \N^p, \nabla\in\coide{\N^q}$, $\alpha \in\Delta$, $\beta\in\nabla$, $D,E\in \HS^p_k(A;\Delta)$ and
 $\varphi \in \Sub_A(p,q;\Delta,\nabla) $.
\medskip

\noindent
We consider the $\N$-grading in $\dT_{A/k}$ given by (see Definition \ref{def:ell-new-alpha}): 
$$\textstyle
\deg (k) = 0,\ \deg (S_a) = 0,\ \deg \left(T_{p,\Delta,D,\alpha}\right) = \lfloor \frac{|\alpha|}{ \ell_\alpha(D)}\rfloor
$$
for $a\in A$, $p\in \N$, $\Delta\in\coide{\N^p}$,  $\alpha \in\Delta$ and $D\in \HS^p_k(A;\Delta)$. This grading is motivated by Proposition \ref{prop:ell-new-alpha}. 
Let us notice that
$$ \deg \left(T_{p,\Delta,D,\alpha}\right) = \deg \left(T_{p,\fn_\alpha,\tau_{\Delta,\fn_\alpha}(D),\alpha}\right).
$$
We will denote $\dT_{A/k}^d$ the homogeneous component of degree $d$ and $\dT_{A/k}^{\leq d}:= \bigoplus_{e\leq d} \dT_{A/k}^e$.
\medskip

Let us call $\dU_{A/k} := \dT_{A/k}/\dI$ and write ${\bf S}_a := S_a + \dI, {\bf T}_{p,\Delta,D,\alpha} := T_{p,\Delta,D,\alpha} + \dI$ for the generators of the $k$-algebra $\dU_{A/k}$.
The grading in $\dT_{A/k}$ induces a filtration on $\dU_{A/k}$ and let us also call $\deg: \dU_{A/k} \to \N$ the corresponding map:
$$ \deg (P) := \min \{\deg(p)\ |\ p\in \dT_{A/k}, P= p + \dI\}\quad \text{for\ }\ P \in \dU_{A/k}, P\neq 0,
$$
and $\deg(0)=-\infty$, 
with 
$ \dU_{A/k}^d = \{ P \in \dU_{A/k}\ |\ \deg(P)\leq d\} = \dT_{A/k}^{\leq d}/\left(\dI\cap \dT_{A/k}^{\leq d}\right)$.
\medskip

The generators of type (0) of $\dI$ give rise to a natural $k$-algebra map $a\in A \mapsto {\bf S}_a \in \dU_{A/k}$ and so $\dU_{A/k}$ is a $k$-algebra over $A$.
\bigskip

\numero \label{nume:collect}
We first collect some direct consequences of the above definitions. For $p\in\N$, $\bfs=\{s_1,\dots,s_p\}$, $\Delta\in\coide{\N^p}$, $\alpha\in\Delta$ and $D\in \HS^p_k(A;\Delta)$ we have:
\begin{enumerate}
\item[(a)] Since the quotient map $\pi:A[[\bfs]]_\Delta \to A[[\bfs]]_{\fn_\alpha}$ is a substitution map (actually, a truncation map) and the action
$$ \pi\sbullet (-): \HS^p_k(A;\Delta) \longrightarrow  \HS^p_k(A;\fn_\alpha)
$$
coincides with the truncation $\tau_{\Delta,\fn_\alpha}$ (see Lemma \ref{lemma:truncations-are-substitutions}), by using the generators of type (v) and the fact that ${\bf C}_\beta(\pi,\alpha) = \delta_{\alpha \beta}$, we obtain ${\bf T}_{p,\Delta,D,\alpha} = {\bf T}_{p,\fn_\alpha,\tau_{\Delta,\fn_\alpha}(D),\alpha}$ (remember that $\deg \left(T_{p,\Delta,D,\alpha}\right) = \deg \left(T_{p,\fn_\alpha,\tau_{\Delta,\fn_\alpha}(D),\alpha}\right)$).
\item[(b)] From (a) and the generators of type (i) of $\dI$ we deduce:
${\bf T}_{p,\Delta,D,0} = {\bf T}_{p,\{0\},\tau_{\Delta,\{0\}}(D),0} = 1$.
\item[(c)]  If $0<|\alpha| < \ell_\alpha(D)$, then $\tau_{\Delta,\fn_\alpha}(D)=\mathbb{I}$ and so from (a) and the generators of type (ii) of $\dI$ we have
${\bf T}_{p,\Delta,D,\alpha} = {\bf T}_{p,\fn_\alpha,\mathbb{I},\alpha} = 0$.
\end{enumerate}

\begin{lemma} The term $\dU_{A/k}^0$ is the $k$-module generated by the ${\bf S}_a$, $a\in A$, and coincides with the image of the natural map $A \to \dU_{A/k}$.
\end{lemma}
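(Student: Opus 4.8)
The plan is to read off the degree-zero piece directly from the presentation. By definition $\dU_{A/k}^0 = \dT_{A/k}^{\leq 0}/(\dI\cap \dT_{A/k}^{\leq 0})$, and since the grading on $\dT_{A/k}$ takes values in $\N$ we have $\dT_{A/k}^{\leq 0} = \dT_{A/k}^0$. Thus $\dU_{A/k}^0$ is precisely the image of the homogeneous component $\dT_{A/k}^0$ under the quotient map $\dT_{A/k}\to \dU_{A/k}$, so it suffices to describe $\dT_{A/k}^0$ as a $k$-module and then to track where its generators are sent.

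First I would identify $\dT_{A/k}^0$. As a free $k$-algebra, $\dT_{A/k}$ is $k$-free on the words in the generators, and the degree of a word is the sum of the degrees of its letters; since every generator has non-negative degree, a word is homogeneous of degree $0$ if and only if each of its letters has degree $0$. Hence $\dT_{A/k}^0$ is the $k$-span of the words built out of the degree-$0$ generators, namely all the $S_a$ together with those $T_{p,\Delta,D,\alpha}$ for which $\lfloor |\alpha|/\ell_\alpha(D)\rfloor = 0$. Next I would determine the images of these generators in $\dU_{A/k}$ by means of \ref{nume:collect}: a degree-$0$ generator $T_{p,\Delta,D,\alpha}$ with $\alpha=0$ maps to $1$ by (b), whereas for $\alpha\neq 0$ the condition $\lfloor |\alpha|/\ell_\alpha(D)\rfloor = 0$ says exactly that $0<|\alpha|<\ell_\alpha(D)$, so by (c) we get $\mathbf{T}_{p,\Delta,D,\alpha}=0$. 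Applying the quotient map to a degree-$0$ word: if it contains a $T$-letter with $\alpha\neq 0$ its image is $0$, and otherwise every letter is an $S_a$ or a $T$-letter with $\alpha=0$, so the image is a product of $\mathbf{S}_a$'s with the factors equal to $1$ dropping out. Using the relations (0), in particular $\mathbf{S}_{aa'}=\mathbf{S}_a\mathbf{S}_{a'}$ and $\mathbf{S}_{c1}=c$, such a product collapses to a single $\mathbf{S}_b$ with $b\in A$ (the empty word giving $\mathbf{S}_1=1$). Therefore $\dU_{A/k}^0$ is spanned over $k$ by the $\mathbf{S}_a$, and in particular lies inside the image of $A\to\dU_{A/k}$.

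Finally I would verify the reverse inclusions and the identification with $\operatorname{im}(A\to\dU_{A/k})$. Each $\mathbf{S}_a$ has degree $\leq 0$, so $\mathbf{S}_a\in\dU_{A/k}^0$; and the relations (0) give $\mathbf{S}_{a+a'}=\mathbf{S}_a+\mathbf{S}_{a'}$ as well as $\mathbf{S}_{ca}=\mathbf{S}_{(c1_A)a}=\mathbf{S}_{c1_A}\mathbf{S}_a=c\,\mathbf{S}_a$, so the set $\{\mathbf{S}_a:a\in A\}$ is already a $k$-submodule, equal at once to the $k$-module generated by the $\mathbf{S}_a$ and to the image of the $k$-algebra map $A\to\dU_{A/k}$. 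Combining this with the previous paragraph yields the asserted three-way equality. None of these steps is a genuine obstacle; the only points demanding care are the bookkeeping in the first paragraph, namely that passing to the filtration does not enlarge the degree-$0$ part beyond the image of $\dT_{A/k}^0$, and the combined use of (b) and (c) to see that every degree-$0$ $T$-generator becomes either $0$ or $1$ in $\dU_{A/k}$.
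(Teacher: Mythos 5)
Your proof is correct and follows essentially the same route as the paper's: identify $\dU_{A/k}^0$ as the image of the degree-zero words in the $S_a$ and the $T_{p,\Delta,D,\alpha}$ with $|\alpha|<\ell_\alpha(D)$, then use \ref{nume:collect} (b), (c) and the type (0) relations to collapse everything onto the $\mathbf{S}_a$. You merely spell out the bookkeeping that the paper leaves implicit, and all of those details check out.
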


\begin{proof} By definition, $\dU_{A/k}^0$ is the $k$-module generated by the monomials in the ${\bf S}_a$, $a\in A$, and the ${\bf T}_{p,\Delta,D,\alpha}$ with 
$$ \textstyle
\deg \left(T_{p,\Delta,D,\alpha}\right) = \lfloor \frac{|\alpha|}{\ell_\alpha(D)}\rfloor =0,
$$
i.e. $|\alpha| < \ell_\alpha(D)$. So, by (b) and (c) and the generators of type (0) of $\dI$ we deduce that
 $\dU_{A/k}^0$ is the $k$-module generated by the ${\bf S}_a$ and coincides with the image of $A \to \dU_{A/k}$.
\end{proof}

The proof of the following proposition is clear (see Proposition \ref{prop:ell-new}).

\begin{prop} \label{prop:bupupsilon}
There is a unique $k$-algebra map $\bupupsilon:\dU_{A/k} \longrightarrow \DD_{A/k}$ sending
$$ {\bf S}_a \longmapsto a,\quad {\bf T}_{p,\Delta,D,\alpha} \longmapsto D_\alpha.
$$
Moreover, it is filtered.
\end{prop}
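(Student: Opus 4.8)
The plan is to first build the map on the free algebra and then check that it kills every generator of $\dI$. Since $\dT_{A/k}$ is free as a $k$-algebra on the symbols $S_a$ and $T_{p,\Delta,D,\alpha}$, there is a unique $k$-algebra homomorphism $\widetilde{\bupupsilon}:\dT_{A/k}\to \DD_{A/k}$ determined by $\widetilde{\bupupsilon}(S_a)=a$ (the multiplication operator) and $\widetilde{\bupupsilon}(T_{p,\Delta,D,\alpha})=D_\alpha$; the latter assignment makes sense because $D_\alpha\in\DD_{A/k}$ by Proposition \ref{prop:ell-new}. It then suffices to verify that $\widetilde{\bupupsilon}$ annihilates each family of generators of $\dI$, for then it factors uniquely through $\dU_{A/k}=\dT_{A/k}/\dI$. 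Uniqueness of $\bupupsilon$ is then automatic, since the classes ${\bf S}_a$ and ${\bf T}_{p,\Delta,D,\alpha}$ generate $\dU_{A/k}$ as a $k$-algebra.

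The verification is a term-by-term computation translating the group and substitution structures on $\HS^p_k(A;\Delta)$ into operations in $\DD_{A/k}$. Generators of type (0) vanish because $a\mapsto a$ is exactly the structural $k$-algebra map $A\to\DD_{A/k}$; those of types (i) and (ii) vanish because $\mathbb{I}_0=\Id$ and $\mathbb{I}_\alpha=0$ for $\alpha\neq 0$. Type (iii) vanishes by the definition of the product in $\HS^p_k(A;\Delta)$, namely $(D\pcirc E)_\alpha=\sum_{\beta+\gamma=\alpha}D_\beta\pcirc E_\gamma$, which is precisely the image of $\sum_{\beta+\gamma=\alpha}T_{p,\Delta,D,\beta}\,T_{p,\Delta,E,\gamma}$. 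For type (iv), recall that every $D\in\HS^p_k(A;\Delta)$ is a $D$-element, so by Lemma \ref{lemma:charact-D-elements} one has the operator identity $D_\alpha\pcirc a=\sum_{\beta+\gamma=\alpha}D_\beta(a)\,D_\gamma$ in $\End_k(A)$, which is exactly the image of the relation. Finally, type (v) vanishes by the explicit expression (\ref{eq:expression_phi_D}) for the components of $\varphi\sbullet D$, together with the fact that ${\bf C}_\beta(\varphi,\alpha)\in A$ (as $\varphi\in\Sub_A(p,q;\Delta,\nabla)$), so that $S_{{\bf C}_\beta(\varphi,\alpha)}\mapsto {\bf C}_\beta(\varphi,\alpha)$ as a multiplication operator.

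For the filtration statement, I would observe that by Proposition \ref{prop:ell-new-alpha} the operator $D_\alpha$ has order at most $\lfloor |\alpha|/\ell_\alpha(D)\rfloor=\deg\left(T_{p,\Delta,D,\alpha}\right)$, while $\widetilde{\bupupsilon}(S_a)=a$ has order $0=\deg(S_a)$. Since the order filtration on $\DD_{A/k}$ is multiplicative, the image under $\widetilde{\bupupsilon}$ of any monomial of degree $\le d$ in $\dT_{A/k}$ is a differential operator of order $\le d$; hence $\bupupsilon$ carries the degree-$\le d$ part of $\dU_{A/k}$ into operators of order $\le d$, i.e. $\bupupsilon$ is filtered.

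As for difficulty, there is no genuine obstacle here: the construction is forced, and the content lies entirely in matching each generator of $\dI$ with an identity already established. The one point deserving care is relation (iv), where one must recall that a HS-derivation is a $D$-element with respect to itself and read off the corresponding component identity from Lemma \ref{lemma:charact-D-elements}; relation (v) similarly relies on the coefficients ${\bf C}_\beta(\varphi,\alpha)$ landing in $A$ rather than merely in $\End_k(A)$, which is what allows them to be realized as images of the symbols $S_{{\bf C}_\beta(\varphi,\alpha)}$.
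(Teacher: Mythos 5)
Your proposal is correct and is exactly the routine verification the paper has in mind when it declares the proof ``clear'' (citing Proposition \ref{prop:ell-new} for membership of the $D_\alpha$ in $\DD_{A/k}$): one defines the map on the free algebra $\dT_{A/k}$, checks that each family of generators of $\dI$ maps to an identity already established (Leibniz/$D$-element identity for type (iv), formula (\ref{eq:expression_phi_D}) for type (v)), and uses Proposition \ref{prop:ell-new-alpha} together with multiplicativity of the order filtration for the filtered statement. Nothing is missing.
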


\begin{cor} The natural map $A \to \dU_{A/k}$ is injective and $A\simeq \dU_{A/k}^0$.
\end{cor}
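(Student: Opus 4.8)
The plan is to use the filtered $k$-algebra homomorphism $\bupupsilon:\dU_{A/k}\to\DD_{A/k}$ of Proposition~\ref{prop:bupupsilon} as a one-sided inverse of the natural map $A\to\dU_{A/k}$. Concretely, I would form the composite
$$
A \longrightarrow \dU_{A/k} \xrightarrow{\ \bupupsilon\ } \DD_{A/k},\qquad a\longmapsto {\bf S}_a\longmapsto a,
$$
where the right-hand arrow sends $a\in A$ to the order-zero differential operator ``multiplication by $a$''. The key observation is that, by the definition of $\bupupsilon$ on the generators ${\bf S}_a$, this composite is precisely the canonical structural map $A\to\DD_{A/k}$.

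The first step is then to note that this canonical map $A\to\DD_{A/k}$ is injective: if $a\in A$ maps to the zero operator, then in particular $a = a\cdot 1 = 0$, using the unit $1\in A$. Since a map whose composite with another map is injective must itself be injective, it follows at once that the natural map $A\to\dU_{A/k}$, $a\mapsto{\bf S}_a$, is injective. For the isomorphism $A\simeq\dU_{A/k}^0$ I would then simply invoke the preceding Lemma, which identifies $\dU_{A/k}^0$ with the image of the natural map $A\to\dU_{A/k}$; combining this with the injectivity just obtained, the structural $k$-algebra map $A\to\dU_{A/k}$ restricts to an isomorphism onto $\dU_{A/k}^0$.

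I do not expect any genuine obstacle here: the entire content is that $\bupupsilon$ provides a retraction onto $A$ after composing with the structural map, so injectivity of $A\to\dU_{A/k}$ is detected downstream in $\DD_{A/k}$, where it is elementary. The only thing to verify is the injectivity of the structural map $A\to\DD_{A/k}$, which is immediate from evaluating the multiplication operator at $1$. Thus the corollary follows formally from Proposition~\ref{prop:bupupsilon} together with the identification of $\dU_{A/k}^0$ in the Lemma above.
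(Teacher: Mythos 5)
Your proof is correct and is exactly the argument the paper intends (the corollary is stated without proof, as an immediate consequence): the composite $A\to\dU_{A/k}\xrightarrow{\bupupsilon}\DD_{A/k}$ is the structural map $a\mapsto a\cdot(-)$, which is injective by evaluation at $1$, so $a\mapsto{\bf S}_a$ is injective, and the preceding Lemma identifies its image with $\dU_{A/k}^0$.
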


\begin{prop} \label{prop:HS-struc-dU}
The $k$-algebra $\dU_{A/k}$ over $A$ is endowed with a natural HS-structure $\Upupsilon$ over $A/k$. Moreover, the pair $(\dU_{A/k},\Upupsilon)$ is universal among HS-structures, i.e. for any $k$-algebra $R$ over $A$ and any HS-structure $\Uppsi$ on $R$ over $A/k$, there is a unique map $f:\dU_{A/k} \to R$ of $k$-algebras over $A$ such that $f\pcirc \Upupsilon = \Uppsi$.
\end{prop}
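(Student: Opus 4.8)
The plan is to define the structure $\Upupsilon$ directly from the $T$-generators of $\dU_{A/k}$ and to obtain each axiom as a term-by-term translation of one family of the relations cutting out $\dI$. Concretely, for $p\in\N$, $\Delta\in\coide{\N^p}$ and $D\in\HS^p_k(A;\Delta)$ I would set $\Upupsilon^p_\Delta(D):=\sum_{\alpha\in\Delta}{\bf T}_{p,\Delta,D,\alpha}\,\bfs^\alpha\in\dU_{A/k}[[\bfs]]_\Delta$. By \ref{nume:collect}(b) its constant term is ${\bf T}_{p,\Delta,D,0}=1$, so it is congruent to $1$ modulo $\langle\bfs\rangle$ and hence a unit of the complete ring $\dU_{A/k}[[\bfs]]_\Delta$; thus $\Upupsilon^p_\Delta(D)\in\U^p(\dU_{A/k};\Delta)$, and $\Upupsilon=\{\Upupsilon^p_\Delta\}$ is a well-defined candidate in the sense of Definition \ref{def:HS-structure}.

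Next I would verify the three axioms as translations of relations (i)--(v). Computing the product in $\dU_{A/k}[[\bfs]]_\Delta$ and comparing $\alpha$-coefficients, relation (iii) gives exactly $\Upupsilon^p_\Delta(D\pcirc E)=\Upupsilon^p_\Delta(D)\,\Upupsilon^p_\Delta(E)$, so each $\Upupsilon^p_\Delta$ is a group homomorphism (axiom (i)), while relations (i) and (ii) guarantee $\Upupsilon^p_\Delta(\mathbb{I})=1$ as a homomorphism requires. Relation (iv), read coefficientwise as ${\bf T}_{p,\Delta,D,\alpha}{\bf S}_a=\sum_{\beta+\gamma=\alpha}{\bf S}_{D_\beta(a)}{\bf T}_{p,\Delta,D,\gamma}$, is precisely the statement that $\Upupsilon^p_\Delta(D)$ is a $D$-element by the last characterization in Lemma \ref{lemma:charact-D-elements}, i.e. the Leibniz rule (axiom (ii)). Finally, using the explicit expression (\ref{eq:explicit-bullet}) for $\varphi\sbullet(-)$ and the $(A;\dU_{A/k})$-linearity of $\varphi_{\dU_{A/k}}$, relation (v) is exactly $\Upupsilon^q_\nabla(\varphi\sbullet D)=\varphi\sbullet\Upupsilon^p_\Delta(D)$ (axiom (iii)); since relations (v) are imposed for all $\varphi\in\Sub_A(p,q;\Delta,\nabla)$, this produces a genuine HS-structure, not merely a pre-HS-structure.

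For the universal property, let $(R,\iota)$ be a $k$-algebra over $A$ with HS-structure $\Uppsi$, and write $\Uppsi^p_\Delta(D)=\sum_\alpha\Uppsi^p_\Delta(D)_\alpha\bfs^\alpha$. I would first define a $k$-algebra map $\widehat f:\dT_{A/k}\to R$ on free generators by $S_a\mapsto\iota(a)$ and $T_{p,\Delta,D,\alpha}\mapsto\Uppsi^p_\Delta(D)_\alpha$, and then check $\widehat f(\dI)=0$ relation by relation. Type (0) vanishes because $\iota$ is a $k$-algebra map; types (i), (ii), (iii) vanish because each $\Uppsi^p_\Delta$ is a group homomorphism; type (iv) vanishes because $\Uppsi^p_\Delta(D)$ is a $D$-element via Lemma \ref{lemma:charact-D-elements}; and type (v) vanishes because axiom (iii) gives $\Uppsi^q_\nabla(\varphi\sbullet D)=\varphi\sbullet\Uppsi^p_\Delta(D)$, whose $\beta$-coefficient is $\sum_{|\alpha|\le|\beta|}\iota({\bf C}_\beta(\varphi,\alpha))\,\Uppsi^p_\Delta(D)_\alpha$ by (\ref{eq:explicit-bullet}). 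Hence $\widehat f$ factors through $f:\dU_{A/k}\to R$, a map over $A$ since $f({\bf S}_a)=\iota(a)$; applying the coefficientwise extension $\overline f$ of (\ref{eq:h-bar-mod}) yields $\overline f(\Upupsilon^p_\Delta(D))=\sum_\alpha\Uppsi^p_\Delta(D)_\alpha\bfs^\alpha=\Uppsi^p_\Delta(D)$, i.e. $f\pcirc\Upupsilon=\Uppsi$. Uniqueness is immediate, since the ${\bf S}_a$ and ${\bf T}_{p,\Delta,D,\alpha}$ generate $\dU_{A/k}$ as a $k$-algebra and their images are forced by the two conditions on $f$.

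The argument is essentially a dictionary between the five relation families and the three axioms, so the real labor is bookkeeping rather than any single deep step. The points demanding genuine care are: confirming $\Upupsilon^p_\Delta(D)$ is truly a unit, which reduces to ${\bf T}_{p,\Delta,D,0}=1$; tracking the $(A;R)$-bimodule interpretation of the $A$-valued quantities $D_\beta(a)$ and ${\bf C}_\beta(\varphi,\alpha)$ once pushed into $R$ through $\iota$, so that the matching of relations (iv) and (v) with axioms (ii) and (iii) is exact and free of index slips; and observing that it is precisely the imposition of relations (v) for \emph{all} $\varphi\in\Sub_A$, not only those with constant coefficients, that both makes $\Upupsilon$ a HS-structure and lets $f$ exist for an arbitrary HS-structure $\Uppsi$. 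Restricting to constant-coefficient substitution maps would instead yield the analogous universal pre-HS-structure.
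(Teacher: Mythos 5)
Your proposal is correct and follows essentially the same route as the paper's proof: the same definition of $\Upupsilon^p_\Delta(D)=\sum_\alpha {\bf T}_{p,\Delta,D,\alpha}\bfs^\alpha$, the same dictionary matching relation families (i)--(iii), (iv), (v) of $\dI$ to the three axioms of a HS-structure, and the same construction of the universal map via a $k$-algebra map $\dT_{A/k}\to R$ sending $S_a\mapsto a1$ and $T_{p,\Delta,D,\alpha}\mapsto \Uppsi^p_\Delta(D)_\alpha$ that kills $\dI$. The only difference is that you spell out details the paper labels ``straightforward'' or ``clear'' (the unit check via ${\bf T}_{p,\Delta,D,0}=1$, the coefficientwise verifications, and the role of general versus constant-coefficient substitution maps), which is a faithful expansion rather than a different argument.
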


\begin{proof} We consider the system of maps $\Upupsilon$ given by:
$$ \Upupsilon^p_{\Delta}: D\in \HS^p_k(A;\Delta) \longmapsto \sum_{\scriptscriptstyle \alpha \in\Delta} {\bf T}_{p,\Delta,D,\alpha} \bfs^\alpha \in \U^p(\dU_{A/k};\Delta)
$$
for $p\in\N$, $\Delta\in\coide{\N^p}$. 
It is straightforward to see that properties in Definition \ref{def:HS-structure} hold for $\Upupsilon$. Namely, property 1) follows from the generators of type (i), (ii) and (iii) of $\dI$, property 2) follows from the generators of type (iv) of $\dI$, and finally the generators of type (v) of $\dI$ guarantee 
property 3).
\medskip

For the universal property, let $f_0: \dT_{A/k} \to R$ be the $k$-algebra map determined by
$$ f_0(S_a) = a 1,\ f_0(T_{p,\Delta,D,\alpha}) = \Uppsi^p_{\Delta}(D)_\alpha$$
for all $a\in A$, for all $p\in\N$, for all $\Delta\in\coide{\N^p}$, for all $\alpha\in\Delta$ and for all $D\in\HS^p_k(A;\Delta)$. 
It is clear that $f_0$ vanishes on $\dI$ and gives rise to our wanted map $f:\dU_{A/k} \to R$ of $k$-algebras over $A$. The uniqueness of $f$ is clear.
\end{proof}

Let us notice that the HS-structure $\Upupsilon$ in the above proposition is filtered.

\begin{cor} The abelian category of left (resp. right) HS-modules over $A/k$ is isomorphic to the category of left (resp. right) $\dU_{A/k}$-modules.
\end{cor}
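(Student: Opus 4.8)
The plan is to deduce the corollary directly from the universal property of $(\dU_{A/k},\Upupsilon)$ recorded in Proposition \ref{prop:HS-struc-dU}, by reformulating both sides in terms of homomorphisms of $k$-algebras over $A$ into an endomorphism ring. I treat the left case, the right one being identical with $\End_k(E)$ replaced by its opposite ring. The basic dictionary I would set up is the following: for a fixed $A$-module $E$, a left $\dU_{A/k}$-module structure on $E$ extending its $A$-module structure is the same datum as a homomorphism $\rho\colon\dU_{A/k}\to\End_k(E)$ of $k$-algebras over $A$, the action being $u\cdot e=\rho(u)(e)$. Indeed, the relations of type (0) (together with ${\bf S}_{c1}=c$) force $\rho$ to be a $k$-algebra map, and $\rho\pcirc{\bf S}$ equal to the $A$-action is exactly the condition that $\rho$ be a map over $A$. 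On the other side, a left HS-module is by definition an $A$-module $E$ equipped with a HS-structure $\Uppsi$ on the $k$-algebra $\End_k(E)$ over $A/k$.

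Next I would apply the universal property to $R=\End_k(E)$. By Proposition \ref{prop:HS-struc-dU}, the assignment $\rho\mapsto \rho\pcirc\Upupsilon$ is a bijection between homomorphisms $\rho\colon\dU_{A/k}\to\End_k(E)$ of $k$-algebras over $A$ and HS-structures $\Uppsi$ on $\End_k(E)$ over $A/k$, its inverse being furnished by the uniqueness clause of that proposition. Combined with the dictionary above, this produces on the level of objects a bijection between left $\dU_{A/k}$-modules and left HS-modules which is the identity on the underlying $A$-module. Concretely, under $\Uppsi=\rho\pcirc\Upupsilon$ and $\Upupsilon^p_\Delta(D)=\sum_{\alpha}{\bf T}_{p,\Delta,D,\alpha}\bfs^\alpha$ one reads off $\rho({\bf T}_{p,\Delta,D,\alpha})=\Uppsi^p_\Delta(D)_\alpha$ for every $D\in\HS^p_k(A;\Delta)$ and $\alpha\in\Delta$, so the $\dU_{A/k}$-action is literally the componentwise action of the HS-structure.

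Finally I would match morphisms, which is the only point requiring a little care. Given left $\dU_{A/k}$-modules $E,F$ with associated HS-structures $\Uppsi_E,\Uppsi_F$ and an $A$-linear map $f\colon E\to F$, the key observation is that $\dU_{A/k}$ is generated as a $k$-algebra by the ${\bf S}_a$ and the ${\bf T}_{p,\Delta,D,\alpha}$; hence $f$ is $\dU_{A/k}$-linear if and only if it commutes with the action of each generator. Commuting with every ${\bf S}_a$ is $A$-linearity, while commuting with ${\bf T}_{p,\Delta,D,\alpha}$ reads $f\pcirc(\Uppsi_E)^p_\Delta(D)_\alpha=(\Uppsi_F)^p_\Delta(D)_\alpha\pcirc f$ for all $\alpha$, and collecting these coefficient identities into power series is exactly the condition $\overline{f}\pcirc(\Uppsi_E)^p_\Delta(D)=(\Uppsi_F)^p_\Delta(D)\pcirc\overline{f}$ defining a HS-map. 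Thus $\dU_{A/k}$-linear maps and HS-maps coincide, and since the whole correspondence is the identity on underlying $A$-modules and $A$-linear maps, the two constructions are mutually inverse functors, yielding the asserted isomorphism of categories. The right case follows verbatim, replacing $\End_k(E)$ by $\End_k(E)^{\text{\rm opp}}$ and using that a right $\dU_{A/k}$-module is the same as a homomorphism $\dU_{A/k}\to\End_k(E)^{\text{\rm opp}}$ of $k$-algebras over $A$. Since Proposition \ref{prop:HS-struc-dU} does all the real work, no genuine obstacle remains; the only step demanding attention is this morphism translation, which rests on the generation of $\dU_{A/k}$ by the two families of generators.
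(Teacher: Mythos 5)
Your proof is correct and follows the route the paper intends: the corollary is stated immediately after Proposition \ref{prop:HS-struc-dU} precisely because it is the universal property applied to $R=\End_k(E)$ (resp.\ $\End_k(E)^{\text{\rm opp}}$), which is exactly your object-level dictionary, and your morphism-level translation via the generators ${\bf S}_a$, ${\bf T}_{p,\Delta,D,\alpha}$ is the routine verification the paper leaves implicit. Nothing is missing.
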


\begin{defi}  \label{defi:enveloping_HS} The {\em enveloping algebra} of the Hasse--Schmidt derivations of $A$ over $k$ is the $k$-algebra $\dU_{A/k}= \dT_{A/k}/\dI$ defined above. It is a filtered $k$-algebra over $A$. 
\end{defi}

\begin{thm} The graded ring $\gr \dU_{A/k}$ is commutative.
\end{thm}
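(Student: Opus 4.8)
The plan is to prove commutativity of $\gr\dU_{A/k}$ by showing that the commutator of any two of the algebra generators drops the filtration degree by at least one. Since $\gr\dU_{A/k}$ is generated as a $k$-algebra by the symbols of the ${\bf S}_a$ (degree $0$) and of the ${\bf T}_{p,\Delta,D,\alpha}$ (degree $\lfloor|\alpha|/\ell_\alpha(D)\rfloor$), this suffices. Two of the three cases are immediate. The pair $({\bf S}_a,{\bf S}_b)$ commutes on the nose, since relation (0) and commutativity of $A$ give ${\bf S}_a{\bf S}_b={\bf S}_{ab}={\bf S}_b{\bf S}_a$. For $({\bf T}_{p,\Delta,D,\alpha},{\bf S}_a)$, relation (iv) gives $[{\bf T}_{p,\Delta,D,\alpha},{\bf S}_a]=\sum_{\beta+\gamma=\alpha,\,\beta\neq0}{\bf S}_{D_\beta(a)}{\bf T}_{p,\Delta,D,\gamma}$; a nonzero summand forces $D_\beta\neq0$ with $0\neq\beta\leq\alpha$, hence $|\beta|\geq\ell_\alpha(D)$ and $\deg{\bf T}_{p,\Delta,D,\gamma}\leq\lfloor(|\alpha|-\ell_\alpha(D))/\ell_\alpha(D)\rfloor=\deg{\bf T}_{p,\Delta,D,\alpha}-1$.

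The substantial case is the commutator of two generators ${\bf T}_{p,\Delta,D,\alpha}$ and ${\bf T}_{q,\nabla,E,\beta}$. First I would merge them into one multivariate group element. Writing $\bfs$ for $D$'s variables, $\bft$ for $E$'s, and $\iota,\kappa$ for the combinatorial inclusions into $\bfs\sqcup\bft$, set $G:=\iota\sbullet D$ and $H:=\kappa\sbullet E$ in $\HS^{p+q}_k(A;\Delta\times\nabla)$. Applying relation (v) to these combinatorial substitutions shows ${\bf T}_{G,(\mu,\nu)}=0$ unless $\nu=0$ (in which case it equals ${\bf T}_{p,\Delta,D,\mu}$), and symmetrically for $H$. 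Because $\Upupsilon$ is a group homomorphism (Proposition \ref{prop:HS-struc-dU}), relation (iii) then collapses to ${\bf T}_{G\pcirc H,(\alpha,\beta)}={\bf T}_{p,\Delta,D,\alpha}{\bf T}_{q,\nabla,E,\beta}$ and ${\bf T}_{H\pcirc G,(\alpha,\beta)}={\bf T}_{q,\nabla,E,\beta}{\bf T}_{p,\Delta,D,\alpha}$, so the desired commutator equals ${\bf T}_{G\pcirc H,(\alpha,\beta)}-{\bf T}_{H\pcirc G,(\alpha,\beta)}$.

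Next I would introduce the group commutator $C:=(H\pcirc G)^*\pcirc(G\pcirc H)$, so that $G\pcirc H=(H\pcirc G)\pcirc C$. Expanding ${\bf T}_{G\pcirc H,(\alpha,\beta)}$ by relation (iii) and splitting off the $\sigma=0$ term (where ${\bf T}_{C,0}=1$) yields
$$[{\bf T}_{p,\Delta,D,\alpha},{\bf T}_{q,\nabla,E,\beta}]=\sum_{\rho+\sigma=(\alpha,\beta),\ \sigma\neq0}{\bf T}_{H\pcirc G,\rho}\,{\bf T}_{C,\sigma}.$$
The crucial input is a localized form of Lemma \ref{lemma:ell-corchete}: since truncation is a group homomorphism, truncating $C$ to $\fn_\sigma$ produces the commutator of the truncations of $H$ and $G$, whence $\ell_\sigma(C)\geq\ell_{\sigma_1}(D)+\ell_{\sigma_2}(E)$ for $\sigma=(\sigma_1,\sigma_2)$. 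Moreover ${\bf T}_{C,\sigma}$ vanishes unless both $\sigma_1,\sigma_2\neq0$ (projecting away either block of variables sends $C$ to $\mathbb{I}$) and $|\sigma|\geq\ell_\sigma(C)$ (by \ref{nume:collect}), and the latter forces $|\sigma_1|\geq\ell_\alpha(D)=:L$ and $|\sigma_2|\geq\ell_\beta(E)=:M$. Using Proposition \ref{prop:ell-new-alpha} for the degrees, each surviving summand has degree at most $\lfloor|\rho_1|/L\rfloor+\lfloor|\rho_2|/M\rfloor+\lfloor(|\sigma_1|+|\sigma_2|)/(L+M)\rfloor$; the elementary inequality $\lfloor(u+v)/(L+M)\rfloor\leq\lfloor u/L\rfloor+\lfloor v/M\rfloor-1$ (valid whenever $u\geq L,\ v\geq M$) combined with superadditivity of the floor bounds this by $\lfloor|\alpha|/L\rfloor+\lfloor|\beta|/M\rfloor-1=\deg{\bf T}_{p,\Delta,D,\alpha}+\deg{\bf T}_{q,\nabla,E,\beta}-1$, which is exactly the required drop.

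I expect the last paragraph to be the main obstacle: it is the passage from the purely group-theoretic estimate $\ell_\sigma(C)\geq\ell_{\sigma_1}(D)+\ell_{\sigma_2}(E)$ to the termwise degree bound, carried through the two floor inequalities, where the entire weight of Lemma \ref{lemma:ell-corchete} and of the choice of grading by $\lfloor|\alpha|/\ell_\alpha(D)\rfloor$ is used. All the remaining ingredients — the merging of the two generators into $G,H$, the reductions ${\bf T}_{G,(\mu,\nu)}=0$ for $\nu\neq0$, and the expansion via relation (iii) — are formal consequences of the relations (iii)–(v) and of the universal HS-structure $\Upupsilon$.
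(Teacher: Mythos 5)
Your proposal is correct and is essentially the paper's own proof: the same reduction to commutators of pairs of generators, the same merging of $D$ and $E$ into a $(p+q)$-variate setting via the combinatorial inclusions $\iota,\kappa$, the same expansion of the commutator of the two ${\bf T}$-generators through the group commutator together with Lemma \ref{lemma:ell-corchete}, and your ``elementary inequality'' is precisely the paper's Lemma \ref{lemma:aux-parte-entera}. The one step you should make explicit is the claim that $|\sigma|\geq\ell_\sigma(C)$ ``forces'' $|\sigma_1|\geq L$ and $|\sigma_2|\geq M$: as a purely numerical deduction from $|\sigma_1|+|\sigma_2|\geq \ell_{\sigma_1}(D)+\ell_{\sigma_2}(E)$ this would be false, but it is correct here because finiteness of $\ell_\sigma(C)$ forces $\ell_{\sigma_1}(D)$ and $\ell_{\sigma_2}(E)$ to be finite, and then any nonzero $\gamma\leq\sigma_1\leq\alpha$ with $D_\gamma\neq 0$ satisfies $L\leq|\gamma|\leq|\sigma_1|$ (the paper instead obtains the corresponding vanishing, its step (5), by expanding the commutator's components into fourfold products and applying \ref{nume:collect}(c) to each factor).
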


\begin{proof} We need to prove that the degree of the bracket of the classes in $\dU_{A/k}$ of any two variables generating $\dT_{A/k}$ is strictly less than the sum of the degrees of these variables. 
\medskip

\noindent -) 
For the variables $S_a$ the result is clear since ${\bf S}_a {\bf S}_{a'} - {\bf S}_{a'} {\bf S}_a = {\bf S}_{aa'} - {\bf S}_{a'a}=0$.
\medskip

\noindent -) 
Let us see the case of one variable $S_a$ and one variable $T_{p,\Delta,D,\alpha}$, with $a\in A$, $p\in\N$, $\Delta\in\coide{\N^p}$, $\alpha\in\Delta$ and $D\in\HS_k^p(A;\Delta)$, and set 
$\ell = \ell_\alpha(D)$.
\medskip

We know from (b) that ${\bf T}_{p,\Delta,D,0}=1$, and from (c) that whenever $0<|\alpha|<\ell$, then  ${\bf T}_{p,\Delta,D,\alpha}=0$, and of course $D_\alpha=0$. 
So, if $|\alpha|<\ell$ then ${\bf T}_{p,\Delta,D,\alpha} {\bf S}_a - {\bf S}_a {\bf T}_{p,\Delta,D,\alpha}=0$. Otherwise $|\alpha|\geq \ell$ and,
by using the generators of type (iv) of $\dI$, we have:
$$ 
 {\bf T}_{p,\Delta,D,\alpha}\, {\bf S}_a - {\bf S}_a \, {\bf T}_{p,\Delta,D,\alpha} = \sum_{\substack{\scriptscriptstyle \beta+\gamma=\alpha\\ \scriptscriptstyle |\beta|>0 }} {\bf S}_{D_\beta(a)} {\bf T}_{p,\Delta,D,\gamma} =
 \sum_{\substack{\scriptscriptstyle \beta+\gamma=\alpha\\ \scriptscriptstyle |\beta|\geq \ell }} {\bf S}_{D_\beta(a)} {\bf T}_{p,\Delta,D,\gamma}.
$$
We conclude that:
\begin{eqnarray*}
&  \deg \left(  {\bf T}_{p,\Delta,D,\alpha}\, {\bf S}_a - {\bf S}_a \, {\bf T}_{p,\Delta,D,\alpha} \right) \leq 
\max \left\{  \deg \left( T_{p,\Delta,D,\gamma}  \right) \ |\  \beta+\gamma=\alpha, |\beta|\geq \ell \right\}
\leq \\
&  
\max \left\{  \lfloor \frac{\scriptstyle |\gamma|}{\scriptstyle \ell_\gamma(D)}\rfloor\ |\  \gamma \leq \alpha, |\gamma| \leq |\alpha|-\ell \right\} \leq  \max \left\{  \lfloor \frac{\scriptstyle |\gamma|}{\scriptstyle \ell_\alpha(D)}\rfloor\ |\  \gamma \leq \alpha, |\gamma| \leq |\alpha|-\ell \right\} < &
 \\
 & 
{\textstyle \lfloor \frac{\scriptstyle |\alpha|}{\scriptstyle \ell}\rfloor} =
 \deg \left(  T_{p,\Delta,D,\alpha} \right) = \deg \left(  T_{p,\Delta,D,\alpha} \right) + \deg(S_a).
\end{eqnarray*}

\noindent -) 
It remains to treat the case of two variables $T_{p,\Delta,D,\alpha}$ and $T_{q,\nabla,E,\beta}$. 
We need to prove that:
\begin{equation} \label{eq:goal-commut-T}
\deg \left(  {\bf T}_{p,\Delta,D,\alpha}\, {\bf T}_{q,\nabla,E,\beta} - {\bf T}_{q,\nabla,E,\beta} \, {\bf T}_{p,\Delta,D,\alpha} \right) < \deg \left(  T_{p,\Delta,D,\alpha}\right) + \deg \left(  T_{q,\nabla,E,\beta}\right).
\end{equation}
From (b), we may assume $\alpha, \beta\neq 0$; by taking into account generators of $\dI$ of type (ii), we may assume $D,E\neq\mathbb{I}$; from (c), we may assume $\ell_\alpha(D)\leq |\alpha|$ and $\ell_\beta(E)\leq |\beta|$; and finally, from (a), we may assume that $\Delta=\fn_\alpha$ and $\nabla=\fn_\beta$.
Let us denote $\bfs=\{s_1,\dots,s_p\}$, $\bft=\{t_1,\dots,t_q\}$,
$$\iota:A[[\bfs]]_{\fn_\alpha} \to A[[\bfs\sqcup\bft]]_{\fn_\alpha\times \fn_\beta} = A[[\bfs\sqcup\bft]]_{\fn_{(\alpha,\beta)}},\  \kappa: A[[\bft]]_\nabla \to A[[\bfs\sqcup\bft]]_{\fn_{(\alpha,\beta)}}
$$ 
the combinatorial substitution maps given by the inclusions $\bfs,\bft\hookrightarrow \bfs\sqcup\bft$, 
 $F:=\iota\sbullet D$, $G:=\kappa\sbullet E$, $\ell_1:=\ell(D) =\ell_\alpha(D)$, $\ell_2:=\ell(E)=\ell_\beta(E)$. From Proposition \ref{prop:varphi-D-main} we have
$F^* =  \iota\sbullet D^*$ and $G^* = \kappa\sbullet E^*$.
\medskip

\noindent
We will proceed in several steps.
First, by using the generators of type (v) of $\dI$ and the fact that:
\begin{eqnarray*}
& \displaystyle 
{\bf C}_{(\gamma,\sigma)}(\iota,\alpha') = \left\{
\begin{array}{cl}
1 & \text{if\ }\  \gamma = \alpha'\  \text{\ and\ }\ \sigma = 0\\
0 & \text{otherwise},
\end{array}
\right.
&
\\
&
{\bf C}_{(\gamma,\sigma)}(\kappa,\beta') = \left\{
\begin{array}{cl}
1 & \text{if\ }\  \gamma = 0\  \text{\ and\ }\ \sigma = \beta'\\
0 & \text{otherwise},
\end{array}
\right.
\end{eqnarray*}
we deduce that:
\medskip

\item[(1)] $ {\bf T}_{p+q,\fn_{(\alpha,\beta)},F,(\alpha',0)} = {\bf T}_{p,\fn_\alpha,D,\alpha'}$,
${\bf T}_{p+q,\fn_{(\alpha,\beta)},G,(0,\beta')} = {\bf T}_{q,\fn_\beta,E,\beta'}$.
\medskip

\item[(2)]$ {\bf T}_{p+q,\fn_{(\alpha,\beta)},F,(\alpha',\beta')} = 0$ for $\beta'\neq 0$ and ${\bf T}_{p+q,\fn_{(\alpha,\beta)},G,(\alpha',\beta')}=0$ for $\alpha'\neq 0$.
\medskip

\item[(3)] $\ell_{(\alpha',0)}(F) = \ell_{\alpha'}(D)$, $\ell_{(0,\beta')}(G) = \ell_{\beta'}(E)$ (in particular,
$\ell(F)=\ell_{(\alpha,0)}(F) = \ell_{\alpha}(D)=\ell(D)=\ell_1$, $\ell(G)=\ell_{(0,\beta)}(G) = \ell_\beta(E)=\ell(E)=\ell_2$)
and 
\begin{eqnarray*}
& 
\deg \left( T_{p+q,\fn_{(\alpha,\beta)},F,(\alpha',0)}\right) = 
\lfloor \frac{|(\alpha',0)|}{\ell_{(\alpha',0)}(F)}\rfloor = 
\lfloor \frac{|\alpha'|}{\ell_{\alpha'}(D)}\rfloor = 
 \deg \left( T_{p,\fn_\alpha,D,\alpha'} \right),
&\\
& 
\deg \left( T_{p+q,\fn_{(\alpha,\beta)},G,(0,\beta')}\right) = 
\lfloor \frac{|(0,\beta')|}{\ell_{(0,\beta')}(G)}\rfloor = 
\lfloor \frac{|\beta'|}{\ell_{\beta'}(E)}\rfloor = 
 \deg \left( T_{q,\fn_\beta,E,\beta'} \right).
\end{eqnarray*} 

\item[(4)]  
From \ref{nume:properties-external-x} and the generators of type (iii) and (v) of $\dI$ we have:
\begin{eqnarray*}
& \displaystyle 
{\bf T}_{p+q,\fn_{(\alpha,\beta)},D\, \boxtimes\, E,(\alpha',\beta')} = {\bf T}_{p+q,\fn_{(\alpha,\beta)},F\, \circ\, G,(\alpha',\beta')} =
{\bf T}_{p,\fn_\alpha,D,\alpha'}\, {\bf T}_{q,\fn_\beta,E,\beta'},&
\\
& \displaystyle 
 {\bf T}_{p+q,\fn_{(\alpha,\beta)},E\, \boxtimes\, D,(\alpha',\beta')} =
{\bf T}_{p+q,\fn_{(\alpha,\beta)},G\, \circ\, F,(\alpha',\beta')} =
{\bf T}_{q,\fn_\beta,E,\beta'}\, {\bf T}_{p,\fn_\alpha,D,\alpha'}.
\end{eqnarray*}
Let us write
$H=[F,G]=F\pcirc G\pcirc F^* \pcirc G^*$. From Lemma \ref{lemma:ell-corchete} we know that $\ell(H) \geq \ell_1 + \ell_2$. 
Let us prove that:
\medskip

\item[(5)] ${\bf T}_{p+q,\fn_{(\alpha,\beta)},H,(\mu,\lambda)}=0$ whenever $(\mu,\lambda)\neq (0,0)$ and $|\mu|<\ell_1$ or $|\lambda|<\ell_2$.
\medskip

By using (1), (2)  and the generators of type (iii) of $\dI$ again, we obtain:
\begin{eqnarray*} 
&\displaystyle 
{\bf T}_{p+q,\fn_{(\alpha,\beta)},H,(\mu,\lambda)} =\cdots=
&\\
&\displaystyle 
\sum {\bf T}_{p+q,\fn_{(\alpha,\beta)},F,(\mu',0)}\, {\bf T}_{p+q,\fn_{(\alpha,\beta)},G,(0,\lambda')}\, {\bf T}_{p+q,\fn_{(\alpha,\beta)},F^*,(\mu'',0)} \, {\bf T}_{p+q,\fn_{(\alpha,\beta)},G^*,(0,\lambda'')}=
\end{eqnarray*}
\begin{eqnarray} \label{eq:aux-H}
&\displaystyle 
\sum {\bf T}_{p,\fn_\alpha,D,\mu'}\, {\bf T}_{q,\fn_\beta,E,\lambda'}\, {\bf T}_{p,\fn_\alpha,D^*,\mu''} \, {\bf T}_{q,\fn_\beta,E^*,\lambda''},
\end{eqnarray}
where both sums are indexed by the $(\mu',\mu'',\lambda',\lambda'')$ such that $\mu'+\mu''=\mu$ and $\lambda'+\lambda''=\lambda$. 
If $\mu=0$ and $0<|\lambda|$ then
\begin{eqnarray*}
&\displaystyle 
 {\bf T}_{p+q,\fn_{(\alpha,\beta)},H,(0,\lambda)} =\cdots= 
&
\\
& \displaystyle 
\sum_{\scriptscriptstyle \lambda'+\lambda''=\lambda}  {\bf T}_{q,\fn_\beta,E,\lambda'}\,  {\bf T}_{q,\fn_\beta,E^*,\lambda''} = {\bf T}_{q,\fn_\beta,E \circ E^*,\lambda} = {\bf T}_{q,\fn_\beta,\mathbb{I},\lambda} = 0,
\end{eqnarray*}
by using generators of type (iii), (ii) of $\dI$.
In a similar way, we have that ${\bf T}_{p+q,\fn_{(\alpha,\beta)},H,(\mu,0)}=0$ whenever $0<|\mu|$. Assume now that $\mu\neq 0$ and $\lambda\neq 0$. If $|\mu|<\ell_1$ or $|\lambda|<\ell_2$, then all the summands in (\ref{eq:aux-H}) vanish by (c) (remember that $\ell(D^*) =\ell(D)$ and $\ell(E^*) =\ell(E)$) and so ${\bf T}_{p+q,\fn_{(\alpha,\beta)},H,(\mu,\lambda)}=0$.
\medskip

\item[(6)] By using $F\pcirc G  = H \pcirc (G \pcirc F)$ and the generators of type (iii) of $\dI$ we have:
\begin{eqnarray*}
&\displaystyle
{\bf T}_{p+q,\fn_{(\alpha,\beta)},F\, \circ\, G,(\alpha,\beta)} = 
\sum_{\substack{\scriptscriptstyle  \alpha'+\alpha''=\alpha\\ \scriptscriptstyle  
\beta'+\beta''=\beta}}
{\bf T}_{p+q,\fn_{(\alpha,\beta)},H,(\alpha',\beta')}\, {\bf T}_{p+q,\fn_{(\alpha,\beta)},G\, \circ\, F,(\alpha'',\beta'')}.
&
\end{eqnarray*}
Hence:
\begin{eqnarray*}
&\displaystyle
{\bf T}_{p+q,\fn_{(\alpha,\beta)},F\, \circ\, G,(\alpha,\beta)} - {\bf T}_{p+q,\fn_{(\alpha,\beta)},G\, \circ\, F,(\alpha,\beta)}  = 
&\\
&\displaystyle
\sum_{\scriptscriptstyle  |\alpha'|+|\beta'|>0}
{\bf T}_{p+q,\fn_{(\alpha,\beta)},H,(\alpha',\beta')}\, {\bf T}_{p+q,\fn_{(\alpha,\beta)},G\, \circ\, F,(\alpha'',\beta'')} \stackrel{\text{(c)}}{=}
&\\
&\displaystyle
\sum_{\scriptscriptstyle   |\alpha'|+|\beta'|\geq \ell(H)}
{\bf T}_{p+q,\fn_{(\alpha,\beta)},H,(\alpha',\beta')}\, {\bf T}_{p+q,\fn_{(\alpha,\beta)},G\, \circ\, F,(\alpha'',\beta'')} \stackrel{\text{(4),(5)}}{=}
&\\
&\displaystyle
\sum_{\scriptscriptstyle  |\alpha'|\geq\ell_1,|\beta'|\geq \ell_2}
{\bf T}_{p+q,\fn_{(\alpha,\beta)},H,(\alpha',\beta')}\, 
{\bf T}_{q,\fn_\beta,E,\beta''} \,{\bf T}_{p,\fn_\alpha,D,\alpha''},
\end{eqnarray*}
where all the indexes $(\alpha',\alpha'',\beta',\beta'')$ in the above sums satisfy $\alpha'+\alpha''=\alpha$ and $\beta'+\beta''=\beta$,
and so, by (4):
\begin{eqnarray*}
&
\deg \left( {\bf T}_{p,\fn_\alpha,D,\alpha}\, {\bf T}_{q,\fn_\beta,E,\beta} - 
{\bf T}_{q,\fn_\beta,E,\beta}\,    {\bf T}_{p,\fn_\alpha,D,\alpha} \right) =
&\\
&
 \deg\left( 
{\bf T}_{p+q,\fn_{(\alpha,\beta)},F\, \circ\, G,(\alpha,\beta)} - {\bf T}_{p+q,\fn_{(\alpha,\beta)},G\, \circ\, F,(\alpha,\beta)} \right)\leq
&\\
&
\max \left\{ \deg\left(T_{p+q,\fn_{(\alpha,\beta)},H,(\alpha',\beta')}\right) + 
\deg\left(T_{q,\fn_\beta,E,\beta''} \right) + \deg\left(T_{p,\fn_\alpha,D,\alpha''} \right) \right\} =
&\\
& 
\max \left\{ \lfloor \frac{|\alpha'|+|\beta'|}{\ell_{(\alpha',\beta')}(H)}\rfloor + 
\lfloor \frac{|\beta''|}{\ell_{\beta''}(E)}\rfloor + \lfloor \frac{|\alpha''|}{\ell_{\alpha''}(D)}\rfloor \right\}\leq
&\\
& 
\max \left\{ \lfloor \frac{|\alpha'|+|\beta'|}{\ell(H)}\rfloor + 
\lfloor \frac{|\beta''|}{\ell(E)}\rfloor + \lfloor \frac{|\alpha''|}{\ell(D)}\rfloor \right\}
\leq
&
\\
& 
\max \left\{ \lfloor \frac{|\alpha'|+|\beta'|}{\ell_1+\ell_2}\rfloor + 
\lfloor \frac{|\beta''|}{\ell_2}\rfloor + \lfloor \frac{|\alpha''|}{\ell_1}\rfloor \right\}
\leq
\max \left\{ \lfloor \frac{|\alpha'|+|\beta'|}{\ell_1+\ell_2}\rfloor + 
\lfloor \frac{|\beta''|}{\ell_2}\rfloor + \lfloor \frac{|\alpha''|}{\ell_1}\rfloor \right\} <
&\\
& 
\lfloor \frac{|\alpha'+\alpha''|}{\ell_1}\rfloor + \frac{|\beta'+\beta''|}{\ell_2}\rfloor =
\lfloor \frac{|\alpha|}{\ell_1}\rfloor + \frac{|\beta|}{\ell_2}\rfloor =
\deg \left(  T_{p,\fn_\alpha,D,\alpha}\right) + \deg \left(  T_{q,\fn_\beta,E,\beta}\right),
\end{eqnarray*} 
where the $\max$'s are taken over the $\alpha',\alpha''\in \N^p$ and $\beta',\beta''\in\N^q$ such that $\alpha'+\alpha''=\alpha$, $\beta'+\beta''=\beta$, 
$  |\alpha'|\geq\ell_1$ and $|\beta'|\geq \ell_2$, and the last (strict) inequality comes from Lemma \ref{lemma:aux-parte-entera}.
\end{proof}

\begin{lemma} \label{lemma:aux-parte-entera}
Let $\ell_1,\ell_2 \geq 1$ be integers. For any integers $a',b',a'',b''\geq 0$ with $a'\geq \ell_1$, $b'\geq \ell_2$  we have:
$$\textstyle
\lfloor \frac{a'+b'}{\ell_1+\ell_2}\rfloor +
  \lfloor \frac{a''}{\ell_1}\rfloor + \lfloor \frac{b''}{\ell_2}\rfloor < \lfloor \frac{a'+a''}{\ell_1}\rfloor +
  \lfloor \frac{b'+b''}{\ell_2}\rfloor.
$$
\end{lemma}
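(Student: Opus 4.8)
The plan is to reduce the three-term inequality to a cleaner two-term inequality by exploiting the superadditivity of the floor function, namely $\lfloor x\rfloor + \lfloor y\rfloor \le \lfloor x+y\rfloor$ for all reals $x,y$. Applying this to the right-hand side, once with $x=a''/\ell_1,\ y=a'/\ell_1$ and once with $x=b''/\ell_2,\ y=b'/\ell_2$, gives
$$\left\lfloor \frac{a'+a''}{\ell_1}\right\rfloor + \left\lfloor \frac{b'+b''}{\ell_2}\right\rfloor \ge \left\lfloor \frac{a'}{\ell_1}\right\rfloor + \left\lfloor \frac{a''}{\ell_1}\right\rfloor + \left\lfloor \frac{b'}{\ell_2}\right\rfloor + \left\lfloor \frac{b''}{\ell_2}\right\rfloor.$$
Hence it suffices to establish the \emph{strict} inequality
$$\left\lfloor \frac{a'+b'}{\ell_1+\ell_2}\right\rfloor < \left\lfloor \frac{a'}{\ell_1}\right\rfloor + \left\lfloor \frac{b'}{\ell_2}\right\rfloor,$$
since adding $\lfloor a''/\ell_1\rfloor + \lfloor b''/\ell_2\rfloor$ to both sides of this and combining with the previous display yields the claimed bound with strictness preserved.

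To prove this reduced inequality I would write the Euclidean divisions $a' = q_1\ell_1 + r_1$ and $b' = q_2\ell_2 + r_2$ with $0\le r_1 < \ell_1$ and $0\le r_2 < \ell_2$, so that $q_1 = \lfloor a'/\ell_1\rfloor$ and $q_2 = \lfloor b'/\ell_2\rfloor$. The hypotheses $a'\ge \ell_1$ and $b'\ge \ell_2$ enter precisely here: they guarantee $q_1\ge 1$ and $q_2\ge 1$. Now $\lfloor (a'+b')/(\ell_1+\ell_2)\rfloor < q_1 + q_2$ is equivalent to $a'+b' < (q_1+q_2)(\ell_1+\ell_2)$, and after substituting $a'+b' = q_1\ell_1 + q_2\ell_2 + r_1 + r_2$ and expanding the product, this reduces to
$$r_1 + r_2 < q_1\ell_2 + q_2\ell_1.$$
The latter follows at once from $r_1 < \ell_1 \le q_2\ell_1$ and $r_2 < \ell_2 \le q_1\ell_2$, where the two middle inequalities use $q_2\ge 1$ and $q_1\ge 1$ respectively, and where the strictness is already supplied by $r_1 < \ell_1$.

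There is essentially no serious obstacle here, as the statement is elementary arithmetic; the argument splits cleanly into the floor-superadditivity reduction followed by the division-with-remainder computation. The only point demanding care is to check that the reduction keeps the inequality strict and that the hypotheses $a'\ge\ell_1,\ b'\ge\ell_2$ are invoked at the right moment, since they are exactly what forces $q_1,q_2\ge 1$; without them the final bound $r_1+r_2 < q_1\ell_2+q_2\ell_1$ fails (for instance when $a'=b'=0$), so the hypotheses are genuinely used and not merely decorative.
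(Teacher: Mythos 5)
Your proof is correct, and it shares the paper's overall skeleton: both arguments split the claim into the superadditivity bound $\lfloor a'/\ell_1\rfloor + \lfloor a''/\ell_1\rfloor \le \lfloor (a'+a'')/\ell_1\rfloor$ (and likewise for $b$), plus the strict two-term inequality
$$\textstyle \lfloor \frac{a'+b'}{\ell_1+\ell_2}\rfloor < \lfloor \frac{a'}{\ell_1}\rfloor + \lfloor \frac{b'}{\ell_2}\rfloor.$$
Where you diverge is in how this strict inequality is established. The paper uses the mediant property: since $\frac{a'+b'}{\ell_1+\ell_2}$ lies between $\frac{a'}{\ell_1}$ and $\frac{b'}{\ell_2}$, one gets $\lfloor \frac{a'+b'}{\ell_1+\ell_2}\rfloor \le \max\left\{\lfloor \frac{a'}{\ell_1}\rfloor, \lfloor \frac{b'}{\ell_2}\rfloor\right\}$, and then the maximum of two integers that are each $\ge 1$ (this is where $a'\ge\ell_1$, $b'\ge\ell_2$ enter) is strictly smaller than their sum. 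You instead run a Euclidean-division computation: writing $a'=q_1\ell_1+r_1$, $b'=q_2\ell_2+r_2$, you convert the floor inequality into the integer inequality $a'+b'<(q_1+q_2)(\ell_1+\ell_2)$ and verify it from $r_1<\ell_1\le q_2\ell_1$ and $r_2<\ell_2\le q_1\ell_2$. Both are sound and invoke the hypotheses at the same logical point (forcing the quotients to be at least $1$); the paper's mediant step is slicker and shorter, while yours is more explicit and self-contained, making the conversion between the floor inequality and the underlying integer inequality completely transparent. Your observation that the hypotheses are genuinely needed (e.g.\ the statement fails for $a'=b'=0$) is a useful sanity check that the paper does not spell out.
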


\begin{proof}  We have
\begin{eqnarray*}
& 
\lfloor \frac{a'+b'}{\ell_1+\ell_2}\rfloor +
  \lfloor \frac{a''}{\ell_1}\rfloor + \lfloor \frac{b''}{\ell_2}\rfloor \leq 
\max\left\{ \lfloor \frac{a'}{\ell_1}\rfloor, \lfloor \frac{b'}{\ell_2}\rfloor \right\} 
+ \lfloor \frac{a''}{\ell_1}\rfloor + \lfloor \frac{b''}{\ell_2}\rfloor < 
&\\
& 
\lfloor \frac{a'}{\ell_1}\rfloor + \lfloor \frac{b'}{\ell_2}\rfloor 
+ \lfloor \frac{a''}{\ell_1}\rfloor + \lfloor \frac{b''}{\ell_2}\rfloor \leq
\lfloor \frac{a'+a''}{\ell_1}\rfloor + \lfloor \frac{b'+b''}{\ell_2}\rfloor.
\end{eqnarray*}
\end{proof}

\subsection{The case of HS-smooth algebras}

Our first goal is to define a canonical map of graded $A$-algebras from the divided power algebra of the module of {\em f-integrable $k$-derivations} (see Definitions  \ref{defi:power-divided-algebra} and \ref{def:HS-integ}) of $A$ to the graded ring of $\dU_{A/k}$. We will closely follow the procedure in \cite[\S 2.2]{nar_2009} (see also section \ref{sec:int-der-HS-smooth}).
\medskip

\begin{prop} \label{prop:bsigma-upsilon}
For each integer $m\geq 1$ the group homomorphism 
$$ \bupsigma \pcirc \Upupsilon^1_m: \HS_k(A;m) \longrightarrow \Ugr(\gr \dU_{A/k};m)
$$
vanishes on $\ker \tau_{m,1}$ and its image is contained in $\EXP_m(\gr \dU_{A/k})$.
\end{prop}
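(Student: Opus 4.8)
The plan is to isolate one vanishing lemma and to deduce both assertions from it. Since $\Upupsilon$ is a \emph{filtered} HS-structure, $\Upupsilon^1_m(D)=\sum_{i=0}^m{\bf T}_{1,\ft_m,D,i}\,t^i$ lies in $\Ufil^1(\dU_{A/k};\ft_m)$, so $\bupsigma$ may be applied to it. The structural input is the degree formula $\deg({\bf T}_{p,\Delta,K,\alpha})=\lfloor|\alpha|/\ell_\alpha(K)\rfloor$ used to grade $\dT_{A/k}$. From it I would prove, for all $p$ and all $\Delta$, that $\ell(K)\geq 2$ implies $\bupsigma(\Upupsilon^p_\Delta(K))=1$: since $\ell(K)\leq\ell_\alpha(K)$ we get $\ell_\alpha(K)\geq 2$ for every $\alpha\neq 0$, hence $\deg({\bf T}_{p,\Delta,K,\alpha})<|\alpha|$ and $\sigma_{|\alpha|}({\bf T}_{p,\Delta,K,\alpha})=0$, while the $\alpha=0$ term equals $1$ by \ref{nume:collect}(b). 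The first assertion then follows at once, because $\ker\tau_{m,1}$ is precisely the set of $D$ with $D_1=0$, i.e. with $\ell(D)\geq 2$, so the lemma (for $p=1$) gives $\bupsigma(\Upupsilon^1_m(D))=1$.

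For the exponential-type statement I would pass to two variables. Consider the constant-coefficient substitution maps $\varphi,\iota,\iota':A[[t]]_m\to A[[t,t']]_m$ given by $\varphi(t)=t+t'$, $\iota(t)=t$, $\iota'(t)=t'$; being homogeneous of degree $1$ they satisfy $\init\varphi=\varphi$, $\init\iota=\iota$, $\init\iota'=\iota'$. Set $\Xi:=\bupsigma(\Upupsilon^1_m(D))$, whose constant term is $1$. Using Proposition \ref{prop:sigma-varphi-bullet} (which, by the equalities just noted, makes $\bupsigma$ commute with $\varphi\sbullet(-)$, $\iota\sbullet(-)$ and $\iota'\sbullet(-)$), property (iii) of $\Upupsilon$ (Definition \ref{def:HS-structure}), and the multiplicativity of both $\Upupsilon^2_{\ft_m}$ and $\bupsigma$, I would rewrite
\[
\varphi\sbullet\Xi=\bupsigma\bigl(\Upupsilon^2_{\ft_m}(\varphi\sbullet D)\bigr),\qquad
(\iota\sbullet\Xi)(\iota'\sbullet\Xi)=\bupsigma\bigl(\Upupsilon^2_{\ft_m}((\iota\sbullet D)\pcirc(\iota'\sbullet D))\bigr).
\]
As $\gr\dU_{A/k}$ is commutative (the theorem just proved), $\EXP_m(\gr\dU_{A/k})$ is defined, and membership $\Xi\in\EXP_m(\gr\dU_{A/k})$ becomes equivalent to $\bupsigma(\Upupsilon^2_{\ft_m}(G))=\bupsigma(\Upupsilon^2_{\ft_m}(H))$, where $G:=\varphi\sbullet D$ and $H:=(\iota\sbullet D)\pcirc(\iota'\sbullet D)$.

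Finally I would compare $G$ and $H$ through the group law rather than term by term. Their components $G_{(a,b)}=\binom{a+b}{a}D_{a+b}$ and $H_{(a,b)}=D_a\pcirc D_b$ differ in general, yet they agree in total degree $1$, where each equals $D_1$. Hence $K:=G\pcirc H^*$ has trivial degree-$1$ part, so $\ell(K)\geq 2$; writing $G=K\pcirc H$ and using multiplicativity of $\Upupsilon^2_{\ft_m}$ and $\bupsigma$ together with the vanishing lemma gives $\bupsigma(\Upupsilon^2_{\ft_m}(G))=\bupsigma(\Upupsilon^2_{\ft_m}(H))$, as needed. I expect the main obstacle to be resisting a direct comparison of the symbols of ${\bf T}_{2,\ft_m,G,(a,b)}$ and ${\bf T}_{2,\ft_m,H,(a,b)}$; the passage to $K=G\pcirc H^*$ is exactly the device that circumvents it, and the only delicate bookkeeping is checking that the constant-coefficient nature of $\varphi,\iota,\iota'$ genuinely licenses both property (iii) and Proposition \ref{prop:sigma-varphi-bullet} in the displayed rewriting.
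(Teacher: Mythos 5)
Your proof is correct and takes essentially the same route as the paper's: your auxiliary element $K=G\pcirc H^{*}$ is exactly the paper's $H=E\pcirc F^{*}$, your vanishing lemma for $\ell(K)\geq 2$ is the paper's inline degree estimate (\ref{eq:341}), and both conclusions are drawn the same way, via Proposition \ref{prop:sigma-varphi-bullet} (using $\init\varphi=\varphi$, $\init\iota=\iota$), property (iii) of $\Upupsilon$, and multiplicativity of $\bupsigma$ and $\Upupsilon$. The only difference is organizational: you isolate the vanishing statement as a standalone lemma and apply it uniformly to both assertions, whereas the paper argues it inline.
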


\begin{proof} Let us consider the combinatorial substitution maps  $\iota_1,\iota_2: A[[s]]_m \to A[[s_1,s_2]]_{(m,m)}$ given by $\iota_i(s) = s_i$, $i=1,2$, and the substitution map $\varphi: A[[s]]_m \to A[[s_1,s_2]]_m$ given by $\varphi(s) = s_1 + s_2$. Notice that $\init \iota_i = \iota_i$ and $\init \varphi = \varphi$ (see Proposition \ref{prop:init}). An element $r\in \U(\gr \dU_{A/k};m)$ belongs to $\EXP_m(\gr \dU_{A/k})$ if and only if $(\iota_1 \sbullet r) (\iota_2 \sbullet r) = \varphi\sbullet r$ (see \ref{defi:exponential-series}).
\medskip

Let $D\in \HS_k(A;m)$ be a HS-derivation, and let us denote $r=(\bupsigma \pcirc \Upupsilon^1_m)(D)$,
$E=\varphi\sbullet D$, $F=(\iota_1\sbullet D)\pcirc (\iota_2\sbullet D)$ and $H=E\pcirc F^*$. 
It is clear that $H_{(1,0)} = H_{(0,1)}=0$ and so $\ell(H) > 1$.
Then,
$$ \textstyle
\deg \left( {\bf T}_{1,\ft_m,H,(i,j)} \right) \leq \deg \left(T_{1,\ft_m,H,(i,j)} \right) = \lfloor \frac{i+j}{\ell_{(i,j)}(H)}  \rfloor \leq \lfloor \frac{i+j}{\ell(H)}  \rfloor < i+j
$$
for all $(i,j)$ with $0< i+j \leq m$, and so
\begin{equation} \label{eq:341}
  (\bupsigma \pcirc \Upupsilon^1_m)(H) = \bupsigma \left(\sum_{\scriptscriptstyle i+j\leq m} {\bf T}_{1,\ft_m,H,(i,j)} \, s_1^i s_2^j\right) =
\sum_{\scriptscriptstyle i+j\leq m} \sigma_{i+j}\left({\bf T}_{1,\ft_m,H,(i,j)}\right) s_1^i s_2^j = 1.
\end{equation}
We deduce that:
\begin{eqnarray*}
& \displaystyle  \varphi\sbullet r = (\init \varphi)\sbullet \left(\bupsigma \left( \Upupsilon^1_m \left(D\right) \right) \right) \stackrel{(\star)}{=} \bupsigma \left( \varphi\sbullet \Upupsilon^1_m \left(D\right) \right) =
\bupsigma \left( \Upupsilon^2_m (E) \right)=\bupsigma \left( \Upupsilon^2_m  (H \pcirc F) \right)=
&
\\ 
& \displaystyle 
   \bupsigma \left( \Upupsilon^2_m  (H) \, \Upupsilon^2_m  (F) \right) \stackrel{\text{(\ref{eq:341})}}{=}
 \bupsigma \left(  \Upupsilon^2_m  (F)\right) = 
 \bupsigma \left(  \Upupsilon^2_m  (\iota_1\sbullet D)\,\, \Upupsilon^2_m  (\iota_2\sbullet D)\right) =
&
\\
& \displaystyle  
\bupsigma \left( (\iota_1 \sbullet \Upupsilon^1_m (D))\,  (\iota_2 \sbullet \Upupsilon^1_m (D)) \right) =
\bupsigma \left( (\iota_1 \sbullet \Upupsilon^1_m (D)) \right)\,
\bupsigma \left( (\iota_2 \sbullet \Upupsilon^1_m (D)) \right) \stackrel{(\star)}{=}
&
\\
& \displaystyle  
\left( (\init \iota_1) \sbullet r \right) \, \left( (\init \iota_2) \sbullet r \right)=
( \iota_1 \sbullet r ) ( \iota_2 \sbullet r ),
 &
\end{eqnarray*}
where equalities $(\star)$ come from Proposition \ref{prop:sigma-varphi-bullet}, and so $r=(\bupsigma \pcirc \Upupsilon^1_m)(D) \in \EXP_m(\gr \dU_{A/k})$.
\medskip

On the other hand, if $D\in \ker \tau_{m,1}$, then $\ell(D)> 1$ and we can proceed as before with $H$ and deduce that $(\bupsigma \pcirc \Upupsilon^1_m)(D) = 1$.
\end{proof}

\begin{cor}
 There is a natural system of $A$-linear maps 
$$ \bupchi_m: \Ider_k(A;m) \longrightarrow \EXP_m(\gr \dU_{A/k}),\quad m\geq 1,
$$
such that for $m'\geq m$ the following diagram is commutative:

\begin{equation} \label{eq:diag-chi-m}
\begin{tikzcd}
\Ider_k(A;m') \ar[r,"\bupchi_{m'}"]  \ar[d,"\text{\rm incl.}"']  & \EXP_{m'}(\gr \dU_{A/k})
 \ar[d,"\text{\rm trunc.}"]\\
\Ider_k(A;m) \ar[r,"\bupchi_m"]  & \EXP_m(\gr \dU_{A/k}).
\end{tikzcd}
\end{equation}
Moreover, the system above induces a natural $A$-linear map
$\bupchi: \Ider^f_k(A) \longrightarrow \EXP(\gr \dU_{A/k})$.
\end{cor}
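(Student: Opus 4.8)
The plan is to reproduce, with $\gr\dU_{A/k}$ in place of $\gr\diff_{A/k}$ and $\bupsigma\pcirc\Upupsilon^1_m$ in place of the total symbol $\Sigma_m$, the construction of the maps $\chi_m$ and $\chi^f$ given in \cite{nar_2009} (Proposition 2.5, Corollary 2.7) and recalled in Section \ref{sec:int-der-HS-smooth}. First I would invoke Proposition \ref{prop:bsigma-upsilon}: the group homomorphism $\bupsigma\pcirc\Upupsilon^1_m$ vanishes on $\ker\tau_{m,1}$ and has image contained in $\EXP_m(\gr\dU_{A/k})$. By the exact sequence (\ref{eq:suex-ider}) we have $\Ider_k(A;m)=\HS_k(A;m)/\ker\tau_{m,1}$, so $\bupsigma\pcirc\Upupsilon^1_m$ factors uniquely through a group homomorphism $\bupchi_m\colon\Ider_k(A;m)\to\EXP_m(\gr\dU_{A/k})$, characterized by $\bupchi_m(\tau_{m,1}(D))=(\bupsigma\pcirc\Upupsilon^1_m)(D)$ for every $m$-integral $D$ of $\delta$. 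Since the group law on $\EXP_m(\gr\dU_{A/k})$ is the multiplication of series, which is precisely the additive law of its natural module structure (\ref{eq:mod_action_on_EXP}), the map $\bupchi_m$ is additive for the $A$-module structure of $\Ider_k(A;m)$.

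The crucial point is $A$-linearity. For $a\in A$ and $D\in\HS_k(A;m)$, the scalar multiple $a\sbullet D$ of Definition \ref{defi:sbullet-0} equals $\varphi\sbullet D$ for the substitution map $\varphi(s)=as$, and if $D$ is an $m$-integral of $\delta$ then $a\sbullet D$ is an $m$-integral of $a\delta$. As $\varphi$ is homogeneous of degree $1$ we have $\init\varphi=\varphi$ (Proposition \ref{prop:init}); applying Proposition \ref{prop:sigma-varphi-bullet} to the filtered $k$-algebra $\dU_{A/k}$ over $A$ (whose HS-structure $\Upupsilon$ is filtered, so $\Upupsilon^1_m(D)\in\Ufil^1(\dU_{A/k};m)$), together with property (iii) of the HS-structure $\Upupsilon$ (Proposition \ref{prop:HS-struc-dU}), yields
\[
(\bupsigma\pcirc\Upupsilon^1_m)(a\sbullet D)=\bupsigma\left(\varphi\sbullet\Upupsilon^1_m(D)\right)=\varphi\sbullet\bupsigma\left(\Upupsilon^1_m(D)\right).
\]
Finally, by (\ref{eq:explicit-bullet}) one has $\varphi\sbullet\left(\sum_i r_i s^i\right)=\sum_i a^i r_i s^i$, which is exactly the action of $a$ described in (\ref{eq:mod_action_on_EXP}) (using that $\gr\dU_{A/k}$ is commutative, so $a^i r_i=r_i a^i$); hence $\bupchi_m(a\delta)=a\,\bupchi_m(\delta)$.

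Compatibility with truncations, i.e. commutativity of (\ref{eq:diag-chi-m}), will follow from the fact that both $\Upupsilon$ and the total symbol map $\bupsigma$ are compatible with truncations (Notation \ref{notacion:Ump}): for $\delta\in\Ider_k(A;m')$ with $m'$-integral $D'$, the HS-derivation $\tau_{m'm}(D')$ is an $m$-integral of $\delta$, and chasing the characterization of $\bupchi_m$ and $\bupchi_{m'}$ shows that truncating $\bupchi_{m'}(\delta)$ returns $\bupchi_m(\delta)$. Passing to the inverse limit over $1\le m<\infty$, and using the presentation of $\EXP(\gr\dU_{A/k})$ as $\varprojlim_m\EXP_m(\gr\dU_{A/k})$ (Section 1.4) together with $\Ider^f_k(A)=\bigcap_m\Ider_k(A;m)$ from (\ref{eq:intersection-ider}), the compatible system $\{\bupchi_m\}$ induces the desired $A$-linear map $\bupchi\colon\Ider^f_k(A)\to\EXP(\gr\dU_{A/k})$.

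I expect the only genuinely delicate step to be the $A$-linearity: one must resist treating $\varphi(s)=as$ as a substitution map with constant coefficients (it is not, unless $a\in k$), and instead apply Proposition \ref{prop:sigma-varphi-bullet}, whose conclusion is available here precisely because the homogeneity of $\varphi$ forces $\init\varphi=\varphi$. Everything else is a formal transcription of the argument already carried out for $\diff_{A/k}$, with commutativity of $\gr\dU_{A/k}$ (established above) ensuring the module computation on $\EXP_m$ goes through.
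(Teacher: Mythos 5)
Your proposal is correct and follows essentially the same route as the paper: factor $\bupsigma \pcirc \Upupsilon^1_m$ through $\Ider_k(A;m)\cong\HS_k(A;m)/\ker\tau_{m,1}$ via Proposition \ref{prop:bsigma-upsilon}, establish $A$-linearity through the substitution map $\varphi(s)=as$, check compatibility with truncations, and pass to the inverse limit. The only (cosmetic) difference is in the $A$-linearity step: the paper computes ${\bf T}_{1,m,a\sbullet D,i}=a^i\,{\bf T}_{1,m,D,i}$ directly from the type (v) generators of $\dI$ and then applies $\sigma_i$, whereas you obtain the same identity by combining property (iii) of $\Upupsilon$ with Proposition \ref{prop:sigma-varphi-bullet} --- a legitimate shortcut, since $\Upupsilon$ is filtered and $\init\varphi=\varphi$.
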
 

\begin{proof} Since $\Ider_k(A;m)$ is by definition the image of the group homomorphism 
$$\tau_{m,1}:\HS_k(A;m) \to \HS_k(A;1) \equiv \Der_k(A),
$$ 
we deduce from Proposition \ref{prop:bsigma-upsilon} that the group homomorphism $\bupsigma \pcirc \Upupsilon^1_m$ induces a natural group homomorphism $\bupchi_m: \Ider_k(A;m) \longrightarrow \EXP_m(\gr \dU_{A/k})$. 
If $\delta \in \Ider_k(A;m)$, then $\bupchi_m(\delta) = \sum_{i=0}^m \sigma_i\left({\bf T}_{1,m,D,i} \right) s^i$ where
$D\in \HS_k(A;m)$ is any $m$-integral of $\delta$, i.e. $D_1=\delta$. Then, for each $a\in A$, $a\sbullet D$ is an $m$-integral of $a\delta$ and 
\begin{eqnarray*}
& \displaystyle 
\bupchi_m(a\delta) = \sum_{i=0}^m \sigma_i\left({\bf T}_{1,m,a\bullet D,i} \right) s^i \stackrel{(\star)}{=}
\sum_{i=0}^m \sigma_i\left(
\sum_{j=0}^i a^j {\bf T}_{1,m,D,j} \right)s^i=
&
\\
& \displaystyle 
= \sum_{i=0}^m  \sigma_i\left(a^i {\bf T}_{1,m,D,i}\right) s^i =
\sum_{i=0}^m  \sigma_i\left( {\bf T}_{1,m,D,i}\right) (as)^i = a \bupchi_m(\delta),
\end{eqnarray*}
where equality $(\star)$ comes from generators of type (v) of $\dI$, 
and so $\bupchi_m$ is $A$-linear (remember that the $A$-action on exponential type series is given by substitutions $s\mapsto as$, $a\in A$, see (\ref{eq:mod_action_on_EXP})). 
The commutativity of (\ref{eq:diag-chi-m}) comes from the commutativity of the following diagram ($\bupsigma$ and the $\Upupsilon^p_\Delta$ are compatible with truncations):
$$
\begin{tikzcd}
\HS_k(A;m') \ar[r,"\bupsigma \pcirc \Upupsilon^1_{m'}"] \ar[d,"\text{\rm trunc.}"']  & \EXP_{m'}(\gr \dU_{A/k})
\ar[d,"\text{\rm trunc.}"] \\
\HS_k(A;m)  \ar[r,"\bupsigma \pcirc \Upupsilon^1_m"] & \EXP_m(\gr \dU_{A/k}).
\end{tikzcd}
$$
The map $\bupchi$ is simply the inverse limit of the $\bupchi_m$.
\end{proof}

\begin{cor} \label{cor:varthetas-cd}
There is a natural map 
$\bupvartheta:\Gamma_A \Ider^f_k(A) \longrightarrow \gr \dU_{A/k}$ of graded $A$-algebras such 
that the following diagram is commutative:
\begin{equation}  \label{eq:CD-cor:varthetas-cd}
\begin{tikzcd}
\Gamma_A \Ider^f_k(A)  \ar[r,"\bupvartheta"]  \ar[rd,"\vartheta^f_{A/k}"'] &   \gr \dU_{A/k} \ar[d,"\gr \bupupsilon"]\\
 &  \gr \DD_{A/k},
\end{tikzcd}
\end{equation}
where $\vartheta^f_{A/k}$ is the map defined in (\ref{eq:mor-canon}) and $\bupupsilon$ is defined in Proposition \ref{prop:bupupsilon}.
\end{cor}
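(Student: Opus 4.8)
The plan is to obtain $\bupvartheta$ directly from the universal property of the divided power algebra (Proposition \ref{prop:PU-PD}) applied to the $A$-linear map $\bupchi\colon \Ider^f_k(A) \to \EXP(\gr \dU_{A/k})$ constructed in the previous corollary, and then to verify the triangle (\ref{eq:CD-cor:varthetas-cd}) by reducing it, again via the universal property, to an equality of $A$-linear maps into $\EXP(\gr \DD_{A/k})$. The first thing I would invoke is the theorem proved just above, namely that $\gr \dU_{A/k}$ is a \emph{commutative} ring. This is the decisive point: it is exactly what allows $\gr \dU_{A/k}$ to serve as a target algebra $B$ in Proposition \ref{prop:PU-PD}. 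Granting it, the $A$-linearity of $\bupchi$ yields a unique homomorphism of $A$-algebras $\bupvartheta\colon \Gamma_A \Ider^f_k(A) \to \gr \dU_{A/k}$ characterised by $\EXP(\bupvartheta) \pcirc \gamma_A = \bupchi$. To see that $\bupvartheta$ is graded I would note that for $\delta\in \Ider_k(A;m)$ with $m$-integral $D$ the series $\bupchi_m(\delta) = \sum_{i=0}^m \sigma_i({\bf T}_{1,m,D,i})\, s^i$ has its coefficient of $s^i$ in the degree-$i$ homogeneous component of $\gr \dU_{A/k}$; since $\Gamma_A \Ider^f_k(A)$ is generated as an $A$-algebra by the divided powers of elements of $\Ider^f_k(A)$, which $\bupvartheta$ therefore carries into the correct homogeneous degrees, the map $\bupvartheta$ respects the gradings. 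This runs exactly parallel to \cite[\S 2.2]{nar_2009}.

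For the commutativity of (\ref{eq:CD-cor:varthetas-cd}), both $\gr \bupupsilon \pcirc \bupvartheta$ and $\vartheta^f_{A/k}$ are homomorphisms of graded $A$-algebras $\Gamma_A \Ider^f_k(A) \to \gr \DD_{A/k}$, so by the universal property it suffices to compare the corresponding $A$-linear maps $\Ider^f_k(A) \to \EXP(\gr \DD_{A/k})$. By construction $\vartheta^f_{A/k}$ corresponds to $\chi^f$, while $\gr \bupupsilon \pcirc \bupvartheta$ corresponds to $\EXP(\gr \bupupsilon) \pcirc \bupchi$, because $\EXP$ is functorial and $\bupvartheta$ corresponds to $\bupchi$. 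I would then check these agree at each finite level $m$: for $\delta \in \Ider_k(A;m)$ with $m$-integral $D \in \HS_k(A;m)$,
$$ \EXP_m(\gr \bupupsilon)\bigl(\bupchi_m(\delta)\bigr) = \sum_{i=0}^m \gr \bupupsilon\bigl(\sigma_i({\bf T}_{1,m,D,i})\bigr)\, s^i = \sum_{i=0}^m \sigma_i(D_i)\, s^i = \chi_m(\delta), $$
where the middle equality uses that $\bupupsilon$ is a filtered map with $\bupupsilon({\bf T}_{1,m,D,i}) = D_i$ (Proposition \ref{prop:bupupsilon}), so that $\gr \bupupsilon$ sends the symbol $\sigma_i({\bf T}_{1,m,D,i})$ to $\sigma_i(D_i)$, and the last equality is the definition of $\chi_m$ through the total symbol map $\Sigma_m$. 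Passing to the inverse limit over $m$ gives $\EXP(\gr \bupupsilon) \pcirc \bupchi = \chi^f$, and the universal property then forces $\gr \bupupsilon \pcirc \bupvartheta = \vartheta^f_{A/k}$.

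The main obstacle here is conceptual rather than computational: the whole construction rests on the commutativity of $\gr \dU_{A/k}$, without which Proposition \ref{prop:PU-PD} could not be applied and $\bupvartheta$ would not even be defined. Once that input is granted, the remaining verifications (the gradedness of $\bupvartheta$ and the level-$m$ identity above) are formal consequences of the universal property together with the explicit descriptions of $\bupchi_m$ and of $\bupupsilon$, and they follow the characteristic-zero argument of \cite{nar_2009} line for line.
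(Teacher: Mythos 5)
Your proposal is correct and takes essentially the same route as the paper: $\bupvartheta$ is obtained from the universal property of $\Gamma_A$ applied to the $A$-linear map $\bupchi$, and the commutativity of the triangle is reduced to the identity $\EXP(\gr \bupupsilon) \pcirc \bupchi = \chi^f$, which the paper invokes as a commutative diagram and you verify explicitly at each finite level $m$ using $\bupupsilon({\bf T}_{1,m,D,i}) = D_i$. Your explicit observation that the commutativity of $\gr \dU_{A/k}$ (the theorem proved just before) is what licenses the application of Proposition \ref{prop:PU-PD} makes precise an input the paper uses only implicitly.
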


\begin{proof}  Let us denote 
$$\gamma:\delta\in\Ider^f_k(A) \longmapsto \sum_{n=0}^\infty \gamma_n(\delta) s^n \in \EXP (\Gamma_A \Ider^f_k(A))
$$
the canonical map (see \ref{defi:power-divided-algebra}). 
The existence of $\bupvartheta$ comes from the universal property of $\gamma$. Namely, there is a unique map of $A$-algebras $\bupvartheta:\Gamma_A \Ider^f_k(A) \longrightarrow \gr \dU_{A/k}$ such that $\bupchi = \EXP(\bupvartheta) \pcirc \gamma$. More explicitly, for each $\delta \in \Ider^f_k(A)$ and for each $D\in\HS_k(A;m)$ such that $D_1=\delta$, we have $\bupvartheta(\gamma_m(\delta)) = 
\sigma_m\left( {\bf T}_{1,m,D,m}\right)$. In particular, $\bupvartheta$ is graded. 
\medskip

The commutativity of the diagram (\ref{eq:CD-cor:varthetas-cd}) is a consequence of the commutativity of the diagram
\begin{equation*}  
\begin{tikzcd}
\Ider^f_k(A)  \ar[r,"\bupchi"]  \ar[rd,"\chi"'] &   \EXP(\gr \dU_{A/k}) \ar[d,"\EXP(\gr \bupupsilon)"]\\
 &   \EXP(\gr \DD_{A/k}),
\end{tikzcd}
\end{equation*}
where $\chi$ is the inverse limit of the maps $\chi_m: \Ider_k(A;m) \to \EXP_m(\gr \DD_{A/k})$, $m\geq 1$,  defined in  \cite[Corollary (2.7)]{nar_2009}.
\end{proof}

\begin{prop} \label{prop:bvartheta-surjective}
Assume that $\Ider^f_k(A)=\Der_k(A)$. Then, the map $$\bupvartheta:\Gamma_A \Ider^f_k(A) \longrightarrow \gr \dU_{A/k}$$ is surjective.
\end{prop}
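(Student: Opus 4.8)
The plan is to prove that the image $I$ of $\bupvartheta$, which is a graded $A$-subalgebra of the commutative ring $\gr\dU_{A/k}$, is everything. Since $\dU_{A/k}$ is generated as an $A$-algebra by $A$ (the image of the ${\bf S}_a$) together with the ${\bf T}_{p,\Delta,D,\alpha}$, the graded ring $\gr\dU_{A/k}$ is generated as an $A$-algebra by $\gr_0\dU_{A/k}=A\subseteq I$ and by the leading symbols $\sigma_{d}({\bf T}_{p,\Delta,D,\alpha})$, where $d=\lfloor|\alpha|/\ell_\alpha(D)\rfloor$; hence it suffices to prove that each such leading symbol lies in $I$. By \ref{nume:collect}(a) we may assume $\Delta=\fn_\alpha$, so that $\ell_\alpha(D)=\ell(D)$. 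Recall from the proof of Corollary \ref{cor:varthetas-cd} that $\bupvartheta(\gamma_m(\delta))=\sigma_m({\bf T}_{1,m,E,m})$ for any $m$-integral $E$ of $\delta$; thus $I$ contains the leading symbol of every univariate generator with $\ell=1$, and, since by hypothesis $\Ider^f_k(A)=\Der_k(A)$, here $\delta$ may be an arbitrary $k$-derivation and $m$ arbitrary.

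First I would reduce the multivariate case to the univariate one. Given $D\in\HS^p_k(A;\fn_\alpha)$, consider the codiagonal substitution $\varphi\colon A[[\bfs]]_{\fn_\alpha}\to A[[t]]_{|\alpha|}$ defined by $\varphi(s_i)=t$; then $\varphi(\bfs^\gamma)=t^{|\gamma|}$, so ${\bf C}_{|\alpha|}(\varphi,\gamma)=\delta_{|\alpha|,|\gamma|}$, and the only $\gamma\le\alpha$ with $|\gamma|=|\alpha|$ is $\gamma=\alpha$. Hence the generators of type (v) of $\dI$ (taken with $\beta=|\alpha|$) give the identity ${\bf T}_{p,\fn_\alpha,D,\alpha}={\bf T}_{1,|\alpha|,\varphi\sbullet D,|\alpha|}$ in $\dU_{A/k}$, reducing the problem to the univariate generators ${\bf T}_{1,n,E,n}$ with $E:=\varphi\sbullet D\in\HS_k(A;n)$ and $n=|\alpha|$.

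The univariate case I would prove by downward induction on $\ell:=\ell(E)\in\{1,\dots,n,\infty\}$, the claim being that the leading symbol of ${\bf T}_{1,n,E,n}$ lies in $I$. When $E=\mathbb{I}$ the symbol is $0$ by the generators of type (ii), and this is the base case; the induction is well founded because $\ell$ ranges over a finite set. For the inductive step, set $\delta:=E_\ell$; the Leibniz identities force $\delta$ to be a $k$-derivation, so by the hypothesis $\Ider^f_k(A)=\Der_k(A)$ it admits an $N$-integral $E'\in\HS_k(A;N)$ with $N=\lfloor n/\ell\rfloor$ and $E'_1=\delta$. With $\psi\colon A[[t]]_N\to A[[s]]_n$, $\psi(t)=s^\ell$, put $P:=\psi\sbullet E'$; then $P_{j\ell}=E'_j$ and $P_e=0$ for $\ell\nmid e$, so $P$ agrees with $E$ up to and including degree $\ell$ and $G:=E\pcirc P^*$ satisfies $\ell(G)>\ell$. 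Writing $E=G\pcirc P$ and expanding by the generators of type (iii) of $\dI$ gives ${\bf T}_{1,n,E,n}=\sum_{i+j=n}{\bf T}_{1,n,G,i}\,{\bf T}_{1,n,P,j}$. Passing to the degree-$d$ symbol ($d=\lfloor n/\ell\rfloor$) and using commutativity of $\gr\dU_{A/k}$, each summand either drops in degree (contributing $0$) or has leading symbol equal to the product of the leading symbols of its two factors: the factor coming from $G$ lies in $I$ by the inductive hypothesis (since $\ell(G)>\ell$, after applying \ref{nume:collect}(a)), while by the generators of type (v) applied to $\psi$ one has ${\bf T}_{1,n,P,j}=0$ unless $\ell\mid j$, in which case it equals ${\bf T}_{1,N,E',j/\ell}$, whose leading symbol is $\bupvartheta(\gamma_{j/\ell}(\delta))\in I$. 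Thus every contributing summand lands in the subalgebra $I$, which closes the induction.

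The multivariate-to-univariate reduction and the membership of the $\ell=1$ symbols in $I$ are essentially formal; the real work is the univariate induction on $\ell$. The delicate point there is the degree bookkeeping: one must verify that $\ell(G)>\ell$ (so that the induction strictly increases $\ell$ and therefore terminates), and that no summand ${\bf T}_{1,n,G,i}\,{\bf T}_{1,n,P,j}$ overshoots the target degree $\lfloor n/\ell\rfloor$ while the ones reaching it factor through elements already known to lie in $I$. This is exactly where the inequalities $\ell(G)>\ell$ and $\ell(P)=\ell$ must be combined with floor estimates in the spirit of Lemma \ref{lemma:aux-parte-entera}, and I expect this bookkeeping, together with the careful identification of which summands survive in the associated graded, to be the main obstacle.
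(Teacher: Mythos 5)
Your univariate induction is essentially sound, but the multivariate-to-univariate reduction on which the whole argument rests is broken: the codiagonal substitution map you invoke does not exist. An $A$-algebra map $\varphi: A[[\bfs]]_{\fn_\alpha} \to A[[t]]_{|\alpha|}$ with $\varphi(s_i)=t$ would have to kill the class of every monomial $\bfs^\gamma$ with $\gamma\not\leq\alpha$, since these vanish in $A[[\bfs]]_{\fn_\alpha}$; but for $p\geq 2$ and $\alpha\neq 0$ there always exists $\gamma\not\leq\alpha$ with $|\gamma|\leq|\alpha|$ (take $\gamma=(\alpha_1+1,0,\dots,0)$ if some $\alpha_i\geq 1$ for $i\geq 2$, and $\gamma=(0,1,0,\dots,0)$ if $\alpha=(\alpha_1,0,\dots,0)$), and such a $\bfs^\gamma$ would map to $t^{|\gamma|}\neq 0$ in $A[[t]]_{|\alpha|}$. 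This is consistent with the paper's own definition of combinatorial substitution maps: for the codiagonal $\iota:\bfs\to\{t\}$ the target co-ideal must lie inside $\iota_*(\fn_\alpha)=\{e\in\N \mid (e,\dots,e)\leq\alpha\}=\ft_{\min_i\alpha_i}$, so the codiagonal is only defined into $A[[t]]_{\min_i\alpha_i}$, and there the type (v) relation produces
$$ {\bf T}_{1,\min_i\alpha_i,\varphi\sbullet D,e} \;=\; \sum_{\substack{\gamma\leq\alpha \\ |\gamma|=e}} {\bf T}_{p,\fn_\alpha,D,\gamma}, $$
a sum over all $\gamma$ of weight $e\leq\min_i\alpha_i<|\alpha|$, never the isolated term ${\bf T}_{p,\fn_\alpha,D,\alpha}$. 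So your proof never reaches the multivariate generators, while $\gr\dU_{A/k}$ is generated by the symbols of \emph{all} the ${\bf T}_{p,\Delta,D,\alpha}$, not only the univariate ones.

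The gap is essential rather than cosmetic, because no substitution map can push a general multivariate $D$ forward to a single univariate HS-derivation; the reduction has to go in the opposite direction. This is exactly what the paper does: using the hypothesis $\Ider^f_k(A)=\Der_k(A)$, it chooses generators $\{\delta_s\}$ of $\Der_k(A)$ with $m$-integrals $D^s$, and invokes \cite[Theorem 1]{nar_subst_2018} to write the given $E\in\HS^q_k(A;\nabla)$ as $E=\varphi_0\sbullet\left(D^{s_1}\boxtimes\cdots\boxtimes D^{s_p}\right)$ with $\ord(\varphi_0)=\ell(E)$; then the type (v) generators together with the factorization ${\bf T}_{p,\underline{m},D^{s_1}\boxtimes\cdots\boxtimes D^{s_p},\alpha}={\bf T}_{1,m,D^{s_1},\alpha_1}\cdots{\bf T}_{1,m,D^{s_p},\alpha_p}$ (as in step (4) of the proof that $\gr\dU_{A/k}$ is commutative) express ${\bf T}_{q,\nabla,E,\beta}$ as an $A$-combination of products of univariate generators, whose symbols visibly lie in the image of $\bupvartheta$. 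Your induction on $\ell$ via the splitting $E=G\pcirc P$ with $P=\psi\sbullet E'$, $\psi(t)=s^\ell$, is a nice self-contained treatment of the univariate case (and the degree bookkeeping you worry about does go through), but to complete the proposition you would still need the structure theorem \cite[Theorem 1]{nar_subst_2018}, or some multivariate analogue of your splitting, to handle $p\geq 2$.
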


\begin{proof} 
The $A$-algebra $\gr \dU_{A/k} $ is generated by the $\sigma_d\left({\bf T}_{q,\nabla,E,\beta}\right)$ for $q \geq 1$, $\nabla\in\coide{\N^q}$, $\beta\in\nabla$, $E\in\HS^q_k(A;\nabla)$, $E\neq\mathbb{I}$, $d=\lfloor \frac{|\beta|}{\ell_\beta(E)} \rfloor$. After \ref{nume:collect}, we may assume that $\nabla= \fn_\beta$ and so $\ell_\beta(E) = \ell(E)$. Let us call $m=\height(\nabla)$.
\medskip

Let $\{\delta_s, s\in \bfs\}$ be a system of generators of the $A$-module $\Der_k(A)$. Since $\Ider_k(A;m)=\Der_k(A)$,
for each $s\in\bfs$ there exists $D^s\in \HS_k(A;m)$ which is an $m$-integral of $\delta_s$. By considering some total ordering $<$ on $\bfs$, we can define $D\in \HS^\bfs_k(A;m)$ as the external product (see Definition \ref{defi:external-x}) of the ordered family $\{D^s, s\in\bfs\}$, i.e. $D_0=\Id$ and for each $\alpha \in\N^{(\bfs)}$, $\alpha\neq 0$,
$$  D_\alpha = D^{s_1}_{\alpha_{s_1}} \pcirc \cdots \pcirc D^{s_e}_{\alpha_{s_e}}\quad \text{with\ }\ 
\supp \alpha = \{s_1 <  \cdots < s_e\}.
$$
After \cite[Theorem 1]{nar_subst_2018}, there exists a substitution map $\varphi_0:A[[\bfs]]_m \to A[[t_1,\dots,t_q]]_\nabla$ such that $E=\varphi_0 \sbullet D$. Moreover, it is clear
that we can take $\ord(\varphi_0) = \ell(E)$.
\medskip

Since $\nabla$ is finite, condition (17) in \cite[Proposition 2]{nar_subst_2018} implies that the set $\{s\in\bfs\ |\ \varphi_0(s)\neq 0\}$ is finite. Let us call $\{s_1 < \dots< s_p\}$ this set. We have a factorization of substitution maps:
$$
\begin{tikzcd}
A[[\bfs]]_m  \ar[rr,"\varphi_0"] \ar[dr,"\varphi_1"'] & & A[[t_1,\dots,t_q]]_\nabla \\
 &  A[[s_1,\dots,s_p]]_m \ar[ur,"\varphi"] & 
\end{tikzcd}
$$
where $\varphi_1(s) = 0$ if $s\neq s_i$, $\varphi_1(s_i)=s_i$ and $\varphi(s_i) = \varphi_0(s_i)$. Then we have
$E=\varphi_0 \sbullet D = \varphi \sbullet F$ with $F=\varphi_1\sbullet D =
D^{s_1}\boxtimes \cdots \boxtimes D^{s_p}\in \HS^p_k(A;(m,\dots,m))$. 
\medskip

We obviously have $\ord(\varphi)=\ord(\varphi_0) = \ell(E)$ and so ${\bf C}_\beta(\varphi,\alpha)=0$ whenever $|\alpha|\ell(E)> |\beta|$.
So,
\begin{eqnarray*}
& \displaystyle
 {\bf T}_{q,\nabla,E,\beta} =  \sum_{\substack{\scriptscriptstyle |\alpha| \leq  m\\ \scriptscriptstyle |\alpha|\leq |\beta|  }} {\bf C}_\beta(\varphi,\alpha) {\bf T}_{p,\underline{m},F,\alpha} =
&
\\
& \displaystyle 
\sum_{\substack{\scriptscriptstyle |\alpha| \leq  m\\ \scriptscriptstyle |\alpha|\ell(E)\leq |\beta|  }} {\bf C}_\beta(\varphi,\alpha) {\bf T}_{1,m,D^{s_1},\alpha_1} {\bf T}_{1,m,D^{s_2},\alpha_2} \cdots {\bf T}_{1,m,D^{s_p},\alpha_p},
&
\\
& \displaystyle
\sigma_d\left({\bf T}_{q,\nabla,E,\beta}\right) = \sum_{\scriptscriptstyle 
|\alpha|=d}
{\bf C}_\beta(\varphi,\alpha)\, \prod_{j=1}^p \sigma_{\alpha_j}\left({\bf T}_{1,m,D^{s_j},\alpha_j}\right) =
\bvartheta\left(
\sum_{|\alpha|=d}
{\bf C}_\beta(\varphi,\alpha)\, \prod_{j=1}^p \gamma_{\alpha_j}(\delta_j)\right)
\end{eqnarray*}
and we deduce that $\bvartheta$ is surjective.
\end{proof}

\begin{nota} In the proof of the above proposition we have used the Axiom of Choice in order to consider a total ordering on $\bfs$. This could be avoided when $\Der_k(A)$ is a finitely generated $A$-module. In general, we could also avoid the Axiom of Choice by proving directly a convenient variant of Theorem 1 of   \cite{nar_subst_2018}.
\end{nota}

\begin{thm}
If $A$ is a HS-smooth $k$-algebra, then the natural map $\bupupsilon: \dU_{A/k} \longrightarrow \DD_{A/k}$ is an isomorphism of filtered $k$-algebras.
\end{thm}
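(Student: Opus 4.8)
The plan is to deduce the theorem from the behaviour of the associated graded map $\gr \bupupsilon$, which in turn is controlled by the commutative triangle of Corollary \ref{cor:varthetas-cd}. Recall from that corollary the identity $\gr \bupupsilon \pcirc \bupvartheta = \vartheta^f_{A/k}$. Since $A$ is HS-smooth, Definition \ref{defi:HS_smooth} tells us that the equivalent conditions of Theorem \ref{teo:impor} hold; in particular $\Ider^f_k(A) = \Der_k(A)$ (condition (c)) and the canonical map $\vartheta^f_{A/k}: \Gamma_A \Ider^f_k(A) \to \gr \diff_{A/k}$ is an isomorphism of graded $A$-algebras (condition (b)).

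Because $\Ider^f_k(A) = \Der_k(A)$, Proposition \ref{prop:bvartheta-surjective} applies and yields that $\bupvartheta$ is surjective. A short diagram chase then settles the graded picture: since $\vartheta^f_{A/k} = \gr \bupupsilon \pcirc \bupvartheta$ is an isomorphism, it is in particular injective, whence $\bupvartheta$ is injective as well; being also surjective, $\bupvartheta$ is an isomorphism. Consequently $\gr \bupupsilon = \vartheta^f_{A/k} \pcirc \bupvartheta^{-1}$ is an isomorphism of graded $A$-algebras.

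It remains to lift this to the filtered level. The map $\bupupsilon$ is filtered by Proposition \ref{prop:bupupsilon}, and both the order filtration on $\diff_{A/k}$ and the filtration on $\dU_{A/k}$ are $\N$-indexed, exhaustive, and concentrated in non-negative degrees (their $(-1)$-st pieces vanish). For such filtrations a morphism inducing an isomorphism on associated graded objects is itself an isomorphism: injectivity follows by testing a hypothetical nonzero kernel element in the lowest filtration step in which it lives, where the induced graded map $\gr^d \bupupsilon$ is injective; surjectivity follows by induction on the filtration degree, lifting a given element modulo lower order via surjectivity of $\gr^d \bupupsilon$, correcting with a lower-order term, and then invoking exhaustiveness. (Equivalently, one applies the five lemma to the short exact sequences relating $\dU_{A/k}^{d-1}$, $\dU_{A/k}^d$ and $\gr^d \dU_{A/k}$, and the corresponding ones for $\diff_{A/k}$.) Hence $\bupupsilon$ is an isomorphism of filtered $k$-algebras.

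The substance of the argument has already been absorbed into the cited results: the commutative triangle of Corollary \ref{cor:varthetas-cd}, the surjectivity of $\bupvartheta$ in Proposition \ref{prop:bvartheta-surjective}, and the smoothness criterion of Theorem \ref{teo:impor}. The only genuinely new ingredient is the graded-to-filtered passage, whose sole non-formal input is that both filtrations are exhaustive and bounded below; I expect this to be the step demanding the most care, even though it is entirely standard.
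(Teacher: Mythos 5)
Your proposal is correct and follows essentially the same route as the paper: reduce to the associated graded map $\gr \bupupsilon$, use the commutative triangle of Corollary \ref{cor:varthetas-cd} together with the isomorphism $\vartheta^f_{A/k}$ (from HS-smoothness) to get injectivity of $\bupvartheta$, and Proposition \ref{prop:bvartheta-surjective} for surjectivity, whence $\gr \bupupsilon = \vartheta^f_{A/k} \pcirc \bupvartheta^{-1}$ is an isomorphism. The only difference is that you spell out the standard graded-to-filtered passage (which the paper compresses into ``it is enough to prove''), and your treatment of that step --- using exhaustiveness and boundedness below of both filtrations --- is accurate.
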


\begin{proof} It is enough to prove that $\gr \bupupsilon: \gr \dU_{A/k} \longrightarrow \gr \DD_{A/k}$ is an isomorphism of graded $A$-algebras. Since $A$ is a HS-smooth $k$-algebra, we have $\vartheta^f_{A/k}: \Gamma_A \Ider^f_k(A)  \xrightarrow{\sim} \gr \DD_{A/k}$ and from Corollary \ref{cor:varthetas-cd} we deduce that $\bupvartheta$ is injective. The surjectivity of $\bupvartheta$  comes from Proposition
\ref{prop:bvartheta-surjective}.
\end{proof}

\begin{cor} If $A$ is a HS-smooth $k$-algebra, then the category of left (resp. right) HS-modules over $A/k$ is isomorphic to the category of left (resp. right) $\DD_{A/k}$-modules.
\end{cor}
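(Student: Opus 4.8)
The plan is to deduce this corollary formally from the two immediately preceding results, so that no fresh computation is needed; all the substantive work has already been carried out.

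First I would invoke the corollary to Proposition \ref{prop:HS-struc-dU}, which identifies the category of left (resp.\ right) HS-modules over $A/k$ with the category of left (resp.\ right) $\dU_{A/k}$-modules. The point is the universal property in Proposition \ref{prop:HS-struc-dU}: endowing an $A$-module $E$ with a left HS-structure on $\End_k(E)$ is exactly the same datum as a map of $k$-algebras over $A$ from $\dU_{A/k}$ to $\End_k(E)$, i.e.\ a left $\dU_{A/k}$-module structure on $E$ whose underlying $A$-module structure is the prescribed one (and dually for right modules, working with $\End_k(E)^{\mathrm{opp}}$). This identification is an isomorphism of categories, not merely an equivalence, since the underlying $A$-module and the $A$-linear HS-maps are carried over unchanged.

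Second I would apply the preceding theorem: since $A$ is HS-smooth, the canonical map $\bupupsilon\colon \dU_{A/k} \to \DD_{A/k}$ is an isomorphism of $k$-algebras over $A$. Restriction and extension of scalars along $\bupupsilon$ and its inverse then furnish mutually inverse functors between left (resp.\ right) $\dU_{A/k}$-modules and left (resp.\ right) $\DD_{A/k}$-modules. Because $\bupupsilon$ is a morphism of $k$-algebras \emph{over $A$}, these functors preserve the underlying $A$-module structure, so the transported structures are the expected ones. Composing this isomorphism of categories with the one of the previous paragraph yields the claimed isomorphism of categories.

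I do not expect a genuine obstacle at this stage. The only thing to check is that both identifications respect the $A$-linear structure, and this is immediate from the fact that every arrow in sight is a map of $k$-algebras over $A$. The real content lies entirely in the preceding theorem, hence ultimately in the injectivity of $\bupvartheta$ coming from Corollary \ref{cor:varthetas-cd} (via HS-smoothness and $\vartheta^f_{A/k}$) together with the surjectivity of $\bupvartheta$ established in Proposition \ref{prop:bvartheta-surjective}.
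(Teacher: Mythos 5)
Your proposal is correct and is exactly the argument the paper intends (the corollary is stated without proof precisely because it follows formally): identify HS-modules with $\dU_{A/k}$-modules via the universal property of $\dU_{A/k}$, then transport structure along the isomorphism $\bupupsilon\colon \dU_{A/k} \to \DD_{A/k}$ of $k$-algebras over $A$ furnished by the HS-smoothness theorem. No gaps.
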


\subsection{Further developments and questions}

\begin{question}
With the hypotheses of the preceding section, it is easy to see that the map
$$ \Upupsilon^1_1: \HS_k(A;1)\equiv \Der_k(A) \longrightarrow \U(\dU_{A/k};1) \equiv \dU_{A/k}
$$
is $k$-linear, compatible with Lie brackets and satisfies Leibniz rule. So, it induces a $k$-algebra map from the enveloping algebra of the Lie-Rinehart algebra $\Der_k(A)$ (\cite{rine-63}) to $\dU_{A/k}$. The paper \cite{nar_HSmod_vs_IC} 
is devoted to prove that this map is an isomorphism whenever $\QQ \subset k$, and so HS-modules 
 and classical integrable connections coincide in characteristic $0$.
\end{question}

\begin{question} Assume that $A$ is a HS-smooth $k$-algebra and $\Omega_{A/k}$ is a projective $A$-module of rank $d$. In an article in preparation we study how the operations in Proposition \ref{prop:HS-rep-sym-altern}, the 
pre-HS-module structure on $\Omega_{A/k}$ (see Proposition \ref{prop:pre-HS-Omega}) and Proposition \ref{prop:carac-dual-HS-structures} give rise to a right HS-module structure on the dualizing module $\omega_{A/k} = \Omega^d_{A/k}$.
\end{question}

\bigskip

\noindent {\small \href{http://personal.us.es/narvaez/}{Luis Narv\'aez Macarro}\\
\noindent \href{http://departamento.us.es/da/}{Departamento de \'Algebra} \&\
\href{http://www.imus.us.es}{Instituto de Matem\'aticas (IMUS)}\\
\href{http://matematicas.us.es}{Facultad de Matem\'aticas}, \href{http://www.us.es}{Universidad de Sevilla}\\
Calle Tarfia s/n, 41012  Sevilla, Spain} \\
{\small {\it E-mail}\ : narvaez@us.es
 }

\end{document}